\documentclass[12 pt]{amsart}
\usepackage{amssymb, amsmath, amsfonts, amsthm, graphics,mathrsfs, mathtools}
\usepackage[usenames, dvipsnames]{xcolor}
\usepackage[hmargin = 1 in, vmargin = 1 in]{geometry}
\usepackage{tikz}

\usetikzlibrary{matrix, calc, arrows, positioning, backgrounds} 
%\usetikzlibrary{arrows}
%\usetikzlibrary{hobby}
%\usepackage[all]{xy}
\usepackage{hyperref}
\usepackage[capitalise,noabbrev]{cleveref}
\crefformat{equation}{(#2#1#3)}
\Crefformat{equation}{(#2#1#3)}

\usepackage{thmtools}
\usepackage{thm-restate}

\usepackage{ytableau}

% use these commands for typesetting doi and arXiv references in the bibliography

%%\usepackage{ytableau} 

\hyphenation{Grothen-dieck}

% When you first define a new word, use this macro to make it stand out
% EG We say that an abelian group $I$ is \newword{injective} if, for any
% injection $G \to H$, and any map $G \to I$, there is a map $H \to I$ making the
% obvious diagram commute.
\definecolor{darkblue}{rgb}{0.0,0,0.7}

\newcommand{\newword}[1]{\textcolor{darkblue}{\textbf{\emph{#1}}}}
%Arrows 

% Fancy comments
\usepackage[colorinlistoftodos]{todonotes}

%Some multiletter functions

\newcommand{\Flags}{\mathsf{Flags}}

\newcommand{\id}{\mathrm{id}}

%set of reduced words of a permutation
\newcommand{\redword}{{\sf red}}
\newcommand{\heckeword}{{\sf hecke}}

\newcommand{\NE}{{\sf NE}}

\newcommand{\rk}{\mathrm{rk}}

\newcommand{\dom}{\mathrm{dom}}

\newcommand{\des}{\mathrm{des}}

%opposite of dem prodcut, only apply if going down

\newcommand{\paths}{{\sf paths}}
\newcommand{\word}{{\sf word}}

\newcommand{\addable}{\alpha}

\newcommand{\partitionsubset}{{\mathbb Y}}

%implement theorem enviroments
\newtheorem{Theorem}{Theorem}[section]

\newtheorem{remark}[Theorem]{Remark}

\newtheorem{proposition}[Theorem]{Proposition}

\newtheorem{theorem}[Theorem]{Theorem}

\newtheorem{lemma}[Theorem]{Lemma}

\numberwithin{equation}{section}

\theoremstyle{remark}

\newenvironment{example}
{\pushQED{\qed}\examplex}
{\popQED\endexamplex}

\newcommand{\pipes}{{\sf Pipes}}
\newcommand{\kpipes}{\overline{\sf Pipes}}
\newcommand{\bpd}{{\sf BPD}}
\newcommand{\kbpd}{\overline{\sf BPD}}

\newcommand{\cobpd}{{\sf coBPD}}
\newcommand{\kcobpd}{\overline{\sf coBPD}}

\newcommand{\copipes}{{\sf coPipes}}
\newcommand{\kcopipes}{\overline{\sf coPipes}}

\newcommand{\demprod}{\delta}

\newcommand{\rothe}{{\rm D}}
\newcommand{\mute}{{\rm mute}}
\newcommand{\up}{{\rm U}}
\newcommand{\wt}{{\tt wt}}
\newcommand{\kwt}{{\tt wt}_K}

\newcommand{\divdiff}{N}
\newcommand{\kdivdiff}{\overline{N}}

%Set of permutations arising from cotranstion
\newcommand{\cotransitionperms}{\Phi}
\newcommand{\cotransitionindices}{\phi}
\newcommand{\kcotransitionperms}{\overline{\Phi}}

% Helper macro to check the next token
\newcommand{\maybeaddspace}{%
	\futurelet\nexttoken\checknexttoken
}

\newcommand{\checknexttoken}{%
	\ifx\nexttoken.\else
	\ifx\nexttoken,\else
	\ifx\nexttoken;\else
	\ifx\nexttoken:\else
	\ifx\nexttoken\}\else
	\;\fi
	\fi
	\fi
	\fi
	\fi
}

%tiles for in line text

%bumpless pipe dreams

\newcommand{\drtile}{\begin{tikzpicture}[x = .75em,y = .75em]
		\draw[step = 1,gray, very thin] (0,0) grid (1, -1);
		\draw[color = black, thick] (0,0) rectangle (1, -1);
		\draw[thick,rounded corners,color = blue](1/2, -1)--(1/2, -1/2)--(1, -1/2);
	\end{tikzpicture}\maybeaddspace}

\newcommand{\ultile}{\begin{tikzpicture}[x = .75em,y = .75em]
		\draw[step = 1,gray, very thin] (0,0) grid (1, -1);
		\draw[color = black, thick] (0,0) rectangle (1, -1);
		\draw[thick,rounded corners,color = blue](1/2, 0)--(1/2, -1/2)--(0, -1/2);
	\end{tikzpicture}\maybeaddspace}

\newcommand{\htile}{\begin{tikzpicture}[x = .75em,y = .75em]
		\draw[step = 1,gray, very thin] (0,0) grid (1, -1);
		\draw[color = black, thick] (0,0) rectangle (1, -1);
		\draw[thick,rounded corners,color = blue](0, -1/2)--(1, -1/2);
	\end{tikzpicture}\maybeaddspace}

\newcommand{\vtile}{\begin{tikzpicture}[x = .75em,y = .75em]
		\draw[step = 1,gray, very thin] (0,0) grid (1, -1);
		\draw[color = black, thick] (0,0) rectangle (1, -1);
		\draw[thick,rounded corners,color = blue](1/2, 0)--(1/2, -1);
	\end{tikzpicture}\maybeaddspace}

\newcommand{\ctile}{\begin{tikzpicture}[x = .75em,y = .75em]
		\draw[step = 1,gray, very thin] (0,0) grid (1, -1);
		\draw[color = black, thick] (0,0) rectangle (1, -1);
		\draw[thick,rounded corners,color = blue](0, -1/2)--(1, -1/2);
		\draw[thick,rounded corners,color = blue](1/2, 0)--(1/2, -1);
	\end{tikzpicture}\maybeaddspace}

\newcommand{\btile}{\begin{tikzpicture}[x = .75em,y = .75em]
		\draw[step = 1,gray, very thin] (0,0) grid (1, -1);
		\draw[color = black, thick] (0,0) rectangle (1, -1);
	\end{tikzpicture}\maybeaddspace}

\newcommand{\bumptile}{\begin{tikzpicture}[x = .75em,y = .75em]
		\draw[step = 1,gray, very thin] (0,0) grid (1, -1);
		\draw[color = black, thick] (0,0) rectangle (1, -1);
		\draw[thick,rounded corners,color = blue](1/2, -1)--(1/2, -1/2)--(1, -1/2);
		\draw[thick,rounded corners,color = blue](1/2, 0)--(1/2, -1/2)--(0, -1/2);
	\end{tikzpicture}\maybeaddspace}

\newcommand{\cohtile}{\begin{tikzpicture}[x = .75em,y = .75em]
		\draw[step = 1,gray, very thin] (0,0) grid (1, -1);
		\draw[color = black, thick] (0,0) rectangle (1, -1);
		\draw[thick,rounded corners,color = ForestGreen](0, -1/2)--(1, -1/2);
	\end{tikzpicture}\maybeaddspace}

\newcommand{\covtile}{\begin{tikzpicture}[x = .75em,y = .75em]
		\draw[step = 1,gray, very thin] (0,0) grid (1, -1);
		\draw[color = black, thick] (0,0) rectangle (1, -1);
		\draw[thick,rounded corners,color = ForestGreen](1/2, 0)--(1/2, -1);
	\end{tikzpicture}\maybeaddspace}

\newcommand{\courtile}{\begin{tikzpicture}[x = .75em,y = .75em]
		\draw[step = 1,gray, very thin] (0,0) grid (1, -1);
		\draw[color = black, thick] (0,0) rectangle (1, -1);
		\draw[thick,rounded corners,color = ForestGreen](1/2, 0)--(1/2, -1/2)--(1, -1/2);
	\end{tikzpicture}\maybeaddspace}

%tiles for co-BPD

\newcommand{\coultile}{\begin{tikzpicture}[x = .75em,y = .75em]
		\draw[step = 1,gray, very thin] (0,0) grid (1, -1);
		\draw[color = black, thick] (0,0) rectangle (1, -1);
		\draw[thick,rounded corners,color = ForestGreen](1/2, 0)--(1/2, -1/2)--(0, -1/2);
	\end{tikzpicture}\maybeaddspace}

%tiles for pipe dreams

\newcommand{\pipeultile}{
\begin{tikzpicture}[x = .75em,y = .75em]
	\draw[step = 1,gray, very thin] (0,0) grid (1, -1);
	\draw[color = black, thick] (0,0) rectangle (1, -1);
	\draw[thick,rounded corners,color = Mulberry](1/2, 0)--(1/2, -1/2)--(0, -1/2);
\end{tikzpicture}\maybeaddspace}

\newcommand{\pipecross}{\begin{tikzpicture}[x = .75em,y = .75em]
		\draw[step = 1,gray, very thin] (0,0) grid (1, -1);
		\draw[color = black, thick] (0,0) rectangle (1, -1);
		\draw[thick,rounded corners,color = Mulberry](0, -1/2)--(1, -1/2);
		\draw[thick,rounded corners,color = Mulberry](1/2, 0)--(1/2, -1);
	\end{tikzpicture}\maybeaddspace}

\newcommand{\pipebump}{\begin{tikzpicture}[x = .75em,y = .75em]
		\draw[step = 1,gray, very thin] (0,0) grid (1, -1);
		\draw[color = black, thick] (0,0) rectangle (1, -1);
		\draw[thick,rounded corners,color = Mulberry](1/2, -1)--(1/2, -1/2)--(1, -1/2);
		\draw[thick,rounded corners,color = Mulberry](1/2, 0)--(1/2, -1/2)--(0, -1/2);
	\end{tikzpicture}\maybeaddspace}

%tiles for co-pipe dreams

\newcommand{\dltile}{\begin{tikzpicture}[x = .75em,y = .75em]
		\draw[step = 1,gray, very thin] (0,0) grid (1, -1);
		\draw[color = black, thick] (0,0) rectangle (1, -1);
		\draw[thick,rounded corners,color = Rhodamine](1/2, -1)--(1/2, -1/2)--(0, -1/2);
	\end{tikzpicture}\maybeaddspace}

\newcommand{\copipecross}{\begin{tikzpicture}[x = .75em,y = .75em]
	\draw[step = 1,gray, very thin] (0,0) grid (1, -1);
	\draw[color = black, thick] (0,0) rectangle (1, -1);
	\draw[thick,rounded corners,color = Rhodamine](0, -1/2)--(1, -1/2);
	\draw[thick,rounded corners,color = Rhodamine](1/2, 0)--(1/2, -1);
\end{tikzpicture}\maybeaddspace}

\newcommand{\copipebump}{\begin{tikzpicture}[x = .75em,y = .75em]
		\draw[step = 1,gray, very thin] (0,0) grid (1, -1);
		\draw[color = black, thick] (0,0) rectangle (1, -1);
		\draw[thick,rounded corners,color = Rhodamine](1/2, 0)--(1/2, -1/2)--(1, -1/2);
		\draw[thick,rounded corners,color = Rhodamine](0, -1/2)--(1/2, -1/2)--(1/2, -1);
	\end{tikzpicture}\maybeaddspace}

\title{Changing Bases with Pipe Dream Combinatorics}

\author{Anna Weigandt}
\address{School of Mathematics, University of Minnesota, Minneapolis MN 55455}
\email{weigandt@umn.edu}

\date{\today}
\keywords{Schubert polynomials, Grothendieck polynomials, bumpless pipe dreams, pipe dreams, Schubert calculus, co-transition}

\makeatletter
\@namedef{subjclassname@2020}{%
	\textup{2020} Mathematics Subject Classification}
\makeatother

\subjclass[2020]{05E05}

\begin{document}

	\begin{abstract}
		Lascoux and Sch\"utzenberger introduced Schubert and Grothendieck polynomials to study the cohomology and K-theory of the complete flag variety. 
		We present explicit combinatorial rules for expressing Grothendieck polynomials in the basis of Schubert polynomials, and vice versa, using the bumpless pipe dreams (BPDs) of Lam, Lee, and Shimozono. 
		A key advantage of BPDs is that they are naturally back stable, which allows us to give a combinatorial formula for expanding back stable Grothendieck polynomials in terms of back stable Schubert polynomials.
		We also provide pipe dream interpretations for the rules originally given by Lenart (Grothendieck to Schubert) and Lascoux (Schubert to Grothendieck), which were previously formulated in terms of binary triangular arrays. 
		We give new proofs of these results, relying on Knutson's co-transition recurrences. 
		As a consequence, we obtain a formula for expanding Grothendieck polynomials into Schubert polynomials using chains in Bruhat order. 
		The key connection between the pipe dream and BPD change of basis formulas is the canonical bijection of Gao and Huang. 
		We show that co-permutations are preserved by this map.
	\end{abstract}

	\maketitle 
	
	\section{Introduction}
	
	The \newword{complete flag variety} $\Flags(n)$ is the space of nested sequences of vector subspaces of $\mathbb{C}^n$ of the form
	\[ V_1 \subset V_2 \subset \cdots \subset V_n=\mathbb{C}^n,\] 
	where $\dim(V_i)=i$ for all $i$. 
	The flag variety has distinguished subvarieties called \emph{Schubert varieties}, which are indexed by permutations in the \emph{symmetric group} $S_n$. 
	Each Schubert variety determines a class $\sigma_w\in \mathrm{H}^*(\Flags(n))$ in the cohomology ring of $\Flags(n)$. 
	These \emph{Schubert classes} form a linear basis for $\mathrm{H}^*(\Flags(n))$. 
	A central problem in Schubert calculus is to find a combinatorial rule for the structure constants $c_{u,v}^w$ in the product 
	\[\sigma_u \cdot \sigma_v=\sum_{w\in S_n}c_{u,v}^w \sigma_w.\] 
	
	The Borel isomorphism identifies $\mathrm{H}^*(\Flags(n))$ with $\mathbb{Z}[x_1, \ldots, x_n]/I$, where $I$ is the ideal generated by nonconstant elementary symmetric polynomials. 
	Lascoux and Sch\"utzenberger \cite{Lascoux.Schutzenberger:Schubert} introduced \emph{Schubert polynomials} $\mathfrak{S}_w$, which are representatives for the Schubert classes. 
	There is an analogous story in K-theory. 
	Here, \emph{Grothendieck polynomials} $\mathfrak{G}_w$ serve as representatives for the classes of structure sheaves in the K-theory of $\Flags(n)$ \cite{Lascoux.Schutzenberger}. 
	
	The purpose of this article is to present combinatorial formulas for expanding Grothendieck polynomials in the basis of Schubert polynomials, and vice versa. We give formulas for changing bases between Schubert and Grothendieck polynomials as sums over the \emph{bumpless pipe dreams (BPDs)} of Lam, Lee, and Shimozono \cite{Lam.Lee.Shimozono}. We also translate the change of basis formulas of Lenart \cite{Lenart} and Lascoux \cite{Lascoux.03}, originally stated in terms of binary triangular arrays, into sums over \emph{pipe dreams}.
	Pipe dreams and BPDs are certain tilings of the $n \times n$ grid, both of which are used in formulas for computing the monomial expansions of Schubert and Grothendieck polynomials \cite{BJS93, Bergeron.Billey, Fomin.Stanley, Fomin.Kirillov, Fomin.Kirillov.96, Lam.Lee.Shimozono, Weigandt}. 
	Pipe dreams have greatly contributed to our combinatorial understanding of Schubert calculus, see, for instance, \cite{Knutson.Miller, Kogan.Miller, Assaf.Searles, Knutson:cotransition}.
	BPDs were originally developed to study back stable Schubert calculus \cite{Lam.Lee.Shimozono,Lam.Lee.Shimozono-K-theory}. 
	Since then, BPDs have become an important tool in Schubert calculus see, e.g., \cite{Buciumas.Scrimshaw, Huang.2023, Huang, Huang.Striker, LOTRZ}.

	Certain properties of Schubert and Grothendieck polynomials appear more transparently in terms of pipe dreams or bumpless pipe dreams. 
	For instance, the transition recurrence on Schubert and Grothendieck polynomials has a simple bijective explanation in terms of BPDs (see \cite{Lascoux, Weigandt}), whereas the co-transition recurrence of Knutson \cite{Knutson:cotransition} is compatible with pipe dreams. 
	Similarly, pipe dreams index components in antidiagonal Gr\"obner degenerations of \emph{matrix Schubert varieties} \cite{Knutson.Miller}, while BPDs govern certain diagonal degenerations \cite{Knutson.Miller.Yong, Hamaker.Pechenik.Weigandt, Klein, Klein.Weigandt}.
	We show that the BPD and pipe dream change of basis formulas are closely connected via the canonical bijection from pipe dreams to bumpless pipe dreams of Gao and Huang \cite{Gao.Huang}.

	\subsection{Description of main results}

	We now summarize our main results.
	We give only a brief overview here and refer the reader to \cref{section:planar} and \cref{section:pipes} for precise definitions.
	Given a BPD $\mathcal{B}$, there is a natural way to obtain a permutation $\demprod(\mathcal{B})$ by reading along the pipes, ignoring crossings whenever two pipes have previously crossed. 
	Our new ingredient is the following: to each BPD $\mathcal{B}$, we associate a co-BPD $\check{\mathcal{B}}$ by making the tile-by-tile replacements pictured below.
		\[\begin{tikzpicture}[x = 1.5em,y = 1.5em]
		\draw[step = 1,gray, very thin] (0,0) grid (1, -1);
		\draw[color = black, thick] (0,0) rectangle (1, -1);
		\draw[thick,rounded corners,color = blue](1/2, -1)--(1/2, -1/2)--(1, -1/2);
	\end{tikzpicture} 
	\quad 
	\raisebox{.5em}{$\mapsto$}
	\quad
	\begin{tikzpicture}[x = 1.5em,y = 1.5em]
		\draw[step = 1,gray, very thin] (0,0) grid (1, -1);
		\draw[color = black, thick] (0,0) rectangle (1, -1);
		\draw[thick,rounded corners,color = ForestGreen](1/2, 0)--(1/2, -1/2)--(1, -1/2);
	\end{tikzpicture} 
	\hspace{3em}
	\begin{tikzpicture}[x = 1.5em,y = 1.5em]
		\draw[step = 1,gray, very thin] (0,0) grid (1, -1);
		\draw[color = black, thick] (0,0) rectangle (1, -1);
		\draw[thick,rounded corners,color = blue](1/2, 0)--(1/2, -1/2)--(0, -1/2);
	\end{tikzpicture}
	\quad 
	\raisebox{.5em}{$\mapsto$}
	\quad
	\begin{tikzpicture}[x = 1.5em,y = 1.5em]
		\draw[step = 1,gray, very thin] (0,0) grid (1, -1);
		\draw[color = black, thick] (0,0) rectangle (1, -1);
		\draw[thick,rounded corners,color = ForestGreen](1/2, -1)--(1/2, -1/2)--(0, -1/2);
	\end{tikzpicture}
	\hspace{3em}
	\begin{tikzpicture}[x = 1.5em,y = 1.5em]
		\draw[step = 1,gray, very thin] (0,0) grid (1, -1);
		\draw[color = black, thick] (0,0) rectangle (1, -1);
		\draw[thick,rounded corners,color = blue](0, -1/2)--(1, -1/2);
	\end{tikzpicture}
	\quad 
	\raisebox{.5em}{$\mapsto$}
	\quad
	\begin{tikzpicture}[x = 1.5em,y = 1.5em]
		\draw[step = 1,gray, very thin] (0,0) grid (1, -1);
		\draw[color = black, thick] (0,0) rectangle (1, -1);
		\draw[thick,rounded corners,color = ForestGreen](0, -1/2)--(1, -1/2);
		\draw[thick,rounded corners,color = ForestGreen](1/2, 0)--(1/2, -1);
	\end{tikzpicture}
	\]\[
	\begin{tikzpicture}[x = 1.5em,y = 1.5em]
		\draw[step = 1,gray, very thin] (0,0) grid (1, -1);
		\draw[color = black, thick] (0,0) rectangle (1, -1);
		\draw[thick,rounded corners,color = blue](1/2, 0)--(1/2, -1);
	\end{tikzpicture}
	\quad 
	\raisebox{.5em}{$\mapsto$}
	\quad
	\begin{tikzpicture}[x = 1.5em,y = 1.5em]
		\draw[step = 1,gray, very thin] (0,0) grid (1, -1);
		\draw[color = black, thick] (0,0) rectangle (1, -1);
	\end{tikzpicture}
	\hspace{3em}
	\begin{tikzpicture}[x = 1.5em,y = 1.5em]
		\draw[step = 1,gray, very thin] (0,0) grid (1, -1);
		\draw[color = black, thick] (0,0) rectangle (1, -1);
		\draw[thick,rounded corners,color = blue](0, -1/2)--(1, -1/2);
		\draw[thick,rounded corners,color = blue](1/2, 0)--(1/2, -1);
	\end{tikzpicture}
	\quad 
	\raisebox{.5em}{$\mapsto$}
	\quad
	\begin{tikzpicture}[x = 1.5em,y = 1.5em]
		\draw[step = 1,gray, very thin] (0,0) grid (1, -1);
		\draw[color = black, thick] (0,0) rectangle (1, -1);
		\draw[thick,rounded corners,color = ForestGreen](0, -1/2)--(1, -1/2);
	\end{tikzpicture}
	\hspace{3em}
	\begin{tikzpicture}[x = 1.5em,y = 1.5em]
		\draw[step = 1,gray, very thin] (0,0) grid (1, -1);
		\draw[color = black, thick] (0,0) rectangle (1, -1);
	\end{tikzpicture}
	\quad 
	\raisebox{.5em}{$\mapsto$}
	\quad
	\begin{tikzpicture}[x = 1.5em,y = 1.5em]
		\draw[step = 1,gray, very thin] (0,0) grid (1, -1);
		\draw[color = black, thick] (0,0) rectangle (1, -1);
		\draw[thick,rounded corners,color = ForestGreen](1/2, 0)--(1/2, -1);
	\end{tikzpicture}\] 
	We then assign a permutation to each co-BPD (again by following the pipes in a certain way), and write $\demprod(\check{\mathcal{B}})$ for this permutation. 
	We call a BPD or co-BPD \newword{reduced} if each pair of pipes crosses at most one time. 
	For each permutation $w$, we denote the set of associated BPDs by $\kbpd(w)$, and write $\bpd(w)$ for the subset of reduced BPDs.
	
	Our first main theorem states that we may expand the Grothendieck polynomial $\mathfrak{G}_w$ as a signed sum over Schubert polynomials, indexed by permutations associated to the reduced co-BPDs arising from BPDs of $w$. 
	\begin{restatable}{thm}{thmmainA}
		\label{thm:main1}
		Given $w \in S_n$, we have 
		\[\mathfrak{G}_w = \sum_{\substack{\mathcal{B} \in \kbpd(w)\\ \check{\mathcal{B}} \text{ is reduced}}} (-1)^{\ell(\demprod(\check{\mathcal{B}}))-\ell(w)} \mathfrak{S}_{\demprod(\check{\mathcal{B}})}.\]
	\end{restatable}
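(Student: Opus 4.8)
The plan is to show that both sides of \cref{thm:main1} satisfy the same recurrence and agree on a base case. Write $\mathfrak{H}_w$ for the right-hand side. Since the Schubert polynomials are linearly independent over $\mathbb{Z}$, and $\mathfrak{H}_w$ is an explicit integral combination of them --- the coefficient of $\mathfrak{S}_v$ being $(-1)^{\ell(v)-\ell(w)}$ times the number of $\mathcal{B}\in\kbpd(w)$ with $\check{\mathcal{B}}$ reduced and $\demprod(\check{\mathcal{B}})=v$ --- any recurrence ``$\mathfrak{H}_w=\sum_{u}c_u\,\mathfrak{H}_u$'' one wishes to verify reduces to a combinatorial identity among weighted counts of bumpless pipe dreams. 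We carry this out with Lascoux's $K$-transition recurrence \cite{Lascoux, Weigandt}, which computes $\mathfrak{G}_w$ by writing $\mathfrak{G}_w=\sum_u(-1)^{\ell(u)-\ell(w)}\mathfrak{G}_u$ with each $u$ strictly closer to being dominant, terminating at dominant permutations.

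\emph{Base case.} If $w$ is dominant, then $\rothe(w)$ is a Young diagram, $\kbpd(w)$ consists of the single Rothe BPD $\mathcal{B}$, which is reduced; the tile replacement rules show that $\check{\mathcal{B}}$ is reduced with $\demprod(\check{\mathcal{B}})=w$; and both sides equal the single monomial $\mathfrak{G}_w=\mathfrak{S}_w=x^{\mathrm{code}(w)}$.

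\emph{Inductive step.} Fix a non-dominant $w$ and apply the $K$-transition recurrence. By induction $\mathfrak{G}_u=\mathfrak{H}_u$ for every $u$ appearing, so it remains to prove $\mathfrak{H}_w=\sum_u(-1)^{\ell(u)-\ell(w)}\mathfrak{H}_u$, which by the reduction above amounts to producing a bijection
\[
\bigl\{\mathcal{B}\in\kbpd(w):\check{\mathcal{B}}\text{ reduced}\bigr\}
\;\xrightarrow{\ \sim\ }\;
\bigsqcup_{u}\bigl\{\mathcal{B}'\in\kbpd(u):\check{\mathcal{B}'}\text{ reduced}\bigr\}
\]
taking $\demprod(\check{\mathcal{B}})$ to $\demprod(\check{\mathcal{B}'})$ with matching signs. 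The $K$-transition recurrence has a known tile-level bijective realization on bumpless pipe dreams \cite{Lascoux, Weigandt}, identifying $\kbpd(w)$ with $\bigsqcup_u\kbpd(u)$ by a modification localized at a distinguished corner of $\mathcal{B}$. The work is to push this modification through the co-BPD construction $\mathcal{B}\mapsto\check{\mathcal{B}}$: since the replacements are local and the corner move changes pipes only in a bounded region, $\check{\mathcal{B}}$ and $\check{\mathcal{B}'}$ should differ only there, and one must check (i) that this local change leaves $\demprod(\check{\mathcal{B}})$ unaffected, (ii) that ``$\check{\mathcal{B}}$ reduced'' and ``$\check{\mathcal{B}'}$ reduced'' match under the bijection so that no spurious terms occur, and (iii) that $(-1)^{\ell(\demprod(\check{\mathcal{B}}))-\ell(w)}$ transforms by exactly the factor $(-1)^{\ell(u)-\ell(w)}$ dictated by the recurrence.

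The main obstacle is this last bookkeeping: controlling the co-BPD and its associated permutation under the corner move of the $K$-transition, and in particular seeing why reducedness of $\check{\mathcal{B}}$ is preserved exactly. An alternative that avoids running the transition recurrence on BPDs is to first prove the parallel pipe dream expansion of $\mathfrak{G}_w$ --- a Lenart-type rule \cite{Lenart} --- using Knutson's co-transition recurrence \cite{Knutson:cotransition}, which is natively compatible with pipe dreams, and then transport it to BPDs via the canonical bijection $\canbijection$ of Gao and Huang \cite{Gao.Huang}, using the fact, established later in the paper, that $\canbijection$ preserves co-permutations; \cref{thm:main1} then follows by matching $\demprod(\check{\mathcal{B}})$ with the corresponding pipe dream statistic.
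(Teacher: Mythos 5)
Your primary route has a structural problem before the bookkeeping even starts: there is no recurrence of the form $\mathfrak{G}_w=\sum_{u\neq w}(-1)^{\ell(u)-\ell(w)}\mathfrak{G}_u$, since Grothendieck polynomials are linearly independent. Lascoux's K-transition, like Knutson's co-transition used in this paper, carries a variable on one side (for co-transition, $x_i\,\mathfrak{G}_w=\sum_{u\in\kcotransitionperms_i(w)}(-1)^{\ell(u)-\ell(w)+1}\mathfrak{G}_u$, \cref{prop:kcotransition}). Because of that factor, the inductive step does not reduce to exhibiting a sign- and co-permutation-compatible bijection of BPD sets: after applying the inductive hypothesis you must multiply Schubert expansions by a variable (Monk-type products) and recombine, which is precisely why the paper's proof of the pipe-dream analogue (\cref{thm:pipe_groth_to_schub_no_signs}) interleaves the co-transition recurrence with divided-difference manipulations and the descent bookkeeping of \cref{lemma:whathappenswithwsianddescents} and \cref{lemma:kcotransitionrestrictedbijection}. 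Moreover, your expectation in item (i) that the corner move leaves $\demprod(\check{\mathcal{B}})$ unaffected is not borne out by the analogous analysis in the paper: under the BPD move realizing co-transition (column insertion), the co-permutation changes by $\square\,\tau_i$ (\cref{lemma:columninsertionbpdcopermutations}), and controlling exactly this change together with when reducedness of $\check{\mathcal{B}}$ survives is the heart of the matter; your sketch names it as the main obstacle but does not resolve it.

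The ``alternative'' you mention in your final paragraph is in fact the paper's proof of \cref{thm:main1}: establish Lenart's pipe-dream expansion (\cref{thm:pipe_groth_to_schub}) via co-transition, then transfer it to BPDs using the Gao--Huang bijection $\kappa$ and \cref{thm:coperm}. But even that transfer needs one more ingredient your sketch omits: $\kappa$ is a bijection between \emph{reduced} pipe dreams and \emph{reduced} BPDs, whereas both expansions run over possibly non-reduced $\mathcal{P}\in\kpipes(w)$ and $\mathcal{B}\in\kbpd(w)$ whose \emph{co}-objects are reduced. To equate the two coefficient counts, the paper invokes, in addition to \cref{thm:coperm}, the symmetry exchanging objects with their co-objects (the ``and symmetry'' step in its proof), so that the co-permutation preservation can be applied to the reduced objects arising from $\check{\mathcal{P}}$ and $\check{\mathcal{B}}$. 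So the route you defer to is the correct one and is the one taken here, but as written your proposal neither carries out the transition-based induction nor supplies this reduced-versus-non-reduced transfer argument.
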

	Although this formula involves signs, they alternate with degree, and the monomial expansion is cancellation free. 
	
	We also show that each Schubert polynomial $\mathfrak{S}_w$ expands as a positive sum of Grothendieck polynomials, indexed by permutations arising from the co-BPDs associated to the reduced BPDs of $w$. 
	\begin{restatable}{thm}{thmmainB}
		\label{thm:main2}
		Given $w \in S_n$, we have 
		\[\mathfrak{S}_w = \sum_{\mathcal{B} \in \bpd(w)} \mathfrak{G}_{\demprod(\check{\mathcal{B}})}.\]
	\end{restatable}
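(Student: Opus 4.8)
The plan is to prove the identity by induction, using Knutson's co-transition recurrence to drive the induction on the Schubert side. Recall that for $w\in S_n$ not dominant the co-transition recurrence expresses $\mathfrak{S}_w=\sum_{v\in\cotransitionperms(w)}\mathfrak{S}_v$, where the indexing permutations $v$ are strictly smaller than $w$ in the well-founded partial order that makes the recurrence terminate, whose minimal elements are the dominant permutations, and where the whole recurrence is pinned down by a canonical choice of box in the Rothe diagram $D(w)$. For the base case, let $w$ be dominant: then $D(w)$ is a Young diagram, $\mathfrak{S}_w=\mathfrak{G}_w=x^{D(w)}$, and $w$ admits a unique bumpless pipe dream $\mathcal{B}_w$, so $\bpd(w)=\{\mathcal{B}_w\}$; one checks directly from the tile-replacement dictionary that $\check{\mathcal{B}_w}$ is reduced with $\demprod(\check{\mathcal{B}_w})=w$, whence both sides equal $\mathfrak{G}_w=\mathfrak{S}_w$. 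In particular this settles $w=\id$.

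For the inductive step it suffices to produce a bijection
\[
\bpd(w)\ \xrightarrow{\ \sim\ }\ \bigsqcup_{v\in\cotransitionperms(w)}\bpd(v)
\]
preserving the statistic $\mathcal{B}\mapsto\demprod(\check{\mathcal{B}})$: granting it, the right-hand side of the theorem for $w$ becomes $\sum_{v\in\cotransitionperms(w)}\sum_{\mathcal{C}\in\bpd(v)}\mathfrak{G}_{\demprod(\check{\mathcal{C}})}$, which by the inductive hypothesis equals $\sum_v\mathfrak{S}_v=\mathfrak{S}_w$. The bijection should be the BPD realization of the co-transition move: in a reduced BPD $\mathcal{B}\in\bpd(w)$ one locates the tiles near the canonical box of $D(w)$ and performs the single elementary droop/undroop (equivalently, the cross-relocation) dictated by Knutson's recurrence; tiles of $\mathcal{B}$ away from that box are undisturbed, the local picture records which $v\in\cotransitionperms(w)$ one lands in, and reducedness survives because only one elementary step is taken. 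Here $\bpd(w)$ is finite, so there are no convergence issues.

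The main obstacle is verifying that this local move is compatible with passage to the co-BPD. Since $\check{(\cdot)}$ is defined tile by tile, one must show that forming the co-BPD after the move agrees with performing the corresponding move (translated through the tile dictionary) on $\check{\mathcal{B}}$, and --- this is the crucial point --- that the net effect on the permutation read from the co-BPD is trivial, i.e.\ $\demprod$ of the co-BPD of $\mathrm{move}(\mathcal{B})$ equals $\demprod(\check{\mathcal{B}})$. Once the move is localized this is a finite verification, but one must check that it never introduces a double crossing into the co-BPD (so the resulting term genuinely lies in $\bpd(v)$ and not in a non-reduced stratum) and that the canonical box selected for $w$ is carried by the move to the canonical box selected for each $v$, so the recursion is internally consistent.

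A second, likely more economical, route is to first establish the pipe-dream form of Lascoux's Schubert-to-Grothendieck rule by running exactly this co-transition induction on pipe dreams --- for which Knutson's recurrence is tailor-made --- with the analogous complement-and-readout statistic on pipe dreams playing the role of $\demprod(\check{\cdot})$, and then to transport the identity to BPDs along the Gao--Huang canonical bijection $\kappa\colon\pipes(w)\to\bpd(w)$; this requires proving that $\kappa$ intertwines the two complement constructions, i.e.\ that $\demprod(\check{\kappa(P)})$ is the co-permutation of $P$ for every reduced pipe dream $P$, which then becomes the technical heart. Alternatively, since \cref{thm:main1} is already available, one could attempt a purely formal deduction: the two formulas define mutually inverse change-of-basis matrices between $\{\mathfrak{S}_w\}$ and $\{\mathfrak{G}_w\}$, so it would suffice to exhibit a sign-reversing involution on the relevant pairs of bumpless pipe dreams; I expect that involution to be harder to make explicit than the co-transition argument. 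In any case one should confirm that the argument respects back-stabilization --- automatic on the BPD side --- which is what yields the back-stable strengthening announced in the introduction.
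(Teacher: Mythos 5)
Your primary route breaks at its very first step: the co-transition recurrence is not $\mathfrak{S}_w=\sum_{v\in\cotransitionperms_i(w)}\mathfrak{S}_v$ but $x_i\,\mathfrak{S}_w=\sum_{v\in\cotransitionperms_i(w)}\mathfrak{S}_v$ (\cref{prop:kcotransition}); every $v$ there has $\ell(v)=\ell(w)+1$, so the version without the factor $x_i$ fails already on degree grounds (and these $v$ are \emph{longer} than $w$, so the induction runs downward from $w_0$, not upward from dominant permutations). Consequently the statistic-preserving bijection you ask for cannot exist: if a bijection $\bpd(w)\to\bigsqcup_v\bpd(v)$ preserved $\demprod(\check{\mathcal{B}})$, your computation would give $\sum_{\mathcal{B}\in\bpd(w)}\mathfrak{G}_{\demprod(\check{\mathcal{B}})}=\sum_v\mathfrak{S}_v=x_i\,\mathfrak{S}_w$, not $\mathfrak{S}_w$. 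And indeed the co-transition move does not fix the co-permutation: on pipe dreams, adding the crossing at the addable cell changes it by $\square\,\tau_i$ (\cref{lemma:cotransitionflippermutation}), and on BPDs the corresponding column-insertion map satisfies $\demprod(\check{\mathcal{B}})=\demprod(\check{\mathcal{B}'})\square\,\tau_i$ (\cref{lemma:columninsertionbpdcopermutations}). That shift is not a nuisance to be verified away; it is exactly what absorbs the extra factor of $x$ at the polynomial level, via $\divdiff_i=\divdiff_i x_{i+1}+\kdivdiff_i$, the identity $\kdivdiff_i(\mathfrak{G}_u)=\mathfrak{G}_{u\square\tau_i}$ \cref{eq:grothdivdiff}, and the matching of $\cotransitionperms_{i+1}(ws_i)$ with $\cotransitionperms_i(w)\setminus\{ws_i\}$ (\cref{lemma:whathappenswithwsianddescents}). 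So the ``finite local check that the net effect on the co-permutation is trivial'' would come out false, and the induction as you set it up cannot close.

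Your second, fallback route is, in outline, the proof the paper actually gives: first prove the pipe-dream (Lascoux) expansion \cref{thm:pipe_schub_to_groth} by reverse induction on Bruhat order from $w_0$ using co-transition, then transfer to BPDs along the column-weight preserving Gao--Huang bijection, whose compatibility with co-permutations (\cref{thm:coperm}) is indeed the technical heart. But note that executing either half still requires precisely the $\square\,\tau_i$ bookkeeping your main argument denies: the pipe-dream induction proceeds through the divided-difference decomposition above rather than through a naive statistic-preserving bijection, and \cref{thm:coperm} is itself proved by showing that co-transition on pipe dreams and column insertion on BPDs alter the co-permutation by the \emph{same} $\square\,\tau_i$. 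As written, the proposal points at the right ingredients in its sketch of the alternative, but its main argument rests on a misstated recurrence and a false invariance claim, so it does not constitute a proof.
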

	At the level of monomials, this formula is not generally cancellation free.

	We now give an example to illustrate the main theorems.
	
	\begin{example}\label{example:bpd1423}
		Let $w = 1423$. 
		The BPDs of $w$ are listed below.
		\[
		% [inline block 0: 14 envs, 22777 chars -> data_tex | \begin{tikzpicture}[x=1.25em,y=1.25em] 			\draw[step=1,gray, thin] (0,0) grid (4, -4);...]
\]
		The co-BPDs are all reduced. 
		Thus, applying \cref{thm:main1} gives
		\[\mathfrak{G}_{2143} = \mathfrak{S}_{2143}-\mathfrak{S}_{3142}-\mathfrak{S}_{2341} + \mathfrak{S}_{3241}.\] 
		Also, by \cref{thm:main2}, 
		\[\mathfrak{S}_{2143} = \mathfrak{G}_{2143} + \mathfrak{G}_{3142} + \mathfrak{G}_{2341}.\] 
		In this expansion, we do not include a term for the fourth co-BPD because its corresponding BPD is not reduced.
	\end{example}
	See \cref{example:bpd21534} and \cref{ex:13452} for additional illustrations of \cref{thm:main1} and \cref{thm:main2}.

	One advantage of the new BPD-based expansion of Grothendieck polynomials into Schubert polynomials is an application to back stable Schubert calculus. 
	\cref{thm:backstable} provides the first combinatorial formula for expanding back stable Grothendieck polynomials into back stable Schubert polynomials.

	The statements of \cref{thm:main1} and \cref{thm:main2} closely parallel the change of basis theorems of Lenart and Lascoux. 
	These earlier results were originally stated using binary triangular arrays. 
	Here, we provide a reformulation using pipe dreams and co-pipe dreams (see \cref{subsection:pipes} and \cref{subsection:copipes} for these definitions). 
	This pictorial perspective makes it possible to read off the associated permutations directly from the diagrams. 
	We now state these formulas.
	
	If $w\in S_n$, we denote the set of pipe dreams of $w$ by $\kpipes(w)$, and write $\pipes(w)$ for the subset of reduced pipe dreams.
	To a pipe dream $\mathcal{P}$, we associate a co-pipe dream $\check{\mathcal{P}}$ (see \cref{lemma:copipedreambijection} for this map).
	The following result, due to Lenart, gives a formula for expanding a Grothendieck polynomial in the basis of Schubert polynomials.
	\begin{restatable}[{\cite{Lenart}}]{thm}{thmpipeA}
		\label{thm:pipe_groth_to_schub}
		Let $w \in S_n$. Then
		\[\mathfrak{G}_w = \sum_{\substack{\mathcal{P} \in \kpipes(w)\\ \check{\mathcal{P}} \text{ is reduced}}}(-1)^{\ell(\demprod(\check{\mathcal{P}}))-\ell(w)}\mathfrak{S}_{\demprod(\check{\mathcal{P}})}. \]
	\end{restatable}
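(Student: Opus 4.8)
The plan is to argue by induction using Knutson's co-transition recurrence \cite{Knutson:cotransition}, which interacts cleanly with pipe dreams. Write
\[
\mathcal{R}_w \;=\; \sum_{\substack{\mathcal{P}\in\kpipes(w)\\ \check{\mathcal{P}}\text{ is reduced}}}(-1)^{\ell(\demprod(\check{\mathcal{P}}))-\ell(w)}\,\mathfrak{S}_{\demprod(\check{\mathcal{P}})}
\]
for the right-hand side of the theorem. When $w$ is dominant there is a unique pipe dream, it is reduced, and one checks directly from the construction of the co-pipe dream (\cref{lemma:copipedreambijection}) that $\check{\mathcal{P}}$ is reduced with $\demprod(\check{\mathcal{P}})=w$; hence $\mathcal{R}_w=\mathfrak{S}_w=\mathfrak{G}_w$, which handles the base case. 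It therefore suffices to show that $\mathcal{R}_w$ satisfies the same co-transition recurrence as $\mathfrak{G}_w$: writing $(r,c)=\cotransitionindices(w)$ for the cell selected by the recurrence and $\kcotransitionperms(w)$ for the associated set of co-transition permutations, we must verify
\[
\mathcal{R}_w \;=\; \sum_{v\in\kcotransitionperms(w)}(-1)^{\ell(v)-\ell(w)}\,\mathcal{R}_v ,
\]
after which the inductive hypothesis applied to each $v$ — each strictly smaller in the order along which the recurrence terminates — gives $\mathcal{R}_w=\mathfrak{G}_w$.

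To establish this identity I would work locally at the distinguished cell $(r,c)$. Every $\mathcal{P}\in\kpipes(w)$ carries one of the admissible tiles there, and performing the surgery dictated by the co-transition step — deleting, adding, or reshaping that tile and propagating the forced changes along the affected pipes — produces a pipe dream of exactly one $v\in\kcotransitionperms(w)$; conversely each pipe dream of such a $v$ lifts, in the prescribed number of ways, to a pipe dream of $w$. The real content is then in three compatibilities. First, the surgery commutes with the pipe-dream-to-co-pipe-dream map $\mathcal{P}\mapsto\check{\mathcal{P}}$ of \cref{lemma:copipedreambijection}, so the co-pipe dream of a lift is obtained from the co-pipe dream of the original by the mirror surgery. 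Second, $\demprod(\check{\mathcal{P}})$ is either equal to $\demprod(\check{\mathcal{P}'})$ or obtained from it by the single Bruhat move determined by the co-transition step, so that the signs $(-1)^{\ell(\demprod(\check{\mathcal{P}}))-\ell(w)}$ reorganize correctly once the factor $(-1)^{\ell(v)-\ell(w)}$ is accounted for. Third, $\check{\mathcal{P}}$ is reduced precisely when the corresponding $\check{\mathcal{P}'}$ is — the surgery creates a repeated crossing exactly in the cases the recurrence is set up to exclude — so the ``$\check{\mathcal{P}}$ reduced'' filter is respected on both sides. Granting these, the double sum over $v\in\kcotransitionperms(w)$ and pipe dreams of $v$ collapses to the sum defining $\mathcal{R}_w$.

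I expect the main obstacle to be precisely the bookkeeping in the second and third compatibilities: one must determine exactly how $\demprod$ of a co-pipe dream changes under the local move at $(r,c)$, confirm that the resulting signs cancel and recombine as claimed, and separately show that reducedness of $\check{\mathcal{P}}$ is neither spuriously gained nor lost — this is where the (monomial-level non-cancellation-free) structure of Grothendieck-to-Schubert expansions actually resides, whereas the identification of $\kpipes(w)$ with the pipe dreams of the $v\in\kcotransitionperms(w)$ is comparatively routine. As a cross-check, and as the route that simultaneously delivers \cref{thm:main1}, one can transport the entire computation through the Gao--Huang canonical bijection $\canbijection\colon\kpipes(w)\xrightarrow{\ \sim\ }\kbpd(w)$: once one knows that $\demprod(\check{\canbijection(\mathcal{P})})=\demprod(\check{\mathcal{P}})$ and that $\canbijection$ matches up reducedness of the associated co-diagrams, \cref{thm:pipe_groth_to_schub} and \cref{thm:main1} become the same identity.
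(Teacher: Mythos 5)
Your strategy — induct via Knutson's co-transition recurrence — is the same idea the paper uses, but the recurrence you propose to verify for $\mathcal{R}_w$ is not the co-transition recurrence, and this is a genuine gap. The recurrence (\cref{prop:kcotransition}) reads $x_i\mathfrak{G}_w=\sum_{u\in\kcotransitionperms_i(w)}(-1)^{\ell(u)-\ell(w)+1}\mathfrak{G}_u$; the factor $x_i$ cannot be dropped. For $w=132$ (so $j=\addable(w)=1$, $i=1$, $\kcotransitionperms_1(w)=\{312,231,321\}$) your identity would force $\mathfrak{G}_{132}=-\mathfrak{G}_{312}-\mathfrak{G}_{231}+\mathfrak{G}_{321}$, which is false (the right side even has the wrong lowest degree). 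And with the $x_i$ restored, the plan ``show the right-hand side satisfies the same recurrence, then induct'' does not run by itself: multiplying $\mathcal{R}_w$ by $x_i$ does not act term-by-term on the Schubert polynomials $\mathfrak{S}_{\demprod(\check{\mathcal{P}})}$, so the combinatorial bijection $\mathcal{P}\mapsto\mathcal{P}\cup\{(i,j)\}$ of \cref{prop:kcotransitionpipefacts} does not directly compare $x_i\mathcal{R}_w$ with $\sum_u\pm\mathcal{R}_u$. The paper avoids this by reverse induction on Bruhat order with base case $w_0$, writing $\mathfrak{G}^{(1)}_w=\kdivdiff^{(1)}_i(\mathfrak{G}^{(1)}_{ws_i})=\divdiff_i(\mathfrak{G}^{(1)}_{ws_i})+\divdiff_i\bigl(x_{i+1}\mathfrak{G}^{(1)}_{ws_i}\bigr)$ for $j=\addable(w)$, $i=w^{-1}(j)$, applying co-transition at $ws_i$ in the cell $(i+1,j)$, invoking the inductive hypothesis on the strictly larger permutations that appear, and only then using \cref{lemma:cotransitionflippermutation} and \cref{lemma:kcotransitionrestrictedbijection}; the signed $\beta=-1$ statement is deduced at the end by $x\mapsto-x$.

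Your third compatibility is also wrong as stated. Under $\mathcal{P}\mapsto\mathcal{Q}=\mathcal{P}\cup\{(i,j)\}$ the word $\mathbf{a}_{\check{\mathcal{Q}}}$ is obtained from $\mathbf{a}_{\check{\mathcal{P}}}$ by deleting its last letter, so reducedness passes from $\check{\mathcal{P}}$ to $\check{\mathcal{Q}}$ but not conversely: $\check{\mathcal{Q}}$ can be reduced while $\check{\mathcal{P}}$ is not. The correct statement (\cref{lemma:kcotransitionrestrictedbijection}) is that this map is a bijection from $\{\mathcal{P}\in\kpipes(w):\check{\mathcal{P}}\text{ reduced}\}$ onto the set of $\mathcal{Q}$ with $\check{\mathcal{Q}}$ reduced \emph{and} $\demprod(\check{\mathcal{Q}})$ having a descent at $i$, and it is precisely this descent condition, paired with the fact that $\divdiff_i$ annihilates $\mathfrak{S}_v$ when $v<vs_i$, that makes the two sides of the induction match. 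Your second compatibility is essentially \cref{lemma:cotransitionflippermutation}, namely $\demprod(\check{\mathcal{Q}})\square\tau_i=\demprod(\check{\mathcal{P}})$, which is fine, but note it depends on $(i,j)$ being the leftmost addable cell with $i+j\leq n$; an arbitrary co-transition cell would not give you this control of the co-permutation. Finally, the Gao--Huang transport in your last paragraph is how the paper deduces \cref{thm:main1} \emph{from} \cref{thm:pipe_groth_to_schub} (via \cref{thm:coperm}); it is not an independent route to the pipe dream statement itself.
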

	Lascoux gave the following formula for expressing a Schubert polynomial as a positive sum of Grothendieck polynomials.
	\begin{restatable}[{\cite{Lascoux.03}}]{thm}{thmpipeB}
		\label{thm:pipe_schub_to_groth}
		Let $w \in S_n$. Then 
		\[\mathfrak{S}_w = \sum_{\mathcal{P} \in \pipes(w)}\mathfrak{G}_{\demprod(\check{\mathcal{P}})}.\]
	\end{restatable}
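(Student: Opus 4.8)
The plan is to prove \cref{thm:pipe_schub_to_groth} by induction on $\ell(w)$, showing that the right‑hand side satisfies Knutson's co‑transition recurrence for Schubert polynomials \cite{Knutson:cotransition} with a matching base case; since that recurrence together with its base case characterizes $\mathfrak{S}_w$, the identity follows. Set $F(w) := \sum_{\mathcal{P} \in \pipes(w)} \mathfrak{G}_{\demprod(\check{\mathcal{P}})}$. For $w = \id$ the only reduced pipe dream is the empty one, its co‑pipe dream is again empty, and $\demprod$ of it is $\id$; hence $F(\id) = \mathfrak{G}_{\id} = 1 = \mathfrak{S}_{\id}$. For $w \ne \id$, the three steps are: (i) apply the co‑transition recurrence to write $\mathfrak{S}_w = \sum_{w'} \mathfrak{S}_{w'}$, the sum running over the permutations $w'$ attached to a distinguished cell $(r,c)$ of the Rothe diagram of $w$, each with $\ell(w') < \ell(w)$; (ii) apply the inductive hypothesis to each summand, replacing $\mathfrak{S}_{w'}$ by $\sum_{\mathcal{P}' \in \pipes(w')} \mathfrak{G}_{\demprod(\check{\mathcal{P}}')}$; and (iii) reorganize the resulting double sum into $F(w)$.

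Step (iii) rests on a parallel decomposition of $\pipes(w)$ itself, which is where the compatibility of pipe dreams with co‑transition (rather than transition) is used. Fixing the same distinguished cell $(r,c)$, I would classify $\mathcal{P} \in \pipes(w)$ by the local arrangement of the tiles of $\mathcal{P}$ at and near $(r,c)$; this should partition $\pipes(w)$ into blocks indexed, at least coarsely, by the permutations $w'$ in the co‑transition recurrence, each block matched to $\pipes(w')$ by a local crossing‑deletion (``chute''‑type) move. The content is then to trace how the co‑pipe dream and its Demazure‑product reading behave under this move: one wants that summing $\mathfrak{G}_{\demprod(\check{\mathcal{P}})}$ over a block reproduces $\sum_{\mathcal{P}' \in \pipes(w')} \mathfrak{G}_{\demprod(\check{\mathcal{P}}')}$, possibly after invoking the co‑transition recurrence for Grothendieck polynomials to merge or split off a term. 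Summing over blocks and over $w'$ yields $F(w) = \sum_{w'} \mathfrak{S}_{w'} = \mathfrak{S}_w$. Along the way one sees that the permutations $\demprod(\check{\mathcal{P}})$ all dominate $w$ in Bruhat order, which makes the change of basis unitriangular and guarantees that the unwinding terminates; iterating the recurrence further expresses $F(w)$ as a sum over saturated Bruhat chains, giving an analogous chain‑theoretic formula in the spirit of the corollary advertised in the introduction.

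The main obstacle is step (iii), and specifically the control of $\demprod \circ \check{(\,\cdot\,)}$ under the local move. Because the co‑pipe dream of a reduced pipe dream need not be reduced, the Demazure‑product reading can collapse a crossing, so $\demprod(\check{\mathcal{P}})$ does not simply change by a transposition as $\mathcal{P}$ ranges over a block; one must show these collapses are precisely the contributions absorbed by the Grothendieck co‑transition recurrence. A secondary point is checking that the block‑to‑$\pipes(w')$ correspondences preserve reducedness of pipe dreams and are compatible with the tile‑replacement rule $\mathcal{P} \mapsto \check{\mathcal{P}}$ of \cref{lemma:copipedreambijection}. As a cross‑check I would also carry out the analogous co‑transition induction for \cref{thm:pipe_groth_to_schub} and then deduce \cref{thm:pipe_schub_to_groth} by inverting the resulting unitriangular matrix and matching the inverse to Lascoux's sum combinatorially; agreement of the two routes would confirm the bookkeeping.
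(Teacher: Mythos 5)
There is a genuine gap, and it lies in step (i). The co-transition recurrence you invoke does not have the form you need: Knutson's recurrence (\cref{prop:kcotransition}) reads $x_i\,\mathfrak{S}_w=\sum_{u\in\cotransitionperms_i(w)}\mathfrak{S}_u$, where every $u$ satisfies $\ell(u)=\ell(w)+1$, i.e.\ the permutations on the right are \emph{longer} than $w$ and the left side carries an extra factor of $x_i$. An identity of the shape $\mathfrak{S}_w=\sum_{w'}\mathfrak{S}_{w'}$ with $\ell(w')<\ell(w)$ cannot hold for any choice of $w'$: Schubert polynomials are homogeneous of degree equal to Coxeter length, so the two sides would have different degrees. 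Consequently your proposed upward induction on $\ell(w)$ with base case $w=\id$ has no recurrence to run on, and the companion combinatorial picture is also inverted: the pipe-dream co-transition bijection (\cref{prop:kcotransitionpipefacts}) is $\mathcal{P}\mapsto\mathcal{P}\cup\{(i,j)\}$, adding a crossing at the addable cell $(i,j)$ with $j=\addable(w)$, and it maps $\pipes(w)$ bijectively onto $\bigsqcup_{u\in\cotransitionperms_i(w)}\pipes(u)$ with the $u$ longer than $w$ --- it does not partition $\pipes(w)$ into blocks matched by crossing-deletion to $\pipes(w')$ for shorter $w'$.

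The workable version of your strategy is reverse induction on Bruhat order (equivalently, downward from $w_0$, whose unique reduced pipe dream gives the base case $\mathfrak{S}_{w_0}=\mathfrak{G}_{w_0}$), which is what the paper does. There the inhomogeneity of the right-hand side (Grothendieck polynomials are not homogeneous) is handled not by ``absorbing collapses'' ad hoc but by exact divided-difference bookkeeping: one writes $\mathfrak{S}_w=\divdiff_i(\mathfrak{S}_{ws_i})=\divdiff_i(x_{i+1}\mathfrak{S}_{ws_i})+\kdivdiff_i(\mathfrak{S}_{ws_i})$, applies the inductive hypothesis to $ws_i$ and to the permutations in $\cotransitionperms_{i+1}(ws_i)$ (all longer than $w$, compare \cref{lemma:whathappenswithwsianddescents}), uses $\kdivdiff_i\mathfrak{G}_v=\mathfrak{G}_{v\,\downprod\,\tau_i}$ together with \cref{lemma:copdascent}, and then reassembles the sum via the bijection above and \cref{lemma:cotransitionflippermutation}, which records precisely how $\demprod(\check{\mathcal{P}})$ changes when the crossing at $(i,j)$ is added. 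Your fallback plan --- proving the Grothendieck-to-Schubert expansion and ``inverting the unitriangular matrix'' --- would not by itself yield Lascoux's formula either, since identifying the inverse matrix with the stated pipe-dream sum is exactly the combinatorial content that must be proved.
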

	We illustrate \cref{thm:pipe_groth_to_schub} and \cref{thm:pipe_schub_to_groth} below.
	
	\begin{example}
		\label{example:pipe1423}
	Let $w = 1423$. The corresponding pipe dreams and their associated co-pipe dreams are shown below.
		\[
		% [inline block 1: 24 envs, 40717 chars -> data_tex | \begin{tikzpicture}[x = 1.25em,y = 1.25em] 			\draw[step = 1,gray, very thin] (0,0) grid (4, -4);...]
\]
		Of the co-pipe dreams, the first, fourth, fifth, and seventh are reduced. 
		This tells us that 
		\[\mathfrak{G}_{2143} = \mathfrak{S}_{2143}-\mathfrak{S}_{3142}-\mathfrak{S}_{2341} + \mathfrak{S}_{3241}.\]
		Of the pipe dreams, the first three are reduced. 
		Therefore, 
		\[\mathfrak{S}_{2143} = \mathfrak{G}_{2143} + \mathfrak{G}_{3142} + \mathfrak{G}_{2341}.\]
		Both of these expansions agree with the results from \cref{example:bpd2143}.
	\end{example}
	
	We present new proofs of \cref{thm:pipe_groth_to_schub} and \cref{thm:pipe_schub_to_groth}. 
	Our approach relies on the combinatorial co-transition recurrence of \cite{Knutson:cotransition}. 
	One advantage of these new proofs is that they lead to a method for constructing the pipe dreams for $w$ that have reduced co-pipe dreams recursively in terms of certain chains in Bruhat order (see \cref{section:chains}). 
	This chain theoretic construction is similar in spirit to the climbing chains of \cite{Bergeron.Sottile.2002} and \cite{Lenart.Robinson.Sottile}, which were used to give formulas for Schubert and Grothendieck polynomials, respectively. 
	Though we still sum over a subset of these chains, there are typically fewer chains to check than there are pipe dreams for $w$.

	Our new results on BPDs follow from transferring properties from pipe dreams to BPDs. 
	We use the canonical bijection between these objects, given by Gao and Huang \cite{Gao.Huang}.
	Our key observation is the following:
	\begin{restatable}{thm}{coperm}
		\label{thm:coperm}
		Suppose $\mathcal{P} \in \pipes_n$ maps to $\mathcal{B} \in \bpd_n$ under the column-weight preserving canonical bijection of Gao--Huang. 
		Then $\demprod(\check{\mathcal{P}}) = \demprod(\check{\mathcal{B}})$.
	\end{restatable}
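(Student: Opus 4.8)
The plan is to track the co-object through the canonical bijection of Gao--Huang and to show that the associated permutation is unchanged at every stage. The Gao--Huang bijection is built by a recursive procedure, which can also be realized as a canonical sequence of local droop-type moves $\mathcal{P} = \mathcal{D}_0 \to \mathcal{D}_1 \to \cdots \to \mathcal{D}_m = \mathcal{B}$; the strategy is to induct on this construction and reduce the theorem to a single elementary step. So first I would fix the basic step of Gao--Huang that carries a diagram $\mathcal{D}$ to a diagram $\mathcal{D}'$ --- deleting and reinserting a distinguished pipe, or performing one canonical droop --- and record that it preserves the underlying permutation, $\demprod(\mathcal{D}) = \demprod(\mathcal{D}')$.

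Next I would check that the tile-by-tile replacement defining $\check{\ }$ is well defined on every diagram appearing in the recursion: since the rule is specified locally for each tile type, $\check{\mathcal{D}}$ is always a legitimate tiling and $\demprod(\check{\mathcal{D}})$ is defined by following its pipes, with $\check{\mathcal{D}_0} = \check{\mathcal{P}}$ and $\check{\mathcal{D}_m} = \check{\mathcal{B}}$. The heart of the argument is then the following local claim: \emph{a single Gao--Huang step $\mathcal{D} \to \mathcal{D}'$ induces, under $\check{\ }$, a local modification $\check{\mathcal{D}} \to \check{\mathcal{D}'}$ that preserves the read-off permutation}, i.e.\ $\demprod(\check{\mathcal{D}}) = \demprod(\check{\mathcal{D}'})$. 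To prove this I would enumerate the local pictures produced by a Gao--Huang step, apply the tile replacements to transport each picture into the co-diagram, and check in every case that the two resulting configurations of co-pipes are related by a move that leaves the Demazure-product reading fixed. The delicate point is that $\demprod$ is computed by following pipes and ignoring any crossing between a pair of pipes that have already crossed; since a Gao--Huang step can locally create or destroy a literal crossing among the co-pipes, one must confirm that it never changes the \emph{Demazure} product of the co-crossing word --- only its non-reduced expressions.

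Granting the local claim, iterating along the recursion (equivalently, along the sequence of droops) yields $\demprod(\check{\mathcal{P}}) = \demprod(\check{\mathcal{D}_0}) = \cdots = \demprod(\check{\mathcal{D}_m}) = \demprod(\check{\mathcal{B}})$. The column-weight-preserving property of the Gao--Huang bijection is used to certify that the recursion or move-sequence under analysis is the canonical one, and to keep straight which pipe of $\mathcal{D}$ --- hence of $\check{\mathcal{D}}$ --- corresponds to which pipe of $\mathcal{D}'$.

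I expect the local claim to be the main obstacle, and within it the Demazure bookkeeping: because a single step can merge or split crossings of the co-pipes, showing that each step is ``harmless'' requires a careful finite case analysis that tracks exactly which pairs of co-pipes meet and how often. A secondary issue is choosing the most convenient realization of the Gao--Huang bijection --- the recursive description or the droop realization --- so as to minimize this case analysis; I would carry out the single-step check in whichever formulation makes it shortest, and only afterwards worry about extending the statement beyond reduced diagrams if that is needed elsewhere.
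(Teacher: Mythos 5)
There is a genuine gap, and it is at the very first step: the reduction of the theorem to ``a canonical sequence of local droop-type moves $\mathcal{P}=\mathcal{D}_0\to\mathcal{D}_1\to\cdots\to\mathcal{D}_m=\mathcal{B}$'' is not available. Pipe dreams and BPDs are tilings of different types (a pipe dream is a SW planar history built from crossing/bump tiles above the antidiagonal; a BPD is a NE planar history built from the six BPD tiles), and the Gao--Huang map $\kappa$ is not a composition of local moves passing through intermediate diagrams of a common kind. In particular the co-object construction $\check{\ }$ is defined only on pipe dreams and on BPDs, and by two different recipes (complement-and-shift of crossings versus tile-by-tile replacement), so your intermediate objects $\check{\mathcal{D}_k}$ are simply not defined; your claim that ``since the rule is specified locally for each tile type, $\check{\mathcal{D}}$ is always a legitimate tiling'' has no content for whatever hybrid objects would appear between $\mathcal{P}$ and $\mathcal{B}$. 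Droop moves are moves on (almost) BPDs for a fixed permutation; they do not transport a pipe dream to a BPD.

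The missing idea is to use the characterizing property of $\kappa$ --- its compatibility with Monk's-rule insertion --- rather than its internal construction, and to induct on Bruhat order via co-transition. Concretely, for $w\neq w_0$ one takes the addable cell $(i,j)$ of $\dom(w)$ with $j=\addable(w)$, sets $\mathcal{P}'=\mathcal{P}\cup\{(i,j)\}$, and uses Gao--Huang's compatibility theorem (the content of the column-weight preserving property, \cref{lemma:monkcompatible}) to see that $\kappa(\mathcal{P}')$ is obtained from $\kappa(\mathcal{P})$ by Huang's column insertion into column $j$. The point is then \emph{not} that each step preserves the co-permutation: on the pipe-dream side the co-permutation changes by $\square\,\tau_i$ (\cref{lemma:cotransitionflippermutation}), and the real technical work is to show that column insertion on BPDs changes the co-permutation in exactly the same way, $\demprod(\check{\mathcal{B}})=\demprod(\check{\mathcal{B}'})\,\square\,\tau_i$ (\cref{lemma:columninsertionbpdcopermutations}), via a case analysis of row mindroops and cross-bump swaps. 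Reverse induction on Bruhat order (the inserted objects have strictly longer permutations) then closes the argument. Your proposal neither identifies the insertion algorithm as the vehicle for the comparison nor accounts for the fact that corresponding moves alter, rather than preserve, the co-permutations; without that, the ``finite case analysis'' you anticipate has no well-defined set of local pictures to analyze.
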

	
	In other words, \cref{thm:coperm} states that the canonical bijection preserves \emph{co-permutations}. 
	This statement provides evidence that, despite pipe dreams and BPDs exhibiting a number of different properties, there is underlying structure governing aspects of their behavior in parallel. 
	We prove \cref{thm:coperm} by analyzing the co-transition recurrence on both pipe dreams and BPDs, focusing on how co-transition modifies the associated co-objects.
	
\subsection{Connections to the literature}

In addition to the binary triangular array formulas of \cite{Lenart,Lascoux.03} for changing bases between Schubert and Grothendieck polynomials, Lenart \cite{Lenart.2000} also gave a tableau formula for the special case of symmetric Grothendieck polynomials and \emph{Schur polynomials}.
The \emph{stable Grothendieck polynomials} of \cite{Fomin.Kirillov} arise as specializations of back stable Grothendieck polynomials. 
The results of Lenart \cite{Lenart.2000} naturally extend to give expansions of stable Grothendieck polynomials indexed by partition shapes into \emph{Schur functions}, and vice versa. 
Chan and Pflueger \cite{Chan.Pflueger} generalized this result to provide analogous expansions for \emph{skew stable Grothendieck polynomials} into \emph{skew Schur functions}, and vice versa. 
Skew stable Grothendieck polynomials (and likewise skew Schur functions) are not linearly independent, thus the resulting expansions are not canonical. 
It would be interesting to compare the result of specializing \cref{thm:backstable} with the work of Chan--Pflueger, but we do not pursue this direction here.

The expansion of Grothendieck polynomials into the Schubert basis has been a useful tool for studying related algebraic and geometric questions.
For instance, Monical, Tokcan, and Yong conjectured that Grothendieck polynomials have saturated Newton polytopes \cite{Monical.Tokcan.Yong}. In the special case of symmetric Grothendieck polynomials, Escobar and Yong \cite{Escobar.Yong} proved the conjecture using Lenart's tableau change of basis formula. 
Although there has been additional progress \cite{Meszaros.StDizier,CCRMM}, the full conjecture remains open. Lenart's formula was also used to prove a combinatorial rule for computing the Castelnuovo--Mumford regularity of Grassmannian matrix Schubert varieties \cite{RRRSW}. 
A number of open questions about the monomial support of Grothendieck polynomials remain, see, for example, \cite{Meszaros.Setiabrata.StDizier}. 
Additional understanding of the Grothendieck to Schubert expansion may be useful for studying these problems.

	\subsection{Organization}
	In \cref{section:background}, we provide necessary background on the symmetric group, Schubert polynomials, and Grothendieck polynomials. 
	We discuss four closely related types of planar histories in \cref{section:planar}. 
	In \cref{section:pipes}, we recall basic facts about pipe dreams and bumpless pipe dreams. 
	We also introduce co-pipe dreams and co-bumpless pipe dreams, which are used in our change of basis formulas. 
	\cref{section:cotransition} concerns the co-transition recurrences of \cite{Knutson:cotransition}, with particular focus on the K-theoretic co-transition recurrence on pipe dreams.
	
	In \cref{section:pipechangebasis}, we provide new proofs of \cref{thm:pipe_groth_to_schub} and \cref{thm:pipe_schub_to_groth}, the pipe dream change of basis formulas. 
	In particular, our proof technique leads to a new description of the Grothendieck to Schubert expansion in terms of chains in Bruhat order (see \cref{thm:pathformulas}). 
	We show in \cref{section:canonicalbijection} that co-permutations are preserved by the canonical bijection of \cite{Gao.Huang}, and use this to prove \cref{thm:main1} and \cref{thm:main2}.
	Finally, \cref{section:backstable} concerns the expansion of back stable Grothendieck polynomials into back stable Schubert polynomials. 
	We recall necessary background from \cite{Lam.Lee.Shimozono,Lam.Lee.Shimozono-K-theory} and explain how to extend our BPD Grothendieck to Schubert expansion formula to the back stable setting (see \cref{thm:backstable}).

	\section{Background}
	\label{section:background}
	
	Write $\mathbb{Z}_{+} = \{1,2,3,\ldots\}$ and $\mathbb{Z}_{\geq 0} = \{0,1,2,\ldots\}$. Given $m,n \in \mathbb{Z}$, we write $[m] = \{i \in \mathbb{Z} : 1\leq i\leq m\}$ and $[m,n] = \{i \in \mathbb{Z} : m\leq i\leq n\}$. 
	Many of our objects are tilings of an $n \times n$ grid. 
	We use matrix coordinates to refer to the cells in the grid: the cell $(i,j)$ sits in the $i$th row from the top of the grid and the $j$th column from the left.
	
	\subsection{The symmetric group}
	
	In this section, we recall basic facts about the \emph{symmetric group}. 
	We refer the reader to \cite{Manivel} for background.

	We write $S_n$ for the \newword{symmetric group} on $n$ letters, i.e., the set of bijections from $[n]$ to itself, with multiplication defined by composition of functions. 
	We often represent $w \in S_n$ using one-line notation as $w = w_1\,w_2\,\cdots \, w_n$, where $w_i = w(i)$ for all $i \in [n]$. 
	
	The \newword{permutation matrix} of $w \in S_n$ is the $n \times n$ matrix with ones in positions $(i,w(i))$ for all $i \in [n]$, and zeros elsewhere. 
	When convenient, we conflate $w$ with its permutation matrix. 
	We sometimes represent permutation matrices visually by drawing an $n \times n$ grid with dots in cells $(i,w(i))$ for all $i \in [n]$. Call this the \newword{graph} of a permutation.
	We may strike out all cells that lie weakly to the right and weakly below each dot; the set of coordinates of the remaining cells is the \newword{Rothe diagram} of $w$, which we denote by $\rothe(w)$. 
	\begin{example}
		\label{example:rothe}
		Let $w = 52413$.
		\[
		\begin{tikzpicture}[x = 1.5em,y = 1.5em]
			\draw[step = 1,gray, very thin] (0,0) grid (5,5);
			\draw[thick](0,0)rectangle(5,5);
			\filldraw [black](2.5,.5)circle(.1);
			\filldraw [black](.5,1.5)circle(.1);
			\filldraw [black](3.5,2.5)circle(.1);
			\filldraw [black](1.5,3.5)circle(.1);
			\filldraw [black](4.5,4.5)circle(.1);
			\draw[thick, color = blue] (2.5,0)--(2.5,.5)--(5,.5);
			\draw[thick, color = blue] (.5,0)--(.5,1.5)--(5,1.5);
			\draw[thick, color = blue] (3.5,0)--(3.5,2.5)--(5,2.5);
			\draw[thick, color = blue] (1.5,0)--(1.5,3.5)--(5,3.5);
			\draw[thick, color = blue] (4.5,0)--(4.5,4.5)--(5,4.5); 
		\end{tikzpicture} 
		\] 
		By striking out cells to the right and below each dot in the grid pictured above, we see that \[\rothe(w) = \{(1,1),(1,2),(1,3),(1,4),(2,1),(3,1),(3,3)\}. \qedhere\]
	\end{example}
	
	Given $w \in S_n$, we define $c_w(i) = \#\{j \in[n]:i < j \text{ and } w(i) > w(j)\}$. Equivalently, $c_w(i)$ is the number of cells in row $i$ of the Rothe diagram of $w$. 
	The \newword{Lehmer code} of $w$ is the tuple $c_w = (c_w(1),\ldots,c_w(n))$. 
	The map from permutations to their Lehmer codes is a bijection from $S_n$ to $\{(c_1,c_2,\ldots,c_n):c_i\leq n-i \text{ for all } i \in[n]\}.$
	
	Let $s_i = (i \, i + 1) \in S_n$ denote the simple transposition that exchanges $i$ and $i + 1$. 
	Also, write $t_{i,j} = (i \, j) \in S_n$ for the transposition that exchanges $i$ and $j$.
	Given a tuple of positive integers $\mathbf{a} = (a_1,\ldots,a_k)$, we say that $\mathbf{a}$ is a \newword{word} for $w$ if $s_{a_1}\cdots s_{a_k} = w$, and a \newword{reduced word} for $w$ if $\mathbf{a}$ is a minimum length word for $w$. 
	If $(a_1,\ldots,a_k)$ is a reduced word for $w$, then $k$ is the \newword{Coxeter length} of $w$, and we write $\ell(w) = k$.

	The \newword{Bruhat order} on the symmetric group is the poset that is the transitive closure of the covering relations $u < v$ if there exist $i,j$ such that $ut_{i,j} = v$ and $\ell(u) + 1 = \ell(v)$.
	Given $w \in S_n$ and $i \in[n-1]$, we say that $i$ is a \newword{descent} of $w$ if $ws_i < w$. 
	Write $\des(w) = \{i : ws_i < w\}$. 
	We say that $i \in [n-1]$ is an \newword{ascent} of $w$ if $ws_i > w$.
	
	To each permutation $w \in S_n$, we associate a \newword{rank function} $\rk_w:[0,n] \times [0,n]\rightarrow \mathbb{Z}_{\geq 0}$ defined by \[\rk_w(a,b) = \#\{i: i \in [a] \text{ and } w(i) \in [b]\}.\] The map from permutations to their rank functions is bijective.
	Given $u,v \in S_n$, if there exists a least upper bound of $\{u,v\}$ in $S_n$, we denote it by $u \vee v$ and call it the \newword{join} of $u$ and $v$.
	
	\begin{lemma}
		\label{lemma:rankbruhat}
		Let $u,v,w \in S_n$.
		\begin{enumerate}
			\item $u\leq v$ if and only if $\rk_u(a,b)\geq \rk_v(a,b)$ for all $(a,b) \in [0,n] \times [0,n]$.
			\item If $\rk_w(a,b) = \min(\rk_u(a,b),\rk_v(a,b))$ for all $(a,b) \in [0,n] \times [0,n]$, then $w = u\vee v$.
		\end{enumerate}
	\end{lemma}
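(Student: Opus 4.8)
The plan is to establish part (1), the Ehresmann--Bruhat rank criterion, and then read off part (2) as a short formal consequence. For the forward direction of (1) it suffices, by transitivity of Bruhat order, to check a single covering relation $u \lessdot v$. Writing $v = u t_{i,j}$ with $i < j$ and $\ell(v) = \ell(u)+1$ forces $u(i) < u(j)$, and $v$ agrees with $u$ away from positions $i$ and $j$ while interchanging the two values there. A direct inspection of the definition of $\rk$ then shows that $\rk_v(a,b) = \rk_u(a,b) - 1$ precisely when $i \le a < j$ and $u(i) \le b < u(j)$, and $\rk_v(a,b) = \rk_u(a,b)$ otherwise; in particular $\rk_v \le \rk_u$ pointwise. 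Composing along a saturated chain from $u$ to $v$ gives $\rk_u \ge \rk_v$ pointwise whenever $u \le v$.

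For the converse of (1), suppose $\rk_u(a,b) \ge \rk_v(a,b)$ for all $(a,b)$; I would induct on the nonnegative integer $N(u,v) = \sum_{(a,b)} \big(\rk_u(a,b) - \rk_v(a,b)\big)$. If $N(u,v) = 0$ then $\rk_u = \rk_v$, so $u = v$ since the rank function determines the permutation. If $N(u,v) > 0$ then $u \ne v$; let $j$ be the least index with $u(j) \ne v(j)$. Because the initial sets $\{u(1),\dots,u(j-1)\}$ and $\{v(1),\dots,v(j-1)\}$ coincide, comparing $\rk_u(j,\cdot)$ with $\rk_v(j,\cdot)$ forces $u(j) < v(j)$. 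Now pick the position $k > j$ for which $u(k)$ is smallest among values satisfying $u(j) < u(k) \le v(j)$ (this set is nonempty, since $v(j)$ itself occurs in $u$ at some position $> j$). One checks that $u \lessdot u t_{j,k}$ is a covering relation and that $\rk_{u t_{j,k}}(a,b) \ge \rk_v(a,b)$ continues to hold for all $(a,b)$, while $N(u t_{j,k}, v) < N(u,v)$. The inductive hypothesis then gives $u t_{j,k} \le v$, hence $u < u t_{j,k} \le v$. (Since this is the classical Ehresmann criterion, one could alternatively cite it from a standard reference and skip the climbing argument.)

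Part (2) follows formally: assume $\rk_w = \min(\rk_u, \rk_v)$ pointwise. Then $\rk_w \le \rk_u$ and $\rk_w \le \rk_v$, so part (1) gives $u \le w$ and $v \le w$, i.e.\ $w$ is a common upper bound of $u$ and $v$. If $z \in S_n$ is any common upper bound, then part (1) gives $\rk_z \le \rk_u$ and $\rk_z \le \rk_v$, whence $\rk_z \le \min(\rk_u, \rk_v) = \rk_w$, and part (1) again yields $w \le z$. Thus $w$ is the least upper bound of $\{u,v\}$, so $w = u \vee v$.

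The main obstacle is the climbing step in the converse of (1): verifying that the transposition $t_{j,k}$ chosen above both covers $u$ (no value of $u$ strictly between $u(j)$ and $u(k)$ occurs at a position strictly between $j$ and $k$) and preserves the pointwise inequality $\rk_{u t_{j,k}} \ge \rk_v$. The minimality in the choice of $k$ is exactly what makes both checks go through; the remainder is bookkeeping with rank functions together with the purely order-theoretic deduction of part (2).
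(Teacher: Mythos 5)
Your part (2) is correct, and it follows the same formal route as the paper: deduce everything from the rank criterion of part (1) (your version, which takes an arbitrary upper bound $z$ and concludes $w\le z$, is if anything the cleaner way to get leastness). For part (1) the paper simply cites the Ehresmann criterion from Bj\"orner--Brenti, which is the fallback you mention in passing. The problem is the self-contained climbing argument you actually present for the converse of (1): with your choice of $k$ (the position carrying the \emph{smallest value} $u(k)$ with $u(j)<u(k)\le v(j)$), the asserted step ``one checks that $\rk_{ut_{j,k}}(a,b)\ge\rk_v(a,b)$ continues to hold'' is false. Concretely, take $u=35124$ and $v=53124$ in $S_5$. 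Then $v=ut_{1,2}$ covers $u$, so $u\le v$ and $\rk_u\ge\rk_v$ pointwise. Here $j=1$ and $u(1)=3<5=v(1)$; the values of $u$ in the window $(3,5]$ are $5$ (at position $2$) and $4$ (at position $5$), so your rule picks $k=5$ and $ut_{1,5}=45123$. This is indeed a cover of $u$, but $\rk_{45123}(2,3)=0<1=\rk_{53124}(2,3)$, so $ut_{1,5}\not\le v$ (both have length $6$, so neither is below the other) and the induction cannot proceed. The failure mode is exactly that $v$ may carry the value $u(j)$ at a position inside the rectangle $[j,k)\times[u(j),u(k))$ where the rank drops; your hypothesis only gives $\rk_u\ge\rk_v$ there, not the strict inequality you need.

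The standard repair is to take $k$ to be the \emph{least position} $k>j$ with $u(j)<u(k)\le v(j)$. Then every $i$ with $j<i<k$ satisfies $u(i)<u(j)$ or $u(i)>v(j)$, which gives the covering relation, and for $j\le a<k$, $u(j)\le b<u(k)$ the required strict inequality $\rk_u(a,b)>\rk_v(a,b)$ follows from the hypothesis evaluated at $(a,v(j))$: there one has $\rk_u(a,v(j))=C+1+\#\{i\in(j,a]:u(i)<u(j)\}$ and $\rk_v(a,v(j))=C+1+\#\{i\in(j,a]:v(i)\le v(j)\}$ with the same constant $C$, and the second inner count is at least $\#\{i\in(j,a]:v(i)\le b\}$. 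With that change (or by citing the criterion outright, as the paper does), the rest of your argument --- the covering verification, the induction on $\sum_{(a,b)}(\rk_u(a,b)-\rk_v(a,b))$, and the derivation of part (2) --- goes through.
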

	\begin{proof}
		\noindent (1) This is the Ehresmann criterion for Bruhat order; see, e.g., \cite[Theorem 2.1.5]{Bjorner.Brenti}.
		
		\noindent (2) Suppose $\rk_w(a,b) = \min(\rk_u(a,b),\rk_v(a,b))$ for all $(a,b) \in [0,n] \times [0,n]$. By Part (1), this implies $w$ is an upper bound of both $u$ and $v$.
		
		Suppose that $w' \leq w$ is an upper bound of $u$ and $v$. Then, again by Part (1), we have
		$\rk_{w'}(a,b)\leq \rk_u(a,b),\rk_v(a,b)$ which implies $\rk_{w'}(a,b)\leq \min(\rk_u(a,b),\rk_v(a,b))$ for all $(a,b) \in [0,n] \times [0,n]$.
		
		Because
		\[\min(\rk_u(a,b),\rk_v(a,b)) = \rk_w(a,b)\leq \rk_{w'}(a,b)\leq \min(\rk_u(a,b),\rk_v(a,b)),\] we conclude that $\rk_{w'}(a,b) = \rk_w(a,b)$ for all $(a,b)\in [0,n] \times [0,n]$, and therefore $w = w'$.
	\end{proof}
	
	It will sometimes be useful to consider permutations as elements of $S_{+} $, the set of bijections from $\mathbb{Z}_{+}$ to itself such that all but finitely many elements are fixed points. 
	There is a natural embedding of $S_n$ into $S_{+}$ via the map $w\mapsto \tilde{w}$, where
	\[
	\tilde{w}(i) = 
	\begin{cases}
		w(i), & \text{if } i \in [n], \\
		i, & \text{if } i > n.
	\end{cases}
	\]
	Conversely, each $w \in S_{+} $ may be represented by some permutation in $S_N$, for $N$ sufficiently large. 
	In this case, we write $w \in S_N$. 
	The definitions of Bruhat order, reduced words, Coxeter length, Lehmer code, rank function, etc., all extend naturally to $S_{+} $.
	
	\subsection{Hecke actions}
	
	The \newword{$0$-Hecke monoid} $\mathcal H_n$ is the free monoid on generators $\{\tau_i:i \in[n-1]\}$, subject to the relations:
	\begin{enumerate}
		\item $\tau_i^2 = \tau_i$,
		\item $\tau_i \tau_j = \tau_j \tau_i$ if $|i - j| > 1$, and
		\item $\tau_i \tau_{i + 1} \tau_i = \tau_{i + 1} \tau_{i} \tau_{i + 1}$.
	\end{enumerate}
	There is an action $*$ of $\mathcal H_n$ on $S_n$ defined by
	\[w* \tau_k = 
	\begin{cases}
		w &\text{if } w s_k < w,\\
		w s_k & \text{if } w s_k > w.
	\end{cases}\]
	The \newword{Demazure product} of a word is defined recursively as follows: $\demprod(\emptyset) = \id$, and $\demprod((a_1, \ldots, a_k)) = \demprod((a_1, \ldots, a_{k-1}))*\tau_{a_k}$. 
	Note that if $\mathbf{a}=(a_1, \ldots, a_k)$ is a reduced word, then $\demprod(\mathbf{a})=s_{a_1}s_{a_2}\cdots s_{a_k}$.
		
	Also, given $w \in S_n$ and $k \in [n - 1]$, we define 
	\[w \square \tau_k = 
	\begin{cases}
		w &\text{if } w s_k > w, \\
		w s_k & \text{if } w s_k < w.
	\end{cases}\]
	The operator $\square$ is closely related to $*$. 
	Write $w_0$ for the \newword{longest permutation} in $S_n$, that is, $w_0= n \, n-1 \, \ldots \, 1$.
	\begin{lemma}
		\label{lemma:upanddownoperatorswithflips}
		Let $w \in S_n$ and $i \in [n - 1]$. 
		Then the following hold:
		\begin{enumerate}
			\item $(w * \tau_i) w_0 = (w w_0)\square \tau_{n - i}$.
			\item $(w\square \tau_i) w_0 = (w w_0)* \tau_{n - i}$.
			\item $w_0 (w*\tau_i)= (w_0w) \square \tau_i$.
			\item $w_0 (w\square \tau_i)=(w_0 w)*\tau_i$.
		\end{enumerate}
	\end{lemma}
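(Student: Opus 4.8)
The plan is to deduce all four identities from a short case analysis resting on two standard facts about the longest element $w_0\in S_n$: that $w_0^2=\id$ and $\ell(uw_0)=\ell(w_0u)=\binom{n}{2}-\ell(u)$ for all $u\in S_n$, and that conjugation by $w_0$ reverses the Coxeter generators, $w_0 s_j w_0 = s_{n-j}$, equivalently $s_j w_0 = w_0 s_{n-j}$. From the latter one reads off $w s_i w_0 = (ww_0)s_{n-i}$, and then the length identities give the two descent-flipping equivalences that drive everything: $ws_i<w$ if and only if $(ww_0)s_{n-i}>ww_0$ (because $\ell((ww_0)s_{n-i})=\ell(ws_iw_0)=\binom{n}{2}-\ell(ws_i)$), and $ws_i<w$ if and only if $(w_0w)s_i>w_0w$ (because $\ell((w_0w)s_i)=\binom{n}{2}-\ell(ws_i)$).

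For (1), I would split on whether $ws_i<w$. If $ws_i<w$, then $w*\tau_i=w$, so the left-hand side is $ww_0$; by the first equivalence $(ww_0)s_{n-i}>ww_0$, hence $(ww_0)\square\tau_{n-i}=ww_0$ too, and the two sides agree. If $ws_i>w$, then $w*\tau_i=ws_i$, so the left-hand side is $ws_iw_0=(ww_0)s_{n-i}$; by the first equivalence $(ww_0)s_{n-i}<ww_0$, hence $(ww_0)\square\tau_{n-i}=(ww_0)s_{n-i}$, and again the sides agree. Identity (2) then follows formally: applying (1) with $w$ replaced by $ww_0$ and $i$ by $n-i$, and right-multiplying by $w_0$ using $w_0^2=\id$, yields $(w\square\tau_i)w_0=(ww_0)*\tau_{n-i}$. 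Identities (3) and (4) have the same shape but use the second descent-flipping equivalence and left multiplication by $w_0$; note that no index reflection appears here, since left multiplication by $w_0$ does not touch the index of the right $\mathcal{H}_n$-action. Concretely, for (3) one would check case-by-case that both sides equal $w_0w$ when $ws_i<w$ and both equal $w_0ws_i=(w_0w)s_i$ when $ws_i>w$; and (4) follows from (3) by substituting $w\mapsto w_0w$ and using $w_0^2=\id$.

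The argument is entirely routine, so there is no genuine obstacle; the only thing to watch is the index bookkeeping. The reflection $i\mapsto n-i$ is forced in (1) and (2) precisely because those identities involve right multiplication by $w_0$, which interacts with the $\mathcal{H}_n$-generators through $s_iw_0=w_0s_{n-i}$, whereas (3) and (4) involve left multiplication and so leave $i$ unchanged. I would therefore isolate the identity $s_jw_0=w_0s_{n-j}$ and the two descent-flipping equivalences as explicit preliminary observations, after which each of (1)--(4) is a two-line verification.
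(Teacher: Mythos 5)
Your proof is correct and follows essentially the same route as the paper: establish that multiplication by $w_0$ flips descents (at $i$ to $n-i$ on the right, at $i$ on the left) together with $ws_iw_0=(ww_0)s_{n-i}$, then verify each identity by a two-case check against the definitions of $*$ and $\square$. The only cosmetic difference is that you derive the descent-flipping facts from the length identity $\ell(uw_0)=\binom{n}{2}-\ell(u)$ and the conjugation $w_0s_jw_0=s_{n-j}$, whereas the paper reads them directly off one-line notation (right multiplication by $w_0$ reverses the string).
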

	\begin{proof}
		In one-line notation, $ww_0$ is $w_{n}w_{n-1}\cdots w_1$, i.e., right multiplication by $w_0$ reverses the string. 
		In particular, we have $w_i=(w w_0)_{n - i + 1}$ and $w_{i + 1} = (w w_0)_{n - i}$.
		Thus, $w_i < w_{i + 1}$ if and only if $(w w_0)_{n-i} > (w w_0)_{n-i + 1}$. 
		Thus, parts (1) and (2) follow directly from this observation and the definitions of $*$ and $\square$.
		
		The second two statements follow from observing that $w$ has a descent at $i$ if and only if $w_0w$ has an ascent at $i$.
	\end{proof}
	
	We record another lemma for later use.
	\begin{lemma}
		\label{lemma:conjugationandwords}
		Fix a word $\mathbf{a}=(a_1, a_2,\ldots,a_k)$ with $a_i\in[n-1]$ for all $i$. 
		Let $\mathbf{b}=(n-a_1,n-a_{2},\ldots,n-a_k)$. Then $\demprod(\mathbf{a})=w_0\demprod(\mathbf{b}) w_0$, where $w_0\in S_n$ is the longest permutation.
	\end{lemma}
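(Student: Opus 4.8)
The plan is to induct on $k$, the length of the word $\mathbf{a}$. When $k = 0$, both $\mathbf{a}$ and $\mathbf{b}$ are empty, so $\demprod(\mathbf{a}) = \id = w_0\,\id\,w_0 = w_0\demprod(\mathbf{b})w_0$, and the base case holds. Conceptually, what makes the statement true is that $w \mapsto w_0 w w_0$ is the automorphism of $S_n$ sending $s_i \mapsto s_{n-i}$, and this automorphism is compatible with the $0$-Hecke action; the induction is just a way to package that compatibility.

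For the inductive step, the key auxiliary identity to establish is that for every $v \in S_n$ and every $i \in [n-1]$,
\[
w_0 (v * \tau_i) w_0 = (w_0 v w_0) * \tau_{n - i}.
\]
I would derive this by composing two parts of \cref{lemma:upanddownoperatorswithflips}: part (3) applied with $w = v$ gives $w_0(v * \tau_i) = (w_0 v) \square \tau_i$, and part (2) applied with $w = w_0 v$ gives $\bigl((w_0 v)\square \tau_i\bigr) w_0 = (w_0 v w_0) * \tau_{n-i}$; chaining these equalities yields the claim. Alternatively, one can verify the identity directly from the definition of $*$, splitting into the cases $v s_i < v$ and $v s_i > v$ and using $w_0 s_i w_0 = s_{n-i}$ together with the observation — already recorded inside the proof of \cref{lemma:upanddownoperatorswithflips} — that $v$ has a descent at $i$ if and only if $w_0 v w_0$ has a descent at $n-i$.

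Granting the auxiliary identity, the induction closes quickly. Write $\mathbf{a}' = (a_1,\ldots,a_{k-1})$ and $\mathbf{b}' = (n-a_1,\ldots,n-a_{k-1})$, so that $\demprod(\mathbf{a}) = \demprod(\mathbf{a}') * \tau_{a_k}$ and $\demprod(\mathbf{b}) = \demprod(\mathbf{b}') * \tau_{n-a_k}$ by the recursive definition of the Demazure product. By the inductive hypothesis, $\demprod(\mathbf{a}') = w_0 \demprod(\mathbf{b}') w_0$. Applying the auxiliary identity with $v = \demprod(\mathbf{b}')$ and with the role of $i$ played by $n - a_k$ (so $n - i = a_k$) gives
\[
\demprod(\mathbf{a}) = \bigl(w_0 \demprod(\mathbf{b}') w_0\bigr) * \tau_{a_k} = w_0 \bigl(\demprod(\mathbf{b}') * \tau_{n-a_k}\bigr) w_0 = w_0 \demprod(\mathbf{b}) w_0,
\]
as desired. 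The argument is essentially routine; the only point demanding care is bookkeeping the substitution $i \leftrightarrow n-i$ and the left/right conventions of the operators $*$ and $\square$ when invoking \cref{lemma:upanddownoperatorswithflips}, so that the auxiliary identity comes out with $\tau_{n-i}$ (rather than $\tau_i$) on the right-hand side. I do not anticipate any genuine obstacle beyond this.
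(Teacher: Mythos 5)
Your proof is correct and follows essentially the same route as the paper: induction on the length of the word, with the inductive step closed by combining parts (2) and (3) of \cref{lemma:upanddownoperatorswithflips} (your explicitly stated auxiliary identity $w_0(v*\tau_i)w_0=(w_0vw_0)*\tau_{n-i}$ is precisely the chain of equalities the paper performs inline). No issues.
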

\begin{proof}
	We proceed by induction. The statement is trivially true for the empty word.
	Now let $\mathbf{a'}=(a_1, a_2,\ldots,a_{k-1})$ and $\mathbf{b'}=(n-a_1,n-a_{2},\ldots,n-a_{k-1})$. 
	Assume that $\demprod(\mathbf{a'})=w_0\demprod(\mathbf{b'})w_0$. 
	Then by applying \cref{lemma:upanddownoperatorswithflips}, we have
	\begin{align*}
		\demprod(\mathbf{a})&=\demprod(\mathbf{a'})*\tau_{a_{k}}\\
		&=(w_0\demprod(\mathbf{b'})w_0)*\tau_{a_k}\\
		&=((w_0\demprod(\mathbf{b'}))\square \tau_{n-a_k})w_0\\
		&=w_0 (\demprod(\mathbf{b'})*\tau_{n-a_k})w_0\\
		&=w_0\demprod(\mathbf{b}) w_0,
	\end{align*}
	as desired.
\end{proof}

	\subsection{Schubert polynomials and Grothendieck polynomials}
	
	The symmetric group $S_n$ acts on the ring $\mathbb{Z}[\beta][x_1, \ldots, x_n]$ by permuting the indices of the $x_i$'s. Specifically, given $w \in S_n$ and $f \in \mathbb{Z}[\beta][x_1, \ldots, x_n]$, we define 
	\[w\cdot f = f(x_{w_1}, x_{w_2}, \ldots, x_{w_n}).
	\]
	Given $i \in [n - 1]$, we define the \newword{Newton divided difference operator} $\divdiff_i$ by $\divdiff_i(f) = \frac{f - s_i \cdot f}{x_i - x_{i + 1}}$. 
	We also define the \newword{K-theoretic divided difference operator} $\kdivdiff^{(\beta)}_i$ by 
	\[\kdivdiff^{(\beta)}_i(f) = \divdiff_i \left((1 + \beta x_{i + 1})f \right).\]
	Note that $\divdiff_i = \kdivdiff^{(0)}_i$.

	We use these operators to recursively define two families of polynomials: \newword{Schubert polynomials} $\{\mathfrak{S}_w : w \in S_n\}$ and \newword{$\beta$-Grothendieck polynomials} $\{\mathfrak{G}^{(\beta)}_w : w \in S_n\}$.
	Both families start with the same initial condition:
	\[\mathfrak{S}_{w_0} = \mathfrak{G}^{(\beta)}_{w_0} = x_1^{n-1 }x_2^{n-2} \cdots x_{n-1}.\]
	If $w_i < w_{i + 1}$, then we define
	\[
	\mathfrak{S}_{w s_i} = \divdiff_i(\mathfrak{S}_w)
	\quad \text{and} \quad
	\mathfrak{G}^{(\beta)}_{w s_i} = \kdivdiff^{(\beta)}_i(\mathfrak{G}^{(\beta)}_w).
	\]
	Note that $\mathfrak{S}_w = \mathfrak{G}^{(0)}_{w}$.

	The divided difference and K-theoretic divided difference operators satisfy the same braid and commutation relations as the simple reflections. 
	Hence, Schubert and $\beta$-Grothendieck polynomials are well-defined. 
	Furthermore, $\left(\kdivdiff_i^{(\beta)}\right)^2 = -\beta \kdivdiff_i^{(\beta)}$ and $\divdiff_i^2 = 0$.
	
	As such,
	\begin{equation}\label{eq:shubdivdiff}
		\divdiff_i(\mathfrak{S}_w) = 
		\begin{cases}
			\mathfrak{S}_{ws_i} &\text{if } w > ws_i, \\
			0 &\text {if } w < ws_i
		\end{cases}
	\end{equation}
	and
	\begin{equation}\label{eq:grothdivdiffbeta}
		\kdivdiff^{(\beta)}_i(\mathfrak{G}^{(\beta)}_w) = 
		\begin{cases}
			\mathfrak{G}^{(\beta)}_{ws_i} &\text{if } w > ws_i, \\
			-\beta \mathfrak{G}^{(\beta)}_{w} &\text {if } w < ws_i.
		\end{cases}
	\end{equation}

	There is a natural inclusion $\iota:S_n \hookrightarrow S_{n + 1}$ defined by 
	\[
	\iota(w)(k) = 
	\begin{cases}
	w(k), & \text{if } 1 \leq k \leq n, \\
	n + 1, & \text{if } k = n + 1.
	\end{cases}
	\]
	Schubert and $\beta$-Grothendieck polynomials are stable with respect to this inclusion, that is,
	\[
	\mathfrak{S}_{\iota(w)} = \mathfrak{S}_w
	\quad \text{and} \quad
	\mathfrak{G}^{(\beta)}_{\iota(w)} = \mathfrak{G}^{(\beta)}_w.
	\]
	Because of this stability, we may regard these families of polynomials as being indexed by permutations $w \in S_{+}$. 
	
	We will often be interested in the specialization $\beta = -1$. 
	For convenience, we define $\kdivdiff_i = \kdivdiff^{(-1)}_i$ and $\mathfrak{G}_w = \mathfrak{G}^{(-1)}_w$. 
	We call the polynomials $\mathfrak{G}_w$ \newword{Grothendieck polynomials}, omitting the $\beta$.
	In this case, we may write \cref{eq:grothdivdiffbeta} more compactly as
	\begin{equation}\label{eq:grothdivdiff}
		\kdivdiff_i(\mathfrak{G}_w) = \mathfrak{G}_{w \square \tau_i}.
	\end{equation}
	Note that $\mathfrak G_w(-\beta x_1,-\beta x_2,\ldots, -\beta x_n)=(-\beta)^{\ell(w)}\mathfrak{G}_w^{(\beta)}$. Thus, there is no generality lost by working with Grothendieck polynomials as opposed to $\beta$-Grothendieck polynomials.
	
	The sets of Schubert polynomials and Grothendieck polynomials each form a $\mathbb Z$-linear basis for $\mathbb Z[x_1,x_2,\ldots]$. 
	As such, any Schubert polynomial can be expressed as a \(\mathbb{Z}\)-linear combination of Grothendieck polynomials, and vice versa.

	\section{Planar histories}
	\label{section:planar}
	
	The main combinatorial objects in this paper are pipe dreams and bumpless pipe dreams. Both are special cases of \emph{planar histories}. 
	For historical reasons, we draw pipe dreams so that pipes start at the top of the $n \times n$ grid, and exit at the left edge. 
	Pipes in bumpless pipe dreams, on the other hand, start at the bottom of the grid and exit at the right. 
	We also introduce co-pipe dreams, in which pipes start at the bottom and exit on the left, and co-bumpless pipe dreams, in which pipes start at the top and exit on the right.
	Despite the variety of orientations among these diagrams, they share certain fundamental similarities as planar histories.
	
	We state our main definitions and facts in terms of \emph{northeast planar histories}. We will also discuss \emph{southeast}, \emph{northwest}, and \emph{southwest} planar histories, by modifying the definitions and results for NE planar histories as needed.
	
	\subsection{Preliminaries on planar histories}
	
	A \newword{pseudoline} is a piecewise linear curve in the plane.
	A \newword{northeast (NE) planar history} $\mathcal{P}$ is a collection of pseudolines in a rectangular region such that:
	\begin{enumerate}
		\item each pseudoline starts at the bottom edge of the rectangle,
		\item each pseudoline ends at the right edge of the rectangle,
		\item each pseudoline travels only north or east at all times, and
		\item distinct pseudolines do not travel concurrently.
	\end{enumerate}
	We may modify the conditions above as appropriate to define southeast (SE), northwest (NW), and southwest (SW) planar histories. 
	
	To align with the terminology for our primary objects of study (BPDs and pipe dreams), we refer to pseudolines as \newword{pipes} from this point on.
	We say that a planar history is \newword{reduced} if each pair of pipes crosses at most one time. 
	We draw sharp turns in pipes as rounded bends. 
	This allows us to visually distinguish between the case when two pipes meet at a point but do not cross from the case where they do cross. 
	When two pipes meet but do not cross, this is sometimes known as an \emph{osculating point} in the literature. See \cite{Behrend}.
	
	\begin{example}
		In the picture below, one pipe (drawn as a solid blue line) passes horizontally across another vertical pipe (drawn as a dotted black line).
		\[
		\begin{tikzpicture}[x = 1.5em,y = 1.5em]
			\draw[thick,rounded corners, color = blue] (-1,2)--(1,2);
			\draw[thick,rounded corners, color = black, dotted] (0,1)--(0,3);
		\end{tikzpicture}
		\]
		We could also have the following situation, where the blue pipe turns north and the black pipe turns east.
		\[\begin{tikzpicture}[x = 1.5em,y = 1.5em]
			\draw[thick, color = blue] (-1,2)--(0,2)--(0,3);
			\draw[thick, color = black, dotted] (0,1)--(0,2)--(1,2);
		\end{tikzpicture}\]
		By drawing the picture with rounded corners, we no longer need additional visual cues to distinguish between the two situations.
		\[
		\begin{tikzpicture}[x = 1.5em,y = 1.5em]
			\draw[thick,rounded corners, color = blue] (-1,2)--(1,2);
			\draw[thick,rounded corners, color = blue] (0,1)--(0,3);
		\end{tikzpicture}
		\hspace{4em}
		\begin{tikzpicture}[x = 1.5em,y = 1.5em]
			\draw[thick,rounded corners, color = blue] (-1,2)--(0,2)--(0,3);
			\draw[thick,rounded corners, color = blue] (0,1)--(0,2)--(1,2);
		\end{tikzpicture}
		\]
		Explicitly, the pipes on the left cross while the pipes on the right do not. 
	\end{example}
	
	Given a NE planar history $\mathcal{P}$ consisting of $n$ pipes, the \newword{permutation of $\mathcal{P}$}, denoted $\demprod(\mathcal{P})$, is defined as follows:
	\begin{enumerate}
		\item Label pipes along the bottom edge of the rectangle from left to right with the numbers $1, 2, \ldots, n$.
		\item Extend these labels across crossings by the rules illustrated below. For each pair of pipes labeled with $i$ and $j$ so that $i < j$, propagate the labels based on the following two cases:
		\begin{equation}
			\label{eqn:NEpipecross}
			\raisebox{-2.75em}{\begin{tikzpicture}[x = 1.5em,y = 1.5em]
					\draw[thick,rounded corners, color = blue,- > ] (-1,2)--(1,2);
					\draw[thick,rounded corners, color = blue,- > ] (0,1)--(0,3);
					\draw (0,.5) node [align = left]{$j$};
					\draw (1.5,2) node [align = left]{$i$};
					\draw (-1.5,2) node [align = left]{$i$};
					\draw (0,3.5) node [align = left]{$j$};
				\end{tikzpicture}
				\hspace{4em}
				\begin{tikzpicture}[x = 1.5em,y = 1.5em]
					\draw[thick,rounded corners, color = blue,- > ] (-1,2)--(1,2);
					\draw[thick,rounded corners, color = blue,- > ] (0,1)--(0,3);
					\draw (0,.5) node [align = left]{$i$};
					\draw (1.5,2) node [align = left]{$i$};
					\draw (-1.5,2) node [align = left]{$j$};
					\draw (0,3.5) node [align = left]{$j$};
			\end{tikzpicture}}
		\end{equation}
		\item Finally, read the resulting labels from top to bottom along the right edge of the grid. 
		This gives a permutation written in one-line notation, which we denote by $\demprod(\mathcal{P})$.
	\end{enumerate}
	
	Informally, we obtain $\demprod(\mathcal{P})$ by tracing along the pipes, ignoring a crossing if the same two pipes have already crossed.

	The following lemma explains how to compute $\demprod(\mathcal{P})$ as the Demazure product of an associated word.
	
	\begin{lemma}
		\label{lemma:planarhistoryword}
		Let $\mathcal{P}$ be a NE planar history with $h$ crossings. 
		Label the crossings in $\mathcal{P}$ with the integers $1,2,\ldots, h$ so that as we read along each pipe, the sequence of labels is strictly increasing. 
		With respect to this labeling, define a word $\mathbf{a} = (a_1, a_2, \ldots, a_h)$, where $a_i$ is the number of pipes that pass above the point immediately to the right and above the $i$th crossing. 
		Then $\demprod(\mathcal{P}) = \demprod(\mathbf{a})$.
	\end{lemma}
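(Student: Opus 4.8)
The plan is to prove the identity by a \emph{sweep} argument: follow a one-parameter family of ``frontier curves'' that moves from the bottom-left of the rectangle to the top-right, recording at each stage the partially propagated pipe labels along the current frontier. Fix the admissible labeling, so that the crossings $c_1, c_2, \ldots, c_h$ are ordered so as to be increasing along every pipe. For each $k \in \{0, 1, \ldots, h\}$ I would produce a monotone staircase $\gamma_k$ running from the top-left corner to the bottom-right corner and traveling only south and east, such that the crossings lying on the south-west side of $\gamma_k$ are exactly $c_1, \ldots, c_k$; here $\gamma_0$ is the union of the left and bottom edges, $\gamma_h$ the union of the top and right edges, and $\gamma_k$ is obtained from $\gamma_{k-1}$ by a purely local push that carries $c_k$ (and nothing else) across the frontier. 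The existence of this family is where the combinatorics of the labeling enters: because $c_k$ is minimal, among $c_k, \ldots, c_h$, for the partial order ``earlier along a common pipe'', every crossing preceding $c_k$ along either of its two pipes already lies behind $\gamma_{k-1}$, so the two pipe-segments leading into $c_k$ are free of unswept crossings and the frontier can be slid up to $c_k$ and across it.

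Next I would record two properties of each $\gamma_k$. First, a pipe travels only north and east while $\gamma_k$ travels only south and east, so the two meet in exactly one point: they cross an odd number of times because $\gamma_k$ separates the pipe's starting point on the bottom edge from its endpoint on the right edge, and they cross at most once because a second intersection would force the pipe and $\gamma_k$ to share a horizontal (or vertical) segment, contradicting non-concurrency. Consequently, propagating the labels $1, 2, \ldots, n$ from left to right across the bottom edge through crossings by the rule of \eqref{eqn:NEpipecross} and then reading them along $\gamma_k$ from its top-left end to its bottom-right end produces a genuine permutation $\pi_k \in S_n$ (independent of the admissible choice of $\gamma_k$, since any two such are isotopic through staircases meeting no crossing). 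Second, by construction $\pi_0 = \id$, because the pipes start on the bottom edge in left-to-right order, and $\pi_h = \demprod(\mathcal{P})$, because reading $\gamma_h$ amounts to reading the labels from top to bottom along the right edge.

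The heart of the proof is the transition $\pi_k = \pi_{k-1} * \tau_{a_k}$. When $\gamma_{k-1}$ is pushed across $c_k$, the two pipes meeting at $c_k$ occupy two consecutive slots in the reading order of $\gamma_{k-1}$, namely slots $a_k$ and $a_k+1$, where $a_k$ is the number of pipes passing above the point just northeast of $c_k$ (the pipe that turns north at $c_k$ lies above that point, which accounts for the offset); the propagation rule routes the smaller of the two labels onto the east-bound strand and the larger onto the north-bound strand, which has the effect of leaving slot $a_k$ occupied by the larger label and slot $a_k+1$ by the smaller --- precisely the action of $w \mapsto w * \tau_{a_k}$ (unchanged when $\pi_{k-1}$ already has a descent at $a_k$, a transposition otherwise). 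Chaining these transitions gives $\demprod(\mathcal{P}) = \pi_h = \id * \tau_{a_1} * \cdots * \tau_{a_h} = \demprod(\mathbf{a})$. The main obstacle is the construction in the first paragraph: one must argue carefully that a thin corridor around the two pipe-segments into $c_k$ lets the frontier be pushed past $c_k$ alone, i.e.\ that the only constraints on the order of a sweep are exactly the ``earlier along a common pipe'' relations. An alternative presentation establishes the formula first for the single labeling coming from the anti-diagonal sweep (with the sweeping lines completed to staircases along the boundary of the rectangle), and then observes that any two admissible labelings are linked by transpositions of consecutive incomparable crossings, each of which swaps two entries of $\mathbf{a}$ that (one checks, using the same geometric input) differ by at least $2$, leaving $\demprod(\mathbf{a})$ unchanged by the Hecke commutation relation $\tau_i \tau_j = \tau_j \tau_i$ for $|i - j| > 1$.
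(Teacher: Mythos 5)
Your sweep hinges on the assertion that the only constraints on the order in which a south/east-travelling frontier can absorb crossings are the ``earlier along a common pipe'' relations, and that assertion is false. A monotone staircase from the top-left corner to the bottom-right corner separates the rectangle into a southwest region that is closed under moving weakly south and west, so the set of crossings behind any $\gamma_k$ must be an order ideal for the coordinatewise southwest order, not merely for the along-pipe order --- and these orders do not agree. Concretely, take four pipes: $A$ enters the bottom at $x=1$, goes north to height $1$, east to $x=2.5$, north to height $10$, then east to the right edge; $B$ enters at $x=2$, goes north to height $8$, then east; $C$ enters at $x=3$, goes north to height $3$, then east; $D$ enters at $x=4$, goes north to height $6$, then east. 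The only crossings are $(2,1)$ and $(2.5,8)$ (both between $A$ and $B$) and $(4,3)$ (between $C$ and $D$), and $(4,3)$ is incomparable to the other two in the along-pipe order. Hence the labeling $c_1=(4,3)$, $c_2=(2,1)$, $c_3=(2.5,8)$ is admissible in the sense of the lemma, yet no staircase $\gamma_1$ can have exactly $\{(4,3)\}$ on its southwest side: since $(2,1)$ lies strictly southwest of $(4,3)$, any monotone frontier with $(4,3)$ behind it has $(2,1)$ behind it as well. So the family $\gamma_0,\dots,\gamma_h$ your argument requires does not exist for a general admissible labeling, and the transition step $\pi_k=\pi_{k-1}*\tau_{a_k}$ never gets started; the ``thin corridor'' you flag as the main obstacle is not a technical nuisance but exactly where the argument breaks.

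For contrast, the paper's proof needs no global frontier: it inducts on $h$, replacing the \emph{last} crossing by a bump. Because labels increase along every pipe, the two pipes through crossing $h$ meet no further crossings, so they run unobstructed to the right edge and exit in the consecutive rows $a_h$ and $a_h+1$; a two-case check of the labels at that crossing then gives $\demprod(\mathcal{P})=\demprod(\mathcal{P}')*\tau_{a_h}$ --- your transition step, but applied only at the one place where it can be justified purely locally. Your proposed ``alternative presentation'' (prove the formula for a southwest-compatible labeling such as the antidiagonal sweep, then argue that $\demprod(\mathbf{a})$ is unchanged when two consecutive along-pipe-incomparable crossings are transposed) could in principle be completed, but its key claim --- that such incomparable crossings carry letters that are equal or differ by at least $2$ --- is asserted rather than proved, and establishing it requires genuine geometric work comparable to the lemma itself. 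As written, the proposal has a real gap.
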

	\begin{proof}
		We proceed by induction on the number of crossings $h$. 
		In the base case, $\mathcal{P}$ is a planar history with no crossings. 
		Since there are no crossings, $\demprod(\mathcal{P})$ is the identity, and $\mathbf{a}$ is the empty word. 
		The Demazure product of the empty word is also the identity permutation. 
		Thus, $\demprod(\mathcal{P}) = \demprod(\mathbf{a})$.
		
		Suppose now that the result holds for all planar histories with $h-1$ crossings, and let $\mathcal{P}$ be a planar history with $h$ crossings. 
		Form a new planar history $\mathcal{P}'$ with $h-1$ crossings by taking crossing $h$ and replacing it with a bump, as pictured below. 
		
		\[\begin{tikzpicture}[x = 1.5em,y = 1.5em]
			\draw[thick,rounded corners, color = blue] (-1,2)--(1,2);
			\draw[thick,rounded corners, color = blue] (0,1)--(0,3)--(1,3);
		\end{tikzpicture} 
		\hspace{1em} \raisebox{1.25em}{$\mapsto$} \hspace{1em}
		\begin{tikzpicture}[x = 1.5em,y = 1.5em]
			\draw[thick,rounded corners, color = blue] (-1,2)--(0,2)--(0,3)--(1,3);
			\draw[thick,rounded corners, color = blue] (0,1)--(0,2)--(1,2);
		\end{tikzpicture}
		\]

		By the inductive hypothesis, $\demprod(\mathcal{P}') = \demprod((a_1,a_2,\ldots,a_{h-1}))$. 
		It is enough to show that $\demprod(\mathcal{P}) = \demprod(\mathcal{P}') * \tau_{a_{h}}$.
		Suppose crossing $h$ involves pipes labeled $i$ and $j$ with $i < j$. 
		Then there are two possible labelings at this crossing, as pictured below.
		\[\begin{tikzpicture}[x = 1.5em,y = 1.5em]
			\draw[thick,rounded corners, color = blue] (-1,2)--(1,2);
			\draw[thick,rounded corners, color = blue] (0,1)--(0,3)--(1,3);
			\draw (0,.5) node [align = left]{$j$};
			\draw (1.5,2) node [align = left]{$i$};
			\draw (-1.5,2) node [align = left]{$i$};
			\draw (1.5,3) node [align = left]{$j$};
		\end{tikzpicture} 
		\hspace{1em} \raisebox{2.65em}{$\mapsto$} \hspace{1em}
		\begin{tikzpicture}[x = 1.5em,y = 1.5em]
			\draw[thick,rounded corners, color = blue] (-1,2)--(0,2)--(0,3)--(1,3);
			\draw[thick,rounded corners, color = blue] (0,1)--(0,2)--(1,2);
			\draw (0,.5) node [align = left]{$j$};
			\draw (1.5,2) node [align = left]{$j$};
			\draw (-1.5,2) node [align = left]{$i$};
			\draw (1.5,3) node [align = left]{$i$};
		\end{tikzpicture}
		\hspace{5em}
		\begin{tikzpicture}[x = 1.5em,y = 1.5em]
			\draw[thick,rounded corners, color = blue] (-1,2)--(1,2);
			\draw[thick,rounded corners, color = blue] (0,1)--(0,3)--(1,3);
			\draw (0,.5) node [align = left]{$i$};
			\draw (1.5,2) node [align = left]{$i$};
			\draw (-1.5,2) node [align = left]{$j$};
			\draw (1.5,3) node [align = left]{$j$};
		\end{tikzpicture} 
		\hspace{1em} \raisebox{2.65em}{$\mapsto$} \hspace{1em}
		\begin{tikzpicture}[x = 1.5em,y = 1.5em]
			\draw[thick,rounded corners, color = blue] (-1,2)--(0,2)--(0,3)--(1,3);
			\draw[thick,rounded corners, color = blue] (0,1)--(0,2)--(1,2);
			\draw (0,.5) node [align = left]{$i$};
			\draw (1.5,2) node [align = left]{$i$};
			\draw (-1.5,2) node [align = left]{$j$};
			\draw (1.5,3) node [align = left]{$j$};
		\end{tikzpicture}\]
		In both replacements, the diagram on the left belongs to $\mathcal{P}$ and the diagram on the right is part of $\mathcal{P}'$.
	
		Because we are considering the last crossing in $\mathcal{P}$, the two pipes continue to the right side of the rectangle and do not cross any other pipes after this point. By definition of $a_h$, there are exactly $a_h$ pipes that pass above the point just to the right and above crossing $h$ in $\mathcal{P}$. In particular, when the pipe labeled $j$ exits the rectangle on the right, it is the $a_h$th pipe from the top.
		Therefore, $\demprod(\mathcal{P})(a_h) = j$ and $\demprod(\mathcal{P})(a_h + 1) = i$.
		
		In the configuration pictured on the left, $\demprod(\mathcal{P}')$ has an ascent at $a_h$, so $\demprod(\mathcal{P}')*\tau_{a_h} = \demprod(\mathcal{P}')s_{a_h} = \demprod(\mathcal{P})$. 
		In the configuration on the right, $\demprod(\mathcal{P}')$ has a descent at $a_h$, and hence $\demprod(\mathcal{P}')*\tau_{a_h} = \demprod(\mathcal{P}') = \demprod(\mathcal{P})$.
	\end{proof}
	
	\begin{example}
		Pictured below is a NE planar history $\mathcal{P}$.
		\[
		\begin{tikzpicture}[x = 2.5em,y = 2.5em]
			\draw[color = black, thick] (0,-1) rectangle (5, 5);
			\draw[thick,rounded corners,color = blue] (1/2,-1)--(1/2, 2)--(3,2)--(3,3)--(3,4.5)--(5,4.5);
			\draw[thick,rounded corners,color = blue] (3/2,-1)--(3/2, 3)--(3.5,3)--(5,3);
			\draw[thick,rounded corners,color = blue] (5/2,-1)--(5/2,1)--(5,1);
			\draw[thick,rounded corners,color = blue] (7/2,-1)--(7/2,1.5)--(3.5,2)--(4,2)--(4,4)--(5,4);
			\draw[thick,rounded corners,color = blue] (1,-1)--(1,0)--(5,0);
			\node[draw = black,fill = white,inner sep = 1pt] at (1.5, 0) {$1$}; 
			\node[draw = black,fill = white,inner sep = 1pt] at (1.5, 2) {$2$};
			\node[draw = black,fill = white,inner sep = 1pt] at (3, 3) {$3$};
			\node[draw = black,fill = white,inner sep = 1pt] at (2.5, 0) {$4$}; 
			\node[draw = black,fill = white,inner sep = 1pt] at (3.5, 0) {$5$}; 
			\node[draw = black,fill = white,inner sep = 1pt] at (3.5, 1) {$6$}; 
			\node[draw = black,fill = white,inner sep = 1pt] at (4, 3) {$7$}; 
			\node at (1/2, -1.5) {$1$};
			\node at (1, -1.5) {$2$};
			\node at (3/2, -1.5) {$3$};
			\node at (5/2, -1.5) {$4$};
			\node at (7/2, -1.5) {$5$};
			\node at (5, 4.5) [right] {$3$};
			\node at (5, 3) [right] {$1$};
			\node at (5, 1) [right] {$4$};
			\node at (5, 4) [right] {$5$};
			\node at (5, 0) [right] {$2$};
		\end{tikzpicture}
		\]
		
		Reading along pipes, we see that $\demprod(\mathcal{P}) = 35142$. 
		We can also compute this using \cref{lemma:planarhistoryword}. 
		In the diagram above, the crossings have been labeled so that for each pipe, the sequence of crossing labels encountered from bottom to top is strictly increasing. From this labeling, we obtain the word $\mathbf{a} = (2,1,1,3,4,3,2)$. The reader may verify that $\demprod(\mathbf{a}) = \demprod(\mathcal{P})$.
	\end{example}

	We now describe how to associate permutations to the other types of planar histories. We have chosen these conventions to be compatible with our change of basis theorems, as well as the usual way of reading permutations from pipe dreams.
	
	For NW planar histories, label pipes along the bottom edge of the rectangle from left to right with the numbers $1, 2,\ldots, n$. 
	For SW and SE planar histories, label pipes along the top edge of the rectangle from left to right with the numbers $1, 2,\ldots, n$. 
	As before, we propagate labels along pipes using local rules at each crossing.
	Assuming $i < j$, the local rules for the SW, NW, and SE planar histories respectively, are pictured below. 
	\begin{equation}
		\label{eqn:SWpipecross}
		\raisebox{-2.75em}{\begin{tikzpicture}[x = 1.5em,y = 1.5em]
				\draw[thick,rounded corners, color = blue,- > ] (1,2)--(-1,2);
				\draw[thick,rounded corners, color = blue,- > ] (0,3)--(0,1);
				\draw (0,3.5) node [align = left]{$i$};
				\draw (-1.5,2) node [align = left]{$j$};
				\draw (1.5,2) node [align = left]{$j$};
				\draw (0,.5) node [align = left]{$i$};
			\end{tikzpicture}
			\hspace{4em}
			\begin{tikzpicture}[x = 1.5em,y = 1.5em]
				\draw[thick,rounded corners, color = blue,- > ] (1,2)--(-1,2);
				\draw[thick,rounded corners, color = blue,- > ] (0,3)--(0,1);
				\draw (0,3.5) node [align = left]{$j$};
				\draw (-1.5,2) node [align = left]{$j$};
				\draw (1.5,2) node [align = left]{$i$};
				\draw (0,.5) node [align = left]{$i$};
		\end{tikzpicture}}
	\end{equation}
	
	\begin{equation}
		\label{eqn:NWpipecross}
		\raisebox{-2.75em}{
			\begin{tikzpicture}[x = 1.5em,y = 1.5em]
				\draw[thick,rounded corners, color = blue,- > ] (1,2)--(-1,2);
				\draw[thick,rounded corners, color = blue,- > ] (0,1)--(0,3);
				\draw (0,.5) node [align = left]{$i$};
				\draw (-1.5,2) node [align = left]{$j$};
				\draw (1.5,2) node [align = left]{$j$};
				\draw (0,3.5) node [align = left]{$i$};
			\end{tikzpicture}
			\hspace{4em}
			\begin{tikzpicture}[x = 1.5em,y = 1.5em]
				\draw[thick,rounded corners, color = blue,- > ] (1,2)--(-1,2);
				\draw[thick,rounded corners, color = blue,- > ] (0,1)--(0,3);
				\draw (0,.5) node [align = left]{$j$};
				\draw (-1.5,2) node [align = left]{$j$};
				\draw (1.5,2) node [align = left]{$i$};
				\draw (0,3.5) node [align = left]{$i$};
		\end{tikzpicture}}
	\end{equation}
	
	\begin{equation}
		\label{eqn:SEpipecross}
		\raisebox{-2.75em}{\begin{tikzpicture}[x = 1.5em,y = 1.5em]
				\draw[thick,rounded corners, color = blue,- > ] (-1,2)--(1,2);
				\draw[thick,rounded corners, color = blue,- > ] (0,3)--(0,1);
				\draw (0,.5) node [align = left]{$j$};
				\draw (-1.5,2) node [align = left]{$i$};
				\draw (1.5,2) node [align = left]{$i$};
				\draw (0,3.5) node [align = left]{$j$};
			\end{tikzpicture}
			\hspace{4em}
			\begin{tikzpicture}[x = 1.5em,y = 1.5em]
				\draw[thick,rounded corners, color = blue,- > ] (-1,2)--(1,2);
				\draw[thick,rounded corners, color = blue,- > ] (0,3)--(0,1);
				\draw (0,.5) node [align = left]{$j$};
				\draw (-1.5,2) node [align = left]{$j$};
				\draw (1.5,2) node [align = left]{$i$};
				\draw (0,3.5) node [align = left]{$i$};
		\end{tikzpicture}}
	\end{equation}
	
	The pipes in a planar history will end on the left or right side of the rectangle. 
	In either case, we define the permutation $\demprod(\mathcal{P})$ by reading the labels from top to bottom.  As before, we obtain $\demprod(\mathcal{P})$ by tracing along the pipes, ignoring a crossing if the same two pipes have already crossed.
	
	A planar history can be reoriented into a NE planar history by reflecting the diagram horizontally, vertically, or both horizontally and vertically.
	Starting from $\mathcal{P}$, write $\NE(\mathcal{P})$ for the NE planar history obtained in this way. 
	Because of the conventions we have chosen, we have the following relationships between $\demprod(\mathcal{P})$ and $\demprod(\NE(\mathcal{P}))$.
	\begin{lemma} 
		\label{lemma:flipdiagramschangepermutation}
		Let $\mathcal{P}$ be a planar history with $n$ pipes, and let $w_0 \in S_n$ be the longest permutation.
		\begin{enumerate}
			\item If $\mathcal{P}$ is a NW planar history, then $\demprod(\mathcal{P}) = w_0\demprod(\NE(\mathcal{P}))$.
			\item If $\mathcal{P}$ is a SE planar history, then $\demprod(\mathcal{P}) = \demprod(\NE(\mathcal{P}))w_0$.
			\item If $\mathcal{P}$ is a SW planar history, then $\demprod(\mathcal{P}) = w_0\demprod(\NE(\mathcal{P}))w_0$.
		\end{enumerate}
	\end{lemma}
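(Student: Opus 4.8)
The plan is to treat $\NE$ as a bijection on planar histories and to track, case by case, how the label-assignment procedure defining $\demprod$ transforms under the relevant reflection. The point is that reflecting a diagram horizontally ($R_H$), vertically ($R_V$), or both does exactly three things: it changes which edges the pipes enter and exit along, it possibly reverses the left-to-right order used to assign the input labels $1,\dots,n$, and it possibly reverses the top-to-bottom order used to read off the output permutation. Everything else about the propagation of labels across crossings is ``reflection-equivariant,'' so once one checks that the local conventions \eqref{eqn:SWpipecross}, \eqref{eqn:NWpipecross}, \eqref{eqn:SEpipecross} are compatible with \eqref{eqn:NEpipecross} under reflection, the three identities fall out of this bookkeeping.

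For part (1) I would take $\mathcal{P}$ to be NW, so that $\NE(\mathcal{P}) = R_H(\mathcal{P})$ (a valid NE planar history, since $R_H$ turns ``west'' into ``east''). Horizontal reflection preserves heights, hence preserves the top-to-bottom reading order and carries the left exit edge of $\mathcal{P}$ to the right exit edge of $\NE(\mathcal{P})$ without reordering; but it reverses the left-to-right order along the entry edge, so the pipe entering at the $k$th position from the left in $\mathcal{P}$ enters at position $n+1-k$ in $\NE(\mathcal{P})$ and thus receives label $n+1-k$. Thus $R_H$ conjugates the input labeling by $w_0$, which interchanges ``smaller'' and ``larger'' labels and, as one checks, interchanges the two pictures of \eqref{eqn:NWpipecross} with those of \eqref{eqn:NEpipecross}; consequently the labels propagated in $\NE(\mathcal{P})$ are obtained from those of $\mathcal{P}$ by applying $w_0$. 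Reading the right edge of $\NE(\mathcal{P})$ from top to bottom then gives $\demprod(\NE(\mathcal{P})) = w_0\,\demprod(\mathcal{P})$, which is part (1).

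For part (2) I would take $\mathcal{P}$ to be SE with $\NE(\mathcal{P}) = R_V(\mathcal{P})$. Now the left-to-right order is preserved, so the input labels are unchanged and \eqref{eqn:SEpipecross} matches \eqref{eqn:NEpipecross} directly; but $R_V$ reverses heights, so the $p$th position from the top on the right edge of $\mathcal{P}$ becomes position $n+1-p$ from the top on the right edge of $\NE(\mathcal{P})$. Comparing the two readings gives $\demprod(\mathcal{P})(p) = \demprod(\NE(\mathcal{P}))(n+1-p)$ for all $p$, i.e., $\demprod(\mathcal{P}) = \demprod(\NE(\mathcal{P}))\,w_0$. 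Part (3) is the superposition of the two effects: for $\mathcal{P}$ SW one has $\NE(\mathcal{P}) = R_H(R_V(\mathcal{P}))$, which both conjugates the input labeling by $w_0$ (the left-to-right order is reversed, as in part (1)) and reverses the top-to-bottom reading on the exit edge (as in part (2)), so these combine to $\demprod(\mathcal{P}) = w_0\,\demprod(\NE(\mathcal{P}))\,w_0$.

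The hard part is the compatibility check underlying all three cases: one must verify that \eqref{eqn:SWpipecross}, \eqref{eqn:NWpipecross}, \eqref{eqn:SEpipecross} are genuinely the $w_0$-relabeled reflections of \eqref{eqn:NEpipecross}, i.e., that in each of the two configurations at a crossing ``no swap'' remains ``no swap'' and ``swap'' remains ``swap'' after reflecting the picture and replacing each label $i$ by $n+1-i$ (so that the smaller/larger dichotomy flips). This is a finite verification, but it is where the chosen orientation conventions actually enter, and it is what pins down the precise placement of the $w_0$'s. As an alternative route, one could instead establish NW/SE/SW analogues of \cref{lemma:planarhistoryword} and then derive the three identities from \cref{lemma:conjugationandwords} (using that a $180^{\circ}$ rotation sends the associated word $(a_1,\dots,a_h)$ to $(n-a_1,\dots,n-a_h)$) together with \cref{lemma:upanddownoperatorswithflips}; but the direct bookkeeping above avoids having to set up those analogues.
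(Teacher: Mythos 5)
Your proposal is correct and follows essentially the same route as the paper: the paper's proof simply asserts that the three identities follow directly from the local crossing rules \cref{eqn:NEpipecross}, \cref{eqn:SWpipecross}, \cref{eqn:NWpipecross}, \cref{eqn:SEpipecross} and the definition of $\demprod$, which is precisely the reflection-plus-relabeling bookkeeping you spell out. Your expanded verification (complementing labels under a left--right flip, reversing the reading order under an up--down flip, and checking that the swap/no-swap dichotomy is preserved) is a correct filling-in of that same direct argument.
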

	\begin{proof}
		The statement follows directly from the rules pictured in \cref{eqn:NEpipecross}, \cref{eqn:SWpipecross}, \cref{eqn:NWpipecross}, and \cref{eqn:SEpipecross}, as well as the definition of $\demprod(\mathcal{P})$.
	\end{proof}

	We record the following fact about inversion pairs in $\demprod(\mathcal{P})$.
	\begin{lemma}
		\label{lemma:planarhistoryinversions}
		Let $\mathcal{P}$ be a planar history with $n$ pipes, and fix indices $a<b$.
		\begin{enumerate}
			\item Suppose $\mathcal{P}$ is a NE or SW planar history.
			Then $\demprod(\mathcal{P})(b) > \demprod(\mathcal{P})(a)$ if and only if the pipes exiting in rows $a$ and $b$ cross in $\mathcal{P}$.
			\item Suppose $\mathcal{P}$ is a NW or SE planar history.
			Then $\demprod(\mathcal{P})(b)>\demprod(\mathcal{P})(a)$ if and only if the pipes exiting in rows $a$ and $b$ {\bf do not} cross in $\mathcal{P}$.
		\end{enumerate}
	\end{lemma}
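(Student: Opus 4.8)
The plan is to reduce all four cases to a single statement about NE planar histories and then to prove it, cleanly when $\mathcal{P}$ is reduced and by induction on crossings in general. For the reduction: reflecting $\mathcal{P}$ horizontally, vertically, or both turns it into the NE planar history $\NE(\mathcal{P})$, and any such reflection sends crossings to crossings and relabels exit rows predictably (a vertical reflection sends exit row $r$ to exit row $n+1-r$; a horizontal reflection fixes exit rows but interchanges the left and right edges). By \cref{lemma:flipdiagramschangepermutation}, $\demprod(\mathcal{P})$ is obtained from $\demprod(\NE(\mathcal{P}))$ by left multiplication by $w_0$ (the NW case), right multiplication by $w_0$ (the SE case), or conjugation by $w_0$ (the SW case). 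Using that, for $u\in S_n$ and $a<b$, the pair $(a,b)$ is an inversion of $w_0u$ exactly when it is a non-inversion of $u$, an inversion of $uw_0$ exactly when $(n+1-b,\,n+1-a)$ is a non-inversion of $u$, and an inversion of $w_0uw_0$ exactly when $(n+1-b,\,n+1-a)$ is an inversion of $u$, together with the row relabelings above, the NW and SE cases acquire exactly one parity flip relative to NE---turning ``cross'' into ``do not cross''---while SW acquires none, so all four statements follow from the NE case.

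When $\mathcal{P}$ is a reduced NE planar history, the argument is short. First, the label on each pipe never changes: \cref{eqn:NEpipecross} always moves the smaller of the two labels onto the eastbound pipe, but in a reduced NE planar history the eastbound pipe at any crossing is always the one that started in the smaller column---were it otherwise, the two pipes would have had to exchange horizontal order earlier, which requires a previous crossing---so the rule is always applied in its nonswapping form. Hence the pipe exiting row $a$ carries its starting column as its final label, so $\demprod(\mathcal{P}) = \sigma$, where $\sigma$ sends an exit row to the starting column of the pipe exiting there. Combining this with the disk-topology fact that two pipes of an NE planar history cross an odd number of times exactly when their two endpoints on the bottom edge and their two endpoints on the right edge interleave around the boundary of the rectangle---which for reduced $\mathcal{P}$ says precisely that the pipes exiting rows $a<b$ cross if and only if $\sigma(a)>\sigma(b)$---and substituting $\sigma=\demprod(\mathcal{P})$, we obtain the claim.

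For a general (possibly non-reduced) NE planar history I would induct on the number of crossings $h$, using the operation from the proof of \cref{lemma:planarhistoryword} that replaces the last crossing---with respect to a labeling increasing along each pipe---by a bump, producing $\mathcal{P}'$ with $h-1$ crossings and $\demprod(\mathcal{P})=\demprod(\mathcal{P}')*\tau_{a_h}$. The two pipes meeting at that crossing exit $\mathcal{P}$ in consecutive rows $a_h$ and $a_h+1$, the northbound one above; passing to the bump interchanges which pipe exits in each of those rows, removes exactly one crossing between them, and replaces $\demprod(\mathcal{P})$ by $\demprod(\mathcal{P}')$, equal to $\demprod(\mathcal{P})$ or $\demprod(\mathcal{P})s_{a_h}$ according to whether $\demprod(\mathcal{P}')$ has a descent or an ascent at $a_h$. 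The step I expect to be the main obstacle is propagating the biconditional through this move for a pair $(a,b)$ meeting exactly one of the two active pipes: one must check that the single new crossing, filtered through the sorting rule \cref{eqn:NEpipecross}, flips the relevant side of the biconditional exactly when the analogous flip occurs in $\mathcal{P}'$. I would control this by keeping the permutation $\sigma$ and the parities of the pairwise crossing counts in play throughout the induction---so that the biconditional always reads ``$(a,b)$ is an inversion of $\sigma$ if and only if those pipes cross an odd number of times''---and converting back to $\demprod(\mathcal{P})$ only at the end, absorbing the saturations produced by repeated crossings one crossing at a time.
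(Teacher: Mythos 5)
Your reduced NE argument is sound as far as it goes---in a reduced NE planar history the swap case of \cref{eqn:NEpipecross} indeed never occurs, so each pipe keeps its starting column as its label, and the boundary-interleaving parity argument is correct---but note what it actually proves: the pipes exiting rows $a<b$ cross if and only if $\demprod(\mathcal{P})(a)>\demprod(\mathcal{P})(b)$. That is the \emph{reverse} of the inequality in part (1) of the statement, yet you assert that substituting $\sigma=\demprod(\mathcal{P})$ ``we obtain the claim.'' The two versions are not interchangeable: a crossing-free NE planar history has $\demprod(\mathcal{P})=\id$, so every pair satisfies $\demprod(\mathcal{P})(b)>\demprod(\mathcal{P})(a)$ while no pipes cross. (The version you derived is the one the paper actually invokes later, in \cref{lemma:copdascent}, \cref{lemma:cobpdascent}, and \cref{lemma:columninsertionbpdcopermutations}, so the printed inequality appears to be reversed; either way your write-up must confront the discrepancy rather than silently pass from one inequality to its opposite.) The reduction of the four orientations to the NE case via \cref{lemma:flipdiagramschangepermutation} is fine, though it is already more machinery than the paper's proof, which simply reads the statement off the local label rules.

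The genuine gap is the non-reduced case, where you plan to prove the biconditional for \emph{every} pair $a<b$ by induction on crossings. Once label swaps occur, neither direction of that biconditional holds for arbitrary pairs: in the paper's own five-pipe NE example with $\demprod(\mathcal{P})=35142$, the pipes exiting rows $1$ and $5$ never cross although $\demprod(\mathcal{P})(1)=3>2=\demprod(\mathcal{P})(5)$, while the pipes exiting rows $3$ and $5$ do cross although $\demprod(\mathcal{P})(3)=1<2=\demprod(\mathcal{P})(5)$. So the step you flag as ``the main obstacle'' is not a bookkeeping issue---the statement you are inducting toward is false in that generality, and what survives for non-reduced planar histories is only the consecutive case $b=a+1$ (descents of $\demprod(\mathcal{P})$ correspond to exiting pipes that cross at least once) or the fully reduced case; consecutive pairs are also all the paper ever uses. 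Moreover, your proposed invariant, ``$(a,b)$ is an inversion of $\sigma$ iff those pipes cross an odd number of times,'' with $\sigma$ the endpoint permutation, cannot be ``converted back to $\demprod(\mathcal{P})$ at the end'': already two pipes crossing exactly twice exit in consecutive rows with $\demprod(\mathcal{P})=21$ but $\sigma=12$ and even crossing parity, so crossing parity of the physical pipes is the wrong invariant, and the relevant condition (crossing at least once) has to be tracked for $\demprod(\mathcal{P})$ itself, not recovered from $\sigma$ afterwards.
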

	\begin{proof}
		The statement follows directly from the rules pictured in \cref{eqn:NEpipecross}, \cref{eqn:SWpipecross}, \cref{eqn:NWpipecross}, and \cref{eqn:SEpipecross}, as well as the definition of $\demprod(\mathcal{P})$.
	\end{proof}
	
	\subsection{Local moves on planar histories}
	
	Many of our arguments will involve analyzing local modifications of pipes. 
	Below, we list several moves that preserve permutations of planar histories.
	\begin{lemma}
		\label{lemma:planarmoves}
		\begin{enumerate}
			\item Suppose $\mathcal{P}$ is a NE planar history or a SW planar history. Applying any of the local moves pictured below to $\mathcal{P}$ produces a new planar history $\mathcal{P}'$ such that $\demprod(\mathcal{P}) = \demprod(\mathcal{P}')$.
			\begin{equation}
				\raisebox{-1.25em}{\begin{tikzpicture}[x = .75em,y = .75em]
						\draw[thick] (2,0)--(2,4);
						\draw[thick,rounded corners, color = blue] (1,1)--(1,3)--(3,3);
					\end{tikzpicture}
					\hspace{1em} \raisebox{1.3em}{$\leftrightarrow$} \hspace{1em}
					\begin{tikzpicture}[x = .75em,y = .75em]
						\draw[thick] (2,0)--(2,4);
						\draw[thick,rounded corners, color = blue] (1,1)--(3,1)--(3,3);
					\end{tikzpicture}
					\hspace{2em}
					\raisebox{.7em}{
						\begin{tikzpicture}[x = .75em,y = .75em]
							\draw[thick] (0,2)--(4,2);
							\draw[thick,rounded corners, color = blue] (1,1)--(1,3)--(3,3);
						\end{tikzpicture}
						\hspace{1em} \raisebox{.6em}{$\leftrightarrow$} \hspace{1em}
						\begin{tikzpicture}[x = .75em,y = .75em]
							\draw[thick] (0,2)--(4,2);
							\draw[thick,rounded corners, color = blue] (1,1)--(3,1)--(3,3);
					\end{tikzpicture}}
					\hspace{2em}
					\begin{tikzpicture}[x = .75em,y = .75em]
						\draw[thick] (2,0)--(2,4);
						\draw[thick] (0,2)--(4,2);
						\draw[thick,rounded corners, color = blue] (1,1)--(1,3)--(3,3);
					\end{tikzpicture}
					\hspace{1em} \raisebox{1.3em}{$\leftrightarrow$} \hspace{1em}
					\begin{tikzpicture}[x = .75em,y = .75em]
						\draw[thick] (2,0)--(2,4);
						\draw[thick] (0,2)--(4,2);
						\draw[thick,rounded corners, color = blue] (1,1)--(3,1)--(3,3);
				\end{tikzpicture}}
			\end{equation}
			\begin{equation}
				\raisebox{-1.25em}{\begin{tikzpicture}[x = .75em,y = .75em]
						\draw[thick,rounded corners, color = blue] (0,1)--(1,1)--(1,3)--(4,3);
						\draw[thick,rounded corners, color = blue] (1,0)--(1,1)--(3,1)--(3,4);
					\end{tikzpicture}
					\hspace{1em}
					\raisebox{1.3em}{$\leftrightarrow$} \hspace{1em}
					\begin{tikzpicture}[x = .75em,y = .75em]
						\draw[thick,rounded corners, color = blue] (0,1)--(3,1)--(3,4);
						\draw[thick,rounded corners, color = blue] (1,0)--(1,3)--(4,3);
					\end{tikzpicture}
					\hspace{1em} \raisebox{1.3em}{$\leftrightarrow$} \hspace{1em}
					\begin{tikzpicture}[x = .75em,y = .75em]
						\draw[thick,rounded corners, color = blue] (1,0)--(1,3)--(3,3)--(3,4);
						\draw[thick,rounded corners, color = blue] (0,1)--(3,1)--(3,3)--(4,3);
				\end{tikzpicture}}
			\end{equation}
						
			\item Suppose $\mathcal{P}$ is a NW planar history or a SE planar history. Applying any of the local moves pictured below to $\mathcal{P}$ produces a new planar history $\mathcal{P}'$ such that $\demprod(\mathcal{P}) = \demprod(\mathcal{P}')$.
			
			\begin{equation}
				\raisebox{-1.25em}{\begin{tikzpicture}[x = .75em,y = .75em]
						\draw[thick] (2,0)--(2,4);
						\draw[thick,rounded corners, color = blue] (3,1)--(3,3)--(1,3);
					\end{tikzpicture}
					\hspace{1em} \raisebox{1.3em}{$\leftrightarrow$} \hspace{1em}
					\begin{tikzpicture}[x = .75em,y = .75em]
						\draw[thick] (2,0)--(2,4);
						\draw[thick,rounded corners, color = blue] (3,1)--(1,1)--(1,3);
					\end{tikzpicture}
					\hspace{2em}
					\raisebox{.7em}{
						\begin{tikzpicture}[x = .75em,y = .75em]
							\draw[thick] (0,2)--(4,2);
							\draw[thick,rounded corners, color = blue] (3,1)--(3,3)--(1,3);
						\end{tikzpicture}
						\hspace{1em} \raisebox{.6em}{$\leftrightarrow$} \hspace{1em}
						\begin{tikzpicture}[x = .75em,y = .75em]
							\draw[thick] (0,2)--(4,2);
							\draw[thick,rounded corners, color = blue] (3,1)--(1,1)--(1,3);
					\end{tikzpicture}}
					\hspace{2em}
					\begin{tikzpicture}[x = .75em,y = .75em]
						\draw[thick] (2,0)--(2,4);
						\draw[thick] (0,2)--(4,2);
						\draw[thick,rounded corners, color = blue] (3,1)--(3,3)--(1,3);
					\end{tikzpicture}
					\hspace{1em} \raisebox{1.3em}{$\leftrightarrow$} \hspace{1em}
					\begin{tikzpicture}[x = .75em,y = .75em]
						\draw[thick] (2,0)--(2,4);
						\draw[thick] (0,2)--(4,2);
						\draw[thick,rounded corners, color = blue] (3,1)--(1,1)--(1,3);
				\end{tikzpicture}}
			\end{equation}

			\begin{equation}
				\raisebox{-1.25em}{\begin{tikzpicture}[x = .75em,y = .75em]
						\draw[thick,rounded corners, color = blue] (4,1)--(3,1)--(3,3)--(0,3);
						\draw[thick,rounded corners, color = blue] (3,0)--(3,1)--(1,1)--(1,4);
					\end{tikzpicture}
					\hspace{1em}
					\raisebox{1.3em}{$\leftrightarrow$} \hspace{1em}
					\begin{tikzpicture}[x = .75em,y = .75em]
						\draw[thick,rounded corners, color = blue] (4,1)--(1,1)--(1,4);
						\draw[thick,rounded corners, color = blue] (3,0)--(3,3)--(0,3);
					\end{tikzpicture}
					\hspace{1em} \raisebox{1.3em}{$\leftrightarrow$} \hspace{1em}
					\begin{tikzpicture}[x = .75em,y = .75em]
						\draw[thick,rounded corners, color = blue] (3,0)--(3,3)--(1,3)--(1,4);
						\draw[thick,rounded corners, color = blue] (4,1)--(1,1)--(1,3)--(0,3);
				\end{tikzpicture}}
			\end{equation}
		\end{enumerate}
	\end{lemma}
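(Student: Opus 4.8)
The plan is to reduce all four orientations to the northeast case, and then verify each pictured move by tracking how labels propagate through the affected region. First I would reduce to NE planar histories: by \cref{lemma:flipdiagramschangepermutation}, for any planar history $\mathcal{P}$ the permutation $\demprod(\mathcal{P})$ differs from $\demprod(\NE(\mathcal{P}))$ only by left multiplication, right multiplication, or conjugation by the longest permutation $w_0$, each of which is a bijection of $S_n$. A horizontal or vertical reflection carries each pictured local move to a local move of the same combinatorial type on $\NE(\mathcal{P})$ (single-line slides go to single-line slides, and the three-diagram moves go to three-diagram moves), and in fact all of the pictured moves in part (2), as well as the SW instances of part (1), reflect onto the NE pictures of part (1). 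Hence it suffices to show: if $\mathcal{P}'$ is obtained from an NE planar history $\mathcal{P}$ by one of the listed moves, then $\demprod(\mathcal{P}) = \demprod(\mathcal{P}')$.

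For this, recall that for an NE planar history $\demprod(\mathcal{P})$ is computed by labeling pipes $1,\ldots,n$ along the bottom edge and propagating labels through crossings by the rules in \cref{eqn:NEpipecross}; concretely, at each crossing the two incoming labels are sorted, the eastbound strand carrying the smaller and the northbound strand the larger. A listed move is applied inside a rectangular region $R$ that, by construction, meets only the pipes appearing in the picture and whose boundary intersections with those pipes are fixed by the move. It therefore suffices to check that the induced \emph{transfer map} — the function from labels entering $\partial R$ to labels leaving $\partial R$ obtained by applying the sorting rule inside $R$ — is the same for the left and right configurations of each picture; the computation of $\demprod$ outside $R$ is then untouched, so $\demprod(\mathcal{P}) = \demprod(\mathcal{P}')$ follows. (Equivalently, one could run this bookkeeping through \cref{lemma:planarhistoryword}, checking that the crossing word changes only by $0$-Hecke relations, but the transfer-map formulation avoids tracking the statistics $a_i$.)

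The remaining content is a finite case check. For a single-line slide, the moving pipe crosses the stationary pipe exactly once in both configurations, at that crossing it has the same travel direction (east, resp.\ north) in both, and the two loose ends are unchanged; so the transfer map is the same single sort in either case, and one could even cite \cref{lemma:planarhistoryinversions} here. For the slide past two lines and the three-diagram moves, the affected region instead involves three (resp.\ two) pipes forming a triangle (resp.\ bigon) that the move flips, so the transfer map must be recomputed picture by picture; carrying out the sort at each crossing shows that every configuration realizes the same map, sending a pair of incoming labels $(x,y)$ to $(\min(x,y),\max(x,y))$ on the appropriate output ports — that is, the diagram literally computes the braid identity $\tau_i\tau_{i+1}\tau_i=\tau_{i+1}\tau_i\tau_{i+1}$, with the idempotency $\tau_i^2=\tau_i$ absorbing the extra crossing in the two-crossing picture. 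This establishes the NE case, and with the first paragraph the lemma follows.

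I expect the main obstacle to be precisely these Reidemeister-III–type moves (the slide past two lines and the three-diagram moves): there the pipe connectivity of $R$ genuinely changes, so one cannot argue that "the endpoints are fixed'' nor appeal directly to \cref{lemma:planarhistoryinversions}, and must instead verify the braid relation by hand from the sorting rule. The reductions between the four orientations and the single-line slides are routine by comparison.
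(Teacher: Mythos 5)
Your argument is correct, but it is not the route the paper takes: the paper gives no proof at all, simply citing the statement as standard (\cite[Lemma~2.1]{Weigandt}), whereas you supply a self-contained argument. Your two ingredients are sound. The reduction step works because \cref{lemma:flipdiagramschangepermutation} expresses $\demprod(\mathcal{P})$ as $\demprod(\NE(\mathcal{P}))$ composed with $w_0$ on one or both sides, an injective operation, and each pictured move is carried by the relevant reflection (note the SW case needs the composite of both reflections, and the rotated picture may land on the other side of the $\leftrightarrow$, which is harmless since the moves are two-sided) to a move already in the NE list. The local verification works because the rule \cref{eqn:NEpipecross} is exactly a min/max comparator (east gets the smaller label, north the larger), pipes travel only north and east so the label propagation is an acyclic feed-forward computation, and hence replacing the pictured sub-configuration by one with the same boundary input--output behavior leaves the labels on the right edge, and thus $\demprod$, unchanged; the checks are the single-crossing slide (trivial), the two-pipe bump/double-cross/bump family (both configurations output $(\min,\max)$, i.e.\ $\tau^2=\tau$), and the slide past a crossing, where the common transfer map is $(\min,\mathrm{median},\max)$ rather than the pairwise sort you state --- your parenthetical about the braid relation shows you know this, but the case check there is the real content and should be written out. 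Compared with the paper's citation, your proof buys self-containedness at the cost of a routine but nontrivial case analysis; your alternative suggestion of rewriting the crossing word and invoking \cref{lemma:planarhistoryword} together with the $0$-Hecke relations is the same argument in algebraic clothing and is closer in spirit to how such lemmas are usually justified in the literature.
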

	
	\begin{proof}
		This is standard; see, e.g., \cite[Lemma~2.1]{Weigandt}.
	\end{proof}
	
	Note also that we may rescale and shift pipes locally in various ways, so long as the relative order of the crossings is preserved; in such cases, the resulting permutation remains the same.
	
	In the next lemma, we describe another move on planar histories that preserves permutations.
	
	\begin{lemma}
		\label{lemma:planarhistoryverticalisok}
		Suppose $\mathcal{P}$ is a NW planar history or SE planar history. 
		Applying any of the local moves pictured below to $\mathcal{P}$ produces a new planar history $\mathcal{P}'$ so that $\demprod(\mathcal{P}) = \demprod(\mathcal{P}')$. 
			\[	\begin{tikzpicture}[x = 1.25em,y = .75em]
			\foreach \x in {1.5,2,3.5} {
				\draw[thick] (\x,0) -- (\x,4);
			}
			\draw[thick,rounded corners, color = blue] (5,1)--(4,1)--(4,3)--(0,3);
			\draw[thick,rounded corners, color = blue] (4,0)--(4,1)--(1,1)--(1,4);
			\draw (2.8,2) node {$\cdots$};
		\end{tikzpicture}
		\hspace{2em}
		\raisebox{1.3em}{$\leftrightarrow$} \hspace{2em}
		\begin{tikzpicture}[x = 1.25em,y = .75em]
			\foreach \x in {1.5,2,3.5} {
				\draw[thick] (\x,0) -- (\x,4);
			}
			\draw[thick,rounded corners, color = blue] (5,1)--(1,1)--(1,4);
			\draw[thick,rounded corners, color = blue] (4,0)--(4,3)--(0,3);
			\draw (2.8,2) node {$\cdots$};
		\end{tikzpicture}
		\hspace{2em}
		\raisebox{1.3em}{$\leftrightarrow$} \hspace{2em}
		\begin{tikzpicture}[x = 1.25em,y = .75em]
			\foreach \x in {1.5,2,3.5} {
				\draw[thick] (\x,0) -- (\x,4);
			}
			\draw[thick,rounded corners, color = blue] (4,0)--(4,3)--(1,3)--(1,4);
			\draw[thick,rounded corners, color = blue] (5,1)--(1,1)--(1,3)--(0,3);
			\draw (2.8,2) node {$\cdots$};
		\end{tikzpicture}
		\]
	\end{lemma}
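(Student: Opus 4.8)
The plan is to reduce the statement to \cref{lemma:planarmoves}. First, by \cref{lemma:flipdiagramschangepermutation}, it suffices to treat NW planar histories: reflecting a NW diagram into a SE diagram multiplies $\demprod$ on the right by $w_0$, and it reflects each of the three pictured configurations into the corresponding SE configuration, so the SE case follows from the NW case. Next I would induct on the number $k$ of ``vertical'' pipes appearing as $\cdots$ in the window --- that is, the pipes that both distinguished pipes run past. The base case $k=0$ is precisely the two-pipe elbow move in the second part of \cref{lemma:planarmoves}, so there is nothing to prove there.

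For the inductive step, suppose $k\geq 1$. I would peel off the vertical pipe whose two crossings (one with the distinguished pipe entering from the right, one with the distinguished pipe entering from below) are closest to the corner where those two pipes meet. Using the single-strand slide moves from the second part of \cref{lemma:planarmoves} (the first display there: sliding a bend of one pipe past a vertical line, a horizontal line, or the crossing of two other pipes), together with the freedom --- noted right after that lemma --- to rescale and shift pipes locally, I would slide the bends of the two distinguished pipes across this vertical pipe. After finitely many such slides the configuration becomes one in which that vertical pipe no longer lies between the relevant bends, and each slide preserves $\demprod$; applying the inductive hypothesis to the remaining $k-1$ vertical pipes then completes the passage from the left picture to the middle picture, and the same argument (run from the other corner) handles the middle-to-right picture. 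As a cross-check, one can instead argue directly from \cref{lemma:planarhistoryword}: the three pictures agree outside the window, the part inside the window is reduced in all three cases, and reading off the crossing word shows that the three configurations contribute words that agree up to commutation relations among the letters produced by the vertical pipes and a single Hecke/braid relation for the bump-or-crossing of the two distinguished pipes --- relations to which the Demazure product is insensitive --- so $\demprod$ is unchanged.

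The main obstacle will be the bookkeeping. In the inductive approach one must verify that the prescribed sequence of single-strand slides genuinely transports the left configuration to the middle one (rather than stalling at a configuration of the wrong shape) and that no two pipes are ever forced to travel concurrently along the way; in the word-theoretic approach one must carefully track the order in which crossings are encountered along each pipe, so that the change in the crossing word is provably a product of allowed relations. Either way, this is the only delicate point; the rest is a routine induction on $k$ built on \cref{lemma:planarmoves}.
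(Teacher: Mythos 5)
Your proposal is correct and follows the same route as the paper: the paper's proof is simply the observation that each of these moves is obtained by applying a sequence of the local moves from \cref{lemma:planarmoves} (sliding the elbows past the intermediate vertical pipes one at a time, then applying the two-pipe elbow move), which is exactly the induction you spell out. The extra reduction from SE to NW via \cref{lemma:flipdiagramschangepermutation} is harmless but unnecessary, since \cref{lemma:planarmoves} already covers both orientations.
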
 
	\begin{proof}
		This follows by applying a sequence of local moves from \cref{lemma:planarmoves}.
	\end{proof}

	\section{Pipe dreams, bumpless pipe dreams, and their co-objects}
	\label{section:pipes}
	
	In this section, we recall fundamental facts about pipe dreams and BPDs. 
	We also define their co-objects: co-pipe dreams and co-BPDs. 
	These co-objects can be viewed as pipe dreams and BPDs flipped upside down.
	However, we find these orientations to be the most natural for stating our main theorems and arguments, so we maintain the distinction between these objects.
	
	\subsection{Pipe dreams}
	\label{subsection:pipes}
	
	Fix $n \in\mathbb{Z}_+$. 
	A \newword{pipe dream} of size $n$ is a tiling of the $n \times n$ grid using the tiles pictured below
	\[\begin{tikzpicture}[x = 1.5em,y = 1.5em]
		\draw[step = 1,gray, very thin] (0,0) grid (1, -1);
		\draw[color = black, thick] (0,0) rectangle (1, -1);
		\draw[thick,rounded corners,color = Mulberry](0, -1/2)--(1, -1/2);
		\draw[thick,rounded corners,color = Mulberry](1/2, 0)--(1/2, -1);
	\end{tikzpicture}
	\hspace{2em}
	\begin{tikzpicture}[x = 1.5em,y = 1.5em]
		\draw[step = 1,gray, very thin] (0,0) grid (1, -1);
		\draw[color = black, thick] (0,0) rectangle (1, -1);
		\draw[thick,rounded corners,color = Mulberry](1/2, 0)--(1/2, -1/2)--(0, -1/2);
		\draw[thick,rounded corners,color = Mulberry](1, -1/2)--(1/2, -1/2)--(1/2, -1);
	\end{tikzpicture}
	\hspace{2em}
	\begin{tikzpicture}[x = 1.5em,y = 1.5em]
		\draw[step = 1,gray, very thin] (0,0) grid (1, -1);
		\draw[color = black, thick] (0,0) rectangle (1, -1);
		\draw[thick,rounded corners,color = Mulberry](1/2, 0)--(1/2, -1/2)--(0, -1/2);
	\end{tikzpicture}
	\hspace{2em}
	\begin{tikzpicture}[x = 1.5em,y = 1.5em]
		\draw[step = 1,gray, very thin] (0,0) grid (1, -1);
		\draw[color = black, thick] (0,0) rectangle (1, -1);
	\end{tikzpicture}\]
	such that 
	\begin{enumerate}
		\item every tile on the antidiagonal is a \pipeultile,
		\item every tile strictly below the antidiagonal is a \btile, and
		\item every tile strictly above the antidiagonal is either a \pipecross
		or a \pipebump.
	\end{enumerate}
	Each such tiling gives rise to a network of $n$ pipes, which we interpret as a SW planar history. 
	Write $\kpipes_n$ for the set of all pipe dreams of size $n$, and $\pipes_n$ for the set of reduced pipe dreams of size $n$.
	
	Let $\mathbb{T}_n = \{(i,j) \in [n] \times [n] : i + j \leq n\}$.
	A pipe dream is uniquely determined by the positions of its crossing tiles.
	Moreover, the map 
	\[\mathcal{P} \mapsto \{(i,j) \in \mathbb{T}_n: \text{tile } (i,j) \text{ is a \pipecross in } \mathcal{P}\}\]
	is a bijection from $\kpipes_n$ to the set of subsets of $\mathbb{T}_n$. 
	For convenience, we freely identify pipe dreams with subsets of $\mathbb{T}_n$.
	
	To each pipe dream $\mathcal{P}$, we associate a word $\mathbf{a}_{\mathcal{P}}$ by reading crossings from right to left, breaking ties within columns by reading from top to bottom, recording the letter $i + j - 1$ for each $(i,j) \in \mathcal{P}$. 
	
	The following lemma states that the Demazure product of the word $\mathbf{a}_{\mathcal{P}}$ agrees with the permutation associated to $\mathcal{P}$, when we view $\mathcal{P}$ as a SW planar history.
	\begin{lemma}
		\label{lemma:pipeworddemproduct}
		Given $\mathcal{P} \in \pipes_n$, we have $\demprod(\mathcal{P}) = \demprod(\mathbf{a}_{\mathcal{P}})$.
	\end{lemma}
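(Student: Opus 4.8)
The plan is to reduce to the already-established \cref{lemma:planarhistoryword} by reorienting the pipe dream. Since $\mathcal{P}$ is by definition a SW planar history, form the NE planar history $\NE(\mathcal{P})$ by rotating $\mathcal{P}$ by $180^\circ$. By part (3) of \cref{lemma:flipdiagramschangepermutation}, $\demprod(\mathcal{P}) = w_0\,\demprod(\NE(\mathcal{P}))\,w_0$, where $w_0 \in S_n$ is the longest element. So it suffices to compute $\demprod(\NE(\mathcal{P}))$ via \cref{lemma:planarhistoryword} and then translate the resulting word back through \cref{lemma:conjugationandwords}.

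First I would record the geometric fact that every pipe of a pipe dream is a monotone down-and-left lattice path from the top edge to the left edge: inspecting the four tile types, no tile can switch a pipe from ``heading down/left'' to ``heading up/right'', and every pipe begins by entering its first cell from the top. Consequently, along any pipe of $\mathcal{P}$ traversed from its top endpoint, the crossing tiles it meets occur with weakly decreasing column index and weakly increasing row index --- that is, precisely in the order in which $\mathbf{a}_{\mathcal{P}}$ reads crossings (columns right-to-left, top-to-bottom within a column). Since a $180^\circ$ rotation preserves the order in which crossings are met along each pipe, labeling the crossings $1, \ldots, h$ in the reading order of $\mathbf{a}_{\mathcal{P}}$ gives a labeling of the crossings of $\NE(\mathcal{P})$ that increases along every pipe. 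Thus \cref{lemma:planarhistoryword} applies with this labeling and produces a word $\mathbf{b} = (b_1, \ldots, b_h)$ with $\demprod(\NE(\mathcal{P})) = \demprod(\mathbf{b})$.

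Combining this with part (3) of \cref{lemma:flipdiagramschangepermutation} and \cref{lemma:conjugationandwords} yields $\demprod(\mathcal{P}) = \demprod\big((n-b_1, \ldots, n-b_h)\big)$, so the proof comes down to identifying $n - b_m$ with the $m$-th letter $i_m + j_m - 1$ of $\mathbf{a}_{\mathcal{P}}$, where $(i_m,j_m)$ is the $m$-th crossing in reading order. Un-rotating, $b_m$ is the number of pipes passing below the point immediately to the southwest of cell $(i_m,j_m)$ in $\mathcal{P}$, and one checks directly --- using that all crossings lie strictly above the antidiagonal and tracking which pipes have already turned the corner past that point --- that this count equals $n - (i_m + j_m - 1)$. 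This last bookkeeping step is the only real work; it is routine but needs care with the tile conventions, so it is the step I expect to be the main obstacle. (Alternatively, one could bypass the reorientation and mimic the inductive proof of \cref{lemma:planarhistoryword} directly: deleting the crossing read last by $\mathbf{a}_{\mathcal{P}}$ --- the bottom-most one in the leftmost occupied column --- and replacing it by a bump tile removes exactly the last letter of $\mathbf{a}_{\mathcal{P}}$, and the two pipes through that crossing meet no further crossings before exiting, so the Demazure product updates by $*\,\tau_{a_h}$ as required.)
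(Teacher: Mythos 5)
Your proposal is correct and follows essentially the same route as the paper's proof: reorient $\mathcal{P}$ to $\NE(\mathcal{P})$, apply \cref{lemma:planarhistoryword} with the reading order of $\mathbf{a}_{\mathcal{P}}$, and conclude via \cref{lemma:conjugationandwords} and part (3) of \cref{lemma:flipdiagramschangepermutation}. The bookkeeping step you flag as the main obstacle --- that the letter attached to the crossing $(i_m,j_m)$ in $\NE(\mathcal{P})$ is $n-(i_m+j_m-1)$ --- is indeed true (a short pipe-counting argument using that all crossings satisfy $i+j\leq n$), and it is exactly the point the paper likewise asserts without further detail, so there is no gap.
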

	\begin{proof}
		Recall that we may reflect $\mathcal{P}$ to obtain a NE planar history $\NE(\mathcal{P})$. Write $\mathbf{a}_{\mathcal{P}}=(a_1,a_2,\ldots,a_k)$. By reading along the pipes of $\NE(\mathcal{P})$ from left to right, breaking ties within columns by reading from bottom to top, we have valid ordering on crossings as described in \cref{lemma:planarhistoryword} and obtain the word $(n-a_1,n-a_2,\ldots,n-a_k)$.
		Thus, 
		\begin{align*}
		\demprod(\mathbf{a}_{\mathcal{P}})&=w_0\demprod((n-a_1,n-a_2,\ldots,n-a_k))w_0 &\text{(by \cref{lemma:conjugationandwords})}\\
		&=w_0\demprod(\NE(\mathcal{P})) w_0 &\text{(by \cref{lemma:planarhistoryword})}\\
		&=\demprod(\mathcal{P}) & \text{(by \cref{lemma:flipdiagramschangepermutation}),}
		\end{align*}
	as desired.
	\end{proof}	
	
		Given $w \in S_n$, we define 
		\[\pipes(w) = \{\mathcal{P} \in \pipes_n : \demprod(\mathcal{P}) = w\} \quad \text{and} \quad \kpipes(w) = \{\mathcal{P} \in \kpipes_n : \demprod(\mathcal{P}) = w\}.\]
	We associate a monomial weight to each pipe dream. Given $\mathcal{P} \in \kpipes_n$, define 
	\[\wt(\mathcal{P}) = \prod_{(i,j) \in \mathcal{P}} x_i.\]

	\begin{example}
		\label{example:pipedream}
		Let $\mathcal{P}$ be the pipe dream pictured below. 
		\[\begin{tikzpicture}[x=1.25em, y=1.25em]
			\draw[step=1, gray, very thin] (0,0) grid (5,-5);
			\draw[thick] (0,0) rectangle (5,-5);
			\draw[thick, rounded corners, Mulberry]
			(0.5,0)--(0.5,-0.5)--(0,-0.5)
			(1,-0.5)--(0.5,-0.5)--(0.5,-1)
			(1,-0.5)--(2,-0.5)
			(1.5,0)--(1.5,-1)
			(2.5,0)--(2.5,-0.5)--(2,-0.5)
			(3,-0.5)--(2.5,-0.5)--(2.5,-1)
			(3.5,0)--(3.5,-0.5)--(3,-0.5)
			(4,-0.5)--(3.5,-0.5)--(3.5,-1)
			(4.5,0)--(4.5,-0.5)--(4,-0.5)
			(0,-1.5)--(1,-1.5)
			(0.5,-1)--(0.5,-2)
			(1.5,-1)--(1.5,-1.5)--(1,-1.5)
			(2,-1.5)--(1.5,-1.5)--(1.5,-2)
			(2,-1.5)--(3,-1.5)
			(2.5,-1)--(2.5,-2)
			(3.5,-1)--(3.5,-1.5)--(3,-1.5)
			(0,-2.5)--(1,-2.5)
			(0.5,-2)--(0.5,-3)
			(1.5,-2)--(1.5,-2.5)--(1,-2.5)
			(2,-2.5)--(1.5,-2.5)--(1.5,-3)
			(2.5,-2)--(2.5,-2.5)--(2,-2.5)
			(0.5,-3)--(0.5,-3.5)--(0,-3.5)
			(1,-3.5)--(0.5,-3.5)--(0.5,-4)
			(1.5,-3)--(1.5,-3.5)--(1,-3.5)
			(0.5,-4)--(0.5,-4.5)--(0,-4.5);
			\node at (0.5, 0.5) {1};
			\node at (-0.5,-0.5) {1};
			\node at (1.5, 0.5) {2};
			\node at (-0.5,-1.5) {3};
			\node at (2.5, 0.5) {3};
			\node at (-0.5,-2.5) {5};
			\node at (3.5, 0.5) {4};
			\node at (-0.5,-3.5) {2};
			\node at (4.5, 0.5) {5};
			\node at (-0.5,-4.5) {4};
		\end{tikzpicture}
		\]
		Then $\mathcal{P}$ has word $\mathbf{a}_{\mathcal{P}} = (4,2,2,3)$ and $\demprod(\mathcal{P}) = 13524$. 
		Because $\ell(13524) = 3$, $\mathcal{P}$ is not reduced. The monomial weight of $\mathcal{P}$ is $\wt(\mathcal{P})=x_1x_2^2x_3$.
	\end{example}
		
We now recall the combinatorial formulas for Schubert and $\beta$-Grothendieck polynomials as sums over pipe dreams.
	
	\begin{theorem}
		\label{theorem:pipesschubertandgrothendieck}
		Let $w \in S_n$. Then
		\[\mathfrak{S}_w = \sum_{\mathcal{P} \in \pipes(w)} \wt(\mathcal{P})\]
		and 
		\[\mathfrak{G}^{(\beta)}_w = \sum_{\mathcal{P} \in \kpipes(w)} \beta^{\#\mathcal{P}-\ell(w)}\wt(\mathcal{P}).\]
	\end{theorem}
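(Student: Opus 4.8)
The plan is to prove the $\beta$-Grothendieck identity by downward induction on Coxeter length, starting from the longest permutation $w_0\in S_n$ and running the recursion \eqref{eq:grothdivdiffbeta} backwards, and then to obtain the Schubert identity by specializing $\beta=0$. This specialization is legitimate because $\mathfrak{G}^{(0)}_w=\mathfrak{S}_w$, because every term of $\sum_{\mathcal{P}\in\kpipes(w)}\beta^{\#\mathcal{P}-\ell(w)}\wt(\mathcal{P})$ with $\#\mathcal{P}>\ell(w)$ vanishes at $\beta=0$, and because there are no pipe dreams of $w$ with $\#\mathcal{P}<\ell(w)$: by \cref{lemma:pipeworddemproduct} the permutation $\demprod(\mathcal{P})$ is the Demazure product of the length-$\#\mathcal{P}$ word $\mathbf{a}_{\mathcal{P}}$, hence $\ell(\demprod(\mathcal{P}))\le\#\mathcal{P}$.

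For the base case $w=w_0$, recall that a pipe dream of size $n$ is a subset of $\mathbb{T}_n$ with $|\mathbb{T}_n|=\binom{n}{2}=\ell(w_0)$. By the length inequality above, $\demprod(\mathcal{P})=w_0$ forces $\#\mathcal{P}\ge\ell(w_0)=|\mathbb{T}_n|$, so $\mathcal{P}$ is the full pipe dream with a crossing in every cell of $\mathbb{T}_n$. Its associated word $\mathbf{a}_{\mathcal{P}}$ is a staircase reduced word for $w_0$, so $\demprod(\mathcal{P})=w_0$ indeed; its weight is $\prod_{i=1}^{n-1}x_i^{n-i}=x_1^{n-1}x_2^{n-2}\cdots x_{n-1}=\mathfrak{G}^{(\beta)}_{w_0}$, and $\#\mathcal{P}-\ell(w_0)=0$, so the base case holds.

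For the inductive step, let $w\ne w_0$ and assume the formula for all permutations of length $>\ell(w)$. Since $w$ is not strictly decreasing it has an ascent, say $w_i<w_{i+1}$; put $v=ws_i$, so $\ell(v)=\ell(w)+1$ and $i$ is a descent of $v$. By \eqref{eq:grothdivdiffbeta} we have $\mathfrak{G}^{(\beta)}_w=\kdivdiff^{(\beta)}_i(\mathfrak{G}^{(\beta)}_v)$, so, applying the inductive hypothesis to $v$, it remains to prove the combinatorial identity
\[
\kdivdiff^{(\beta)}_i\!\Bigl(\sum_{\mathcal{P}\in\kpipes(v)}\beta^{\#\mathcal{P}-\ell(v)}\wt(\mathcal{P})\Bigr)=\sum_{\mathcal{Q}\in\kpipes(w)}\beta^{\#\mathcal{Q}-\ell(w)}\wt(\mathcal{Q}).
\]
To establish it I would organize $\kpipes(v)$ and $\kpipes(w)$ by the configuration of pipes in the relevant column and set up the Bergeron--Billey-style \emph{ladder} correspondence: deleting the lowest crossing of that column from $\mathcal{P}\in\kpipes(v)$ and sliding the pipes downward produces some $\mathcal{Q}\in\kpipes(w)$, and the fibre of this map over a fixed $\mathcal{Q}$ is a ladder of pipe dreams sitting above that column. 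The total weight change along a fibre is matched to $\kdivdiff^{(\beta)}_i=\divdiff_i\circ(1+\beta x_{i+1})$ via the twisted Leibniz rules $\divdiff_i(fg)=(\divdiff_i f)g$ and $\divdiff_i(x_i^{k+1}h)=(x_i^k+x_i^{k-1}x_{i+1}+\cdots+x_{i+1}^k)h$ for $g,h$ symmetric in $x_i,x_{i+1}$; the bump tiles, i.e. the non-reduced pipe dreams, are exactly what produces the second case of \eqref{eq:grothdivdiffbeta} in the bookkeeping, their contribution being tracked by the exponent $\#\mathcal{P}-\ell(\cdot)$.

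The main obstacle is precisely this identity: one must track in detail how pipes reroute when a crossing is inserted into or deleted from a column, including how bump tiles appear and disappear in the non-reduced case, and confirm that the division built into $\divdiff_i$ is absorbed exactly by the geometric-series expansion coming from the twisted Leibniz rule. An alternative framing identifies pipe dreams of $w$ with the subwords of a fixed reduced word for $w_0$ (the staircase word) whose Demazure product is $w$, and deduces the theorem by iterating $\kdivdiff^{(\beta)}_i$ on $x_1^{n-1}\cdots x_{n-1}$, but this merely relocates the same bookkeeping. Everything outside this core — the base case, the specialization to $\beta=0$, and the accounting of $\beta$-exponents — is routine.
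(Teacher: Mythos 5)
Your base case, the downward induction frame, and the specialization $\beta=0$ (via the inequality $\ell(\demprod(\mathbf{a}_{\mathcal{P}}))\le\#\mathcal{P}$ from \cref{lemma:pipeworddemproduct}) are all fine, but note first that the paper does not actually prove \cref{theorem:pipesschubertandgrothendieck}: it cites \cite{Bergeron.Billey,Fomin.Kirillov.96,BJS93,Fomin.Stanley} for the Schubert case and \cite{Fomin.Kirillov,Knutson.Miller:subword,Knutson.Miller} for the Grothendieck case, so the comparison is with those classical arguments rather than with anything in this paper.

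Measured against that, your proposal has a genuine gap at exactly the point you call ``routine bookkeeping.'' The identity
$\kdivdiff^{(\beta)}_i\bigl(\sum_{\mathcal{P}\in\kpipes(ws_i)}\beta^{\#\mathcal{P}-\ell(ws_i)}\wt(\mathcal{P})\bigr)=\sum_{\mathcal{Q}\in\kpipes(w)}\beta^{\#\mathcal{Q}-\ell(w)}\wt(\mathcal{Q})$
is not an auxiliary verification; it \emph{is} the theorem (given the inductive hypothesis), and it is precisely the content of Knutson--Miller's mitosis operation \cite{Knutson.Miller} in cohomology and of its K-theoretic analogue (or, alternatively, of the nilCoxeter/0-Hecke algebra arguments of \cite{Fomin.Stanley,Fomin.Kirillov}). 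Moreover, the correspondence you sketch is not right as described: $\divdiff_i$ and $\kdivdiff^{(\beta)}_i$ act on the variables $x_i,x_{i+1}$, which record crossings in \emph{rows} $i$ and $i+1$, so any combinatorial operation implementing them must redistribute crossings between those two rows (this is what mitosis does); ``deleting the lowest crossing of a column and sliding pipes downward'' does not interact with the row weights at all, and the Bergeron--Billey ladder moves you invoke connect pipe dreams of the \emph{same} permutation (as in \cref{lemma:kladdermoves}) rather than realizing the passage from $\kpipes(ws_i)$ to $\kpipes(w)$. Your appeal to the twisted Leibniz rules also presupposes that the relevant partial sums are symmetric in $x_i,x_{i+1}$, which itself requires a nontrivial pipe-dream argument. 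Indeed, the paper deliberately sidesteps divided differences on pipe dreams: its recursions on $\kpipes(w)$ are the co-transition recurrences of \cite{Knutson:cotransition} (\cref{prop:kcotransitionpipefacts}), i.e.\ multiplication by $x_i$ at an addable corner of $\dom(w)$, exactly because that operation, unlike $\kdivdiff_i$, has a transparent bijective meaning (add one crossing). If you want a self-contained proof along your lines you must either prove a K-theoretic mitosis statement in full, or switch to the Fomin--Stanley/Fomin--Kirillov algebraic route, or run an induction on co-transition as the paper does elsewhere; as written, the central lemma is asserted rather than proved.
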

	
	The pipe dream formula for Schubert polynomials is due to \cite{Bergeron.Billey,Fomin.Kirillov.96}, building on work of \cite{BJS93,Fomin.Stanley}. 
	The formula for Grothendieck polynomials is due to \cite{Fomin.Kirillov}; see also \cite{Knutson.Miller:subword, Knutson.Miller}.

	\subsection{Co-pipe dreams}
	\label{subsection:copipes}
	
	A \newword{co-pipe dream} of size $n$ is a tiling of the $n \times n$ grid using the tiles pictured below
	\[\begin{tikzpicture}[x = 1.5em,y = 1.5em]
		\draw[step = 1,gray, very thin] (0,0) grid (1, -1);
		\draw[color = black, thick] (0,0) rectangle (1, -1);
		\draw[thick,rounded corners,color = Rhodamine](0, -1/2)--(1, -1/2);
		\draw[thick,rounded corners,color = Rhodamine](1/2, 0)--(1/2, -1);
	\end{tikzpicture}
	\hspace{2em}
	\begin{tikzpicture}[x = 1.5em,y = 1.5em]
		\draw[step = 1,gray, very thin] (0,0) grid (1, -1);
		\draw[color = black, thick] (0,0) rectangle (1, -1);
		\draw[thick,rounded corners,color = Rhodamine](1/2, 0)--(1/2, -1/2)--(1, -1/2);
		\draw[thick,rounded corners,color = Rhodamine](0, -1/2)--(1/2, -1/2)--(1/2, -1);
	\end{tikzpicture}
	\hspace{2em}
	\begin{tikzpicture}[x = 1.5em,y = 1.5em]
		\draw[step = 1,gray, very thin] (0,0) grid (1, -1);
		\draw[color = black, thick] (0,0) rectangle (1, -1);
		\draw[thick,rounded corners,color = Rhodamine](1/2, -1)--(1/2, -1/2)--(0, -1/2);
	\end{tikzpicture}
	\hspace{2em}
	\begin{tikzpicture}[x = 1.5em,y = 1.5em]
		\draw[step = 1,gray, very thin] (0,0) grid (1, -1);
		\draw[color = black, thick] (0,0) rectangle (1, -1);
	\end{tikzpicture}\]
	such that
	\begin{enumerate}
		\item every tile on the diagonal is a \dltile,
		\item every tile strictly above the diagonal is a \btile, and
		\item every tile strictly below the diagonal is either a \copipecross
		or a \copipebump.
	\end{enumerate}
	
	Each such tiling gives rise to a network of $n$ pipes, which we interpret as a NW planar history. Write $\kcopipes_n$ for the set of all co-pipe dreams of size $n$, and $\copipes_n$ for the set of reduced co-pipe dreams of size $n$.
	Given $w \in S_n$, we define 
	\[\copipes(w) = \{\mathcal{P} \in \copipes_n : \demprod(\mathcal{P}) = w\} \quad \text{and} \quad \kcopipes(w) = \{\mathcal{P} \in \kcopipes_n : \demprod(\mathcal{P}) = w\}.\]

	Let $\check{\mathbb{T}}_n = \{(i, j) \in [n] \times [n] : i - j \geq 1\}$. 
	Each co-pipe dream is uniquely determined by the positions of its crossing tiles, so we identify co-pipe dreams with subsets of $\check{\mathbb{T}}_n$.

	Flipping a co-pipe dream $\mathcal{P} \in \copipes_n$ upside down yields a pipe dream $\mathcal{Q}$. 
	We define the word of $\mathcal{P}$ to be the same as the word of $\mathcal{Q}$, and write $\mathbf{a}_{\mathcal{P}} = \mathbf{a}_{\mathcal{Q}}$. 
	Equivalently, one obtains $\mathbf{a}_{\mathcal{P}}$ by reading crossings in $\mathcal{P}$ from right to left, breaking ties within columns by reading from bottom to top, and recording $n-i+j$ whenever there is a crossing in position $(i,j)$.

	\begin{lemma}
		\label{lemma:copipewordandpermutation}
		Given $\mathcal{P} \in \kcopipes_n$, we have $\demprod(\mathcal{P}) = \demprod(\mathbf{a}_{\mathcal{P}}) \cdot w_0$.
	\end{lemma}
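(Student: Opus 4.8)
The plan is to reduce to the pipe dream case via the upside-down flip. Let $\mathcal{Q} \in \kpipes_n$ be the pipe dream obtained by flipping $\mathcal{P}$ upside down, so that by definition $\mathbf{a}_{\mathcal{P}} = \mathbf{a}_{\mathcal{Q}}$. First I would record the orientations: $\mathcal{P}$ is a NW planar history and $\mathcal{Q}$ is a SW planar history. The key bookkeeping observation is that the two NE reorientations coincide: $\NE(\mathcal{Q})$ is obtained from $\mathcal{Q}$ by reflecting both horizontally and vertically, and composing that with the vertical reflection carrying $\mathcal{P}$ to $\mathcal{Q}$ leaves exactly the horizontal reflection carrying $\mathcal{P}$ to $\NE(\mathcal{P})$. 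Hence $\NE(\mathcal{P}) = \NE(\mathcal{Q})$, and in particular their permutations agree; write $v = \demprod(\NE(\mathcal{P})) = \demprod(\NE(\mathcal{Q}))$.

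Next I would apply \cref{lemma:flipdiagramschangepermutation}: part (1), applied to the NW planar history $\mathcal{P}$, gives $\demprod(\mathcal{P}) = w_0 v$; part (3), applied to the SW planar history $\mathcal{Q}$, gives $\demprod(\mathcal{Q}) = w_0 v\, w_0$. Eliminating $v$ between these two identities yields $\demprod(\mathcal{P}) = \demprod(\mathcal{Q})\, w_0$, using $w_0^2 = \id$.

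Finally I would invoke \cref{lemma:pipeworddemproduct} to identify $\demprod(\mathcal{Q}) = \demprod(\mathbf{a}_{\mathcal{Q}}) = \demprod(\mathbf{a}_{\mathcal{P}})$; since the proof of \cref{lemma:pipeworddemproduct} only uses \cref{lemma:conjugationandwords}, \cref{lemma:planarhistoryword}, and \cref{lemma:flipdiagramschangepermutation}, none of which requires reducedness, it applies verbatim to any $\mathcal{Q} \in \kpipes_n$. Combining with the previous paragraph gives $\demprod(\mathcal{P}) = \demprod(\mathbf{a}_{\mathcal{P}})\, w_0$, as claimed.

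The main obstacle is simply keeping the four orientations and their reflections straight — verifying that the vertical flip really does turn a co-pipe dream into a valid pipe dream and that $\NE(\mathcal{P}) = \NE(\mathcal{Q})$ — together with the minor point of noting that \cref{lemma:pipeworddemproduct}, though stated for reduced pipe dreams, holds for all of $\kpipes_n$. One could instead argue directly with \cref{lemma:planarhistoryword} by checking that the reading order used to define $\mathbf{a}_{\mathcal{Q}}$ is increasing along the pipes of $\NE(\mathcal{Q})$ and that the letters it produces are the $n - a_i$, then finishing with \cref{lemma:conjugationandwords}; but this merely re-derives \cref{lemma:pipeworddemproduct} and is less economical.
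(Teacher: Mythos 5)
Your proof is correct and takes essentially the same route as the paper, whose proof simply cites \cref{lemma:pipeworddemproduct} and \cref{lemma:flipdiagramschangepermutation}; you merely make explicit the bookkeeping $\NE(\mathcal{P}) = \NE(\mathcal{Q})$ and the elimination of $v$ between parts (1) and (3). Your remark that \cref{lemma:pipeworddemproduct}, though stated for $\pipes_n$, holds verbatim for all of $\kpipes_n$ (its proof never uses reducedness) is a fair point that the paper leaves implicit and that is indeed needed for the nonreduced case.
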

	\begin{proof}
		This follows directly from \cref{lemma:pipeworddemproduct} and \cref{lemma:flipdiagramschangepermutation}.
	\end{proof}
	
	The map from $\kpipes_n$ to $\kcopipes_n$ given by reflecting pipe dreams vertically is a bijection. 
	A different bijection will be used in our change of basis formulas. 
	
	\begin{lemma}
		There is a bijection from $\kpipes_n$ to $\kcopipes_n$ defined by $\mathcal{P} \mapsto \check{\mathcal{P}}$ where $\check{\mathcal{P}} = \{(i + j,j) : (i,j) \in \mathbb{T}_n-\mathcal{P}\}.$
	\end{lemma}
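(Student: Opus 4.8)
The plan is to reduce the statement to an elementary fact about the index sets, using the bijections recalled above between $\kpipes_n$ and the power set of $\mathbb{T}_n$ and between $\kcopipes_n$ and the power set of $\check{\mathbb{T}}_n$. Concretely, I would show that the shear $\phi\colon(i,j)\mapsto(i+j,j)$ restricts to a bijection $\mathbb{T}_n\to\check{\mathbb{T}}_n$; then $\mathcal{P}\mapsto\check{\mathcal{P}}$ is the composite of the complementation involution $\mathcal{P}\mapsto\mathbb{T}_n\setminus\mathcal{P}$ on the power set of $\mathbb{T}_n$ with the power-set map $S\mapsto\phi(S)$ induced by $\phi$, and hence is a bijection.

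First I would check well-definedness: if $(i,j)\in\mathbb{T}_n\setminus\mathcal{P}$, so $i,j\ge 1$ and $i+j\le n$, then $(i+j,j)$ satisfies $(i+j)-j=i\ge 1$ and $i+j\le n$, so it lies in $\check{\mathbb{T}}_n$, and therefore $\check{\mathcal{P}}$ is the crossing set of a genuine co-pipe dream. (As for pipe dreams, every subset of $\check{\mathbb{T}}_n$ is such a crossing set: filling the remaining cells below the diagonal with $\copipebump$'s, the diagonal cells with $\dltile$'s, and the cells above the diagonal with blanks yields a valid tiling.) Next I would verify that $\phi\colon\mathbb{T}_n\to\check{\mathbb{T}}_n$ is a bijection. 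It is injective since $(a,b)=(i+j,j)$ forces $j=b$ and $i=a-b$; and it is surjective since for $(a,b)\in\check{\mathbb{T}}_n$ — so $a-b\ge 1$ and $a,b\in[n]$ — the cell $(a-b,b)$ lies in $\mathbb{T}_n$ (as $a-b\ge 1$, $b\ge 1$, and $(a-b)+b=a\le n$) and $\phi(a-b,b)=(a,b)$. Thus $\phi$ is a bijection with inverse $(a,b)\mapsto(a-b,b)$.

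Finally I would assemble the pieces: complementation within $\mathbb{T}_n$ is an involution of the power set of $\mathbb{T}_n$, the pushforward along the bijection $\phi$ is a bijection of power sets, and their composite $\mathcal{P}\mapsto\phi(\mathbb{T}_n\setminus\mathcal{P})=\check{\mathcal{P}}$ is therefore a bijection from $\kpipes_n$ to $\kcopipes_n$; its inverse sends the co-pipe dream with crossing set $\mathcal{Q}\subseteq\check{\mathbb{T}}_n$ to the pipe dream with crossing set $\mathbb{T}_n\setminus\{(a-b,b):(a,b)\in\mathcal{Q}\}$. I do not anticipate a genuine obstacle here; the only points requiring any care are matching the defining inequalities of $\mathbb{T}_n$ and $\check{\mathbb{T}}_n$ under the shear, and recording the (immediate) fact that every subset of $\check{\mathbb{T}}_n$ underlies an honest co-pipe dream.
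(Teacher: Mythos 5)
Your proposal is correct and follows essentially the same route as the paper: the paper's proof likewise notes that the shear $(i,j)\mapsto(i+j,j)$ is a bijection from $\mathbb{T}_n$ to $\check{\mathbb{T}}_n$, hence induces a bijection on subsets, and that $\mathcal{P}\mapsto\check{\mathcal{P}}$ is this bijection precomposed with complementation. Your additional explicit checks of the defining inequalities and of the fact that every subset of $\check{\mathbb{T}}_n$ is the crossing set of a co-pipe dream are fine elaborations of what the paper leaves implicit.
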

	\begin{proof}
		The map $(i,j) \mapsto (i + j,j)$ defines a bijection from $\mathbb{T}_n$ to $\check{\mathbb T}_n$. 
		Thus it induces a bijection from subsets of $\mathbb{T}_n$ to subsets of $\check{\mathbb T}_n$. 
		The map 
		\[\mathcal{P} \mapsto \{(i + j,j):(i,j) \in \mathbb{T}_n-\mathcal{P}\}\]
		is the result of precomposing this bijection with complementation, and hence is itself a bijection. 
	\end{proof}
	
	We call $\check{\mathcal{P}}$ the co-pipe dream \newword{associated to} $\mathcal{P}$. 
	We also say that $\demprod(\check{\mathcal{P}})$ is the \newword{co-permutation} of $\mathcal{P}$.
	
	It is helpful to visualize this map as follows. 
	First, remove each tile on the main antidiagonal. 
	Then, change each bump \pipebump to a cross \copipecross and each cross \pipecross to an upside down bump \copipebump. 
	Next, keeping tiles in order within their columns, move them downward so that the crosses and bumps are bottom-justified in the $n\times n$ grid. 
	Finally, place a downward bump \dltile in every cell along the diagonal.

	\begin{example}
		Let $\mathcal{P}$ be the pipe dream from \cref{example:pipedream}. 
		Its associated co-pipe dream $\check{\mathcal{P}}$ is pictured below.
		\[\begin{tikzpicture}[x = 1.25em,y = 1.25em]
			\draw[step = 1,gray, very thin] (0,0) grid (5, -5);
			\draw[color = black, thick] (0,0) rectangle (5, -5);
			\draw[thick,rounded corners,color = Rhodamine](1/2, -1)--(1/2, -1/2)--(0, -1/2);
			\draw[thick,rounded corners,color = Rhodamine](0, -3/2)--(1, -3/2);
			\draw[thick,rounded corners,color = Rhodamine](1/2, -1)--(1/2, -2);
			\draw[thick,rounded corners,color = Rhodamine](3/2, -2)--(3/2, -3/2)--(1, -3/2);
			\draw[thick,rounded corners,color = Rhodamine](1/2, -2)--(1/2, -5/2)--(1, -5/2);
			\draw[thick,rounded corners,color = Rhodamine](0, -5/2)--(1/2, -5/2)--(1/2, -3);
			\draw[thick,rounded corners,color = Rhodamine](3/2, -2)--(3/2, -5/2)--(2, -5/2);
			\draw[thick,rounded corners,color = Rhodamine](1, -5/2)--(3/2, -5/2)--(3/2, -3);
			\draw[thick,rounded corners,color = Rhodamine](5/2, -3)--(5/2, -5/2)--(2, -5/2);
			\draw[thick,rounded corners,color = Rhodamine](1/2, -3)--(1/2, -7/2)--(1, -7/2);
			\draw[thick,rounded corners,color = Rhodamine](0, -7/2)--(1/2, -7/2)--(1/2, -4);
			\draw[thick,rounded corners,color = Rhodamine](1, -7/2)--(2, -7/2);
			\draw[thick,rounded corners,color = Rhodamine](3/2, -3)--(3/2, -4);
			\draw[thick,rounded corners,color = Rhodamine](2, -7/2)--(3, -7/2);
			\draw[thick,rounded corners,color = Rhodamine](5/2, -3)--(5/2, -4);
			\draw[thick,rounded corners,color = Rhodamine](7/2, -4)--(7/2, -7/2)--(3, -7/2);
			\draw[thick,rounded corners,color = Rhodamine](0, -9/2)--(1, -9/2);
			\draw[thick,rounded corners,color = Rhodamine](1/2, -4)--(1/2, -5);
			\draw[thick,rounded corners,color = Rhodamine](1, -9/2)--(2, -9/2);
			\draw[thick,rounded corners,color = Rhodamine](3/2, -4)--(3/2, -5);
			\draw[thick,rounded corners,color = Rhodamine](5/2, -4)--(5/2, -9/2)--(3, -9/2);
			\draw[thick,rounded corners,color = Rhodamine](2, -9/2)--(5/2, -9/2)--(5/2, -5);
			\draw[thick,rounded corners,color = Rhodamine](3, -9/2)--(4, -9/2);
			\draw[thick,rounded corners,color = Rhodamine](7/2, -4)--(7/2, -5);
			\draw[thick,rounded corners,color = Rhodamine](9/2, -5)--(9/2, -9/2)--(4, -9/2);
			\node at (1/2, -11/2){1};
			\node at (-1/2, -1/2){2};
			\node at (3/2, -11/2){2};
			\node at (-1/2, -3/2){4};
			\node at (5/2, -11/2){3};
			\node at (-1/2, -5/2){5};
			\node at (7/2, -11/2){4};
			\node at (-1/2, -7/2){1};
			\node at (9/2, -11/2){5};
			\node at (-1/2, -9/2){3};
		\end{tikzpicture}
		\]
		We have $\demprod(\check{\mathcal{P}}) = 24513$. 
		Note also that $\mathbf{a}_{\check{\mathcal{P}}} = (4,4,2,3,1,4)$. 
		Furthermore, $\demprod(\mathbf{a}_{\check{\mathcal{P}}}) = 31542$, and thus $\demprod(\check{\mathcal{P}}) = \demprod(\mathbf{a}_{\check{\mathcal{P}}}) w_0$, agreeing with \cref{lemma:copipewordandpermutation}.
	\end{example}
	
	The next lemma says that given a pipe dream $\mathcal{P}$, the descent set of the co-permutation of $\mathcal{P}$ is a subset of the descent set of $\demprod(\mathcal{P})$.
	\begin{lemma}
		\label{lemma:copdascent}
		Let $\mathcal{P} \in \kpipes_n$. 
		If $\demprod(\mathcal{P})$ has an ascent at $k$, then $\demprod(\check{\mathcal{P}})$ also has an ascent at $k$. In other words, $\des(\demprod(\check{\mathcal{P}}))\subseteq \des(\demprod(\mathcal{P}))$.
	\end{lemma}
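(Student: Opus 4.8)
The plan is to reduce the statement to a local combinatorial fact about the leftmost columns of $\mathcal{P}$ and $\check{\mathcal{P}}$ and then verify that fact.

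First I would recast the descent condition in terms of pipes. By \cref{lemma:planarhistoryinversions}, applied to the SW planar history $\mathcal{P}$ and the NW planar history $\check{\mathcal{P}}$, having an ascent of $\demprod(\mathcal{P})$ at $k$ is equivalent to a statement about whether the two pipes of $\mathcal{P}$ exiting in rows $k$ and $k+1$ cross, while having an ascent of $\demprod(\check{\mathcal{P}})$ at $k$ is the corresponding statement for $\check{\mathcal{P}}$ but with ``cross'' and ``do not cross'' interchanged, owing to the NW orientation. So it suffices to show: whenever the two pipes of $\mathcal{P}$ exiting rows $k$ and $k+1$ cross, the two pipes of $\check{\mathcal{P}}$ exiting rows $k$ and $k+1$ do not.

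Next I would analyze column $1$. The tile-by-tile description of $\mathcal{P}\mapsto\check{\mathcal{P}}$ (remove the antidiagonal $\pipeultile$, swap $\pipebump\leftrightarrow\pipecross$, bottom-justify each column, fill the main diagonal with $\dltile$) shows that in column $1$ a $\pipecross$ of $\mathcal{P}$ in row $i$ becomes a $\copipebump$ of $\check{\mathcal{P}}$ in row $i+1$, a $\pipebump$ of $\mathcal{P}$ in row $i$ becomes a $\pipecross$ of $\check{\mathcal{P}}$ in row $i+1$, a $\dltile$ is inserted in row $1$, and the $\pipeultile$ at $(n,1)$ disappears. Tracing a pipe along column $1$ and viewing the column as a small sorting network — labels routed downward in $\mathcal{P}$ and upward in $\check{\mathcal{P}}$, with each $\pipecross$ acting as a comparator and each $\pipebump$ as a point where a fresh label enters from column $2$ — I would express $\demprod(\mathcal{P})(k),\demprod(\mathcal{P})(k+1)$ and $\demprod(\check{\mathcal{P}})(k),\demprod(\check{\mathcal{P}})(k+1)$ in terms of the tile types in rows $k,k+1$ and the labels entering column $1$ from the right.

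The main obstacle is that the labels entering column $1$ from the right depend on all the remaining columns, and these differ between $\mathcal{P}$ and $\check{\mathcal{P}}$. I would handle this by induction on $n$: deleting column $1$ and the (empty) bottom row of $\mathcal{P}$ produces a pipe dream $\mathcal{P}^-$ of size $n-1$, and the columns of $\check{\mathcal{P}}$ to the right of column $1$ reassemble, after a uniform shift of one row, into the co-pipe dream associated to $\mathcal{P}^-$; the sequences of labels feeding column $1$ of $\mathcal{P}$ and of $\check{\mathcal{P}}$ are then governed by $\demprod(\mathcal{P}^-)$ and by $\demprod(\check{\mathcal{P}^-})$, whose descent sets are related by the inductive hypothesis. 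The inductive step then becomes a finite case analysis on the tile types in rows $k,k+1$ of column $1$ of $\mathcal{P}$ (equivalently rows $k,k+1,k+2$ of column $1$ of $\check{\mathcal{P}}$, because of the one-row shift), combined with the descent comparison supplied by induction. I expect the delicate point to be reconciling this one-row shift with the inductive data — in particular, checking that the shift cannot let a descent ``leak'' into a row where $\demprod(\mathcal{P})$ has an ascent.
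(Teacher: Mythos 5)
You have a genuine gap, and it occurs at the very first step: the dictionary you set up between ascents and crossings is oriented backwards, so the statement you reduce the lemma to is false. You take ``ascent of $\demprod(\mathcal{P})$ at $k$ $\Leftrightarrow$ the pipes exiting rows $k,k+1$ cross in $\mathcal{P}$'' together with the opposite convention for $\check{\mathcal{P}}$, and conclude that it suffices to show the two pipes never cross in both diagrams. In fact, for the SW-oriented pipe dream an ascent at $k$ corresponds to those pipes \emph{not} crossing (the unique pipe dream of $21$ has its two pipes crossing, and $\demprod=21$ has a descent), while for the NW-oriented co-pipe dream an ascent corresponds to a crossing (the size-two co-pipe dream with a single cross at $(2,1)$ has crossing pipes and $\demprod=12$); the displayed text of \cref{lemma:planarhistoryinversions} does read the way you used it, but it is inconsistent with these computations and with how that lemma is applied elsewhere in the paper, so the orientation must be flipped. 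With the correct orientation, what must be shown is that the pipes exiting rows $k,k+1$ cross in \emph{at least one} of $\mathcal{P},\check{\mathcal{P}}$; your target, that they cross in at most one, amounts to the reverse inclusion $\des(\demprod(\mathcal{P}))\subseteq\des(\demprod(\check{\mathcal{P}}))$ and is simply false. Concretely, take $n=4$ and $\mathcal{P}=\{(1,1),(3,1)\}$ (the third pipe dream of $2143$ in the introduction), so that $\check{\mathcal{P}}=\{(3,1),(3,2),(4,2),(4,3)\}$: in $\mathcal{P}$ the pipes exiting rows $1$ and $2$ cross at $(1,1)$, while in $\check{\mathcal{P}}$ the pipes exiting rows $1$ and $2$ (those entering at the bottoms of columns $2$ and $4$) cross at $(4,2)$. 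Here $\demprod(\mathcal{P})=2143$ and $\demprod(\check{\mathcal{P}})=2341$, so the lemma itself is untouched, but your reduced statement fails at $k=1$, and no column-$1$ case analysis can establish it.

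Beyond this, the remainder is a plan rather than a proof. Your structural observation is fine: deleting column $1$ of $\mathcal{P}$ and reindexing gives a pipe dream $\mathcal{P}^-$ of size $n-1$, and the columns of $\check{\mathcal{P}}$ other than the first are the co-pipe dream of $\mathcal{P}^-$ shifted down one row. But the inductive step --- the case analysis on column-$1$ tiles combined with the one-row shift you yourself flag as delicate --- is exactly where the content lies, and none of it is carried out; with the corrected target it would in any case have to be redone. For comparison, the paper does not induct at all: it argues that an ascent of $\demprod(\mathcal{P})$ at $k$ forces the pipes exiting rows $k$ and $k+1$ not to cross, deduces that $(k,1)\notin\mathcal{P}$, hence that $(k+1,1)\in\check{\mathcal{P}}$, hence that the pipes exiting rows $k$ and $k+1$ of $\check{\mathcal{P}}$ cross, and concludes the ascent of $\demprod(\check{\mathcal{P}})$ at $k$. (Even there, the passage from ``the exiting pipes do not cross'' to ``$(k,1)$ is not a crossing'' deserves more justification than one sentence, since the pipe running vertically through $(k,1)$ need not be the one exiting in row $k+1$; but the intended proof is a single local look at column $1$ with the correctly oriented crossing criterion, not an induction on $n$.)
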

	\begin{proof}
		If $\demprod(\mathcal{P})$ has an ascent at $k$, then the pipe exiting the grid in row $k$ of $\mathcal{P}$ has a smaller label than the pipe exiting in row $k+1$. 
		By \cref{lemma:planarhistoryinversions}, this implies that the two pipes do not cross in~$\mathcal{P}$.
		In particular, the tile at position $(k, 1)$ is not a \pipecross. 
		This implies that in $\check{\mathcal{P}}$, the tile at $(k + 1,1 )$ is a \copipecross. 
		Thus, the pipes exiting in rows $k + 1$ and $k$ in $\check{\mathcal{P}}$ must cross. 
		Since $\check{\mathcal{P}}$ is a NW planar history, it follows from \cref{lemma:planarhistoryinversions} that $\demprod(\check{\mathcal{P}})$ has an ascent at $k$.
	\end{proof}
	
	\subsection{Bumpless pipe dreams}
	\label{subsection:BDP}
	
	A \newword{bumpless pipe dream (BPD)} of size $n$ is a tiling of the $n \times n$ grid using the tiles pictured below
	\begin{equation}
		\label{eqn:BPDtiles}
		\begin{tikzpicture}[x = 1.5em,y = 1.5em]
			\draw[step = 1,gray, very thin] (0,0) grid (1, -1);
			\draw[color = black, thick] (0,0) rectangle (1, -1);
			\draw[thick,rounded corners,color = blue](1/2, -1)--(1/2, -1/2)--(1, -1/2);
		\end{tikzpicture} 
		\hspace{2em}
		\begin{tikzpicture}[x = 1.5em,y = 1.5em]
			\draw[step = 1,gray, very thin] (0,0) grid (1, -1);
			\draw[color = black, thick] (0,0) rectangle (1, -1);
			\draw[thick,rounded corners,color = blue](1/2, 0)--(1/2, -1/2)--(0, -1/2);
		\end{tikzpicture}
		\hspace{2em}
		\begin{tikzpicture}[x = 1.5em,y = 1.5em]
			\draw[step = 1,gray, very thin] (0,0) grid (1, -1);
			\draw[color = black, thick] (0,0) rectangle (1, -1);
			\draw[thick,rounded corners,color = blue](0, -1/2)--(1, -1/2);
		\end{tikzpicture}
		\hspace{2em}
		\begin{tikzpicture}[x = 1.5em,y = 1.5em]
			\draw[step = 1,gray, very thin] (0,0) grid (1, -1);
			\draw[color = black, thick] (0,0) rectangle (1, -1);
			\draw[thick,rounded corners,color = blue](1/2, 0)--(1/2, -1);
		\end{tikzpicture}
		\hspace{2em}
		\begin{tikzpicture}[x = 1.5em,y = 1.5em]
			\draw[step = 1,gray, very thin] (0,0) grid (1, -1);
			\draw[color = black, thick] (0,0) rectangle (1, -1);
			\draw[thick,rounded corners,color = blue](0, -1/2)--(1, -1/2);
			\draw[thick,rounded corners,color = blue](1/2, 0)--(1/2, -1);
		\end{tikzpicture}
		\hspace{2em}
		\begin{tikzpicture}[x = 1.5em,y = 1.5em]
			\draw[step = 1,gray, very thin] (0,0) grid (1, -1);
			\draw[color = black, thick] (0,0) rectangle (1, -1);
		\end{tikzpicture}
	\end{equation}
	so that we get a network of $n$ pipes, with each pipe starting at the bottom edge of the grid and ending at the right. 
	Any such tiling is a NE planar history.

	Write $\kbpd_n$ for the set of all BPDs of size $n$, and $\bpd_n$ for the reduced BPDs of size $n$. 
	Given $w \in S_n$, let 
	\[\bpd(w) = \{\mathcal{B} \in \bpd_n : \demprod(\mathcal{B}) = w\}\quad \text{and} \quad\kbpd(w) = \{\mathcal{B} \in \kbpd_n :\demprod(\mathcal{B}) = w\}.\]
	For each $w \in S_n$, there is a unique BPD in $\kbpd(w)$ which does not contain any \ultile tiles. 
	We call this the \newword{Rothe BPD} for $w$. 
	All BPDs of $w$ can be obtained from the Rothe BPD by applying a sequence of \emph{droop moves} and \emph{K-theoretic droop moves} (see \cite{Lam.Lee.Shimozono, Weigandt}).

	Given $\mathcal{B} \in \kbpd_n$, define 
	\[\rothe(\mathcal{B}) = \{(i,j) : (i,j) \text{ is a \btile in } \mathcal{B}\}\] and 
	\[\up(\mathcal{B}) = \{(i,j):(i,j) \text{ is a \ultile in } \mathcal{B}\}.\]
	There are two types of weights that we assign to BPDs. 
	Given $\mathcal{B} \in \kbpd_n$, we define the ordinary weight to be \[\wt(\mathcal{B}) = \prod_{(i,j) \in \rothe(\mathcal{B})} x_i.\] 
	There is also a K-theoretic weight defined by
	\[\kwt(\mathcal{B}) = \left(\prod_{(i,j) \in \rothe(\mathcal{B})} \beta x_i\right) \left( \prod_{(i,j) \in \up(\mathcal{B})} 1 + \beta x_i \right).\]
	
	\begin{theorem}
		\label{bpd:pipesschubertandgrothendieck}
		Let $w \in S_n$. 
		Then
		\[\mathfrak{S}_w = \sum_{\mathcal{B} \in \bpd(w)} \wt(\mathcal{B}),\]
		and 
		\[\mathfrak{G}^{(\beta)}_w = \beta^{-\ell(w)}\sum_{\mathcal{B} \in \kbpd(w)} \kwt(\mathcal{B}).\]
	\end{theorem}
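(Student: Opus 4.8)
This is the bumpless pipe dream formula of Lam--Lee--Shimozono \cite{Lam.Lee.Shimozono, Lam.Lee.Shimozono-K-theory} and Weigandt \cite{Weigandt}. The plan is to recover it from the recursive definitions of $\mathfrak{S}_w$ and $\mathfrak{G}^{(\beta)}_w$ via divided differences. First I would reduce to the Grothendieck identity: for a BPD $\mathcal{B}$ of $w$ it is standard that $\#\rothe(\mathcal{B})\ge\ell(w)$, with equality exactly when $\mathcal{B}$ is reduced (the number of blank tiles equals the number of crossing tiles, which is at least $\ell(\demprod(\mathcal{B}))$). Since $\kwt(\mathcal{B})=\beta^{\#\rothe(\mathcal{B})}\bigl(\prod_{(i,j)\in\rothe(\mathcal{B})}x_i\bigr)\bigl(\prod_{(i,j)\in\up(\mathcal{B})}(1+\beta x_i)\bigr)$, the expression $\beta^{-\ell(w)}\sum_{\mathcal{B}\in\kbpd(w)}\kwt(\mathcal{B})$ is a polynomial in $\beta$ whose value at $\beta=0$ is $\sum_{\mathcal{B}\in\bpd(w)}\wt(\mathcal{B})$, so the Schubert formula follows from the Grothendieck formula together with $\mathfrak{S}_w=\mathfrak{G}^{(0)}_w$.

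For the Grothendieck formula, set $G_w=\beta^{-\ell(w)}\sum_{\mathcal{B}\in\kbpd(w)}\kwt(\mathcal{B})$ and prove $G_w=\mathfrak{G}^{(\beta)}_w$ by downward induction on $\binom{n}{2}-\ell(w)$. For the base case $w=w_0$ one checks that $\kbpd(w_0)$ consists of the single Rothe BPD of $w_0$, whose blank tiles fill the staircase $\{(i,j):i+j\le n\}$ and which has no $\ultile$ tiles, so $G_{w_0}=x_1^{n-1}\cdots x_{n-1}=\mathfrak{G}^{(\beta)}_{w_0}$. For the inductive step, fix $v\neq w_0$, choose an ascent $i$ of $v$, and set $w=vs_i$, so that $i\in\des(w)$ and $\ell(w)=\ell(v)+1$; by induction $G_w=\mathfrak{G}^{(\beta)}_w$, and by \cref{eq:grothdivdiffbeta} we have $\kdivdiff^{(\beta)}_i\mathfrak{G}^{(\beta)}_w=\mathfrak{G}^{(\beta)}_{ws_i}=\mathfrak{G}^{(\beta)}_v$. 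Thus it remains to prove the combinatorial identity $\kdivdiff^{(\beta)}_i\bigl(\sum_{\mathcal{B}\in\kbpd(w)}\kwt(\mathcal{B})\bigr)=\beta\sum_{\mathcal{B}\in\kbpd(v)}\kwt(\mathcal{B})$ whenever $i\in\des(w)$; tracking the powers of $\beta$ then gives $\kdivdiff^{(\beta)}_i G_w=G_v$.

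This last identity is the crux, and I expect it to be the main obstacle. My plan is to partition $\kbpd(w)$ and $\kbpd(v)$ into matched families according to the configuration of the diagram outside rows $i$ and $i+1$ (together with its interface with the adjacent rows), so that within a matched pair of families the generating functions differ only in the $x_i,x_{i+1}$-dependence contributed by rows $i,i+1$. Because $i$ is a descent of $w$ but an ascent of $v$, the admissible local pictures in those two rows for $v$ are obtained from those for $w$ by a family of droop and K-droop type moves confined to rows $i$ and $i+1$; summing the corresponding local weights and applying $\kdivdiff^{(\beta)}_i(f)=\divdiff_i\bigl((1+\beta x_{i+1})f\bigr)$ makes the sum telescope. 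The delicate bookkeeping is that of the $(1+\beta x_i)$ factors attached to $\ultile$ tiles in rows $i$ and $i+1$, together with the overall power of $\beta$; this is exactly where the arguments of \cite{Lam.Lee.Shimozono, Weigandt} do their work, and in practice it is cleanest to route it through the (K-theoretic) transition recurrence of Lascoux, whose bijective realization on BPDs is carried out in \cite{Weigandt}, since transition modifies the diagram one cell at a time.

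An alternative I would mention but not pursue here, since it reverses the paper's logical order, is to transport \cref{theorem:pipesschubertandgrothendieck} across the composition of the column-weight-preserving Gao--Huang bijection of \cref{section:canonicalbijection} with the transpose symmetries of pipe dreams and BPDs: this composition is a weight-preserving bijection $\pipes(w)\to\bpd(w)$ (and its K-theoretic refinement $\kpipes(w)\to\kbpd(w)$) for the row weights $\wt$ and $\kwt$, and carries the pipe dream formula directly to the BPD formula.
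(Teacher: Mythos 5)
The paper does not prove this theorem: it is quoted from the literature, with the Schubert case attributed to \cite{Lam.Lee.Shimozono} and the Grothendieck case to \cite{Weigandt} (a translation of Lascoux's alternating sign matrix formula), so your proposal has to be judged as a standalone argument rather than against an internal proof. Your preliminary reductions are correct. The fact you assert in passing, that $\#\rothe(\mathcal{B})$ equals the number of crossing tiles for every $n\times n$ BPD, is true and has a one-line proof: a pipe entering at the bottom of column $c$ and exiting at the right of row $r$ visits $(n-r)+(n-c)+1$ cells, so summing over pipes gives $n^2$ cell-visits, and comparing with the tile count $n^2$ forces $\#\{\text{blank}\}=\#\{\text{cross}\}$; since pairs of pipes that cross at least once are exactly the inversions of $\demprod(\mathcal{B})$, this is $\geq \ell(w)$ with equality precisely for reduced BPDs. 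Hence the $\beta=0$ specialization does recover the Schubert identity, and your $\beta$-bookkeeping and base case $w_0$ are fine.

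The genuine gap is the step you yourself label the crux: the identity $\kdivdiff^{(\beta)}_i\bigl(\sum_{\mathcal{B}\in\kbpd(w)}\kwt(\mathcal{B})\bigr)=\beta\sum_{\mathcal{B}\in\kbpd(ws_i)}\kwt(\mathcal{B})$ for $i\in\des(w)$ is planned, not proved, and the plan is doubtful as stated. Unlike ladder moves on pipe dreams, droop and K-droop moves on BPDs are not confined to two adjacent rows, so BPDs of $w$ and of $ws_i$ are not matched by modifications supported in rows $i,i+1$ with everything else frozen; this is precisely why the published proofs do not verify divided-difference compatibility locally but instead argue via the (K-theoretic) transition recurrence \cite{Lam.Lee.Shimozono,Weigandt} or via Monk-rule insertion \cite{Huang.2023}, which are different recursions. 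Your fallback of ``routing through transition'' abandons the divided-difference induction you set up and amounts to re-citing the same sources the paper cites. Separately, the K-theoretic half of your alternative is false as stated: there is no $\kwt$-compatible bijection $\kpipes(w)\to\kbpd(w)$, and the sets can have different sizes (for $w=1423$ one has $\#\kpipes(w)=5$ but $\#\kbpd(w)=3$); the K-theoretic lift of the Gao--Huang correspondence requires the marked BPDs of \cite{Huang.Shimozono.Yu}. The cohomological transport through \cite{Gao.Huang} is legitimate but yields only the Schubert half.
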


	The BPD formula for Schubert polynomials is due to \cite{Lam.Lee.Shimozono}. 
	The formula for Grothendieck polynomials is given in \cite{Weigandt}, and is a translation of an earlier formula of Lascoux \cite{Lascoux}, originally stated in terms of \emph{alternating sign matrices}.

	\subsection{Co-bumpless pipe dreams}
	
	A \newword{co-bumpless pipe dream (co-BPD)} of size $n$ is a tiling of the $n \times n$ grid using the tiles pictured below
	\begin{equation}
		\label{eqn:coBPDtiles}
		\begin{tikzpicture}[x = 1.5em,y = 1.5em]
			\draw[step = 1,gray, very thin] (0,0) grid (1, -1);
			\draw[color = black, thick] (0,0) rectangle (1, -1);
			\draw[thick,rounded corners,color = ForestGreen](1/2, 0)--(1/2, -1/2)--(1, -1/2);
		\end{tikzpicture} 
		\hspace{2em}
		\begin{tikzpicture}[x = 1.5em,y = 1.5em]
			\draw[step = 1,gray, very thin] (0,0) grid (1, -1);
			\draw[color = black, thick] (0,0) rectangle (1, -1);
			\draw[thick,rounded corners,color = ForestGreen](1/2, -1)--(1/2, -1/2)--(0, -1/2);
		\end{tikzpicture}
		\hspace{2em}
		\begin{tikzpicture}[x = 1.5em,y = 1.5em]
			\draw[step = 1,gray, very thin] (0,0) grid (1, -1);
			\draw[color = black, thick] (0,0) rectangle (1, -1);
			\draw[thick,rounded corners,color = ForestGreen](0, -1/2)--(1, -1/2);
			\draw[thick,rounded corners,color = ForestGreen](1/2, 0)--(1/2, -1);
		\end{tikzpicture}
		\hspace{2em}
		\begin{tikzpicture}[x = 1.5em,y = 1.5em]
			\draw[step = 1,gray, very thin] (0,0) grid (1, -1);
			\draw[color = black, thick] (0,0) rectangle (1, -1);
		\end{tikzpicture}
		\hspace{2em}
		\begin{tikzpicture}[x = 1.5em,y = 1.5em]
			\draw[step = 1,gray, very thin] (0,0) grid (1, -1);
			\draw[color = black, thick] (0,0) rectangle (1, -1);
			\draw[thick,rounded corners,color = ForestGreen](0, -1/2)--(1, -1/2);
		\end{tikzpicture}
		\hspace{2em}
		\begin{tikzpicture}[x = 1.5em,y = 1.5em]
			\draw[step = 1,gray, very thin] (0,0) grid (1, -1);
			\draw[color = black, thick] (0,0) rectangle (1, -1);
			\draw[thick,rounded corners,color = ForestGreen](1/2, 0)--(1/2, -1);
		\end{tikzpicture}
	\end{equation}
	so that we obtain a network of $n$ pipes, with each pipe starting at the top of the grid and ending at the right. 
	Any such tiling is a SE planar history. 
		
	Write $\kcobpd_n$ for the set of all $n \times n$ co-BPDs, and $\cobpd_n$ for the set of reduced $n \times n$ co-BPDs. 
	Given $w \in S_n$, define 
	\[\cobpd(w) = \{\mathcal{B} \in \cobpd_n : \demprod(\mathcal{B}) = w\} \quad \text{and} \quad \kcobpd(w) = \{\mathcal{B} \in \kcobpd_n : \demprod(\mathcal{B}) = w\}.\]

	If we flip a co-BPD upside down, we obtain a BPD. 
	This map is clearly bijective. 
	However, we will use a different bijection in our change of basis formulas. 
	\begin{lemma}
		\label{lemma:copipedreambijection}
		There is a bijection from $\kbpd_n\rightarrow \kcobpd_n$ defined by making the following tile-by-tile replacements:
		\[\begin{tikzpicture}[x = 1.5em,y = 1.5em]
			\draw[step = 1,gray, very thin] (0,0) grid (1, -1);
			\draw[color = black, thick] (0,0) rectangle (1, -1);
			\draw[thick,rounded corners,color = blue](1/2, -1)--(1/2, -1/2)--(1, -1/2);
		\end{tikzpicture} 
		\quad 
		\raisebox{.5em}{$\mapsto$}
		\quad
		\begin{tikzpicture}[x = 1.5em,y = 1.5em]
			\draw[step = 1,gray, very thin] (0,0) grid (1, -1);
			\draw[color = black, thick] (0,0) rectangle (1, -1);
			\draw[thick,rounded corners,color = ForestGreen](1/2, 0)--(1/2, -1/2)--(1, -1/2);
		\end{tikzpicture} 
		\hspace{3em}
		\begin{tikzpicture}[x = 1.5em,y = 1.5em]
			\draw[step = 1,gray, very thin] (0,0) grid (1, -1);
			\draw[color = black, thick] (0,0) rectangle (1, -1);
			\draw[thick,rounded corners,color = blue](1/2, 0)--(1/2, -1/2)--(0, -1/2);
		\end{tikzpicture}
		\quad 
		\raisebox{.5em}{$\mapsto$}
		\quad
		\begin{tikzpicture}[x = 1.5em,y = 1.5em]
			\draw[step = 1,gray, very thin] (0,0) grid (1, -1);
			\draw[color = black, thick] (0,0) rectangle (1, -1);
			\draw[thick,rounded corners,color = ForestGreen](1/2, -1)--(1/2, -1/2)--(0, -1/2);
		\end{tikzpicture}
		\hspace{3em}
		\begin{tikzpicture}[x = 1.5em,y = 1.5em]
			\draw[step = 1,gray, very thin] (0,0) grid (1, -1);
			\draw[color = black, thick] (0,0) rectangle (1, -1);
			\draw[thick,rounded corners,color = blue](0, -1/2)--(1, -1/2);
		\end{tikzpicture}
		\quad 
		\raisebox{.5em}{$\mapsto$}
		\quad
		\begin{tikzpicture}[x = 1.5em,y = 1.5em]
			\draw[step = 1,gray, very thin] (0,0) grid (1, -1);
			\draw[color = black, thick] (0,0) rectangle (1, -1);
			\draw[thick,rounded corners,color = ForestGreen](0, -1/2)--(1, -1/2);
			\draw[thick,rounded corners,color = ForestGreen](1/2, 0)--(1/2, -1);
		\end{tikzpicture}
		\]\[
		\begin{tikzpicture}[x = 1.5em,y = 1.5em]
			\draw[step = 1,gray, very thin] (0,0) grid (1, -1);
			\draw[color = black, thick] (0,0) rectangle (1, -1);
			\draw[thick,rounded corners,color = blue](1/2, 0)--(1/2, -1);
		\end{tikzpicture}
		\quad 
		\raisebox{.5em}{$\mapsto$}
		\quad
		\begin{tikzpicture}[x = 1.5em,y = 1.5em]
			\draw[step = 1,gray, very thin] (0,0) grid (1, -1);
			\draw[color = black, thick] (0,0) rectangle (1, -1);
		\end{tikzpicture}
		\hspace{3em}
		\begin{tikzpicture}[x = 1.5em,y = 1.5em]
			\draw[step = 1,gray, very thin] (0,0) grid (1, -1);
			\draw[color = black, thick] (0,0) rectangle (1, -1);
			\draw[thick,rounded corners,color = blue](0, -1/2)--(1, -1/2);
			\draw[thick,rounded corners,color = blue](1/2, 0)--(1/2, -1);
		\end{tikzpicture}
		\quad 
		\raisebox{.5em}{$\mapsto$}
		\quad
		\begin{tikzpicture}[x = 1.5em,y = 1.5em]
			\draw[step = 1,gray, very thin] (0,0) grid (1, -1);
			\draw[color = black, thick] (0,0) rectangle (1, -1);
			\draw[thick,rounded corners,color = ForestGreen](0, -1/2)--(1, -1/2);
		\end{tikzpicture}
		\hspace{3em}
		\begin{tikzpicture}[x = 1.5em,y = 1.5em]
			\draw[step = 1,gray, very thin] (0,0) grid (1, -1);
			\draw[color = black, thick] (0,0) rectangle (1, -1);
		\end{tikzpicture}
		\quad 
		\raisebox{.5em}{$\mapsto$}
		\quad
		\begin{tikzpicture}[x = 1.5em,y = 1.5em]
			\draw[step = 1,gray, very thin] (0,0) grid (1, -1);
			\draw[color = black, thick] (0,0) rectangle (1, -1);
			\draw[thick,rounded corners,color = ForestGreen](1/2, 0)--(1/2, -1);
		\end{tikzpicture}\]
	\end{lemma}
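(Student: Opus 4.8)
The plan is to describe the tile replacement uniformly and then check that it carries legal tilings to legal tilings in both directions. The key observation is that each of the six BPD tiles of \cref{eqn:BPDtiles} and each of the six co-BPD tiles of \cref{eqn:coBPDtiles} is determined by which of its four edges carries a pipe, and that the stated rule has the simple description: \emph{keep the east and west ports of a tile unchanged, and complement the set of north/south ports} (so a tile with only a south port becomes one with only a north port, a tile with both a north and south port becomes one with neither, and vice versa). A direct inspection of the six pictured replacements confirms this and simultaneously shows that the rule matches the six BPD tile types bijectively onto the six co-BPD tile types. Since complementing the north/south ports twice is the identity, the same recipe read in the other direction carries co-BPD tiles back to BPD tiles, and the two tile-level maps are mutually inverse.

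Next I would check that applying this rule cell-by-cell respects the edge-matching conditions defining a tiling. Two horizontally adjacent cells share an east/west port, which the rule leaves alone, so nothing changes there. Two vertically adjacent cells $(i,j)$ and $(i+1,j)$ share the south port of $(i,j)$ and the north port of $(i+1,j)$; in a valid tiling these two ports agree (both occupied or both empty), and the rule negates each of them, so they still agree. Hence applying the rule to a BPD yields a legal tiling by co-BPD tiles, and symmetrically in the reverse direction.

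Then I would pin down the boundary data. In a BPD all $n$ pipes start along the bottom edge and end along the right edge, so the bottom and right edges are fully occupied while the top and left edges are empty. Applying the rule complements the north ports along row $1$ (empty $\to$ occupied) and the south ports along row $n$ (occupied $\to$ empty) and leaves the east/west ports along the outer columns untouched; thus the output has its top and right edges fully occupied and its bottom and left edges empty, which is exactly the boundary data required of a co-BPD (and conversely).

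The last and most delicate point is to argue that the output is a genuine pipe network --- $n$ pipes, no closed loops, each running from the top edge to the right edge --- rather than merely a tiling with matching edges and correct boundary. This follows from the fact that every co-BPD tile joins its ports only in ways compatible with travel to the south or the east: starting a pipe at a point of the top edge and tracing it, each successive tile forces the pipe to continue south or east, so the pipe is monotone, cannot close into a loop, and must leave the grid through the bottom or right edge; since the bottom edge is empty, it exits at the right. As there are $n$ starting points on the top edge and $n$ available endpoints on the right edge, this accounts for all pipes, so the output is a bona fide co-BPD. The identical argument, with ``south or east'' replaced by ``north or east'', applied to the inverse recipe shows it sends co-BPDs to BPDs; since the two tile recipes compose to the identity, the resulting maps $\kbpd_n \leftrightarrow \kcobpd_n$ are mutually inverse bijections, as claimed. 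I expect this monotonicity step to be the main obstacle to write cleanly, but it is essentially the standard verification that such tilings are planar histories of the appropriate orientation.
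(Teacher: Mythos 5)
Your proposal is correct and takes essentially the same route as the paper, whose proof simply states that well-definedness and invertibility are immediate from the substitutions; your uniform description of the rule (keep east/west ports, complement north/south ports) together with the edge-matching, boundary, and monotonicity checks is just a careful expansion of that one-line observation. The only point worth tightening is that the monotonicity argument should also be invoked to exclude closed loops not meeting the top edge (traverse any would-be loop: it moves monotonically south/east or north/west, so it cannot close), but this follows from the same local analysis you already give.
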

	\begin{proof}
		It is immediate from the substitutions that this map is well-defined and invertible.
	\end{proof}
	
	Given $\mathcal{B} \in \kbpd_n$, we write $\check{\mathcal{B}}$ for its corresponding co-BPD under the bijection from \cref{lemma:copipedreambijection} and call $\check{\mathcal{B}}$ the co-BPD \newword{associated to} $\mathcal{B}$.
	We define the \newword{co-permutation} of $\mathcal{B}$ to be $\demprod(\check{\mathcal{B}})$.

	\begin{remark}
		BPDs are in direct bijection with states the six-vertex model. 
		As explained in \cite{Weigandt}, one can obtain a BPD $\mathcal{B}$ from a state of the six-vertex model by drawing pipe segments corresponding to the arrows that point left and down. 
		To obtain the corresponding co-BPD $\check{\mathcal{B}}$, instead select the arrows that point left and up.
	\end{remark}

	\begin{lemma}
		\label{lemma:cobpdascent}
		Let $\mathcal{B} \in \kbpd_n$, and fix $k \in [n-1]$.
		If $\demprod(\mathcal{B})$ has an ascent at $k$, then $\demprod(\check{\mathcal{B}})$ also has an ascent at $k$. In other words, $\des(\demprod(\check{\mathcal{B}}))\subseteq \des(\demprod(\mathcal{B}))$.
	\end{lemma}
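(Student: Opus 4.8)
The plan is to mirror the proof of \cref{lemma:copdascent}, carrying the argument out in column $n$ of the grid --- where the pipes of a BPD and of a co-BPD exit --- in place of column $1$, and using the tile-by-tile bijection of \cref{lemma:copipedreambijection} in place of the co-pipe dream bijection. First I would record a preliminary fact: since every pipe of a BPD exits on the right edge and each of the $n$ rows contains exactly one such exit, every tile in column $n$ of a BPD touches the right edge, hence is a \drtile, a \htile, or a \ctile; similarly every tile in column $n$ of a co-BPD is a \courtile, a \htile, or a \ctile.

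Now suppose $\demprod(\mathcal{B})$ has an ascent at $k$. Reading $\demprod(\mathcal{B})$ down the right edge, the pipe exiting in row $k$ carries a smaller label than the pipe exiting in row $k+1$, so \cref{lemma:planarhistoryinversions} (applied to the NE planar history $\mathcal{B}$) shows these two pipes do not cross in $\mathcal{B}$. I claim this forces the tile at $(k,n)$ to be a \htile or a \ctile: if instead $(k,n)$ were a \drtile, the pipe exiting row $k$ would arrive at $(k,n)$ along its bottom edge, which means the tile at $(k+1,n)$ has a segment on its top edge; the only right-edge tile with that property is the \ctile, so $(k+1,n)$ would be a \ctile, and then the pipe exiting row $k$ (the vertical strand of that \ctile, bent right at $(k,n)$) and the pipe exiting row $k+1$ (its horizontal strand) would cross inside that tile, a contradiction. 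Applying \cref{lemma:copipedreambijection}, which interchanges \htile and \ctile, the tile at $(k,n)$ in $\check{\mathcal{B}}$ is again a \htile or a \ctile, and in particular it is not a \courtile. From here I would run the reflected version of the preceding argument inside the co-BPD $\check{\mathcal{B}}$, whose pipes travel south and east and exit on the right, to conclude that the pipes exiting rows $k$ and $k+1$ of $\check{\mathcal{B}}$ must cross; \cref{lemma:planarhistoryinversions} applied to $\check{\mathcal{B}}$ then gives that $\demprod(\check{\mathcal{B}})$ has an ascent at $k$. Ranging over $k$ yields $\des(\demprod(\check{\mathcal{B}}))\subseteq\des(\demprod(\mathcal{B}))$.

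The step I expect to require the most care is this last deduction inside $\check{\mathcal{B}}$. In the BPD direction a \drtile at $(k,n)$ pins the crossing down to the single neighbouring tile $(k+1,n)$, whereas in a co-BPD the crossing of the pipes exiting rows $k$ and $k+1$ need not occur in column $n$ at all: when the tiles of $\check{\mathcal{B}}$ at $(k,n)$ and $(k+1,n)$ are both crossing tiles the crossing gets pushed leftward. To close this, I would either trace the pipe exiting row $k+1$ of $\check{\mathcal{B}}$ back through the diagram and use monotonicity of co-BPD pipes to locate where it must meet the pipe exiting row $k$, or --- more cleanly --- first observe that \cref{lemma:copipedreambijection} leaves unchanged which left/right cell edges are traversed by a horizontal pipe strand while complementing which top/bottom cell edges are traversed by a vertical pipe strand. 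This identifies the columns at which pipes cross the horizontal line between rows $k$ and $k+1$ in $\check{\mathcal{B}}$ (namely the pipes exiting rows $k+1,\dots,n$) as exactly the complement of the corresponding columns in $\mathcal{B}$ (the pipes exiting rows $1,\dots,k$), after which the crossing in $\check{\mathcal{B}}$ can be extracted by comparing positions.
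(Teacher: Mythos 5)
Your opening steps are fine: an ascent at $k$ gives that the exit pipes of rows $k$ and $k+1$ of $\mathcal{B}$ do not cross, and your local analysis correctly shows that $(k,n)$ then cannot be a \drtile, so that in $\check{\mathcal{B}}$ the tile at $(k,n)$ is not a \courtile. The gap is the step you yourself flag: from that single tile you cannot conclude that the exit pipes of rows $k$ and $k+1$ of $\check{\mathcal{B}}$ cross. The implication ``$(k,n)$ of a co-BPD is not a \courtile $\Rightarrow$ the pipes exiting rows $k$ and $k+1$ cross'' is simply false. For instance, with $n=3$ and $k=1$, take the co-BPD whose row-$1$ exit pipe enters at the top of column $2$ and turns east at $(1,2)$, whose row-$2$ exit pipe enters at the top of column $1$ and turns east at $(2,1)$, and whose third pipe runs down column $3$ to an elbow at $(3,3)$: the tile at $(1,3)$ is a crossing tile, not a \courtile, yet the pipes exiting rows $1$ and $2$ are disjoint. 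Its preimage BPD has a \htile at $(1,3)$ while $\demprod(\mathcal{B})=213$ has a \emph{descent} at $1$, the relevant crossing sitting at $(2,2)$; so the column-$n$ tile does not detect ascent/descent on either side of the bijection, and no argument localized to column $n$ can close the loop. Your first fallback (tracing the row-$(k+1)$ pipe and invoking monotonicity) runs into the same obstruction: nothing forces the two pipes of $\check{\mathcal{B}}$ to meet unless you bring back the full non-crossing hypothesis in $\mathcal{B}$, not just the tile at $(k,n)$.

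What is actually needed is a comparison of rows $k$ and $k+1$ rather than of one cell: the pipe exiting row $r$ of $\mathcal{B}$ makes its last turn at the rightmost \drtile of row $r$, and under \cref{lemma:copipedreambijection} that cell is exactly the rightmost \courtile of row $r$ in $\check{\mathcal{B}}$. If the exit pipes of rows $k$ and $k+1$ of $\mathcal{B}$ do not cross, these elbow columns satisfy $c_k<c_{k+1}$ (if $c_k>c_{k+1}$ the row-$k$ pipe is vertical in column $c_k$ of row $k+1$, where the row-$(k+1)$ pipe runs horizontally, and $c_k=c_{k+1}$ is impossible); then in $\check{\mathcal{B}}$ the row-$(k+1)$ pipe descends through $(k,c_{k+1})$, where the row-$k$ pipe runs horizontally, so the two pipes of $\check{\mathcal{B}}$ cross and $\demprod(\check{\mathcal{B}})$ has an ascent at $k$. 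This is precisely the paper's proof, written there in contrapositive form. Your second fallback --- that the bijection preserves horizontal edge segments and complements vertical ones --- is a correct observation, but the ``comparing positions'' it defers is exactly this elbow-column comparison; once you carry it out you have reproduced the paper's argument rather than found a shortcut through column $n$.
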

	\begin{proof}
		We will prove the second statement, from which the first follows.
		
		Assume $\check{\mathcal{B}}$ has a descent at $k$.
		Then by \cref{lemma:planarhistoryinversions}, the pipes that exit the grid in rows $k$ and $k + 1$ of $\check{\mathcal{B}}$ do not cross. 
		This implies that the last \coultile in row $k$ of $\check{\mathcal{B}}$ occurs strictly to the right of the last \coultile in row $k + 1$. 
		Thus, in $\mathcal{B}$, the last \drtile in row $k$ occurs strictly to the right of the last \drtile in row $k + 1$. 
		Therefore, the pipes ending in row $k$ and $k + 1$ in $\mathcal{B}$ must cross. By \cref{lemma:planarhistoryinversions}, this implies that $\demprod(\mathcal{B})$ has a descent at position $k$. 
	\end{proof}

		\section{Co-transition recurrences}

		\label{section:cotransition}
		
		The goal of this section is to establish the K-theoretic co-transition recurrence on pipe dreams of \cite{Knutson:cotransition}. We will use facts about K-theoretic ladder moves on pipe dreams, which we prove in the next section.
		
		\subsection{Ladder moves on pipe dreams}
		We begin by describing certain local replacements on pipe dreams which are permutation preserving. 
		Let $w \in S_n$ and $\mathcal{P} \in \kpipes(w)$. 
		Fix $i,j,k \in [n-1]$ such that $i\leq j$. Suppose further that $[i + 1, j] \times [k, k + 1] \subseteq \mathcal{P}$, $(j + 1,k) \in \mathcal{P}$, and $(j + 1,k + 1),(i,k),(i,k + 1)\notin \mathcal{P}$. 
		We call the replacement
		\[\mathcal{P} \mapsto (\mathcal{P}-\{(j + 1, k)\}) \cup \{(i, k + 1)\}\] 
		a \newword{ladder move} and the replacement 
		\[\mathcal{P} \mapsto \mathcal{P} \cup \{(i, k + 1)\}\] a \newword{K-theoretic ladder move}. 
		See \cref{figure:ladderandkladder} for examples of these moves.
		Note that when $i = j$, the set $[i + 1,j] \times [k,k + 1]$ is empty. 
		In this case, we refer to the (K-theoretic) ladder move as \newword{simple}. Pictured below on the left is a simple ladder move; on the right is a simple K-theoretic ladder move.
		\[
		\begin{tikzpicture}[x = 1.25em,y = 1.25em]
			\draw[step = 1,gray, very thin] (0,0) grid (2, -2);
			\draw[color = black, thick] (0,0) rectangle (2, -2);
			\draw[thick,rounded corners,color = Mulberry](1/2, 0)--(1/2, -1/2)--(0, -1/2);
			\draw[thick,rounded corners,color = Mulberry](1, -1/2)--(1/2, -1/2)--(1/2, -1);
			\draw[thick,rounded corners,color = Mulberry](3/2, 0)--(3/2, -1/2)--(1, -1/2);
			\draw[thick,rounded corners,color = Mulberry](2, -1/2)--(3/2, -1/2)--(3/2, -1);
			\draw[thick,rounded corners,color = Mulberry](0, -3/2)--(1, -3/2);
			\draw[thick,rounded corners,color = Mulberry](1/2, -1)--(1/2, -2);
			\draw[thick,rounded corners,color = Mulberry](3/2, -1)--(3/2, -3/2)--(1, -3/2);
			\draw[thick,rounded corners,color = Mulberry](2, -3/2)--(3/2, -3/2)--(3/2, -2);
		\end{tikzpicture}
		\quad
		\raisebox{1em}{$\mapsto$}
		\quad
		\begin{tikzpicture}[x = 1.25em,y = 1.25em]
			\draw[step = 1,gray, very thin] (0,0) grid (2, -2);
			\draw[color = black, thick] (0,0) rectangle (2, -2);
			\draw[thick,rounded corners,color = Mulberry](1/2, 0)--(1/2, -1/2)--(0, -1/2);
			\draw[thick,rounded corners,color = Mulberry](1, -1/2)--(1/2, -1/2)--(1/2, -1);
			\draw[thick,rounded corners,color = Mulberry](1, -1/2)--(2, -1/2);
			\draw[thick,rounded corners,color = Mulberry](3/2, 0)--(3/2, -1);
			\draw[thick,rounded corners,color = Mulberry](1/2, -1)--(1/2, -3/2)--(0, -3/2);
			\draw[thick,rounded corners,color = Mulberry](1, -3/2)--(1/2, -3/2)--(1/2, -2);
			\draw[thick,rounded corners,color = Mulberry](3/2, -1)--(3/2, -3/2)--(1, -3/2);
			\draw[thick,rounded corners,color = Mulberry](2, -3/2)--(3/2, -3/2)--(3/2, -2);
		\end{tikzpicture}
		\hspace{5em}
		\begin{tikzpicture}[x = 1.25em,y = 1.25em]
			\draw[step = 1,gray, very thin] (0,0) grid (2, -2);
			\draw[color = black, thick] (0,0) rectangle (2, -2);
			\draw[thick,rounded corners,color = Mulberry](1/2, 0)--(1/2, -1/2)--(0, -1/2);
			\draw[thick,rounded corners,color = Mulberry](1, -1/2)--(1/2, -1/2)--(1/2, -1);
			\draw[thick,rounded corners,color = Mulberry](3/2, 0)--(3/2, -1/2)--(1, -1/2);
			\draw[thick,rounded corners,color = Mulberry](2, -1/2)--(3/2, -1/2)--(3/2, -1);
			\draw[thick,rounded corners,color = Mulberry](0, -3/2)--(1, -3/2);
			\draw[thick,rounded corners,color = Mulberry](1/2, -1)--(1/2, -2);
			\draw[thick,rounded corners,color = Mulberry](3/2, -1)--(3/2, -3/2)--(1, -3/2);
			\draw[thick,rounded corners,color = Mulberry](2, -3/2)--(3/2, -3/2)--(3/2, -2);
		\end{tikzpicture}
		\quad
		\raisebox{1em}{$\mapsto$}
		\quad
		\begin{tikzpicture}[x = 1.25em,y = 1.25em]
			\draw[step = 1,gray, very thin] (0,0) grid (2, -2);
			\draw[color = black, thick] (0,0) rectangle (2, -2);
			\draw[thick,rounded corners,color = Mulberry](1/2, 0)--(1/2, -1/2)--(0, -1/2);
			\draw[thick,rounded corners,color = Mulberry](1, -1/2)--(1/2, -1/2)--(1/2, -1);
			\draw[thick,rounded corners,color = Mulberry](1, -1/2)--(2, -1/2);
			\draw[thick,rounded corners,color = Mulberry](3/2, 0)--(3/2, -1);
			\draw[thick,rounded corners,color = Mulberry](0, -3/2)--(1, -3/2);
			\draw[thick,rounded corners,color = Mulberry](1/2, -1)--(1/2, -2);
			\draw[thick,rounded corners,color = Mulberry](3/2, -1)--(3/2, -3/2)--(1, -3/2);
			\draw[thick,rounded corners,color = Mulberry](2, -3/2)--(3/2, -3/2)--(3/2, -2);
		\end{tikzpicture}
		\]
		If $\mathcal{P} \mapsto \mathcal{Q}$ is a (K-theoretic) ladder move, then the reverse operation $\mathcal{Q} \mapsto \mathcal{P}$ is called an \newword{inverse (K-theoretic) ladder move}.
		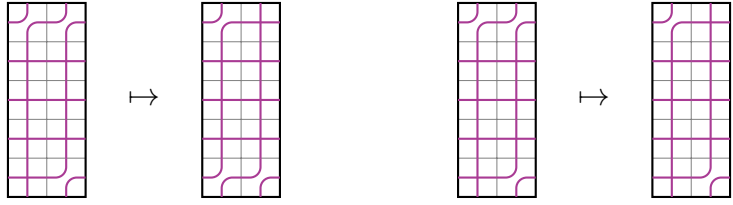
\begin{figure}
			\begin{tikzpicture}[x = 1.25em,y = 1.25em]
				\draw[step = 1,gray, very thin] (0,0) grid (2, -5);
				\draw[color = black, thick] (0,0) rectangle (2, -5);
				\draw[thick,rounded corners,color = Mulberry](1/2, 0)--(1/2, -1/2)--(0, -1/2);
				\draw[thick,rounded corners,color = Mulberry](1, -1/2)--(1/2, -1/2)--(1/2, -1);
				\draw[thick,rounded corners,color = Mulberry](3/2, 0)--(3/2, -1/2)--(1, -1/2);
				\draw[thick,rounded corners,color = Mulberry](2, -1/2)--(3/2, -1/2)--(3/2, -1);
				\draw[thick,rounded corners,color = Mulberry](0, -3/2)--(1, -3/2);
				\draw[thick,rounded corners,color = Mulberry](1/2, -1)--(1/2, -2);
				\draw[thick,rounded corners,color = Mulberry](1, -3/2)--(2, -3/2);
				\draw[thick,rounded corners,color = Mulberry](3/2, -1)--(3/2, -2);
				\draw[thick,rounded corners,color = Mulberry](0, -5/2)--(1, -5/2);
				\draw[thick,rounded corners,color = Mulberry](1/2, -2)--(1/2, -3);
				\draw[thick,rounded corners,color = Mulberry](1, -5/2)--(2, -5/2);
				\draw[thick,rounded corners,color = Mulberry](3/2, -2)--(3/2, -3);
				\draw[thick,rounded corners,color = Mulberry](0, -7/2)--(1, -7/2);
				\draw[thick,rounded corners,color = Mulberry](1/2, -3)--(1/2, -4);
				\draw[thick,rounded corners,color = Mulberry](1, -7/2)--(2, -7/2);
				\draw[thick,rounded corners,color = Mulberry](3/2, -3)--(3/2, -4);
				\draw[thick,rounded corners,color = Mulberry](0, -9/2)--(1, -9/2);
				\draw[thick,rounded corners,color = Mulberry](1/2, -4)--(1/2, -5);
				\draw[thick,rounded corners,color = Mulberry](3/2, -4)--(3/2, -9/2)--(1, -9/2);
				\draw[thick,rounded corners,color = Mulberry](2, -9/2)--(3/2, -9/2)--(3/2, -5);
			\end{tikzpicture}
			\quad
			\raisebox{3em}{$\mapsto$}
			\quad
			\begin{tikzpicture}[x = 1.25em,y = 1.25em]
				\draw[step = 1,gray, very thin] (0,0) grid (2, -5);
				\draw[color = black, thick] (0,0) rectangle (2, -5);
				\draw[thick,rounded corners,color = Mulberry](1/2, 0)--(1/2, -1/2)--(0, -1/2);
				\draw[thick,rounded corners,color = Mulberry](1, -1/2)--(1/2, -1/2)--(1/2, -1);
				\draw[thick,rounded corners,color = Mulberry](1, -1/2)--(2, -1/2);
				\draw[thick,rounded corners,color = Mulberry](3/2, 0)--(3/2, -1);
				\draw[thick,rounded corners,color = Mulberry](0, -3/2)--(1, -3/2);
				\draw[thick,rounded corners,color = Mulberry](1/2, -1)--(1/2, -2);
				\draw[thick,rounded corners,color = Mulberry](1, -3/2)--(2, -3/2);
				\draw[thick,rounded corners,color = Mulberry](3/2, -1)--(3/2, -2);
				\draw[thick,rounded corners,color = Mulberry](0, -5/2)--(1, -5/2);
				\draw[thick,rounded corners,color = Mulberry](1/2, -2)--(1/2, -3);
				\draw[thick,rounded corners,color = Mulberry](1, -5/2)--(2, -5/2);
				\draw[thick,rounded corners,color = Mulberry](3/2, -2)--(3/2, -3);
				\draw[thick,rounded corners,color = Mulberry](0, -7/2)--(1, -7/2);
				\draw[thick,rounded corners,color = Mulberry](1/2, -3)--(1/2, -4);
				\draw[thick,rounded corners,color = Mulberry](1, -7/2)--(2, -7/2);
				\draw[thick,rounded corners,color = Mulberry](3/2, -3)--(3/2, -4);
				\draw[thick,rounded corners,color = Mulberry](1/2, -4)--(1/2, -9/2)--(0, -9/2);
				\draw[thick,rounded corners,color = Mulberry](1, -9/2)--(1/2, -9/2)--(1/2, -5);
				\draw[thick,rounded corners,color = Mulberry](3/2, -4)--(3/2, -9/2)--(1, -9/2);
				\draw[thick,rounded corners,color = Mulberry](2, -9/2)--(3/2, -9/2)--(3/2, -5);
			\end{tikzpicture}
			\hspace{5em}
			\begin{tikzpicture}[x = 1.25em,y = 1.25em]
				\draw[step = 1,gray, very thin] (0,0) grid (2, -5);
				\draw[color = black, thick] (0,0) rectangle (2, -5);
				\draw[thick,rounded corners,color = Mulberry](1/2, 0)--(1/2, -1/2)--(0, -1/2);
				\draw[thick,rounded corners,color = Mulberry](1, -1/2)--(1/2, -1/2)--(1/2, -1);
				\draw[thick,rounded corners,color = Mulberry](3/2, 0)--(3/2, -1/2)--(1, -1/2);
				\draw[thick,rounded corners,color = Mulberry](2, -1/2)--(3/2, -1/2)--(3/2, -1);
				\draw[thick,rounded corners,color = Mulberry](0, -3/2)--(1, -3/2);
				\draw[thick,rounded corners,color = Mulberry](1/2, -1)--(1/2, -2);
				\draw[thick,rounded corners,color = Mulberry](1, -3/2)--(2, -3/2);
				\draw[thick,rounded corners,color = Mulberry](3/2, -1)--(3/2, -2);
				\draw[thick,rounded corners,color = Mulberry](0, -5/2)--(1, -5/2);
				\draw[thick,rounded corners,color = Mulberry](1/2, -2)--(1/2, -3);
				\draw[thick,rounded corners,color = Mulberry](1, -5/2)--(2, -5/2);
				\draw[thick,rounded corners,color = Mulberry](3/2, -2)--(3/2, -3);
				\draw[thick,rounded corners,color = Mulberry](0, -7/2)--(1, -7/2);
				\draw[thick,rounded corners,color = Mulberry](1/2, -3)--(1/2, -4);
				\draw[thick,rounded corners,color = Mulberry](1, -7/2)--(2, -7/2);
				\draw[thick,rounded corners,color = Mulberry](3/2, -3)--(3/2, -4);
				\draw[thick,rounded corners,color = Mulberry](0, -9/2)--(1, -9/2);
				\draw[thick,rounded corners,color = Mulberry](1/2, -4)--(1/2, -5);
				\draw[thick,rounded corners,color = Mulberry](3/2, -4)--(3/2, -9/2)--(1, -9/2);
				\draw[thick,rounded corners,color = Mulberry](2, -9/2)--(3/2, -9/2)--(3/2, -5);
			\end{tikzpicture}
			\quad
			\raisebox{3em}{$\mapsto$}
			\quad
			\begin{tikzpicture}[x = 1.25em,y = 1.25em]
				\draw[step = 1,gray, very thin] (0,0) grid (2, -5);
				\draw[color = black, thick] (0,0) rectangle (2, -5);
				\draw[thick,rounded corners,color = Mulberry](1/2, 0)--(1/2, -1/2)--(0, -1/2);
				\draw[thick,rounded corners,color = Mulberry](1, -1/2)--(1/2, -1/2)--(1/2, -1);
				\draw[thick,rounded corners,color = Mulberry](1, -1/2)--(2, -1/2);
				\draw[thick,rounded corners,color = Mulberry](3/2, 0)--(3/2, -1);
				\draw[thick,rounded corners,color = Mulberry](0, -3/2)--(1, -3/2);
				\draw[thick,rounded corners,color = Mulberry](1/2, -1)--(1/2, -2);
				\draw[thick,rounded corners,color = Mulberry](1, -3/2)--(2, -3/2);
				\draw[thick,rounded corners,color = Mulberry](3/2, -1)--(3/2, -2);
				\draw[thick,rounded corners,color = Mulberry](0, -5/2)--(1, -5/2);
				\draw[thick,rounded corners,color = Mulberry](1/2, -2)--(1/2, -3);
				\draw[thick,rounded corners,color = Mulberry](1, -5/2)--(2, -5/2);
				\draw[thick,rounded corners,color = Mulberry](3/2, -2)--(3/2, -3);
				\draw[thick,rounded corners,color = Mulberry](0, -7/2)--(1, -7/2);
				\draw[thick,rounded corners,color = Mulberry](1/2, -3)--(1/2, -4);
				\draw[thick,rounded corners,color = Mulberry](1, -7/2)--(2, -7/2);
				\draw[thick,rounded corners,color = Mulberry](3/2, -3)--(3/2, -4);
				\draw[thick,rounded corners,color = Mulberry](0, -9/2)--(1, -9/2);
				\draw[thick,rounded corners,color = Mulberry](1/2, -4)--(1/2, -5);
				\draw[thick,rounded corners,color = Mulberry](3/2, -4)--(3/2, -9/2)--(1, -9/2);
				\draw[thick,rounded corners,color = Mulberry](2, -9/2)--(3/2, -9/2)--(3/2, -5);
			\end{tikzpicture}
			\caption{Pictured on the left is a ladder move and on the right a K-theoretic ladder move. }
			\label{figure:ladderandkladder}
		\end{figure}
				
		The \newword{bottom pipe dream} for $w \in S_n$ is the pipe dream
		\[\{(i,j): i \in [n-1] \text{ and } j \in [c_w(i)] \}.\] 
		The bottom pipe dream of $w$ is an element of $\pipes(w)$. 
		Furthermore, $\mathcal{P} \subseteq \mathbb{T}_n$ is the bottom pipe dream of some permutation if and only if its crossings are left-justified; that is, $(i,j) \in \mathcal{P}$ with $j > 1$ implies $(i,j-1) \in \mathcal{P}$. In particular, the map sending a permutation to its Lehmer code defines a bijection between permutations and left-justified pipe dreams.
		
		Bergeron and Billey \cite{Bergeron.Billey} showed the following:
		\begin{lemma}
			Let $\mathcal{P} \in \pipes(w)$. 
			\begin{enumerate}
				\item If $\mathcal{P} \mapsto \mathcal{Q}$ is a ladder move, then $\mathcal{Q} \in \pipes(w)$.
				\item $\mathcal{P}$ can be obtained from the bottom pipe dream of $w$ by a sequence of ladder moves.
			\end{enumerate} 
		\end{lemma}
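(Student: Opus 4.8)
The plan is to handle the two parts in sequence, using a local analysis of pipes for part~(1) and an ``undo''-plus-induction argument for part~(2). Throughout I work with pipe dreams as SW planar histories and freely identify a pipe dream with its set of crossing cells in $\mathbb{T}_n$.

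For part~(1): a ladder move with parameters $i\le j$ and $k$ changes only the tiles at $(j+1,k)$ (from a cross to a bump) and at $(i,k+1)$ (from a bump to a cross), so only the pipes passing through those two cells are rerouted. First I would pin down these pipes: the pipe $P$ running up column $k$ through the block, the pipe $Q$ entering the cross at $(j+1,k)$ horizontally, and the horizontal pipes $S_r$ for $i<r\le j$ passing straight through both columns of the block. A direct trace shows that after the move $P$ and $Q$ connect the same two boundary points of the block as before (locally they simply trade which of the two available slots they occupy), the $S_r$ are untouched as strands, and the multiset of crossings in the block is unchanged: $P$ crosses $Q$ once, $P$ crosses each $S_r$ once, $Q$ crosses each $S_r$ once, and nothing else; only the $P$--$Q$ crossing is relocated, from $(j+1,k)$ to $(i,k+1)$. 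Preservation of the crossing multiset gives that $\mathcal{Q}$ is reduced, and to see $\demprod(\mathcal{Q})=\demprod(\mathcal{P})$ I would either record how the word $\mathbf{a}_{\mathcal{P}}$ of \cref{lemma:pipeworddemproduct} transforms (by commutation and braid moves) or realize the ladder move as a composition of the permutation-preserving local moves of \cref{lemma:planarmoves}; this recovers the argument of \cite{Bergeron.Billey}. The same bookkeeping applies verbatim to an inverse ladder move. The routine-but-fiddly strand trace is the main obstacle for part~(1).

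For part~(2): I claim that any $\mathcal{P}\in\pipes(w)$ that is not left-justified admits an inverse ladder move; granting this, the result follows by induction. The key step is locating the move. Since $\mathcal{P}$ is not left-justified, there is a crossing $(i_0,k+1)\in\mathcal{P}$ with $(i_0,k)\notin\mathcal{P}$. Iterate: given $i_t$ with $(i_t,k)\notin\mathcal{P}$ and $(i_t,k+1)\in\mathcal{P}$, let $j_t\ge i_t$ be maximal with $[i_t+1,j_t]\times\{k,k+1\}\subseteq\mathcal{P}$, so that at row $j_t+1$ at least one of $(j_t+1,k)$, $(j_t+1,k+1)$ is absent. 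If both are absent, then $(i_t,j_t,k)$ are the parameters of an inverse ladder move, which moves the crossing at $(i_t,k+1)$ to $(j_t+1,k)$ and lands in $\pipes(w)$ by part~(1). If only $(j_t+1,k)$ is absent, set $i_{t+1}=j_t+1>i_t$ and continue; since $\mathcal{P}$ is finite, the strictly increasing sequence $i_0<i_1<\cdots$ terminates, so this case cannot persist. The remaining case, $(j_t+1,k)\in\mathcal{P}$ but $(j_t+1,k+1)\notin\mathcal{P}$, is impossible: following the pipe up column $k$ from the cross $(j_t+1,k)$, across the bump at $(i_t,k)$, and into $(i_t,k+1)$ from the left, and separately following the pipe that enters the cross $(j_t+1,k)$ horizontally, out through the bump (or antidiagonal tile) at $(j_t+1,k+1)$ and up column $k+1$ into $(i_t,k+1)$ from below, one finds that these two distinct pipes cross at both $(j_t+1,k)$ and $(i_t,k+1)$ --- contradicting that $\mathcal{P}$ is reduced. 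This settles the claim. To conclude, set $N(\mathcal{P})=\sum_{(i,j)\in\mathcal{P}}i$; a forward ladder move strictly decreases $N$. If $N(\mathcal{P})$ is maximal over $\pipes(w)$ then $\mathcal{P}$ admits no inverse ladder move, hence is left-justified, hence equals the bottom pipe dream of $w=\demprod(\mathcal{P})$ by the Lehmer-code bijection (which has Demazure product $w$). In general I induct downward on $N_{\max}-N(\mathcal{P})$: if $\mathcal{P}$ is the bottom pipe dream we are done, and otherwise an inverse ladder move gives $\mathcal{P}'\in\pipes(w)$ with $N(\mathcal{P}')>N(\mathcal{P})$, reachable from the bottom pipe dream by forward ladder moves by induction, and one more forward ladder move reaches $\mathcal{P}$. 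The delicate point in part~(2) is exactly the case analysis above --- in particular, using reducedness to kill the third case --- while the induction is then immediate.
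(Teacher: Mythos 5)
Your proof is correct. The paper itself does not prove this lemma (it is quoted from Bergeron--Billey); the closest in-paper argument is the proof of the K-theoretic generalization, \cref{lemma:kladdermoves}, and your strategy is essentially that one: part (1) by realizing a ladder move as a composition of the permutation-preserving local moves of \cref{lemma:planarmoves} (with reducedness of $\mathcal{Q}$ following since the move preserves both the crossing count and $\demprod$), and part (2) by locating an inverse move at a non-left-justified crossing and iterating, using that the unique left-justified element of $\pipes(w)$ is the bottom pipe dream. The one place you genuinely depart from the paper's template is necessary: in the reduced setting the configuration $(j_t+1,k)\in\mathcal{P}$, $(j_t+1,k+1)\notin\mathcal{P}$ cannot be resolved by an inverse K-theoretic ladder move, and your double-crossing argument (the pipe through $(j_t+1,k)$ vertically is the pipe through $(i_t,k+1)$ horizontally, and vice versa, so they would cross twice) correctly rules it out; an alternative, slightly quicker disposal is to note that an inverse K-theoretic move would yield an element of $\kpipes(w)$ with only $\ell(w)-1$ crossings, which is impossible since $\demprod$ of a word of length $\ell(w)-1$ cannot equal $w$. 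Your monovariant $N(\mathcal{P})=\sum_{(i,j)\in\mathcal{P}}i$ for termination is fine, playing the role of the paper's ``crossings move down and to the left'' observation.
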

		
		We will make use of the following K-theoretic generalization. Although this result is known to experts, we include a proof for completeness.
		
		\begin{lemma}
			\label{lemma:kladdermoves}
			Let $\mathcal{P} \in \kpipes(w)$. 
			\begin{enumerate}
				\item If $\mathcal{P} \mapsto \mathcal{Q}$ is a ladder move or a K-theoretic ladder move, then $\mathcal{Q} \in \kpipes(w)$.
				\item The pipe dream $\mathcal{P}$ can be obtained from the bottom pipe dream of $w$ by a sequence of ladder moves and K-theoretic ladder moves.
			\end{enumerate}
		\end{lemma}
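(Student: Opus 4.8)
The plan is to prove part (1) first and then bootstrap part (2) from it using the reduced case due to Bergeron and Billey (stated above). \emph{For part (1)}, an ordinary ladder move is exactly the Bergeron--Billey lemma, so the new content is the K-theoretic ladder move $\mathcal{P}\mapsto\mathcal{Q}=\mathcal{P}\cup\{(i,k+1)\}$. First I would observe that $\mathcal{Q}$ is still a legal pipe dream: since $(j+1,k)\in\mathcal{P}\subseteq\mathbb{T}_n$ and $i\le j$ we have $(i,k+1)\in\mathbb{T}_n$, and the move just converts the \pipebump at $(i,k+1)$ into a \pipecross. To check $\demprod(\mathcal{Q})=\demprod(\mathcal{P})$, I would trace, in $\mathcal{P}$, the two pipes through the tile $(i,k+1)$: the pipe $p$ entering that \pipebump from above and leaving to the left, and the pipe $q$ entering from the right and leaving downward. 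Using $[i+1,j]\times[k,k+1]\subseteq\mathcal{P}$, $(j+1,k)\in\mathcal{P}$, and $(j+1,k+1),(i,k)\notin\mathcal{P}$, one follows $p$ left through the \pipebump at $(i,k)$ and then down the column of \pipecross tiles to $(j+1,k)$, entering from above, and follows $q$ down the column of \pipecross tiles, then turning west at $(j+1,k+1)$ and entering $(j+1,k)$ from the right; hence $p$ and $q$ already cross at $(j+1,k)$ in $\mathcal{P}$ (the case $i=j$ is the same, with the column of crossings empty). So in $\mathcal{Q}$ the pipes $p$ and $q$ cross a second time, and this crossing is redundant. To make this precise I would compare the words $\mathbf{a}_{\mathcal{P}}$ and $\mathbf{a}_{\mathcal{Q}}$ of \cref{lemma:pipeworddemproduct}: they coincide except that $\mathbf{a}_{\mathcal{Q}}$ inserts one extra letter $m=i+k$ at the head of a block $m+1,\ldots,m+(j-i)$ which reappears verbatim later in the word, and since every letter occurring between the two copies of that block commutes with $\tau_m,\ldots,\tau_{m+(j-i)}$, a short $0$-Hecke calculation (using $\tau_m^2=\tau_m$ and the braid relations) shows the Demazure product is unchanged.

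\emph{For part (2)}, I would induct on the excess $e(\mathcal{P})=\#\mathcal{P}-\ell(w)\ge 0$. If $e(\mathcal{P})=0$ then $\mathcal{P}$ is reduced and the Bergeron--Billey lemma provides a chain of ordinary ladder moves from the bottom pipe dream of $w$ to $\mathcal{P}$. If $e(\mathcal{P})\ge 1$, first note that ordinary ladder moves, K-theoretic ladder moves, and their inverses all preserve $\demprod$ and keep us inside $\kpipes(w)$ (the inverse directions follow from part (1), since the permutation is determined by the diagram). It then suffices to find a sequence of inverse ordinary ladder moves carrying $\mathcal{P}$ to a pipe dream $\mathcal{P}'$ admitting an inverse K-theoretic ladder move to some $\mathcal{P}''$: then $e(\mathcal{P}'')=e(\mathcal{P})-1$, so the inductive hypothesis gives a forward chain from the bottom pipe dream of $w$ to $\mathcal{P}''$, and reversing the moves just performed extends this to a forward chain reaching $\mathcal{P}$. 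Since $\#\mathcal{P}>\ell(w)$, the word $\mathbf{a}_{\mathcal{P}}$ is non-reduced, so some pair of pipes of $\mathcal{P}$ crosses at least twice; I would take such a pair $p,q$ whose two crossings bound a region containing no crossing of $p$ with $q$ and which is innermost among all such regions, clear that region of other crossings by inverse ordinary ladder moves, and read off at one of its corners the local tile pattern required for an inverse K-theoretic ladder move.

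I expect the crux to be this last step: showing that an innermost double crossing can always be reduced, via inverse ladder moves, to the exact configuration of an inverse K-theoretic ladder move. Once that combinatorial fact is in hand, the word bookkeeping of part (1) and the induction are routine. The analysis should be organized using \cref{lemma:planarhistoryinversions}, which controls which pairs of pipes cross; I expect it to require a careful examination of the \pipebump and \pipecross tiles along the two arcs joining the double crossing, and this is where the real work lies.
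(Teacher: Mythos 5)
Your part (1) has a small but real gap: you dispose of the ordinary ladder move by citing Bergeron--Billey, but their lemma (as stated in the paper) applies only to \emph{reduced} pipe dreams, so it says nothing about an ordinary ladder move applied to a non-reduced element of $\kpipes(w)$. That case needs the same kind of justification as your K-theoretic case — either the word/$0$-Hecke bookkeeping you sketch, or (as the paper does for both moves at once) the observation that any (K-theoretic) ladder move is a composition of the permutation-preserving local moves of \cref{lemma:planarmoves}. Your Hecke computation for the K-theoretic move is fine in outline: the letters separating the column-$(k+1)$ block from the column-$k$ block are $\leq i+k-2$ or $\geq j+k+2$, hence commute past, and $\tau_m\tau_{m+1}\cdots\tau_{m+d}\,\tau_m\cdots\tau_{m+d}=\tau_{m+1}\cdots\tau_{m+d}\,\tau_m\cdots\tau_{m+d}$ follows from the braid and idempotent relations.

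The serious gap is in part (2), and it is exactly the step you flag as the crux and do not prove: that an innermost doubly-crossing pair of pipes can be brought, by inverse ordinary ladder moves, into the precise configuration of an inverse K-theoretic ladder move. Inverse ladder moves do not delete crossings and do not apply freely — each one requires a completely filled $2\times\ell$ block of crossings with prescribed empty cells at its corners — so "clearing the region of other crossings" is not an operation you actually have; and even when the region between the two crossings of $p$ and $q$ contains no crossings at all, the two crossings need not be diagonally adjacent (the connecting arcs may wander through many bump tiles shared with other pipes), so no corner of the region need exhibit the inverse K-theoretic pattern. Without a proof of this reduction the induction on excess does not get off the ground. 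The paper avoids the whole issue with a different, greedy argument: choose $(i,j)\in\mathcal{P}$ with $j>1$, $(i,j-1)\notin\mathcal{P}$ and $i$ maximal, and let $i'>i$ be minimal with $(i',j)\notin\mathcal{P}$; maximality and minimality force $[i+1,i'-1]\times[j-1,j]\subseteq\mathcal{P}$, so one may always perform an inverse K-theoretic ladder move (if $(i',j-1)\in\mathcal{P}$) or an inverse ladder move (if not), and iterating left-justifies the diagram, terminating at the bottom pipe dream. Some such explicit positional choice — rather than an "innermost double crossing" analysis — is the missing idea; as written, your plan for (2) does not go through.
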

		\begin{proof}
			\noindent (1) Let $w \in S_n$ and $\mathcal{P} \in \kpipes(w)$. Suppose $\mathcal{P} \mapsto \mathcal{Q}$ is a ladder move or K-theoretic ladder move. A (K-theoretic) ladder move may be obtained by applying a sequence of the moves described in \cref{lemma:planarmoves}. Thus $w = \demprod(\mathcal{P}) = \demprod(\mathcal{Q})$, and therefore $\mathcal{Q} \in \kpipes(w)$.
						
			\noindent (2) Let $\mathcal{P} \in \kpipes(w)$. 
			If the crossings of $\mathcal{P}$ are left-justified, then $\mathcal{P}$ must be the bottom pipe dream of $w$. 
			Otherwise, there exists some $(i, j) \in \mathcal{P}$ with $j > 1$ such that $(i, j - 1)\notin \mathcal{P}$. 
			Choose $i$ to be maximal with this property. Now let $i' > i$ be the smallest index such that $(i', j)\notin \mathcal{P}$. 
			Then we have $[i + 1, i'- 1] \times [j - 1, j] \subseteq \mathcal{P}$. 
			If $(i',j-1) \in \mathcal{P}$, we may apply an inverse K-theoretic ladder move to $\mathcal{P}$ by removing the crossing at position $(i, j)$. 
			If instead $(i', j - 1) \notin \mathcal{P}$, then we may apply an inverse ladder move to $\mathcal{P}$ by removing the crossing at $(i, j)$ and adding a crossing at $(i', j - 1)$.
			
			We may iterate this process until we reach a pipe dream that is left-justified. At each step, we either delete a crossing or move a crossing down and to the left. Because $\kpipes_n$ is finite, the process terminates. By the first part, each intermediate pipe dream lies in $\kpipes(w)$. Since the final pipe dream is left-justified, it must be the bottom pipe dream of $w$.
		\end{proof}
		
		\subsection{Dominant parts of diagrams}
		
		A \newword{partition} is a tuple of positive integers $\lambda = (\lambda_1,\ldots,\lambda_k)$ with $\lambda_1 \geq \lambda_2 \geq \cdots \geq \lambda_k$. 
		Given a partition $\lambda$, we define the \newword{Young diagram} of $\lambda$ to be $\partitionsubset_{\lambda} = \{(i,j) : j \in [\lambda_i], i \in [k]\}$. 
		An \newword{addable cell} of $\partitionsubset_{\lambda}$ is a cell $(i,j)$ that satisfies all of the following conditions:
		\begin{enumerate}
			\item $(i,j)\notin \partitionsubset_{\lambda}$,
			\item if $i > 1$, then $(i - 1, j) \in \partitionsubset_{\lambda}$, and 
			\item if $j > 1$, then $(i, j - 1) \in \partitionsubset_{\lambda}$.
		\end{enumerate}
		Intuitively, $(i,j)$ is an addable cell of $\partitionsubset_{\lambda}$ if adding it to the Young diagram of $\lambda$ produces the Young diagram corresponding to a larger partition $\lambda'$.

		Given $U \subseteq [n] \times [n]$, the \newword{dominant part} of $U$, denoted $\dom(U)$, is the largest set of the form $\partitionsubset_{\lambda}$ such that $\partitionsubset_{\lambda} \subseteq U$ (under the containment order). Define $\dom(w) = \dom(\rothe(w))$.
		
		\begin{example}
			Let $w = 52413$ as in \cref{example:rothe}.
			\[\begin{tikzpicture}[x = 1.5em,y = 1.5em]
				\draw[fill = gray,semitransparent](0,5) rectangle (1,4);
				\draw[fill = gray,semitransparent](1,5) rectangle (2,4);
				\draw[fill = gray,semitransparent](2,5) rectangle (3,4);
				\draw[fill = gray,semitransparent](3,5) rectangle (4,4);
				\draw[fill = gray,semitransparent](0,4) rectangle (1,3);
				\draw[fill = gray,semitransparent](0,3) rectangle (1,2);
				\draw[step = 1,gray, very thin] (0,0) grid (5,5);
				\draw[color = black, thick](0,0)rectangle(5,5);
				\filldraw [black](2.5,.5)circle(.1);
				\filldraw [black](.5,1.5)circle(.1);
				\filldraw [black](3.5,2.5)circle(.1);
				\filldraw [black](1.5,3.5)circle(.1);
				\filldraw [black](4.5,4.5)circle(.1);
				
				\draw[thick, color = blue] (2.5,0)--(2.5,.5)--(5,.5);
				\draw[thick, color = blue] (.5,0)--(.5,1.5)--(5,1.5);
				\draw[thick, color = blue] (3.5,0)--(3.5,2.5)--(5,2.5);
				\draw[thick, color = blue] (1.5,0)--(1.5,3.5)--(5,3.5);
				\draw[thick, color = blue] (4.5,0)--(4.5,4.5)--(5,4.5); 
			\end{tikzpicture} \]
			We have shaded the cells in $\dom(w) = \partitionsubset_{(4,1,1)}$. The Young diagram $\partitionsubset_{(4,1,1)}$ has three addable cells, $(1,5), (2,2),$ and $(4,1)$. Note that the addable cells are of the form $(i,w(i))$ with $i\in\{1,2,4\}$.
		\end{example}

		In the next lemma, we show that $\dom(\mathcal{P})$ is invariant among elements $\mathcal{P} \in\kpipes(w)$.
		
		\begin{lemma}
			\label{lemma:dominantpartofpipedreams}
			Let $w \in S_{+} $. 
			The following statements hold.
			\begin{enumerate}
				\item If $(i,j)$ is an addable cell of $\dom(w)$, then $j = w(i)$.
				\item $\dom(w) = \dom(\mathcal{P})$ for all $\mathcal{P} \in \kpipes(w)$.
				\item If $(i,j)$ is an addable cell of $\dom(w)$, then $(i,j)\notin \mathcal{P}$ for all $\mathcal{P} \in \kpipes(w)$.
			\end{enumerate}
		\end{lemma}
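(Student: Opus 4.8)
The plan is to reduce all three parts to the description
$\dom(w) = \{(i,j) : [i]\times[j]\subseteq\rothe(w)\}$, which holds because the union of two Young diagrams is again a Young diagram, so $\dom(U)$ is the union of all Young diagrams contained in $U$, and the minimal Young diagram containing a cell $(i,j)$ is the rectangle $[i]\times[j]$. Unwinding the definition $\rothe(w) = \{(c,d): w(c)>d,\ w^{-1}(d)>c\}$, I would first record the reformulation: $(i,j)\in\dom(w)$ if and only if $w(c)>j$ for all $c\in[i]$ (equivalently $\rk_w(i,j)=0$), noting that the condition ``$w^{-1}(d)>i$ for all $d\in[j]$'' is the same statement read off $w^{-1}$. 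Given this, part~(1) is immediate: if $(i,j)$ is addable to $\dom(w)$, then $(i,j)\notin\dom(w)$ produces some $c\le i$ with $w(c)\le j$; the condition on $(i-1,j)$ (vacuous when $i=1$) forces $c=i$, so $w(i)\le j$; and the condition on $(i,j-1)$ (vacuous when $j=1$) gives $w(i)>j-1$, so $w(i)\ge j$. Hence $w(i)=j$.

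For part~(2) I would combine two facts. First, $\dom(\mathcal{P})=\dom(w)$ when $\mathcal{P}$ is the bottom pipe dream $B_w=\{(i,j):i\in[n-1],\ j\in[c_w(i)]\}$: since $B_w$ is left-justified, $(i,j)\in\dom(B_w)$ iff $j\le\min(c_w(1),\dots,c_w(i))$, and a short argument with left-to-right minima (if $d=\min(w(1),\dots,w(i))$, then every value $<d$ occurs strictly to the right of each position $\le i$, giving $c_w(a)\ge d-1$ for all $a\le i$, with equality at the position attaining $d$) shows $\min(c_w(1),\dots,c_w(i))=\min(w(1),\dots,w(i))-1$, which by the reformulation above is exactly the length of row $i$ of $\dom(w)$.

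Second, I would show that a ladder move or a K-theoretic ladder move $\mathcal{P}\mapsto\mathcal{Q}$ preserves the dominant part; this is the technical heart. In the notation of the definition (remove $(j+1,k)$, add $(i,k+1)$, with $(i,k),(i,k+1)\notin\mathcal{P}$), suppose $[a]\times[b]$ is contained in one of $\mathcal{P},\mathcal{Q}$ but not the other. The only cells by which $\mathcal{P}$ and $\mathcal{Q}$ differ are $(j+1,k)$ and $(i,k+1)$, so the discrepancy cell is one of these and lies in $[a]\times[b]$; in every case the inclusions $i\le j<j+1$ and $k<k+1$ force $[a]\times[b]$ to also contain $(i,k)$ or $(i,k+1)$, contradicting that both of these are absent from $\mathcal{P}$ (for the added-cell case, one instead reaches $(i,k+1)\in\mathcal{P}$, again a contradiction). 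Hence $\dom(\mathcal{P})=\dom(\mathcal{Q})$; the simple moves ($i=j$) are the same argument. Since by \cref{lemma:kladdermoves} every $\mathcal{P}\in\kpipes(w)$ is obtained from $B_w$ by a sequence of such moves, part~(2) follows. I expect this casework, though elementary, to be the main obstacle.

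For part~(3), let $(i,j)$ be an addable cell of $\dom(w)$ and suppose for contradiction that $(i,j)\in\mathcal{P}$ for some $\mathcal{P}\in\kpipes(w)$. By part~(2), $\dom(\mathcal{P})=\dom(w)$, and $\dom(\mathcal{P})\subseteq\mathcal{P}$ since it is by definition a Young diagram inside $\mathcal{P}$. Every cell of $[i]\times[j]$ other than $(i,j)$ lies weakly northwest of $(i-1,j)$ or of $(i,j-1)$, hence in the Young diagram $\dom(w)$, hence in $\mathcal{P}$; together with $(i,j)\in\mathcal{P}$ this gives $[i]\times[j]\subseteq\mathcal{P}$, so $(i,j)\in\dom(\mathcal{P})=\dom(w)$, contradicting that $(i,j)$ is addable. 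The edge cases $i=1$ and $j=1$ are handled the same way, with the corresponding condition omitted (and when $i=j=1$ the conclusion $(1,1)\in\dom(\mathcal{P})$ is immediate).
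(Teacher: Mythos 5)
Your proposal is correct, and it overlaps with the paper's proof only partially. Part (1) is essentially the paper's argument in different clothing: you read addability off a rank-function/rectangle description of $\dom(w)$, while the paper reads it off dots in the graph of $w$; both squeeze $w(i)\le j$ and $w(i)\ge j$ out of the two neighboring cells. For Part (2) you follow the paper's skeleton (reduce to the bottom pipe dream via \cref{lemma:kladdermoves}), but you supply two things the paper leaves implicit or does differently: an actual proof that ladder and K-theoretic ladder moves preserve $\dom$ --- the paper merely asserts this --- and a direct computation $\min_{a\le i}c_w(a)=\min_{a\le i}w(a)-1$ identifying $\dom$ of the bottom pipe dream, where the paper instead argues via Part (1) and addable cells. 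Your criterion $(a,b)\in\dom(U)$ if and only if $[a]\times[b]\subseteq U$ makes the ladder-move invariance a short two-case check; note one small slip: in the added-cell case the contradiction you reach should be $(i,k)\in\mathcal{P}$ (the rectangle lies in $\mathcal{Q}$ and contains $(i,k)$, which is not the added cell, so it already lies in $\mathcal{P}$), not $(i,k+1)\in\mathcal{P}$ --- the fix is immediate from the sentence preceding your parenthetical. Part (3) is where you genuinely diverge: the paper runs inverse ladder moves back to the bottom pipe dream and extracts an intermediate pipe dream containing $(i,j)$ but not $(i,j-1)$, whereas you observe directly that $(i,j)\in\mathcal{P}$ together with $\dom(w)=\dom(\mathcal{P})\subseteq\mathcal{P}$ forces $[i]\times[j]\subseteq\mathcal{P}$, hence $(i,j)\in\dom(\mathcal{P})=\dom(w)$, contradicting addability. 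Your version is shorter, needs no further appeal to ladder moves (or even to Part (1)), and is arguably the cleaner route; the paper's argument buys nothing extra for this part.
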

		\begin{proof}
			
			\noindent (1) Suppose $(i,j)$ is an addable cell of $\dom(w)$. Because $(i,j)\notin \dom(w)$, but is adjacent to $\dom(w)$, we must have $(i,j)\notin \rothe(w)$. 
			Thus, in the Rothe diagram, the cell $(i,j)$ must have a dot weakly above it or weakly to its left. Since the Rothe diagram is formed by removing all cells weakly to the right and below each dot in the graph of $w$, such a dot must occur at $(i, w(i))$, and hence $(i, j) = (i, w(i))$.
			
			\noindent (2) Note first, if $\mathcal{P} \mapsto \mathcal{Q}$ is a ladder move or K-theoretic ladder move, then $\dom(\mathcal{P}) = \dom(\mathcal{Q})$. By \cref{lemma:kladdermoves}, any $\mathcal{P} \in \kpipes(w)$ can be reached from the bottom pipe dream by such moves, so it suffices to verify the claim for the bottom pipe dream.
			
			If $\mathcal{P}$ is the bottom pipe dream of $w$, then by construction we have $\dom(w) \subseteq \dom(\mathcal{P})$. From Part (1) and the fact that for every $(i, j) = (i, w(i))$ that is an addable cell of $\rothe(w)$, we have $(i,w(i))$ is not a crossing in the bottom pipe dream of $w$, we conclude that $\dom(\mathcal{P})$ cannot contain any partition shape larger than $\dom(w)$. Thus, $\dom(\mathcal{P}) = \dom(w)$.
			
			\noindent (3) Let $\mathcal{P} \in \kpipes(w)$ and suppose for contradiction that $(i, j) \in \mathcal{P}$. 
			From Part (2), we know that $\dom(\mathcal{P}) = \dom(w)$, and that $(i, j)$ is an addable cell for this partition.
			
			By \cref{lemma:kladdermoves}, there exists a sequence of inverse (K-theoretic) ladder moves taking $\mathcal{P}$ to the bottom pipe dream of $w$. 
			Since $(i, j)$ is not present in the bottom pipe dream, there must exist an intermediate pipe dream $\mathcal{P}' \in \kpipes(w)$ such that $(i, j) \in \mathcal{P}'$ but $(i, j - 1) \notin \mathcal{P}'$. 
			
			But then $(i, j)$ would not be an addable cell for $\dom(\mathcal{P}') = \dom(w)$, contradicting our assumption.
		\end{proof}
		
		The next lemma follows readily from the interpretation of pipe dreams as subwords of an ambient word. We refer the reader to \cite{Knutson.Miller:subword} for background.
		
		\begin{lemma}
			\label{lemma:subpipedream}
			Suppose $w \in S_{+} $ and let $\mathcal{P} \in \kpipes(w)$.
			\begin{enumerate}
				\item If $u \in S_{+} $ with $u\leq w$, then there exists $\mathcal{Q} \in \kpipes(u)$ such that $\mathcal{Q} \subseteq \mathcal{P}$.
				\item If $\mathcal{R} \in \kpipes(v)$ and $\mathcal{P} \subseteq \mathcal{R}$, then $w\leq v$.
			\end{enumerate}
		\end{lemma}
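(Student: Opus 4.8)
The plan is to reduce both parts to standard facts about subwords and Demazure products, using the identification of a pipe dream with a subword of a fixed ambient word. Fix $n$ with $\mathcal{P} \in \kpipes_n$; then $w \in S_n$, and for Part (1) also $u \in S_n$ (since $u \leq w$), while for Part (2) also $v \in S_n$ (since $\mathcal{R} \in \kpipes_n$), and Bruhat order among these elements is the same whether computed in $S_n$ or in $S_{+}$. Let $\mathbf{Q}$ be the word obtained by reading \emph{all} of $\mathbb{T}_n$ in the fixed order used to define $\mathbf{a}_{\bullet}$ (right to left by column, top to bottom within a column), recording $i + j - 1$ for each $(i,j)$; this is just $\mathbf{a}_{\mathbb{T}_n}$. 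Under the bijection of \cref{subsection:pipes} between $\kpipes_n$ and subsets of $\mathbb{T}_n$, a pipe dream $\mathcal{P}$ corresponds to $S \subseteq \mathbb{T}_n$, and by construction $\mathbf{a}_{\mathcal{P}}$ is exactly the subword $\mathbf{Q}_S$ of $\mathbf{Q}$ indexed by $S$; moreover $\mathcal{P} \subseteq \mathcal{R}$ corresponds precisely to inclusion of the index sets, hence to $\mathbf{a}_{\mathcal{P}}$ being a subword of $\mathbf{a}_{\mathcal{R}}$. By \cref{lemma:pipeworddemproduct}, $\demprod(\mathcal{P}) = \demprod(\mathbf{a}_{\mathcal{P}})$, so everything becomes a statement about subwords of $\mathbf{Q}$.

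For Part (2), I would invoke monotonicity of the Demazure product under passing to a subword: if $\mathbf{b}$ is a subword of $\mathbf{a}$, then $\demprod(\mathbf{b}) \leq \demprod(\mathbf{a})$ in Bruhat order. This follows by induction on the number of deleted letters from the fact that $\pi * \tau_k \geq \pi$ and $\tau_k * \pi \geq \pi$ for every $\pi$ and $k$, together with the fact that $\pi \mapsto \pi * \tau_k$ is order preserving on $S_n$ (a case check using the lifting property of Bruhat order, see \cite{Bjorner.Brenti}), iterated along a reduced word. Applying this with $\mathbf{a} = \mathbf{a}_{\mathcal{R}}$ and $\mathbf{b} = \mathbf{a}_{\mathcal{P}}$ gives $w = \demprod(\mathbf{a}_{\mathcal{P}}) \leq \demprod(\mathbf{a}_{\mathcal{R}}) = v$.

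For Part (1), I would combine two standard facts. First, any word $\mathbf{a}$ admits a subword that is a reduced word for $\demprod(\mathbf{a})$, obtained by repeatedly deleting a letter that does not change the Demazure product. Second, the subword property of Bruhat order: if $u \leq \pi$ and $\mathbf{r}$ is a reduced word for $\pi$, then $u$ has a reduced word that is a subword of $\mathbf{r}$ (see \cite{Bjorner.Brenti}). Applying the first fact to $\mathbf{a}_{\mathcal{P}}$ yields a subword reduced for $w$; applying the second to this reduced word and to $u \leq w$ yields a further subword $\mathbf{r}'$, reduced for $u$, that is a subword of $\mathbf{a}_{\mathcal{P}} = \mathbf{Q}_S$. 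The index set $S' \subseteq S$ of $\mathbf{r}'$ defines a pipe dream $\mathcal{Q}$ (every subset of $\mathbb{T}_n$ is a pipe dream), with $\mathcal{Q} \subseteq \mathcal{P}$ and $\mathbf{a}_{\mathcal{Q}} = \mathbf{r}'$, hence $\demprod(\mathcal{Q}) = \demprod(\mathbf{r}') = u$, so $\mathcal{Q} \in \kpipes(u)$.

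There is no genuine obstacle here; the only point requiring a little care is the passage from ``word with Demazure product $\pi$'' to ``reduced word for $\pi$'' and the monotonicity of $*$, both of which are routine for the $0$-Hecke monoid and are in any case subsumed by the subword-complex framework of \cite{Knutson.Miller:subword}, which could be cited directly in place of the elementary arguments above.
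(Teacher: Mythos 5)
Your argument is correct, and it takes a more self-contained route than the paper, whose entire proof consists of citing \cite[Lemma 3.4]{Knutson.Miller:subword} (for Part (1) directly, and for Part (2) by first extracting a reduced $\mathcal{P}' \in \pipes(w)$ with $\mathcal{P}' \subseteq \mathcal{P} \subseteq \mathcal{R}$ and then applying the same lemma again to conclude $w \leq v$). You instead unpack exactly what that lemma encodes: the identification of pipe dreams with subwords of the fixed ambient word $\mathbf{a}_{\mathbb{T}_n}$ via \cref{lemma:pipeworddemproduct}, monotonicity of the Demazure product under deletion of letters (which correctly rests on the order-preservation of $\pi \mapsto \pi * \tau_k$, i.e.\ the lifting property, as in \cite{Bjorner.Brenti}), the fact that every word contains a reduced subword for its Demazure product, and the subword property of Bruhat order. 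This buys a proof that is elementary and independent of the subword-complex machinery, at the cost of reproving facts that \cite{Knutson.Miller:subword} packages in one statement; as you note yourself, the citation route is available and is what the paper uses. The only implicit normalization in your write-up is taking $\mathcal{P}$ and $\mathcal{R}$ to lie in the same $\kpipes_n$; this is harmless, since enlarging the grid adds no crossings, leaves $\mathbf{a}_{\mathcal{P}}$ and the Demazure product unchanged, and Bruhat order is compatible with the embeddings into $S_{+}$, so your Part (2) covers the general case after choosing $n$ large enough.
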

		\begin{proof}
			\noindent (1) This follows directly from \cite[Lemma 3.4]{Knutson.Miller:subword}.
			
			\noindent (2) By \cite[Lemma 3.4]{Knutson.Miller:subword}, there exists a reduced pipe dream $\mathcal{P}' \in \pipes(w)$ such that $\mathcal{P}' \subseteq \mathcal{P}$. 
			Again, applying \cite[Lemma 3.4]{Knutson.Miller:subword}, because $\mathcal{P}' \subseteq \mathcal{R}$, it follows that $w\leq v$.
		\end{proof}
				
		\subsection{Statements of the co-transition recurrences}
		
		In this section we state the algebraic and combinatorial co-transition recurrences of Knutson \cite{Knutson:cotransition}.
		Given $w \in S_{+} $ and a cell $(i,j) \in \mathbb{Z}_{+} \times \mathbb{Z}_{+}$, we say that $(a,b)$ is a \newword{neighbor} of $(i,j)$ in $w$ if all of the following conditions hold:
		\begin{enumerate}
			\item $b = w(a)$,
			\item $i \leq a$ and $j \leq b$,
			\item $(i,j) \neq (a,b)$, and
			\item for all $(a',b') \in [i,a] \times [j,b] - \{(i,j),(a,b)\}$, we have $b' \neq w(a')$.
		\end{enumerate}
		Given $w \in S_{+} $, we define
		\begin{equation}
			\cotransitionindices_i(w) = \{j : (j, w(j)) \text{ is a neighbor of } (i, w(i))\},
		\end{equation}
		and
		\begin{equation}
			\label{eq:cotransitionpermsdef}
			\cotransitionperms_i(w) = \{wt_{i,j} \in S_{+} : j \in\cotransitionindices_i(w)\}.
		\end{equation}
		It follows directly from the definition of neighbor and \cref{eq:cotransitionpermsdef} that 
		\[\cotransitionperms_i(w) = \{wt_{i,j} \in S_{+} : \ell(w t_{i,j}) = \ell(w) + 1\}.\]
		We also define 
		\[\kcotransitionperms_i(w) = \{wt_{i,i_k}t_{i,i_{k-1}}\cdots t_{i,i_1}:\emptyset \neq \{i_1 < i_2 < \cdots < i_k\} \subseteq \cotransitionindices_i(w)\}.\]
		
		Observe that each nonempty subset of $\cotransitionindices_i(w)$ uniquely determines an element of $\kcotransitionperms_i(w)$. 
		For convenience, given $U \subseteq \cotransitionindices_i(w)$ with $U = \{i_1 < i_2 < \cdots < i_k\}$, we define 
		$w_{U,i} = wt_{i,i_k} t_{i,i_{k-1}}\cdots t_{i,i_1}$. We also set $w_{\emptyset,i} = w$. It is immediate from the definitions that
		\begin{equation}
		\{w\}\cup \kcotransitionperms_i(w) = \{w_{U,i} : U \subseteq \cotransitionindices_{i}(w)\}.
		\end{equation}
		Note also that $\ell(w_{U,i})=\ell(w)+\#U$.
		
		We now recall the combinatorial co-transition recurrence on pipe dreams from \cite{Knutson:cotransition}. 
		We state both the cohomological and K-theoretic versions. 
		Knutson proved the cohomological statement and sketched a proof of the K-theoretic recurrence \cite{Knutson:cotransition}.
		We include additional details for completeness.
		
		\begin{proposition}
			\label{prop:kcotransitionpipefacts}
			Let $w \in S_+$ and suppose $(i,j)$ is an addable cell of $\dom(w)$.
			\begin{enumerate}
				\item The map $\mathcal{P} \mapsto \mathcal{P} \cup \{(i,j)\}$ defines a bijection from $\kpipes(w)$ to $\displaystyle \bigsqcup_{u \in \kcotransitionperms_i(w)} \kpipes(u)$.
				\item The map $\mathcal{P} \mapsto \mathcal{P} \cup \{(i,j)\}$ defines a bijection from $\pipes(w)$ to $\displaystyle \bigsqcup_{u \in\cotransitionperms_i(w)} \pipes(u)$.
			\end{enumerate}
		\end{proposition}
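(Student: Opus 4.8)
The plan is to prove part~(1) first, deduce part~(2) by bookkeeping Coxeter lengths, and throughout write $k := i+j-1$. Well-definedness and injectivity are quick: by part~(3) of \cref{lemma:dominantpartofpipedreams} the addable cell $(i,j)$ of $\dom(w)$ is never a crossing of a $\mathcal{P}\in\kpipes(w)$, and after enlarging $n$ if necessary $(i,j)$ lies strictly above the antidiagonal, so $\mathcal{P}\mapsto\mathcal{P}\cup\{(i,j)\}$ genuinely adds a crossing and is injective with left inverse ``delete the crossing at $(i,j)$''. The real content of (1) is that its image is $\bigsqcup_{u\in\kcotransitionperms_i(w)}\kpipes(u)$. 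Granting this, part~(2) follows: if $\mathcal{P}\in\pipes(w)$ then $\#(\mathcal{P}\cup\{(i,j)\})=\ell(w)+1$, and writing $\demprod(\mathcal{P}\cup\{(i,j)\})=w_{U,i}$ with $U\subseteq\cotransitionindices_i(w)$, the relation $\ell(w_{U,i})=\ell(w)+\#U$, the bound $\ell(w_{U,i})\le\#(\mathcal{P}\cup\{(i,j)\})$ (a pipe dream has at least as many crossings as the Coxeter length of its permutation), and $U\neq\emptyset$ (shown below) force $\#U=1$, so $w_{U,i}\in\cotransitionperms_i(w)$ and $\mathcal{P}\cup\{(i,j)\}$ is reduced; the converse direction is analogous.

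The forward inclusion is the heart of the matter. Fix $\mathcal{P}\in\kpipes(w)$ and set $u:=\demprod(\mathcal{P}\cup\{(i,j)\})$. Since $\mathcal{P}\subseteq\mathcal{P}\cup\{(i,j)\}$, part~(2) of \cref{lemma:subpipedream} gives $w\le u$, and $u\ne w$, since otherwise $\mathcal{P}\cup\{(i,j)\}$ would be a pipe dream of $w$ containing the addable cell $(i,j)$, contradicting \cref{lemma:dominantpartofpipedreams}; hence $w<u$. To identify $u$ I would analyze the effect on the word: because $(i,j)$ is addable to $\dom(w)=\dom(\mathcal{P})$, every cell of $\{(1,j),\dots,(i-1,j)\}\cup\{(i,1),\dots,(i,j-1)\}$ is a crossing of $\mathcal{P}$, and in the reading order the crossings $(1,j),\dots,(i-1,j)$ contribute the consecutive run $j,j+1,\dots,k-1$ immediately before the slot where the new letter $k$ is inserted. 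So $\mathbf{a}_{\mathcal{P}}=\mathbf{b}\cdot(j,j+1,\dots,k-1)\cdot\mathbf{g}$ and $\mathbf{a}_{\mathcal{P}\cup\{(i,j)\}}=\mathbf{b}\cdot(j,\dots,k-1,k)\cdot\mathbf{g}$, where $\mathbf{b}$ collects the letters from columns strictly right of $j$ and hence uses only $s_{j+1},\dots,s_{n-1}$. Therefore $\demprod(\mathbf{b})$ fixes $1,\dots,j$ pointwise, and applying $*\tau_j*\tau_{j+1}\cdots*\tau_{k-1}$ to it slides the value $j$ rightward from position $j$ to position $k$ through a chain of ascents (every intervening value exceeds $j$), so $\demprod(\mathbf{b}\cdot(j,\dots,k-1))$ has an ascent at $k$; consequently the inserted letter $k$ performs a genuine simple transposition. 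It then remains to follow the displaced value $j$ as the remaining letters of $\mathbf{g}$ (coming from column $j$ below row $i$ and from columns left of $j$) are applied, and to check that the outcome is $u=w\cdot(i\ i_1\ \cdots\ i_m)$, where $i_1<\dots<i_m$ are exactly the indices with $(i_\ell,w(i_\ell))$ a neighbor of $(i,w(i))$, i.e.\ $u=w_{U,i}$ with $U=\{i_1,\dots,i_m\}\subseteq\cotransitionindices_i(w)$ (and $U\ne\emptyset$ since $u\ne w$).

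For the backward inclusion, take $u\in\kcotransitionperms_i(w)$ and $\mathcal{Q}\in\kpipes(u)$. A Rothe-diagram computation --- using that $u=w\cdot(i\ i_1\ \cdots\ i_m)$ agrees with $w$ on positions $<i$ and the staircase $w(i_1)>w(i_2)>\dots>w(i_m)>w(i)=j$ --- shows $\dom(u)\supseteq\dom(w)\cup\{(i,j)\}$, so $(i,j)\in\dom(u)$ and hence $(i,j)$ is a crossing of $\mathcal{Q}$ by part~(2) of \cref{lemma:dominantpartofpipedreams}. Put $\mathcal{P}:=\mathcal{Q}\setminus\{(i,j)\}$ and $w':=\demprod(\mathcal{P})$. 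Since $\dom(w)\subseteq\dom(u)\subseteq\mathcal{Q}$ and $(i,j)\notin\dom(w)$ we get $\dom(w)\subseteq\mathcal{P}$, and the same computation then shows $(i,j)$ is addable to $\dom(w')$ with $w'(i)=j$; so the forward inclusion applies to $\mathcal{P}\in\kpipes(w')$ and gives $u\in\kcotransitionperms_i(w')$. Finally, inverse co-transition recovers $w$ from the data $(u,i,j)$ --- $U$ is read off from $u$ as the positions of the strictly decreasing run from value $u(i)$ down to the value $j$ sitting at position $u^{-1}(j)$ --- whence $w'=w$ and $\mathcal{Q}=\mathcal{P}\cup\{(i,j)\}$ with $\mathcal{P}\in\kpipes(w)$. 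This gives surjectivity and completes the proof of (1).

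The hard part will be the last step of the forward inclusion. The ``parabolic'' observation that the freshly added crossing performs an honest swap (the letters preceding the run $j,\dots,k-1$ lie in a parabolic fixing $1,\dots,j$, and the run lifts $j$ to position $k$) is clean, but pinning the resulting permutation down as precisely $w_{U,i}$ for $U\subseteq\cotransitionindices_i(w)$ --- equivalently, checking that every $\kpipes(w_{U,i})$ is hit --- forces one to track how the single displaced value cascades through the rest of the word and to match the visited positions with the neighbor (staircase) structure of $w$. That combinatorial bookkeeping, together with the accompanying Rothe-diagram identities used in the backward direction, is where essentially all of the work lies.
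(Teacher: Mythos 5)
There is a genuine gap, and you have located it yourself: the forward inclusion. Everything easy is in place (injectivity and the fact that $(i,j)$ is never a crossing, via \cref{lemma:dominantpartofpipedreams}; the length bookkeeping that deduces part (2) from part (1); the reduction of surjectivity to the forward step), and your ``parabolic'' observation --- that the letters from columns right of $j$ generate a subgroup fixing $1,\dots,j$, so the run $j,j+1,\dots,k-1$ slides the value $j$ to position $k$ and the inserted letter $k$ acts as an honest $s_k$ at that intermediate stage --- is correct. But the proposition's actual content is the identification $\demprod(\mathcal{P}\cup\{(i,j)\})=w_{U,i}$ for some nonempty $U\subseteq\cotransitionindices_i(w)$, together with the fact that every $\mathcal{Q}\in\kpipes(u)$, $u\in\kcotransitionperms_i(w)$, arises this way, and your plan defers exactly this to ``bookkeeping'' that is never carried out. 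That bookkeeping is not routine: inserting a letter in the middle of a Hecke word does not interact simply with the Demazure product of the suffix $\mathbf{g}$ --- the displaced value can cascade, and letters of $\mathbf{g}$ that were absorbed when computing $\demprod(\mathbf{a}_{\mathcal{P}})$ may become active afterwards --- so ``follow the displaced value'' is essentially the whole theorem restated, not a proof. Since your backward inclusion is set up to quote the forward one, both directions are open as written.

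The paper closes this step by a different mechanism that avoids tracking words altogether, and the ingredients are already available to you earlier in the paper. From $\mathcal{P}\subseteq\mathcal{Q}=\mathcal{P}\cup\{(i,j)\}$ and \cref{lemma:subpipedream} one gets the Bruhat sandwich $w\le v\le u$ whenever a crossing in the dominant part is deleted; \cref{lemma:booleaninterval} says the interval $[w,\vee\cotransitionperms_i(w)]$ is exactly $\{w\}\cup\kcotransitionperms_i(w)$, which pins the intermediate permutation into this finite list; and the rigidity of the dominant part (\cref{lemma:dominantpartofpipedreams}) then forces it to be $w$, because $(i,j)$ lies in $\dom(\pi)$ for every $\pi\in\kcotransitionperms_i(w)$ but not in $\dom$ of the diagram with the crossing removed. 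This is packaged as \cref{lemma:dominantcrossremoval}, and applying it to $\mathcal{Q}$ (whose dominant part contains $(i,j)$) immediately yields both well-definedness and surjectivity --- precisely the two facts your draft leaves unproven. If you want to keep a word-level argument instead, you would need a genuinely new lemma controlling how a mid-word insertion changes a Demazure product, which is the hard combinatorics you flagged; invoking the Boolean-interval/subword route is the efficient way to finish.
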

		We will prove \cref{prop:kcotransitionpipefacts} in \cref{subsection:cotransitionproof}.
		As an immediate consequence, we obtain the algebraic co-transition recurrences for Grothendieck and Schubert polynomials.
	
	\begin{proposition}
		\label{prop:kcotransition}
		Let $w \in S_+$. Suppose $(i,j)$ is an addable cell of $\dom(w)$.
		\begin{enumerate}
			\item $\displaystyle x_i\mathfrak{G}_w = \sum_{u \in \kcotransitionperms_i(w)} (-1)^{\ell(u) - \ell(w) + 1}\mathfrak{G}_u$.
			\item $x_i \displaystyle \mathfrak{S}_w = \sum_{u \in \cotransitionperms_i(w)} \mathfrak{S}_{u}$.
		\end{enumerate}
	\end{proposition}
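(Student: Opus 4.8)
The plan is to deduce both recurrences directly from the combinatorial bijections of \cref{prop:kcotransitionpipefacts} together with the pipe dream formulas of \cref{theorem:pipesschubertandgrothendieck}, by tracking monomial weights (and, in the K-theoretic case, the exponent $\#\mathcal{P}-\ell(w)$) across the map $\mathcal{P}\mapsto\mathcal{P}\cup\{(i,j)\}$. The first step is to record the elementary weight identity: for any $\mathcal{P}\in\kpipes_n$ with $(i,j)\notin\mathcal{P}$, we have $\wt(\mathcal{P}\cup\{(i,j)\}) = x_i\,\wt(\mathcal{P})$ and $\#(\mathcal{P}\cup\{(i,j)\}) = \#\mathcal{P}+1$, both immediate from $\wt(\mathcal{P}) = \prod_{(a,b)\in\mathcal{P}}x_a$. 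Since $(i,j)$ is an addable cell of $\dom(w)$, \cref{lemma:dominantpartofpipedreams}(3) guarantees $(i,j)\notin\mathcal{P}$ for every $\mathcal{P}\in\kpipes(w)$, so the map $\mathcal{P}\mapsto\mathcal{P}\cup\{(i,j)\}$ is defined on all of $\kpipes(w)$ (and restricts to $\pipes(w)$).

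For part (2), I would start from $x_i\mathfrak{S}_w = \sum_{\mathcal{P}\in\pipes(w)} x_i\,\wt(\mathcal{P}) = \sum_{\mathcal{P}\in\pipes(w)} \wt(\mathcal{P}\cup\{(i,j)\})$ using \cref{theorem:pipesschubertandgrothendieck}, then reindex the sum via the bijection of \cref{prop:kcotransitionpipefacts}(2) to obtain $\sum_{u\in\cotransitionperms_i(w)}\sum_{\mathcal{Q}\in\pipes(u)}\wt(\mathcal{Q})$, which equals $\sum_{u\in\cotransitionperms_i(w)}\mathfrak{S}_u$ again by \cref{theorem:pipesschubertandgrothendieck}.

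For part (1) the same manipulation applies, but one must carry the sign $(-1)^{\#\mathcal{P}-\ell(w)}$ from the Grothendieck formula. With $\beta=-1$, \cref{theorem:pipesschubertandgrothendieck} gives $\mathfrak{G}_w = \sum_{\mathcal{P}\in\kpipes(w)}(-1)^{\#\mathcal{P}-\ell(w)}\wt(\mathcal{P})$, so $x_i\mathfrak{G}_w = \sum_{\mathcal{P}\in\kpipes(w)}(-1)^{\#\mathcal{P}-\ell(w)}\wt(\mathcal{P}\cup\{(i,j)\})$. If $\mathcal{Q} = \mathcal{P}\cup\{(i,j)\}$ lies in $\kpipes(u)$ for $u\in\kcotransitionperms_i(w)$, then $\#\mathcal{P}-\ell(w) = \#\mathcal{Q}-1-\ell(w)$, and since $\#\mathcal{Q}-1-\ell(w) = (\#\mathcal{Q}-\ell(u)) + (\ell(u)-\ell(w)+1) - 2$, we get $(-1)^{\#\mathcal{P}-\ell(w)} = (-1)^{\ell(u)-\ell(w)+1}\,(-1)^{\#\mathcal{Q}-\ell(u)}$. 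Reindexing over the fibers of the bijection of \cref{prop:kcotransitionpipefacts}(1) and pulling out the global factor $(-1)^{\ell(u)-\ell(w)+1}$ from each fiber then yields $x_i\mathfrak{G}_w = \sum_{u\in\kcotransitionperms_i(w)}(-1)^{\ell(u)-\ell(w)+1}\mathfrak{G}_u$.

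There is no real obstacle once \cref{prop:kcotransitionpipefacts} is granted: the argument is a routine bookkeeping exercise, and the only point that requires any care is the parity computation matching the signs in the Grothendieck pipe dream formula to the signs in the claimed recurrence. The substantive work has been deferred to the proof of \cref{prop:kcotransitionpipefacts} in \cref{subsection:cotransitionproof}, which is where the combinatorics of K-theoretic ladder moves established earlier enters.
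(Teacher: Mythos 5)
Your proposal is correct and follows the paper's own route: the paper also deduces the proposition directly from \cref{theorem:pipesschubertandgrothendieck} and \cref{prop:kcotransitionpipefacts}, merely declaring it immediate, while you spell out the weight and sign bookkeeping (which checks out, including the parity identity $(-1)^{\#\mathcal{P}-\ell(w)}=(-1)^{\ell(u)-\ell(w)+1}(-1)^{\#\mathcal{Q}-\ell(u)}$).
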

	\begin{proof}
		This is immediate from \cref{theorem:pipesschubertandgrothendieck} and \cref{prop:kcotransitionpipefacts}. 
	\end{proof}
	
	We note that the first part \cref{prop:kcotransition} is a special case of \cite[Theorem 3.1]{Lenart.2003}. 
	The second part can be derived from Monk's rule. 
	
	\subsection{Proof of combinatorial co-transition recurrences}
	\label{subsection:cotransitionproof}
		
		We now state some technical lemmas, with the goal of establishing the K-theoretic co-transition recurrence for pipe dreams.
		
		We start with the following lemma, which gives a concrete method for constructing elements of $\kcotransitionperms_i(w)$ from the permutation matrix of $w$. See also \cref{figure:replacements}.
		\begin{lemma}
			\label{lemma:kcotransitionreplacements}
			Let $w \in S_{+} $.
			Let $U \subseteq \cotransitionindices_i(w)$ be nonempty and write $U = \{i_1 < i_2 < \cdots < i_k\}$. For convenience, set $i_0 = i$.
			The permutation matrix for $w_{U,i}$ is obtained from the permutation matrix of $w$ by making the following replacements:
			\begin{itemize}
				\item set $(i_h,w(i_h))$ to $0$ for all $h \in[0,k]$,
				\item set $(i_{h-1},w(i_h))$ to $1$ for all $h \in[k]$, and
				\item set $(i_k,w(i_0))$ to $1$.
			\end{itemize}
		\end{lemma}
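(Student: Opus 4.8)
The plan is to reduce the statement to a single computation of the product of transpositions $\sigma := t_{i,i_k}t_{i,i_{k-1}}\cdots t_{i,i_1}$, and then read off the permutation matrix of $w_{U,i} = w\sigma$. First I would show, by induction on $k$, that $\sigma$ is the $(k{+}1)$-cycle $(i_0\ i_1\ \cdots\ i_k)$; that is, $\sigma(i_{h-1}) = i_h$ for $h \in [k]$, $\sigma(i_k) = i_0$, and $\sigma$ fixes every other element of $\mathbb{Z}_{+}$. The base case $k=1$ is the identity $t_{i,i_1} = (i_0\ i_1)$ (recall $i_0 = i$). For the inductive step, write $\sigma = t_{i,i_k}\,\sigma'$ where $\sigma' = t_{i,i_{k-1}}\cdots t_{i,i_1}$ is, by hypothesis, the cycle $(i_0\ i_1\ \cdots\ i_{k-1})$; one then checks directly that left-multiplying $(i_0\ i_1\ \cdots\ i_{k-1})$ by $t_{i,i_k} = (i_0\ i_k)$ yields $(i_0\ i_1\ \cdots\ i_k)$. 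Here the only fact used is that $i_0 = i < i_1 < \cdots < i_k$ are pairwise distinct, which holds because each element of $\cotransitionindices_i(w)$ indexes a neighbor of $(i,w(i))$ lying strictly below it. Equivalently, this step can be done in one stroke by tracking each of $i_0, i_1, \ldots, i_k$ through the sequence $t_{i,i_1}, t_{i,i_2}, \ldots, t_{i,i_k}$, applied in that order.

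With the cycle identity established, the rest is bookkeeping. Since $w_{U,i} = w\sigma$, we have $w_{U,i}(x) = w(\sigma(x))$ for all $x$, hence $w_{U,i}(i_{h-1}) = w(i_h)$ for $h \in [k]$, $w_{U,i}(i_k) = w(i_0)$, and $w_{U,i}(x) = w(x)$ for every $x \notin \{i_0, i_1, \ldots, i_k\}$. Translating to permutation matrices: the matrix of a permutation $v$ has a $1$ in each position $(a,v(a))$ and zeros elsewhere, so comparing the matrices of $w$ and $w_{U,i}$ row by row, the rows indexed by $x \notin \{i_0,\ldots,i_k\}$ are identical; in the rows indexed by $i_0, \ldots, i_k$, the old ones sit at $(i_h, w(i_h))$ for $h \in [0,k]$ and the new ones sit at $(i_{h-1}, w(i_h))$ for $h \in [k]$ together with $(i_k, w(i_0))$. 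This is precisely the list of replacements in the statement.

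I do not expect a genuine obstacle here; the only points that require care are fixing the convention for composing permutations (in $w\, t_{i,i_k}\cdots t_{i,i_1}$ the rightmost transposition $t_{i,i_1}$ acts first) and confirming pairwise distinctness of $i_0, \ldots, i_k$ so that the cycle computation is valid. It is worth noting that the proof uses nothing about $\cotransitionindices_i(w)$ beyond this distinctness: the same formula describes $w\, t_{i,i_k}\cdots t_{i,i_1}$ for any strictly increasing tuple $i < i_1 < \cdots < i_k$, and in particular it shows $\ell(w_{U,i}) = \ell(w) + \#U$ is consistent with the claimed matrix (the new inversions being exactly those created by moving the ones up the chain), which can be recorded as a remark if useful later.
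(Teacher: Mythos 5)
Your proposal is correct and matches the paper's approach: the paper simply notes that the lemma "follows directly from the definition of $w_{U,i}$," and your argument is exactly that direct verification, with the routine computation that $t_{i,i_k}t_{i,i_{k-1}}\cdots t_{i,i_1}$ is the cycle $(i_0\ i_1\ \cdots\ i_k)$ spelled out explicitly. The conventions you fix (rightmost transposition acts first, distinctness of $i_0 < i_1 < \cdots < i_k$ from the neighbor condition) are the right ones and consistent with the paper's Figure of replacements.
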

		\begin{proof}
			This follows directly from the definition of $w_{U,i}$.
		\end{proof}
		
		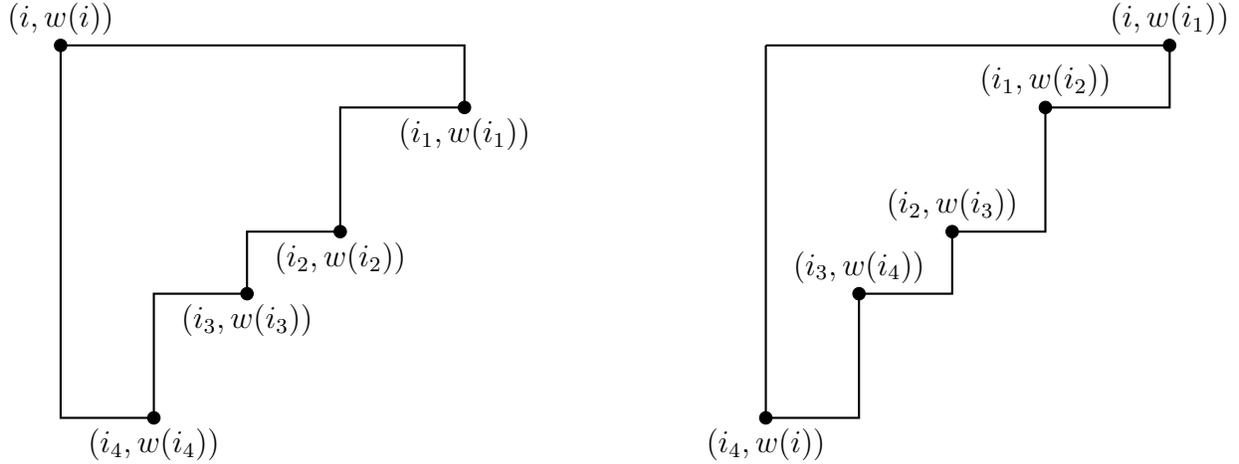
\begin{figure}
			\[\begin{tikzpicture}[x = 2em,y = 2em]
				\draw[thick] (-0.5,7)--(6,7)--(6,6)--(4,6)--(4,4)--(2.5,4)--(2.5,3)--(1,3)--(1,1)--(-0.5,1)--(-0.5,7);
				\filldraw [black](-0.5,7)circle(.1);
				\filldraw [black](6,6)circle(.1);
				\filldraw [black](4,4)circle(.1);
				\filldraw [black](2.5,3)circle(.1);
				\filldraw [black](1,1)circle(.1);
				\node [above] at (-0.5, 7) {$(i,w(i))$}; 
				\node [below] at (6, 6) {$(i_1,w(i_1))$}; 
				\node [below]at (4, 4) {$(i_2,w(i_2))$}; 
				\node [below]at (2.5, 3) {$(i_3,w(i_3))$}; 
				\node [below]at (1, 1) {$(i_4,w(i_4))$}; 
			\end{tikzpicture}
			\hspace{5em}
			\begin{tikzpicture}[x = 2em,y = 2em]
				\draw[thick] (-0.5,7)--(6,7)--(6,6)--(4,6)--(4,4)--(2.5,4)--(2.5,3)--(1,3)--(1,1)--(-0.5,1)--(-0.5,7);
				\filldraw [black](6,7)circle(.1);
				\filldraw [black](4,6)circle(.1);
				\filldraw [black](2.5,4)circle(.1);
				\filldraw [black](1,3)circle(.1);
				\filldraw [black](-0.5,1)circle(.1);
				\node [above] at (6, 7) {$(i,w(i_1))$}; 
				\node [above] at (4, 6) {$(i_1,w(i_2))$}; 
				\node [above]at (2.5, 4) {$(i_2,w(i_3))$}; 
				\node [above]at (1, 3) {$(i_3,w(i_4))$}; 
				\node [below] at (-0.5, 1) {$(i_4,w(i))$}; 
			\end{tikzpicture}
			\]
			\caption{Visualization of the replacements in \cref{lemma:kcotransitionreplacements}, when $k=4$. 
				A portion of the graph of $w$ is on the left, and the graph of $w_{U,i}$ is on the right, with $U=\{i_1,i_2,i_3,i_4\}$.
				Note that no dots appear in the interior of the pictured partition shapes.}
			\label{figure:replacements}
		\end{figure}

		\begin{lemma}
			\label{lemma:booleanranks}
			Let $w \in S_+$ and fix $U = \{i_1 < i_2 < \cdots < i_k\} \subseteq \cotransitionindices_i(w)$ with $U \neq \emptyset$. 
			Then $\rk_w(a,b)-\rk_{w_{U,i}}(a,b) = 1$ if \[(a,b) \in \bigcup_{h = 1}^k[i,i_h-1] \times [w(i),w(i_h)-1]\] and $0$ otherwise.
		\end{lemma}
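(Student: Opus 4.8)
The plan is to translate the statement into a count of dots of permutation matrices and evaluate that count by a telescoping argument over the rows of the ambient grid.

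\emph{Step 1: reduction to a local dot count.} By definition $\rk_w(a,b)$ is the number of rows $c \leq a$ with $w(c) \leq b$, so $\rk_w(a,b) - \rk_{w_{U,i}}(a,b)$ decomposes as a sum of per-row contributions, the contribution of row $c$ vanishing unless $w(c) \neq w_{U,i}(c)$. By \cref{lemma:kcotransitionreplacements} (taking $i_0 = i$), the two permutations agree outside the rows $i_0, i_1, \ldots, i_k$, and on these rows $w_{U,i}(i_h) = w(i_{h+1})$ for $0 \leq h \leq k-1$ while $w_{U,i}(i_k) = w(i_0)$. Thus only rows $i_0, \ldots, i_k$ contribute, and the contribution of a row is $\pm 1$ exactly when $b$ lies between the old and new entries of that row: row $i_0$ contributes $+1$ iff $w(i) \leq b < w(i_1)$; row $i_h$ with $1 \leq h \leq k-1$ contributes $-1$ iff $w(i_{h+1}) \leq b < w(i_h)$; row $i_k$ contributes $-1$ iff $w(i) \leq b < w(i_k)$.

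\emph{Step 2: the staircase structure.} I would next record that the neighbor condition forces $i = i_0 < i_1 < \cdots < i_k$ together with $w(i_0) < w(i_k) < w(i_{k-1}) < \cdots < w(i_1)$, which is the shape visible in \cref{figure:replacements}. Indeed, since each $(i_h, w(i_h))$ is a neighbor of $(i, w(i))$ we have $i_h > i$ and $w(i_h) > w(i)$; and if $i_h < i_{h'}$ then $i_h$ lies in the row-interval of the neighbor $(i_{h'}, w(i_{h'}))$, so the no-interior-dots clause in the definition of neighbor forces $w(i_h) \notin [w(i), w(i_{h'})]$, hence $w(i_h) > w(i_{h'})$.

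\emph{Step 3: evaluate the sum and match the region.} Fix $(a,b)$ and let $p \in \{0, 1, \ldots, k+1\}$ satisfy $i_0, \ldots, i_{p-1} \leq a < i_p$ (so $p=0$ means $a < i$ and $p = k+1$ means $a \geq i_k$). Summing the contributions of rows $i_0, \ldots, i_{p-1}$ and using the staircase order of Step 2 to write $[w(i), w(i_1)) = [w(i), w(i_p)) \sqcup \bigsqcup_{h=1}^{p-1}[w(i_{h+1}), w(i_h))$ when $1 \leq p \leq k$, or $[w(i), w(i_1)) = [w(i), w(i_k)) \sqcup \bigsqcup_{h=1}^{k-1}[w(i_{h+1}), w(i_h))$ when $p = k+1$, the contributions telescope: the total equals $1$ exactly when $1 \leq p \leq k$ and $w(i) \leq b < w(i_p)$, and $0$ otherwise. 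Finally, for $i_{p-1} \leq a < i_p$ one has $a < i_h$ iff $h \geq p$, so $(a,b) \in \bigcup_{h=1}^k [i, i_h-1] \times [w(i), w(i_h)-1]$ iff $w(i) \leq b < \max_{h \geq p} w(i_h) = w(i_p)$; by the staircase order this is precisely the condition ``$1 \leq p \leq k$ and $w(i) \leq b < w(i_p)$'' (vacuous when $p = 0$ or $p = k+1$), which matches the value just computed. This gives the lemma.

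The computation is essentially routine; the one point requiring care is the \textbf{bookkeeping of the telescoping indicator intervals} across the three ranges $a < i$, $i_{p-1} \leq a < i_p$, and $a \geq i_k$, where the single $+1$ coming from row $i_0$ has to be split correctly against the $-1$'s from the later rows. Setting up, and correctly invoking, the staircase order of the $i_h$ and the $w(i_h)$ is exactly what makes that split go through.
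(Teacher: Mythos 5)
Your proof is correct and follows essentially the same route as the paper: the paper disposes of this lemma in one line by citing \cref{lemma:kcotransitionreplacements} together with the definition of the rank function, and your per-row decomposition, staircase ordering of the $w(i_h)$, and telescoping of the indicator intervals is just that computation carried out in full detail. Nothing is missing; the bookkeeping across the ranges $a<i$, $i_{p-1}\leq a<i_p$, and $a\geq i_k$ checks out.
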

		\begin{proof}
			This follows from \cref{lemma:kcotransitionreplacements} together with the definition of the rank function of a permutation.
		\end{proof}

		The \newword{Boolean lattice} $B_n$ is the poset whose elements are the subsets of $[n]$, ordered by inclusion. 
		As the name suggests, $B_n$ is also a lattice; join is given by taking unions of subsets, and meet by taking intersections. 
		If a lattice is isomorphic to $B_n$, for some $n$, we also refer to it as a Boolean lattice.

		Given $u,v \in S_{+} $, let $[u,v]_{S_{+} } = \{w \in S_{+} :u\leq w\leq v\}$. 
		We now describe certain intervals in Bruhat order that govern the K-theoretic co-transition recurrences, and show that they are Boolean lattices.
		
		\begin{lemma}
			\label{lemma:booleaninterval}
			Let $w \in S_{+} $.
			\begin{enumerate}
				\item Given $U,U'\subseteq \cotransitionindices_{i}(w)$, we have $w_{U,i}\leq w_{U',i}$ if and only if $U\subseteq U'$. As such, the set $\{w\} \cup \kcotransitionperms_i(w)$, considered as a subposet of the Bruhat order on $S_{+}$, is a Boolean lattice. 				
				\item $[w,\vee \cotransitionperms_i(w)]_{S_{+}} = \{w\} \cup \kcotransitionperms_i(w)$.
			\end{enumerate} 
		\end{lemma}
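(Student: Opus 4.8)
The plan is to prove both parts of \cref{lemma:booleaninterval} together, leveraging the rank-function description of Bruhat order from \cref{lemma:rankbruhat} and the explicit rank computation in \cref{lemma:booleanranks}.

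For part (1), first I would establish the ``if'' direction: suppose $U \subseteq U'$, and I want $w_{U,i} \leq w_{U',i}$. By \cref{lemma:rankbruhat}(1), this is equivalent to showing $\rk_{w_{U,i}}(a,b) \geq \rk_{w_{U',i}}(a,b)$ for all $(a,b) \in [0,n] \times [0,n]$. Using \cref{lemma:booleanranks} twice, $\rk_w(a,b) - \rk_{w_{U,i}}(a,b)$ is the indicator that $(a,b)$ lies in $R_U := \bigcup_{i_h \in U}[i,i_h-1] \times [w(i),w(i_h)-1]$, and similarly for $U'$. Since $U \subseteq U'$ forces $R_U \subseteq R_{U'}$, the inequality follows immediately. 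For the ``only if'' direction, I would argue the contrapositive: if $U \not\subseteq U'$, pick $i_h \in U \setminus U'$; because the $i_h$ are distinct and the neighbor condition (condition (4) in the definition of neighbor, governing $\cotransitionindices_i(w)$) guarantees no dots lie in the interior of the relevant staircase, one can find a cell $(a,b)$ — e.g.\ $(i, w(i_h)-1)$ or a suitable nearby cell — that lies in $R_U$ but not in $R_{U'}$, so $\rk_{w_{U,i}}(a,b) < \rk_{w_{U',i}}(a,b)$, which by \cref{lemma:rankbruhat}(1) rules out $w_{U,i} \leq w_{U',i}$. I need to double-check that the cell I pick genuinely distinguishes $R_U$ from $R_{U'}$; the cleanest choice is to look at the cell just left of the dot at $(i_h, w(i_h))$ in the same row (or just above it in the same column) and use that the neighbor structure means the rectangles $[i,i_h-1]\times[w(i),w(i_h)-1]$ for different $h$ are genuinely ``nested-or-disjoint'' in a controlled way. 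Once the poset map $U \mapsto w_{U,i}$ is shown to be an order isomorphism onto its image (from $(B_{\cotransitionindices_i(w)}, \subseteq)$), and since we already noted $\{w\}\cup\kcotransitionperms_i(w) = \{w_{U,i} : U \subseteq \cotransitionindices_i(w)\}$ with $\ell(w_{U,i}) = \ell(w) + \#U$ distinct for distinct $U$, the Boolean lattice claim for part (1) follows.

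For part (2), I would show the two inclusions. The inclusion $\{w\}\cup\kcotransitionperms_i(w) \subseteq [w, \vee\cotransitionperms_i(w)]_{S_+}$: every $w_{U,i}$ satisfies $w \leq w_{U,i}$ by part (1) (taking $U' = U$, $\emptyset \subseteq U$), and $w_{U,i} \leq \vee\cotransitionperms_i(w)$ because the singletons $\{i_h\}$ give $w_{\{i_h\},i} = wt_{i,i_h} \in \cotransitionperms_i(w)$, so $\vee\cotransitionperms_i(w)$ is an upper bound for all of them; since $w_{U,i}$ is the join of the $w_{\{i_h\},i}$ for $i_h \in U$ inside the Boolean lattice, and the full join $\vee\cotransitionperms_i(w)$ dominates each generator, we get $w_{U,i} \leq \vee\cotransitionperms_i(w)$. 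Here I should first confirm $\vee\cotransitionperms_i(w)$ exists: I would compute, via \cref{lemma:booleanranks}, that $w_{\cotransitionindices_i(w),i}$ (the ``full'' element) has rank function equal to $\min$ over all $\cotransitionperms_i(w)$ of the rank functions — indeed $\rk_{w_{[k],i}}(a,b) = \rk_w(a,b) - [\,(a,b) \in R_{\text{full}}\,]$ and $R_{\text{full}} = \bigcup R_{\{i_h\}}$, so this is exactly the pointwise minimum — and then invoke \cref{lemma:rankbruhat}(2) to conclude $w_{\cotransitionindices_i(w),i} = \vee\cotransitionperms_i(w)$. For the reverse inclusion $[w,\vee\cotransitionperms_i(w)]_{S_+} \subseteq \{w\}\cup\kcotransitionperms_i(w)$: if $w \leq v \leq w_{\cotransitionindices_i(w),i}$, then by \cref{lemma:rankbruhat}(1) the rank function of $v$ is squeezed, $\rk_{w_{\cotransitionindices_i(w),i}} \leq \rk_v \leq \rk_w$, so $\rk_w - \rk_v$ takes values in $\{0,1\}$ and its support is contained in $R_{\text{full}}$; I would then argue that the support is forced to be exactly some $R_U$ for a downward-closed-under-the-rectangle-order subset, which using the neighbor structure must be of the form $\bigcup_{i_h \in U} R_{\{i_h\}}$ for some $U \subseteq \cotransitionindices_i(w)$, hence $v = w_{U,i}$ by uniqueness of the rank function (again \cref{lemma:rankbruhat}).

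The main obstacle is the combinatorial bookkeeping in the ``only if'' direction of part (1) and in the reverse inclusion of part (2): namely, pinning down precisely which subsets of $R_{\text{full}}$ can arise as $R_U$, which rests on the geometric fact that the dots $(i_h, w(i_h))$ for $i_h \in \cotransitionindices_i(w)$ form an ``antichain-like'' staircase descending from $(i,w(i))$ with no dots in the interiors of the rectangles $[i,i_h-1]\times[w(i),w(i_h)-1]$ — this is exactly condition (4) of the neighbor definition. I expect I will need a short lemma (or careful paragraph) showing that $R_{\{i_h\}} \cap R_{\{i_{h'}\}}$ for $h \neq h'$ is itself a union of smaller rectangles compatible with the staircase, so that the map $U \mapsto R_U$ from subsets of $\cotransitionindices_i(w)$ to subsets of the plane is injective and order-preserving, and its image consists exactly of the sets that can be the support of $\rk_w - \rk_v$ for $v$ in the interval. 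Everything else reduces to routine applications of \cref{lemma:rankbruhat} and \cref{lemma:booleanranks}.
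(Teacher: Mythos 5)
Your part (1) is on the right track and uses the same ingredients the paper intends (\cref{lemma:rankbruhat} plus \cref{lemma:booleanranks}: $w_{U,i}\leq w_{U',i}$ iff $\mathbf{1}_{R_U}\leq\mathbf{1}_{R_{U'}}$ iff $R_U\subseteq R_{U'}$), but the specific distinguishing cells you name do not work. The cell $(i_h,w(i_h)-1)$ ``just left of the dot'' (and likewise the one just above it) lies outside $R_{\{i_h\}}=[i,i_h-1]\times[w(i),w(i_h)-1]$, and the cell $(i,w(i_h)-1)$ lies in $R_{\{i_{h'}\}}$ for every $h'\leq h$, so it fails to separate $R_U$ from $R_{U'}$ whenever $U'$ contains a neighbor with smaller row index. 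The cell that works is the inner corner $(i_h-1,\,w(i_h)-1)$: because the neighbors form a staircase ($i_1<\cdots<i_k$ with $w(i_1)>\cdots>w(i_k)$, forced by condition (4) of the neighbor definition), this corner lies in $R_{\{i_{h'}\}}$ if and only if $h'=h$. With that repair, $R_U\subseteq R_{U'}\Leftrightarrow U\subseteq U'$ and part (1) is complete.

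The genuine gap is the reverse inclusion in part (2). From $w\leq u\leq w_{\cotransitionindices_i(w),i}$ you correctly deduce that $D=\rk_w-\rk_u$ is $\{0,1\}$-valued with support contained in $R_{\text{full}}$, but the assertion that this support ``is forced to be exactly some $R_U$'' is precisely the content of the lemma and is never proved; your proposed auxiliary lemma about intersections $R_{\{i_h\}}\cap R_{\{i_{h'}\}}$ being unions of rectangles cannot deliver it, since a priori the support of $D$ could be an arbitrary subset of $R_{\text{full}}$ --- what rules this out is that $\rk_u$ is the rank function of a \emph{permutation}, not the geometry of the region. Two ways to close it: (a) the paper's route: take $U$ maximal with $w_{U,i}\leq u$; if $w_{U,i}\neq u$, the lifting property of Bruhat order gives a reflection $t_{j,k}$ with $w_{U,i}t_{j,k}\leq u$ and $\ell(w_{U,i}t_{j,k})=\ell(w_{U,i})+1$, the squeezed rank inequalities force $(j,w_{U,i}(j))$ into the staircase region, and \cref{lemma:kcotransitionreplacements} then forces $w_{U,i}t_{j,k}=w_{U\cup\{k\},i}$, contradicting maximality; or (b) completing your route: since $D$ vanishes off $R_{\text{full}}$, the dots of $u$ agree with those of $w$ outside the closed staircase $\bigcup_h[i,i_h]\times[w(i),w(i_h)]$, so $u$ merely rearranges the dots in rows $i,i_1,\dots,i_k$ among the columns $w(i),w(i_1),\dots,w(i_k)$ subject to row $i_h$'s dot landing in a column $\leq w(i_h)$, and a short induction on $k$ shows the only such rearrangements are the cycles producing the $w_{U,i}$. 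Either argument must actually be written; as it stands the proposal asserts the conclusion. Two smaller points: deducing $w_{U,i}\leq\vee\cotransitionperms_i(w)$ ``because $w_{U,i}$ is the join inside the Boolean lattice'' is not valid reasoning in the ambient poset (a join in a subposet need not be below an ambient upper bound of the generators), though it is unnecessary --- part (1) with $U'=\cotransitionindices_i(w)$ gives it once you identify $w_{\cotransitionindices_i(w),i}=\vee\cotransitionperms_i(w)$, and your rank-function verification of that identification (which the paper leaves implicit) is a welcome addition; also \cref{lemma:rankbruhat}(2) is stated for a pair, so you need its evident extension to the finite set $\cotransitionperms_i(w)$.
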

		
		\begin{proof}
			\noindent (1) This follows directly from \cref{lemma:rankbruhat} and \cref{lemma:booleanranks}. 
			
			\noindent (2) Let $v = \vee \cotransitionperms_i(w)$. Suppose that $u \in S_{+} $ with $w\leq u \leq v$. If $u \in \{w\} \cup \kcotransitionperms_i(w)$ we are done, so assume not. 
			
			Choose $n$ large enough so that $\{w\} \cup \kcotransitionperms_i(w) \subseteq S_n$.
			By \cref{lemma:rankbruhat}, we have
			\[\rk_w(a,b)\geq \rk_u(a,b)\geq \rk_v(a,b) \text{ for all } (a,b) \in [0,n] \times [0,n].\]
			Thus,
			\begin{equation}
				\label{eq:rankinequalities}
				0\leq \rk_w(a,b)-\rk_u(a,b)\leq \rk_w(a,b)-\rk_v(a,b) \text{ for all } (a,b) \in [0,n] \times [0,n].
			\end{equation}
			By \cref{lemma:booleanranks}, $\rk_w(a,b)-\rk_v(a,b)$ equals $1$ if $(a,b) \in\bigcup_{h \in \cotransitionindices_i(w)}[i,i_h-1] \times [w(i),w(i_h)-1]$ and is $0$ otherwise.
			
			Now let $U \subseteq \cotransitionindices_{i}(w)$ be a subset of maximal size such that $w_{U,i}\leq u$. 
			Because $w_{U,i}\leq u$, there exists indices $j < k$ so that $w_{U,i} t_{j,k}\leq u$ and $\ell(w_{U,i} t_{j,k}) = \ell(w_{U,i}) + 1$. Because of \cref{eq:rankinequalities}, we must have \[(j,w_{U,i}(j)) \in\bigcup_{h \in \cotransitionindices_i(w)}[i,i_h-1] \times [w(i),w(i_h)-1].\]
			
			But then by \cref{lemma:kcotransitionreplacements}, we must be in the situation where $w_{U,i} t_{j,k} = w_{U \cup \{k\},i}$ with $j \in U$ and $k \in\cotransitionindices_i(w)-U$, which contradicts the maximality of the set $U$. Hence, no such \( u \notin \{w\} \cup \kcotransitionperms_i(w) \) can exist in the interval $[w,\vee \cotransitionperms_i(w)]_{S_{+}}$, and the claim follows.
		\end{proof}
		
		The next lemma describes how removing a cross from the dominant part of a pipe dream modifies the corresponding permutations.
		\begin{lemma}
			\label{lemma:dominantcrossremoval}
			Let $u \in S_+$ and $(i,j) \in \dom(u)$. 
			Let $i < i_1 < i_2 < \cdots < i_k = u^{-1}(j)$ be the row indices of the neighbors of $(i,j)$ in $u$. 
			Let $w = ut_{i,i_1}t_{i,i_2}\cdots t_{i,i_k}$. 
			The following hold:
			\begin{enumerate}
				\item $u \in \kcotransitionperms_{i}(w)$.
				\item If $\mathcal{Q} \in \kpipes(u)$, then $\mathcal{Q}-\{(i,j)\} \in \kpipes(w)$.
			\end{enumerate}
		\end{lemma}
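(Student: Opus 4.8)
The plan is to handle the two parts of the lemma essentially independently: part~(1) is a direct computation on permutation matrices using only the combinatorics of neighbors, while part~(2) is a reduction to the bottom pipe dream of $u$ via (K-theoretic) ladder moves.

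For part~(1), I would begin by recording the one-line notation of $w=u t_{i,i_1}\cdots t_{i,i_k}$: composing these transpositions in order shows $w(i)=u(i_k)=j$, $w(i_h)=u(i_{h-1})$ for $h\in[k]$ (setting $i_0=i$), and $w(r)=u(r)$ for all other $r$. Next I would observe that the neighbors of $(i,j)$ in $u$ form an antichain in the product order on cells (if one dot lay weakly northwest of another inside the relevant rectangle, it would violate condition~(4) of the definition of neighbor), so since their rows are $i<i_1<\cdots<i_k$ their columns satisfy $u(i)>u(i_1)>\cdots>u(i_k)=j$. The key auxiliary fact is that a dot of $u$ that lies in a rectangle $[i,r]\times[j,c]$, is distinct from $(i,j)$, and is minimal there in the product order, is automatically a neighbor of $(i,j)$ (any dot in its northwest sub-rectangle other than the corners would be a smaller element of that set, contradicting minimality). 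Applying this to the rectangle $[i,r]\times[j,u(r)]$ shows that for any row $r$ strictly between consecutive neighbor rows $i_m<r<i_{m+1}$ one has $u(r)<j$ or $u(r)>u(i_m)$. With these constraints in hand, I would simply verify the definition of neighbor for each pair $(i,w(i))=(i,j)$ and $(i_h,w(i_h))=(i_h,u(i_{h-1}))$ in $w$: the dots of $w$ in rows $[i,i_h]$ are $(i,j)$, the staircase dots $(i_m,u(i_{m-1}))$, and the unchanged dots $(r,u(r))$, and in each case the column is either $j$ (a corner of the rectangle), or exceeds $u(i_{h-1})$, or falls outside $[j,u(i_{h-1})]$ by the gap-row constraint. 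Hence $\{i_1,\dots,i_k\}\subseteq\cotransitionindices_i(w)$; since the same composition gives $u=w t_{i,i_k}\cdots t_{i,i_1}$, this is exactly the statement $u=w_{\{i_1,\dots,i_k\},i}\in\kcotransitionperms_i(w)$.

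For part~(2), first note that $(i,j)\in\dom(\mathcal{Q})=\dom(u)$ by \cref{lemma:dominantpartofpipedreams}(2), so $(i,j)$ is a crossing tile of $\mathcal{Q}$ and the deletion makes sense. By \cref{lemma:kladdermoves}(2), write $\mathcal{Q}$ as the result of a sequence of ladder moves and K-theoretic ladder moves applied to the bottom pipe dream $\mathcal{Q}_0$ of $u$. The crucial point is that no move in this sequence touches $(i,j)$ or any cell weakly northwest of it: that rectangle lies in $\dom=\partitionsubset_\lambda$ and hence consists entirely of crossings at every intermediate stage (since each move preserves $\demprod$, hence $\dom$, by \cref{lemma:dominantpartofpipedreams}(2) and \cref{lemma:kladdermoves}(1)), which rules out $(i,j)$ being the added cell or an empty cell required by a move (those would be noncrossings), rules out $(i,j)$ being the removed cell (removing it from $\partitionsubset_\lambda$ would shrink $\dom$), and rules out $(i,j)$ being a kept interior crossing of a move (then some cell $(i',j)$ with $i'<i$ required to be empty by the move would lie in $[1,i]\times[1,j]\subseteq\partitionsubset_\lambda$, a crossing). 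Consequently the identical sequence of moves is valid starting from $\mathcal{Q}_0-\{(i,j)\}$ and produces $\mathcal{Q}-\{(i,j)\}$, so by \cref{lemma:kladdermoves}(1) it is enough to prove the base case $\demprod(\mathcal{Q}_0-\{(i,j)\})=w$. This I would establish by tracing pipes in the left-justified pipe dream $\mathcal{Q}_0$ with the single dominant crossing $(i,j)$ deleted, equivalently by computing the Demazure product of the associated word via \cref{lemma:pipeworddemproduct}, and comparing with the one-line notation of $w$ found in part~(1).

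I expect the base case of part~(2) to be the main obstacle. Even though $\mathcal{Q}_0$ is about as simple as a pipe dream of $u$ can be, deleting a crossing from the dominant region can change the Demazure product by more than one length step (for instance $u=4312$, $(i,j)=(1,1)$ gives $w=1432$ with $\ell(w)=\ell(u)-2$), so the argument must genuinely follow how the two pipes through $(i,j)$ reroute. A clean way to organize this is to compute the rank function of $\demprod(\mathcal{Q}_0-\{(i,j)\})$ directly from the diagram using \cref{lemma:planarhistoryinversions} and then invoke \cref{lemma:rankbruhat}; the staircase configuration of the neighbor dots $(i,u(i)),(i_1,u(i_1)),\dots,(i_k,u(i_k))$ should keep the bookkeeping under control, matching it against the rank function of $w$.
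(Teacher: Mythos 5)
Your part (1) is correct (it is a fleshed-out version of the verification the paper leaves implicit), and your reduction step in part (2) is sound: since $[1,i]\times[1,j]\subseteq\dom(u)$ consists of crossings in every element of $\kpipes(u)$, every cell that a (K-theoretic) ladder move requires to be empty lies outside that rectangle, so no move in a sequence from the bottom pipe dream $\mathcal{Q}_0$ to $\mathcal{Q}$ can involve $(i,j)$, and the same sequence carries $\mathcal{Q}_0-\{(i,j)\}$ to $\mathcal{Q}-\{(i,j)\}$. The genuine gap is the base case you yourself flag: you never prove $\demprod(\mathcal{Q}_0-\{(i,j)\})=w$, and this is the entire content of the lemma in disguise. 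Deleting the single crossing at $(i,j)$ drops the Coxeter length by $k=\#\{i_1,\ldots,i_k\}$, so there is no one-step verification; and the tools you name do not obviously deliver it. \cref{lemma:planarhistoryinversions} tells you, for each pair of exit rows, whether the corresponding pipes cross, but deciding that in the modified diagram already requires tracing how the two pipes through $(i,j)$ reroute through the staircase of remaining crossings, which is exactly the analysis you defer; and \cref{lemma:rankbruhat} concerns Bruhat comparisons and joins, not a mechanism for reading off the Demazure product of a non-reduced diagram. As written, the hardest step is a plan rather than a proof.

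For comparison, the paper needs no reduction to the bottom pipe dream and no pipe tracing. Since $w\leq u$, \cref{lemma:subpipedream} produces $\mathcal{P}\in\kpipes(w)$ with $\mathcal{P}\subseteq\mathcal{Q}$; because $(i,j)$ is an addable cell of $\dom(w)$ (this is why the paper's part (1) records that fact), \cref{lemma:dominantpartofpipedreams} gives $(i,j)\notin\mathcal{P}$, hence $\mathcal{P}\subseteq\mathcal{Q}-\{(i,j)\}\subseteq\mathcal{Q}$ and, writing $v$ for $\demprod(\mathcal{Q}-\{(i,j)\})$, a second application of \cref{lemma:subpipedream} yields $w\leq v\leq u$. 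Then \cref{lemma:booleaninterval} forces $v\in\{w\}\cup\kcotransitionperms_i(w)$, and since $(i,j)\in\dom(\pi)$ for every $\pi\in\kcotransitionperms_i(w)$ while $(i,j)\notin\dom(\mathcal{Q}-\{(i,j)\})=\dom(v)$, one concludes $v=w$. Note that this sandwich argument works for an arbitrary $\mathcal{Q}\in\kpipes(u)$, so if you close your base case with it the ladder-move reduction becomes superfluous; if you prefer a direct word or pipe computation at the bottom pipe dream, that computation still has to be supplied before your proposal constitutes a proof.
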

		
		\begin{proof}
			\noindent (1) By construction, $w(i) = j$. Thus, $(i,j)\notin \dom(w)$. 
			In particular, $(i,j)$ is a northwest most dot in the graph of $w$, hence $(i,j)$ is an addable cell for $\dom(w)$. Furthermore, $u=w_{U,i}$ where $U=\{i_1,i_2,\ldots,i_k\}$, thus $u\in \kcotransitionperms_{i}(w)$.
			
			\noindent (2)
			We have $w\leq u$ and hence, by \cref{lemma:subpipedream}, there exists $\mathcal{P} \in \kpipes(w)$ such that $\mathcal{P} \subseteq \mathcal{Q}$. 
			Because $(i,j)$ is an addable cell for $\dom(w)$, by \cref{lemma:dominantpartofpipedreams}, $(i,j)\notin \mathcal{P}$. 
			Therefore, $\mathcal{P} \subseteq \mathcal{Q}-\{(i,j)\}$. 
			Applying \cref{lemma:subpipedream}, we see that $\mathcal{Q}-\{(i,j)\} \in \kpipes(v)$ for some $v \in S_n$ with $w\leq v\leq u$. 
			
			By \cref{lemma:booleaninterval}, we have $v \in \{w\} \cup \kcotransitionperms_{i}(w)$. 
			By \cref{lemma:dominantpartofpipedreams}, because $(i,j)\notin \dom(\mathcal{Q})$, it follows that $(i,j)\notin \dom(v)$. 
			Because $(i,j) \in \dom(\pi)$ for all $\pi \in \kcotransitionperms_{i}(w)$, we have $v = w$.
		\end{proof}

		We now are ready to prove the co-transition recurrences on pipe dreams.		
		
		\begin{proof}[Proof of {\cref{prop:kcotransitionpipefacts}}]
			\noindent (1) Let $\mathcal{P} \in \kpipes(w)$. 
			Since $(i,j)$ is an addable cell of $\dom(w)$, by \cref{lemma:dominantpartofpipedreams}, we know that $(i,j)\notin \mathcal{P}$. 
			Hence, the given map is injective. 
			
			Let $\mathcal{Q} = \mathcal{P} \cup \{(i,j)\}$. 
			We have $\mathcal{Q} \in \kpipes(u)$ for some $u \in S_+$. 
			Furthermore, by \cref{lemma:dominantpartofpipedreams}, $\dom(\mathcal{P})=\dom(w)$ and hence $(i,j)$ is an addable cell of $\dom(\mathcal{P})$. Therefore, $(i,j) \in \dom(\mathcal{Q}) = \dom(u)$. 
			By \cref{lemma:dominantcrossremoval}, since $\mathcal{P} = \mathcal{Q}-\{(i,j)\} \in \kpipes(w)$, we have $u \in \kcotransitionperms_i(w)$.
			Hence, the map is well-defined.

			To show surjectivity, take some $u \in \kcotransitionperms_{i}(w)$ and let $\mathcal{Q} \in \kpipes(u)$. We can write $u=w_{U,i}$ for some $U\subseteq \cotransitionindices_i(w)$ with $U\neq \emptyset$.
			Let $i < i_1 < i_2 < \cdots < i_k = u^{-1}(j)$ be the row indices of the neighbors of $(i,j)$ in $u$. 
			By construction, and the definition of neighbors, it must be that $U=\{i_1,i_2,\ldots,i_k\}$.
			Furthermore, $w=ut_{i,i_1}t_{i,i_2}\cdots t_{i,i_k}$. Thus, by \cref{lemma:dominantcrossremoval}, we have $\mathcal{P} \in \kpipes(w)$.
						
			\noindent (2) If $\pi \in S_n$ and $\mathcal{P} \in \pipes(\pi)$, then $\#\mathcal{P} = \ell(\pi)$. 
			Furthermore, if $\mathcal{Q} \in \kpipes(\pi)-\pipes(\pi)$, then $\#\mathcal{Q} > \ell(u)$. 
			
			Now take $\mathcal{P}\in \pipes(w)$.
			We have 
			\[\#(\mathcal{P} \cup \{(i,j)\}) = \min\{\ell(u):u \in \kcotransitionperms_i(w)\}.\] 
			In particular, this minimum is achieved exactly when $u \in \cotransitionperms_i(w)$. 
			Thus, restricting the bijection from Part (1) gives a bijection from $\pipes(w)$ to $\bigsqcup_{u \in \cotransitionperms_i(w)} \pipes(u)$.
		\end{proof}

		\section{Changing bases with pipe dreams}
		
		\label{section:pipechangebasis}
		
		In this section, we give new proofs of Lenart's rule \cite{Lenart} for expanding Grothendieck polynomials as sums of Schubert polynomials, as well as Lascoux's rule \cite{Lascoux.03} for expanding Schubert polynomials as sums of Grothendieck polynomials. 
		We have rephrased both rules in terms of pipe dreams and co-pipe dreams; up to reindexing the underlying triangular arrays, the statements are equivalent. 
		Lenart and Lascoux used noncommutative Schubert calculus in their arguments. 
		We give new proofs of both rules using the combinatorial co-transition recurrence on pipe dreams. 
		The supporting framework we develop for these proofs also yields a recursive method to generate the set of pipe dreams for $w\in S_n$ whose co-pipe dreams are reduced (see \cref{section:chains}).
		
		\subsection{Technical lemmas}

		Given $w \in S_n$, we will often consider the leftmost column $j$ such that there exists an addable cell $(i,j)$ of $\dom(w)$ with $i + j \leq n$. If such a column exists, we define $\addable(w) = j$. If it does not, then $\addable(w)$ is undefined. In this case, we must have that $w = w_0$.

		\begin{lemma} 
			\label{lemma:kcotransitiondescents}
			Let $w \in S_{n} $ with $w \neq w_0$. Suppose $(i,j)$ is an addable cell of $\dom(w)$ with $j = \addable(w)$.
			The following statements hold:
			\begin{enumerate}
				\item $w < ws_i$.
				\item $i + 1 \in \cotransitionindices_i(w)$.
				\item $\cotransitionindices_i(w)-\{i + 1\} \subseteq \cotransitionindices_{i + 1}(ws_i)$.
				\item $w_{U,i} \in \kcotransitionperms_i(w)$ has a descent at $i$ if and only if $w_{U,i}\geq ws_i$, if and only if $i + 1 \in U$.
				\item $(ws_i)_{U,i+1} \in \kcotransitionperms_{i + 1}(ws_i)$ has a descent at $i$ if and only if $U \subseteq \cotransitionindices_i(w)$. 
				In this case, $(ws_i)_{U,i + 1}=w_{U \cup \{i + 1\},i}$.
			\end{enumerate}
		\end{lemma}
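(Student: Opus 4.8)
The plan is to establish the five statements in order, since later parts build on earlier ones. Parts (1) and (2) are the base: because $(i,j)$ is an addable cell of $\dom(w)$ with $j = \addable(w)$ minimal, there is no addable cell in column $j-1$ with valid antidiagonal constraint, which forces the cell $(i+1,j-1)$ (or the corresponding Rothe-diagram structure) to be occupied by a dot or to lie in $\dom(w)$. Concretely I would use \cref{lemma:dominantpartofpipedreams}(1): addable cells of $\dom(w)$ are exactly of the form $(a,w(a))$, so $w(i) = j$, and minimality of $j$ together with the shape of $\dom(w)$ forces $w(i+1) < w(i) = j$, i.e. $w$ has an ascent at $i$, giving (1). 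For (2), the cell $(i+1, w(i+1))$ is then a northwest-most dot weakly southeast of $(i,j)$ with nothing strictly between, so $(i+1, w(i+1))$ is a neighbor of $(i,j)$ in $w$, hence $i+1 \in \cotransitionindices_i(w)$.

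For (3), I would argue directly from the definition of neighbor. Take $k \in \cotransitionindices_i(w)$ with $k \neq i+1$; then $(k, w(k))$ is a neighbor of $(i,w(i))$ in $w$, meaning no dot of $w$ lies in the interior of the rectangle $[i,k]\times[j,w(k)]$ except the two corners. Passing from $w$ to $ws_i$ swaps the values in positions $i$ and $i+1$; since $w(i+1) < j = w(i)$, the dot formerly at $(i,j)=(i,w(i))$ moves to $(i+1, j)$, and the dot at $(i+1, w(i+1))$ moves to $(i, w(i+1))$. One checks that relative to $(i+1, (ws_i)(i+1)) = (i+1, j)$, the dot $(k, w(k)) = (k, (ws_i)(k))$ remains a neighbor: the rectangle $[i+1,k]\times[j, w(k)]$ still has no dots in its interior, because the only dots that moved were those in rows $i, i+1$, and the new dot $(i+1,j)$ is now the corner, while $(i, w(i+1))$ has left the region (its row $i < i+1$). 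This gives (3). Part (4) is then a bookkeeping computation with \cref{lemma:kcotransitionreplacements}: $w_{U,i}$ alters $w$ only in rows indexed by $\{i\} \cup U$; a descent of $w_{U,i}$ at $i$ means $w_{U,i}(i) > w_{U,i}(i+1)$, and since $w_{U,i}(i)$ equals $w(i_1) \geq j$ where $i_1 = \min U$ (with equality only if... ), while row $i+1$ is unchanged from $w$ unless $i+1 \in U$, one reads off directly that the descent occurs iff $i+1 \in U$. The equivalence with $w_{U,i} \geq ws_i$ follows from \cref{eq:shubdivdiff}-style length considerations or from \cref{lemma:rankbruhat} combined with \cref{lemma:booleanranks}: $ws_i$ covers $w$ (by (1)), and $ws_i = w_{\{i+1\},i}$ by \cref{lemma:kcotransitionreplacements}, so by the Boolean-lattice structure of \cref{lemma:booleaninterval}(1), $w_{U,i} \geq w_{\{i+1\},i}$ iff $\{i+1\}\subseteq U$.

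For (5), the final and most delicate statement, I would again use \cref{lemma:kcotransitionreplacements} applied to $ws_i$ with the index $i+1$. Write $ws_i =: v$, so $v(i) = w(i+1) < j$ and $v(i+1) = j = w(i)$. For $U = \{i_1 < \cdots < i_k\} \subseteq \cotransitionindices_{i+1}(v)$, the permutation $v_{U,i+1}$ modifies $v$ in rows $\{i+1\}\cup U$; a descent at $i$ means $v_{U,i+1}(i) > v_{U,i+1}(i+1)$. Row $i$ is untouched (since $i \notin \{i+1\}\cup U$), so $v_{U,i+1}(i) = v(i) = w(i+1)$, which is small; meanwhile $v_{U,i+1}(i+1) = v(i_1) = w(i_1)$ if $i_1 = \min U$. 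So the descent at $i$ holds iff $w(i+1) > w(i_1)$ for the minimal element $i_1$ of $U$ — and I would show this happens exactly when $U \subseteq \cotransitionindices_i(w)$, using that neighbors of $(i,w(i))$ in $w$ all have value $> w(i) > w(i+1)$... wait, that's backwards; I should instead track that the neighbors contributing to $\cotransitionindices_{i+1}(v)$ split into those "inherited from row $i$ of $w$" (values $> j$, giving $U \subseteq \cotransitionindices_i(w)$) versus those genuinely new to $v$. The identity $(ws_i)_{U, i+1} = w_{U\cup\{i+1\}, i}$ in that case is then a direct matrix comparison via \cref{lemma:kcotransitionreplacements}: both sides set the same cells to $0$ and $1$, once one observes that prepending $i+1$ to $U$ in the $w$-based construction reproduces exactly the row-$i/i{+}1$ swap encoded in $s_i$ followed by the $U$-cycle anchored at $i+1$. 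The main obstacle I anticipate is precisely this last cycle-decomposition identity and the careful case analysis in (5) distinguishing which neighbors of $v$ at row $i+1$ come from $\cotransitionindices_i(w)$; keeping the roles of "column value $j$" and the transposed rows $i, i+1$ straight in the permutation-matrix picture (Figure~\ref{figure:replacements}) is where the bookkeeping is easy to get wrong, so I would draw the relevant before/after matrix diagram explicitly.
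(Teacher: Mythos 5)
The central inequality in your proposal is backwards, and this is a substantive error rather than a typo. With $(i,j)$ an addable cell of $\dom(w)$ and $j=\addable(w)$, minimality of $j$ forces $w(i)<w(i+1)$, not $w(i+1)<w(i)$ as you claim: if $w(i+1)<w(i)=j$, then (writing $\dom(w)=\partitionsubset_{\lambda}$) the cell $(i+1,\lambda_{i+1}+1)$ would be an addable cell of $\dom(w)$ in a column $\le w(i+1)<j$ with antidiagonal sum at most $(i+1)+w(i+1)\le i+j\le n$, contradicting the minimality of $j$. (Your phrase ``$w(i+1)<w(i)=j$, i.e.\ $w$ has an ascent at $i$'' is also internally inconsistent, since that inequality is a descent.) The error propagates: part (2) \emph{requires} $w(i+1)>j$, because by the definition of neighbor the dot $(i+1,w(i+1))$ must lie weakly southeast of $(i,j)$, so your own inequality contradicts the neighbor claim you make; and in (5) you set $(ws_i)(i)=w(i+1)<j$, run into exactly this problem (``wait, that's backwards''), and never resolve it, so the equivalence ``descent at $i$ iff $U\subseteq\cotransitionindices_i(w)$'' is not established.

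The second gap is that the structural fact which actually drives the ``only if'' direction of (4) and all of (5) is never stated: among the neighbors $(a,w(a))$ of $(i,j)$, columns strictly decrease as rows increase, so $i+1$ (which is a neighbor row by (2)) carries the \emph{largest} column value $w(i+1)$ in $\cotransitionindices_i(w)$. Consequently $\cotransitionindices_{i+1}(ws_i)$ splits at the threshold $w(i+1)$ — not at $j$, as in your sketch: the rows with column value $<w(i+1)$ are exactly $\cotransitionindices_i(w)\setminus\{i+1\}$, and they all occur \emph{below} the ``new'' neighbors of $(i+1,j)$, whose columns exceed $w(i+1)$. Since row $i$ is untouched, \cref{lemma:kcotransitionreplacements} gives $(ws_i)_{U,i+1}(i)=w(i+1)$ and $(ws_i)_{U,i+1}(i+1)=w(\min U)$, so the descent at $i$ holds iff $w(\min U)<w(i+1)$, iff $\min U\in\cotransitionindices_i(w)$, iff (by the ordering just described) $U\subseteq\cotransitionindices_i(w)$; the identity $(ws_i)_{U,i+1}=w_{U\cup\{i+1\},i}$ is then the row-by-row comparison you indicate, and the same threshold fact is what justifies ``reads off directly'' in (4) when $i+1\notin U$. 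Your overall architecture — (3) by comparing neighbor rectangles, (4) via $ws_i=w_{\{i+1\},i}$ together with \cref{lemma:booleaninterval}, (5) via \cref{lemma:kcotransitionreplacements} — matches the paper's (whose proofs of (3)--(5) are terse citations of the same two lemmas), but as written the proposal does not prove (2), the ``only if'' half of (4), or (5).
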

		
		\begin{proof}
			
			\noindent (1) Because $j = \addable(w)$, we must have $w(i) < w(i + 1)$. 
			If not, then \[w(i + 1) + i + 1\leq w(i) + i\leq n,\] which contradicts the assumption that $j = \addable(w)$. 
			Hence, $w < ws_i$. 
						
			\noindent (2) Because $(i,w(i))$ and $(i + 1,w(i + 1))$ lie in adjacent rows in the graph of $w$, and $w(i) < w(i + 1)$, it follows that $i + 1 \in \cotransitionindices_i(w)$.
						
			\noindent (3) The graphs of $w$ and $ws_i$ differ only by exchanging rows $i$ and $i + 1$. 
			Thus if $j \neq i + 1$ is a neighbor of $(i,w(i))$ in the graph of $w$, then it will still be a neighbor of \[(i + 1,w(i))=(i + 1,(ws_i)(i + 1))\] in the graph of $ws_i$.
			
			\noindent (4) Note that $ws_i=w_{\{i+1\},i}$.  Thus, this statement follows from \cref{lemma:kcotransitionreplacements} and \cref{lemma:booleaninterval}.
			
			\noindent (5) This also follows from \cref{lemma:kcotransitionreplacements} and \cref{lemma:booleaninterval}.
		\end{proof}
		
		The next lemma will be used to aid with divided difference computations in our proofs of \cref{thm:pipe_groth_to_schub} and \cref{thm:pipe_schub_to_groth}.
		
		\begin{lemma}
			\label{lemma:whathappenswithwsianddescents}
			Let $w \in S_n$ with $w \neq w_0$. Suppose that $(i,j)$ is an addable cell of $\dom(w)$ with $j = \addable(w)$. Then:
			\begin{enumerate}
				\item For all $u \in \kcotransitionperms_i(w)$, we have $u \in S_n$.
				\item The cell $(i + 1,j)$ is an addable cell of $\dom(ws_i)$. 
				\item $\{u \in \kcotransitionperms_{i + 1}(ws_i) : u \text{ has a descent at } i\} = \{u \in \kcotransitionperms_{i}(w): u > ws_i\}.$
				\item The map $u \mapsto us_i$ defines a bijection from 
				\[\{v \in \cotransitionperms_{i + 1}(ws_i) : v \text{ has a descent at } i\}\]
				to $\cotransitionperms_{i}(w)-\{ws_i\}$.
			\end{enumerate} 
		\end{lemma}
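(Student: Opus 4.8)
The plan is to prove the four parts in sequence, reducing each to structural facts already established, principally \cref{lemma:dominantpartofpipedreams}, \cref{lemma:kcotransitionreplacements}, \cref{lemma:rankbruhat}, and above all \cref{lemma:kcotransitiondescents}. For Part (1), recall from \cref{lemma:dominantpartofpipedreams} that $j=w(i)$, so the hypothesis $i+j\le n$ reads $w(i)\le n-i$. By \cref{lemma:kcotransitionreplacements}, the permutation matrix of $w_{U,i}$ differs from that of $w$ only in the rows indexed by $\{i\}\cup U$ and the columns indexed by $\{w(i)\}\cup\{w(h):h\in U\}$; since $w\in S_n$, it therefore suffices to show every neighbor of $(i,w(i))$ in $w$ has row index at most $n$. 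First I would observe that a neighbor $(a,w(a))$ with $a>n$ has $w(a)=a$, and that the diagonal dots $(n+1,n+1),\dots,(a,a)$ all lie in the rectangle $[i,a]\times[w(i),a]$ because $w(i)\le n-1<n+1\le a$; the ``staircase'' condition defining a neighbor then forces $a=n+1$. But with $a=n+1$ the same staircase condition places every dot in columns $w(i)+1,\dots,n$ into rows $1,\dots,i-1$, so $i-1\ge n-w(i)$, contradicting $i+w(i)\le n$. Hence all neighbors have row index at most $n$ and $w_{U,i}\in S_n$.

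For Part (2), let $\lambda$ be the partition with $\partitionsubset_\lambda=\dom(w)$; addability of $(i,j)$ gives $\lambda_a\ge j$ for $a<i$ and $\lambda_i=j-1$, and $j=w(i)$. By \cref{lemma:kcotransitiondescents}(1) we have $w<ws_i$, so passing to $ws_i$ moves the dot in column $j$ from row $i$ down to row $i+1$ and installs a dot in row $i$ at column $w(i+1)>j$. A direct check of which cells belong to $\rothe(ws_i)$ (comparing $(ws_i)(a)$ and $(ws_i)^{-1}(b)$ to those of $w$, using $\lambda_i=j-1$) shows that $[1,i]\times[1,j]\subseteq\dom(ws_i)$ and $[1,i+1]\times[1,j-1]\subseteq\dom(ws_i)$, while $(i+1,j)$, being a dot of $ws_i$, is not in $\dom(ws_i)$. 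These three facts are precisely the conditions that $(i+1,j)$ is an addable cell of $\dom(ws_i)$, with the cases $i=1$ and $j=1$ being vacuous.

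For Part (3), I would combine parts (2),(3),(4),(5) of \cref{lemma:kcotransitiondescents}. Since $i+1\in\cotransitionindices_i(w)$ and $i+1\notin\cotransitionindices_{i+1}(ws_i)$ (no index is its own neighbor), part (3) of that lemma yields $\cotransitionindices_{i+1}(ws_i)\cap\cotransitionindices_i(w)=\cotransitionindices_i(w)-\{i+1\}$. By part (5), an element $(ws_i)_{U,i+1}\in\kcotransitionperms_{i+1}(ws_i)$ has a descent at $i$ exactly when $\emptyset\ne U\subseteq\cotransitionindices_i(w)-\{i+1\}$, in which case it equals $w_{U\cup\{i+1\},i}$; as $U$ ranges over such sets, $V:=U\cup\{i+1\}$ ranges over the subsets of $\cotransitionindices_i(w)$ containing $i+1$ together with at least one other index, and by part (4) (together with the Boolean lattice structure of \cref{lemma:booleaninterval}, so that $w_{V,i}=ws_i$ iff $V=\{i+1\}$) these are exactly the $V$ with $w_{V,i}>ws_i$. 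For Part (4) one cannot merely restrict the equality of Part (3), since the length-minimal elements on the two sides do not match up; instead I would use the conjugation identity $s_i\,t_{i+1,k}\,s_i=t_{i,k}$ for $k\ne i,i+1$. The elements of $\cotransitionperms_{i+1}(ws_i)$ with a descent at $i$ are the $(ws_i)t_{i+1,k}$ with $k\in\cotransitionindices_i(w)-\{i+1\}$ (same analysis, with $U=\{k\}$), and the identity gives $\big((ws_i)t_{i+1,k}\big)s_i=w\,t_{i,k}$, of length $\ell(w)+1$, hence in $\cotransitionperms_i(w)-\{ws_i\}$. Conversely, for $wt_{i,k}\in\cotransitionperms_i(w)$ with $k\ne i+1$ the neighbor condition forces $w(i+1)>w(k)$ (otherwise the dot $(i+1,w(i+1))$ would lie strictly inside the rectangle witnessing that $(k,w(k))$ is a neighbor of $(i,w(i))$), so $wt_{i,k}$ has an ascent at $i$ and $(wt_{i,k})s_i=(ws_i)t_{i+1,k}$ has a descent at $i$ and lies in $\cotransitionperms_{i+1}(ws_i)$ by \cref{lemma:kcotransitiondescents}(3). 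Since right multiplication by $s_i$ is an involution on $S_n$, this shows $u\mapsto us_i$ restricts to the claimed bijection.

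I expect the main obstacle to be Part (2): tracking how the dominant part $\dom$ changes under the transposition $s_i$ requires a careful case analysis of the entries of $\rothe(ws_i)$ near rows $i$ and $i+1$, and one must verify all three defining conditions of an addable cell, including the boundary cases. The dot-counting estimate in Part (1) is also somewhat delicate, though self-contained; Parts (3) and (4) are comparatively mechanical once \cref{lemma:kcotransitiondescents} and the conjugation identity are in place.
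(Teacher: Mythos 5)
Your proposal is correct, and most of it runs parallel to the paper's proof: for part (2) you verify the three defining cells $(i,j)$, $(i+1,j-1)$, $(i+1,j)$ directly from the dot positions of $ws_i$ near rows $i,i+1$ (the paper instead asserts $\dom(ws_i)\supseteq\dom(w)\cup\{(i,j)\}$ and uses minimality of $j=\addable(w)$ to place $(i+1,j-1)$ in $\dom(w)$; your direct check is sound and in fact needs the minimality of $j$ only through $w(i)<w(i+1)$), and parts (3)–(4) are, as in the paper, bookkeeping with parts (3)–(5) of \cref{lemma:kcotransitiondescents}, the Boolean-lattice injectivity from \cref{lemma:booleaninterval}, and the identity $(ws_i)_{U,i+1}=w_{U\cup\{i+1\},i}$; your conjugation identity $s_it_{i+1,k}s_i=t_{i,k}$ is exactly the computation the paper extracts from \cref{lemma:kcotransitionreplacements}, and your remark that (4) is not a restriction of (3) (the lengths do not match) is precisely why the map must be $u\mapsto us_i$. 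The genuine divergence is part (1): the paper gets it in one line from \cref{prop:kcotransitionpipefacts} — since $i+j\le n$, adjoining the crossing $(i,j)$ to any $\mathcal{P}\in\kpipes(w)$ stays inside $\mathbb{T}_n$, so every $u\in\kcotransitionperms_i(w)$ arises as the Demazure product of a pipe dream of size $n$ and hence lies in $S_n$ — whereas you argue directly on the permutation matrix, showing every neighbor of $(i,w(i))$ has row index at most $n$ by ruling out $a>n+1$ via the fixed dot $(n+1,n+1)$ and ruling out $a=n+1$ via the count $n-w(i)\le i-1$, contradicting $i+w(i)\le n$. Both arguments are valid; yours is self-contained and avoids any pipe-dream input, at the price of the dot-counting case analysis you flag, while the paper's trades that for machinery already established.
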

	
		\begin{proof}
			\noindent (1) Because $j=\addable(w)$, we have $i + j \leq n$.  Thus, $\mathcal{P} \cup \{(i,j)\} \subseteq \mathbb{T}_n$ for all $\mathcal{P} \in \kpipes(w)$. 
			In particular, this means that $\demprod(\mathcal{P}\cup\{(i,j)\})\in S_n$.
			Thus, the claim follows from \cref{prop:kcotransitionpipefacts}.

			\noindent (2) We have $\dom(ws_i) = \dom(w) \cup \{(i,j)\}$, and $(i + 1,j) = (i + 1,(ws_i)(i + 1))$. Since $j = \addable(w)$, if $j > 1$ then $(i + 1,j-1) \in \dom(w)$. Thus, $(i + 1,j)$ is an addable cell of $\dom(ws_i)$.
			
			\noindent (3) $(\subseteq)$ Let $U \subseteq \cotransitionindices_{i + 1}(ws_i)$ so that $(ws_i)_{U,i + 1} \in\{u \in \kcotransitionperms_{i + 1}(ws_i) : u \text{ has a descent at } i\}$. 
			By Part (5) of \cref{lemma:kcotransitiondescents}, it follows that $U \subseteq \cotransitionindices_i(w)$ and $(ws_i)_{U,i + 1}=w_{U \cup \{i + 1\},i}$. Furthermore, $U \neq \emptyset$.
			By Part (4) of \cref{lemma:kcotransitiondescents}, we have $w_{U \cup \{i + 1\},i} > w_{\{i + 1\},i}=ws_i$, thus $(ws_i)_{U,i + 1} \in \{u \in \kcotransitionperms_{i}(w): u > ws_i\}$.
			
			\noindent $(\supseteq)$ Let $U \subseteq \cotransitionindices_i(w)$ so that $w_{U,i} \in \{u \in \kcotransitionperms_{i}(w) : u > ws_i\}$. 
			By Part (4) of \cref{lemma:kcotransitiondescents}, it follows that $i + 1 \in U$ and $w_{U,i}$ has a descent at $i$. 
			From \cref{lemma:kcotransitionreplacements}, we know that $w_{U,i}=(ws_i)_{U',i + 1}$ where $U'=U-\{i + 1\}$. 
			By Part (3) of \cref{lemma:kcotransitiondescents}, we have $U' \subseteq \cotransitionindices_{i + 1}(ws_i)$, thus $w_{U,i} \in \{u \in \kcotransitionperms_{i + 1}(ws_i) : u \text{ has a descent at } i\}$.

			\noindent (4) First we will show that the map is well-defined. 
			Suppose $u \in \cotransitionperms_{i + 1}(ws_i)$ and that $u$ has a descent at $i$.
			By Part (5) of \cref{lemma:kcotransitiondescents}, we must have $u=(ws_i)_{\{j\},i + 1}$ for some $j \in \cotransitionindices_i(w)-\{i + 1\}$. Furthermore, $us_i=(ws_i)_{\{j\},i + 1}s_i=w_{\{j\},i}$ (this follows from \cref{lemma:kcotransitionreplacements}). Thus, $us_i \in \cotransitionperms_i(w)-\{ws_i\}$.
			
			Injectivity follows because the map $u \mapsto us_i$ is an involution on $S_{+} $, and hence injective.
			
			For surjectivity, suppose that $u \in \cotransitionperms_i(w)-\{ws_i\}$. 
			Then $u=w_{\{j\},i}$ for some $j \in \cotransitionindices_i(w)$ with $j \neq i + 1$. 
			Furthermore, $us_i=(ws_i)_{\{j\},i + 1}$.
			Because $\{j\} \subseteq \cotransitionindices_i(w)$, it follows from Part (5) of \cref{lemma:kcotransitiondescents} that 
			\[us_i \in \{v \in \cotransitionperms_{i + 1}(ws_i) : v \text{ has a descent at } i\}.\] 
			Since $us_i \mapsto us_is_i=u$, the map is surjective.
		\end{proof}
	
		The following lemma describes how the the co-transition recurrence modifies co-permutations of pipe dreams.
		
		\begin{lemma}
			\label{lemma:cotransitionflippermutation}
			Let $w \in S_n$ with $w \neq w_0$. 
			Suppose $(i,j)$ is an addable cell of $\dom(w)$ with $j = \addable(w)$. 
			Let $\mathcal{P} \in \kpipes(w)$ and define $\mathcal{Q} = \mathcal{P} \cup \{(i,j)\}$.
			Then $\demprod(\check{\mathcal{Q}})\square \tau_i = \demprod(\check{\mathcal{P}})$.
		\end{lemma}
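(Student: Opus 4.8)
The plan is to argue geometrically with the co-pipe dream diagrams, showing that the single local change from $\check{\mathcal{Q}}$ to $\check{\mathcal{P}}$ alters the permutation read off by the NW-planar-history rules by exactly $\square\tau_i$. First I would record that $\check{\mathcal{P}}$ differs from $\check{\mathcal{Q}}$ in only one tile. Since $\mathcal{Q} = \mathcal{P} \cup \{(i,j)\}$ with $(i,j) \in \mathbb{T}_n$ (as $i+j \le n$, because $j = \addable(w)$) and $(i,j) \notin \mathcal{P}$ by \cref{lemma:dominantpartofpipedreams}, the formula $\check{\mathcal{R}} = \{(a+b,b) : (a,b) \in \mathbb{T}_n \setminus \mathcal{R}\}$ gives $\check{\mathcal{Q}} = \check{\mathcal{P}} \setminus \{(i+j,j)\}$: the tile of $\check{\mathcal{P}}$ at $(i+j,j) \in \check{\mathbb{T}}_n$ is a crossing, and replacing it by the corresponding bump yields $\check{\mathcal{Q}}$.

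Next I would pin down the local shape of $\check{\mathcal{P}}$ near column $j$. Since $\dom(w) \subseteq \mathbb{T}_n$ (it equals $\dom$ of the bottom pipe dream) and $j = \addable(w)$ is the leftmost column carrying an addable cell of $\dom(w)$ inside $\mathbb{T}_n$, an induction on columns shows $\dom(w)$ contains all of $\mathbb{T}_n$ in columns $1,\dots,j-1$: column $1$ always has an addable cell, minimality of $j$ forces it outside $\mathbb{T}_n$, so column $1$ of $\dom(w)$ has the maximal height $n-1$; this forces column $2$ to have an addable cell, hence (again by minimality) maximal height $n-2$; and so on. By \cref{lemma:dominantpartofpipedreams} these cells are crossings in every element of $\kpipes(w)$, so dually $\check{\mathcal{P}}$ has no crossings in columns $1,\dots,j-1$: there it consists only of below-diagonal bumps and the diagonal elbow tiles. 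Likewise $\dom(w)$ has exactly $i-1$ cells in column $j$ and $(i,j)\notin\mathcal{P}$, so $(i+j,j)$ is the topmost crossing in column $j$ of $\check{\mathcal{P}}$, with bumps directly above it in rows $j+1,\dots,i+j-1$.

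The core of the proof is a trace of the two pipes through $(i+j,j)$. In $\check{\mathcal{P}}$, the pipe $A$ entering column $j$ at the bottom runs straight up through the crossings below row $i+j$, passes through $(i+j,j)$, and then enters the ``staircase'' of bumps and diagonal elbows occupying rows $\le i+j-1$ and columns $\le j$; an easy induction along the staircase shows $A$ leaves the grid on the left in row $i$, the pipe $B$ entering $(i+j,j)$ from the right leaves on the left in row $i+1$, and $A,B$ travel on opposite strands of every bump they share, so $(i+j,j)$ is their only meeting point. In $\check{\mathcal{Q}}$, where $(i+j,j)$ is a bump, the analogous pipes $A',B'$ do not cross, and the same staircase analysis gives $A'$ in row $i+1$ and $B'$ in row $i$. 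As $\check{\mathcal{P}}$ and $\check{\mathcal{Q}}$ agree away from $(i+j,j)$, the two pipes arrive at that tile with the same pair of labels in both diagrams --- say $\ell$ on the pipe from below and $m$ on the pipe from the right --- so by \cref{lemma:planarhistoryinversions}, $\demprod(\check{\mathcal{Q}})(i)=m$ and $\demprod(\check{\mathcal{Q}})(i+1)=\ell$. If $\ell<m$, the tile $(i+j,j)$ is a genuine crossing by the NW crossing rule (see \cref{eqn:NWpipecross}), giving $\demprod(\check{\mathcal{P}})(i)=\ell<m=\demprod(\check{\mathcal{P}})(i+1)$; then $\demprod(\check{\mathcal{Q}})$ has a descent at $i$ and $\demprod(\check{\mathcal{P}}) = \demprod(\check{\mathcal{Q}})s_i = \demprod(\check{\mathcal{Q}})\square\tau_i$. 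If $\ell>m$, the crossing acts as a bump, the exit rows of $A$ and $B$ revert to those of $A',B'$, so $\demprod(\check{\mathcal{P}}) = \demprod(\check{\mathcal{Q}})$, and now $\demprod(\check{\mathcal{Q}})$ has an ascent at $i$, so again $\demprod(\check{\mathcal{P}}) = \demprod(\check{\mathcal{Q}})\square\tau_i$. Either way the identity holds, and both cases leave $\demprod(\check{\mathcal{P}})$ with an ascent at $i$, consistent with \cref{lemma:copdascent} and \cref{lemma:kcotransitiondescents}.

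I expect the main obstacle to be the staircase bookkeeping: tracking the tile types along the zig-zag paths of $A$ and $B$ through columns $\le j$ and confirming they never share a strand, so that $(i+j,j)$ is genuinely the only candidate crossing between them and the label comparison above is legitimate. Once that local picture is established, the rest is routine. A purely computational alternative would invoke \cref{lemma:copipewordandpermutation}, writing $\mathbf{a}_{\check{\mathcal{P}}}$ as $\mathbf{a}_{\check{\mathcal{Q}}}$ with one inserted letter and showing the insertion behaves like an append for the Demazure product, then converting the resulting $*\tau$ into $\square\tau_i$ via \cref{lemma:upanddownoperatorswithflips}; but identifying the inserted letter and justifying the ``append'' require the same structural facts about columns $<j$, so I would favor the geometric argument.
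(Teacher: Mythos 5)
Your argument is correct, but it is not the paper's: the paper proves this lemma by exactly the word-theoretic computation you relegate to your final paragraph. There, one observes that since $j=\addable(w)$, the crossing of $\check{\mathcal{P}}$ at $(i+j,j)$ is the \emph{last} letter read in $\mathbf{a}_{\check{\mathcal{P}}}$ (columns $1,\dots,j-1$ of $\check{\mathcal{P}}$ are crossing-free and $(i+j,j)$ is the topmost crossing of column $j$), that this letter is $n-i$, and hence $\demprod(\mathbf{a}_{\check{\mathcal{P}}})=\demprod(\mathbf{a}_{\check{\mathcal{Q}}})*\tau_{n-i}$; then \cref{lemma:copipewordandpermutation} and \cref{lemma:upanddownoperatorswithflips} convert this into $\demprod(\check{\mathcal{P}})=\demprod(\check{\mathcal{Q}})\square\tau_i$. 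Both routes rest on the same structural input, which the paper asserts only briefly and you prove carefully: your column-by-column induction showing that $\dom(w)$ fills columns $1,\dots,j-1$ of $\mathbb{T}_n$, combined with \cref{lemma:dominantpartofpipedreams}. What your geometric trace buys is a self-contained local picture: the two pipes through $(i+j,j)$ ride the upper and lower staircases, exit on the left in rows $i$ and $i+1$, and the NW label rule at the single differing tile yields precisely the descent/ascent dichotomy of $\square\tau_i$; the cost is the staircase bookkeeping, which the paper's computation avoids because the ``delete the last letter of the word'' step is already packaged by \cref{lemma:planarhistoryword}, \cref{lemma:conjugationandwords}, and \cref{lemma:copipewordandpermutation}. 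Two small inaccuracies in your write-up, neither load-bearing: the pipe passing vertically through $(i+j,j)$ need not be ``the pipe entering column $j$ at the bottom,'' since cells of column $j$ below row $i+j$ may be bumps (only the tiles above the crossing in column $j$ and all tiles in columns $<j$ are forced), though your argument only uses what happens downstream of $(i+j,j)$ and the equality of the arriving labels in the two diagrams, so nothing breaks; and reading off $\demprod(\check{\mathcal{Q}})(i)=m$ and $\demprod(\check{\mathcal{Q}})(i+1)=\ell$ is simply the definition of $\demprod$ for a NW planar history (exit labels read top to bottom), not an application of \cref{lemma:planarhistoryinversions}.
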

		
		\begin{proof}
			Because we have chosen $(i,j)$ to be the leftmost addable cell of $\dom(w)$ with $i + j\leq n$, it follows that we obtain ${\mathbf{a}}_{\check{\mathcal{Q}}}$ from ${\mathbf{a}}_{\check{\mathcal{P}}}$ by removing the last entry. 
			Furthermore, this entry is $n - i$. This is because when we map $\mathcal{P}$ to $\check{\mathcal{P}}$, the bump at $(i,j)$ in $\mathcal{P}$ becomes a crossing in $\check{\mathcal{P}}$ at $(i+j,j)$. When we flip vertically to get the word $\mathbf{a}_{\check{\mathcal{P}}}$, the crossing in $(i+j,j)$ moves to $(n-(i+j)+1,j)$, which has associated letter $n-(i+j)+1+j-1=n-i$.
			Thus, $\demprod({\mathbf{a}}_{\check{\mathcal{P}}}) = \demprod({\mathbf{a}}_{\check{\mathcal{Q}}})*\tau_{n-i}$, and so,
			\begin{align*}
				\demprod(\check{\mathcal{P}})
				& = \demprod({\mathbf{a}}_{\check{\mathcal{P}}}) w_0 &\text{(by \cref{lemma:copipewordandpermutation})}\\
				& = (\demprod({\mathbf{a}}_{\check{\mathcal{Q}}})*\tau_{n-i}) w_0 \\
				& = (\demprod({\mathbf{a}}_{\check{\mathcal{Q}}})w_0)\square \tau_{i} & \text{(by \cref{lemma:upanddownoperatorswithflips})}\\
				& = \demprod(\check{\mathcal{Q}})\square \tau_i &\text{(by \cref{lemma:copipewordandpermutation})}
			\end{align*}
			as desired.
		\end{proof}
		
		The next lemma provides a recursive method to construct pipe dreams for $w$ with reduced co-pipe dreams.
		
		\begin{lemma}
			\label{lemma:kcotransitionrestrictedbijection}
			Let $w \in S_n$ with $w \neq w_0$. 
			Suppose that $(i,j)$ is an addable cell of $\dom(w)$ with $j = \addable(w)$.
			The map $\mathcal{P} \mapsto \mathcal{P} \cup \{(i,j)\}$ defines a bijection from $\{\mathcal{P} \in \kpipes(w) : \check{\mathcal{P}} \text{ is reduced}\}$ to 
			\begin{align}
				\label{eq:bigunionintwoforms}
				\bigsqcup_{\substack{u \in \kcotransitionperms_i(w)\\u\geq ws_i}} &\{\mathcal{Q} \in \kpipes(u):\check {\mathcal{Q}} \text{ is reduced and } \demprod(\check{\mathcal{Q}}) \text{ has a descent at } i\}\\
				& = \bigsqcup_{u \in \kcotransitionperms_i(w)} \{\mathcal{Q} \in \kpipes(u):\check {\mathcal{Q}} \text{ is reduced and } \demprod(\check{\mathcal{Q}}) \text{ has a descent at } i\}. \nonumber
			\end{align}
		\end{lemma}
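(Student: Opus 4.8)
The plan is to combine the raw co-transition bijection of \cref{prop:kcotransitionpipefacts}(1) with the compatibility statement \cref{lemma:cotransitionflippermutation} and the descent bookkeeping of \cref{lemma:cotransitiondescents,lemma:whathappenswithwsianddescents}. First I would note that the equality of the two index sets in \cref{eq:bigunionintwoforms} is essentially free: by \cref{lemma:cobpdascent}-style reasoning (here \cref{lemma:copdascent}), if $\check{\mathcal{Q}}$ has a descent at $i$ then $\demprod(\mathcal{Q})=u$ has a descent at $i$, so $u\geq us_i$; combined with Part (4) of \cref{lemma:kcotransitiondescents}, which says $u\in\kcotransitionperms_i(w)$ has a descent at $i$ iff $u\geq ws_i$, the requirement ``$\demprod(\check{\mathcal{Q}})$ has a descent at $i$'' already forces $u\geq ws_i$. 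Hence the two disjoint unions in \cref{eq:bigunionintwoforms} have exactly the same nonempty summands, and it suffices to establish the bijection onto the second (larger-looking) union.

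Next I would invoke \cref{prop:kcotransitionpipefacts}(1): the map $\mathcal{P}\mapsto\mathcal{P}\cup\{(i,j)\}=\mathcal{Q}$ is already a bijection from $\kpipes(w)$ to $\bigsqcup_{u\in\kcotransitionperms_i(w)}\kpipes(u)$. So the whole task reduces to checking that this bijection restricts correctly, i.e.\ that for $\mathcal{P}\in\kpipes(w)$ with image $\mathcal{Q}$, the co-pipe dream $\check{\mathcal{P}}$ is reduced \emph{if and only if} $\check{\mathcal{Q}}$ is reduced and $\demprod(\check{\mathcal{Q}})$ has a descent at $i$. This is where \cref{lemma:cotransitionflippermutation} does the heavy lifting: it tells us $\demprod(\check{\mathcal{P}})=\demprod(\check{\mathcal{Q}})\square\tau_i$, and moreover (from its proof) $\mathbf{a}_{\check{\mathcal{P}}}$ is obtained from $\mathbf{a}_{\check{\mathcal{Q}}}$ by appending the single letter $n-i$. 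In terms of lengths, $\ell(\demprod(\check{\mathcal{P}}))=\ell(\demprod(\check{\mathcal{Q}}))+1$ exactly when $\demprod(\check{\mathcal{Q}})$ has an ascent at $i$ (equivalently, when $\square\tau_i$ acts by multiplication); since a planar history is reduced iff the length of its Demazure product equals its number of crossings, and $\check{\mathcal{P}}$ has exactly one more crossing than $\check{\mathcal{Q}}$, I would argue: if $\check{\mathcal{P}}$ is reduced then $\check{\mathcal{Q}}$ (having one fewer crossing, with $\demprod(\check{\mathcal{Q}})\leq\demprod(\check{\mathcal{P}})$ and lengths differing by exactly one) is reduced and the appended crossing genuinely increased the length, which via \cref{lemma:upanddownoperatorswithflips} and the $*\tau_{n-i}$ description forces a descent of $\demprod(\check{\mathcal{Q}})$ at $i$; conversely if $\check{\mathcal{Q}}$ is reduced with a descent at $i$, then appending the letter $n-i$ (which, under $w_0(-)w_0$ conjugation, corresponds to the descent position $i$) keeps the Demazure product length one higher, matching the crossing count, so $\check{\mathcal{P}}$ is reduced. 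Concretely I would phrase this using \cref{lemma:copipewordandpermutation}: reducedness of $\check{\mathcal{P}}$ is equivalent to $\ell(\demprod(\mathbf{a}_{\check{\mathcal{P}}}))=\#\mathbf{a}_{\check{\mathcal{P}}}$, and likewise for $\check{\mathcal{Q}}$; since $\demprod(\mathbf{a}_{\check{\mathcal{P}}})=\demprod(\mathbf{a}_{\check{\mathcal{Q}}})*\tau_{n-i}$, we get $\ell$ goes up by one iff $\demprod(\mathbf{a}_{\check{\mathcal{Q}}})$ has an ascent at $n-i$, iff (conjugating by $w_0$) $\demprod(\check{\mathcal{Q}})=\demprod(\mathbf{a}_{\check{\mathcal{Q}}})w_0$ has a descent at $i$. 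Chaining these equivalences gives exactly the claimed restriction.

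Finally I would assemble: the target of the restricted map lands in the summand indexed by $u=\demprod(\mathcal{Q})\in\kcotransitionperms_i(w)$, and by the preceding paragraph lands precisely in $\{\mathcal{Q}\in\kpipes(u):\check{\mathcal{Q}}\text{ reduced},\ \demprod(\check{\mathcal{Q}})\text{ has a descent at }i\}$; injectivity is inherited from \cref{prop:kcotransitionpipefacts}(1), and surjectivity follows because the ``only if'' direction shows every such $\mathcal{Q}$ has a preimage $\mathcal{P}=\mathcal{Q}-\{(i,j)\}$ with $\check{\mathcal{P}}$ reduced (using \cref{lemma:dominantcrossremoval} to see $\mathcal{P}\in\kpipes(w)$, since $(i,j)$ is an addable cell of $\dom(w)$ and hence of $\dom(u)$). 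The main obstacle I anticipate is the bookkeeping in the ``$\Leftarrow$'' direction — confirming that reducedness of $\check{\mathcal{Q}}$ together with a descent at $i$ really does imply reducedness of $\check{\mathcal{P}}$ — because one must rule out the possibility that appending the crossing creates a second crossing between some already-crossed pair of pipes; this is controlled precisely by the length equality $\ell(\demprod(\mathbf{a}_{\check{\mathcal{P}}}))=\ell(\demprod(\mathbf{a}_{\check{\mathcal{Q}}}))+1=\#\mathbf{a}_{\check{\mathcal{Q}}}+1=\#\mathbf{a}_{\check{\mathcal{P}}}$, so the argument goes through cleanly once the $*\tau_{n-i}$ versus $\square\tau_i$ translation via \cref{lemma:upanddownoperatorswithflips} is set up carefully.
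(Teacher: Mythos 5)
Your proposal is correct and follows essentially the same route as the paper: both first dispose of the equality of the two unions via \cref{lemma:copdascent} and Part (4) of \cref{lemma:kcotransitiondescents}, and then restrict the bijection of \cref{prop:kcotransitionpipefacts} using the fact (from the proof of \cref{lemma:cotransitionflippermutation}) that $\mathbf{a}_{\check{\mathcal{P}}}$ is $\mathbf{a}_{\check{\mathcal{Q}}}$ with the letter $n-i$ appended, so that reducedness and the descent at $i$ are tracked exactly as in the paper's well-definedness and surjectivity steps. Your packaging of these two steps as a single ``reduced iff reduced with descent at $i$'' equivalence is only a cosmetic difference.
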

	
		\begin{proof}			
			By \cref{lemma:copdascent}, if $w_{\check {\mathcal{Q}}}$ has a descent at $i$ and $\mathcal{Q} \in \kpipes(u)$, then $u$ must also have a descent at $i$. 
			By \cref{lemma:kcotransitiondescents}, a permutation $u \in \kcotransitionperms_i(w)$ has a descent at $i$ if and only if $u\geq ws_i$. Thus, given $u \in \kcotransitionperms_i(w)$ such that $u\not \geq ws_i$, we have 
			\[\{\mathcal{Q} \in \kpipes(u):\check {\mathcal{Q}} \text{ is reduced and } \demprod(\check{\mathcal{Q}}) \text{ has a descent at } i\} = \emptyset.\]
			This establishes the equality in \cref{eq:bigunionintwoforms}.
			
			We will now show the given map is bijective.
			Since this map is a restriction of the bijection from \cref{prop:kcotransitionpipefacts}, it follows that it is injective. 
			
			We now show the map is well-defined. 
			Let $\mathcal{P} \in \kpipes(w)$ such that $\check{\mathcal{P}}$ is reduced. 
			Define $\mathcal{Q} = \mathcal{P} \cup \{(i,j)\}$. 
			Because we have chosen $(i,j)$ to be the leftmost addable cell with $i + j\leq n$, it follows that the word
			${\mathbf{a}}_{\check{\mathcal{Q}}}$ is obtained from ${\mathbf{a}}_{\check{\mathcal{P}}}$ by removing the last entry. 
			Since $\check{\mathcal{P}}$ is reduced, the word ${\mathbf{a}}_{\check{\mathcal{P}}}$ is reduced. 
			Removing the final entry from ${\mathbf{a}}_{\check{\mathcal{P}}}$ yields a reduced word ${\mathbf{a}}_{\check{\mathcal{Q}}}$, and therefore $\check{\mathcal{Q}}$ is reduced.
			From \cref{lemma:cotransitionflippermutation}, we have $\demprod(\check{\mathcal{Q}}) s_i = \demprod(\check{\mathcal{Q}})\square \tau_i = \demprod(\check{\mathcal{P}})$. 
			Thus, $\demprod(\check{\mathcal{Q}})$ has a descent at $i$. 
			
			It remains to show that the map is surjective. 
			Take $u \in \kcotransitionperms_i(w)$ with $u\geq ws_i$, and let $\mathcal{Q} \in \kpipes(u)$ such that $\check{\mathcal{Q}}$ is reduced and $\demprod(\check{\mathcal{Q}})$ has a descent at $i$. 
			Define $\mathcal{P} = \mathcal{Q}-\{(i,j)\}$. 
			By \cref{prop:kcotransitionpipefacts}, we have $\mathcal{P} \in \kpipes(w)$.
			We must show that $\check{\mathcal{P}}$ is reduced. 
			Because $\check{\mathcal{Q}}$ is reduced, $\mathbf{a}_{\check{\mathcal{Q}}}$ is a reduced word. 
			Since $\demprod(\check{\mathcal{Q}})$ has a descent at $i$, we have $\demprod(\mathbf{a}_{\check{\mathcal{Q}}}) = \demprod(\check{\mathcal{Q}})w_0$ has an ascent at $n-i$. 
			Thus, when we obtain ${\mathbf{a}}_{\check{\mathcal{P}}}$ from ${\mathbf{a}}_{\check{\mathcal{Q}}}$ by appending the letter $n-i$, we still have a reduced word. 
			Therefore, $\check{\mathcal{P}}$ is reduced.
		\end{proof}
		
		\subsection{Proof of the pipe dream change of basis formulas}
		
		In this section, we prove the pipe dream change of basis formulas, \cref{thm:pipe_groth_to_schub} and \cref{thm:pipe_schub_to_groth}.
		
		We start with a lemma.
		\begin{lemma}
			\label{eq:grothorddivdiff}
			Let $w \in S_n$. If $w < ws_i$, then 
			\[\divdiff_i(\mathfrak{G}_{w}) = 0.\]
		\end{lemma}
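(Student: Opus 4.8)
The plan is to derive the vanishing from the $K$-theoretic identity $\kdivdiff_i(\mathfrak{G}_w) = \mathfrak{G}_{w\square\tau_i}$ of \cref{eq:grothdivdiff}, together with the defining relation $\kdivdiff_i(f) = \divdiff_i((1-x_{i+1})f)$ and the Leibniz rule for $\divdiff_i$. Since $w < ws_i$ means $ws_i > w$, the definition of $\square$ gives $w\square\tau_i = w$, so \cref{eq:grothdivdiff} reads $\kdivdiff_i(\mathfrak{G}_w) = \mathfrak{G}_w$.

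Next I would expand the left-hand side. Recall the Leibniz-type identity $\divdiff_i(fg) = \divdiff_i(f)\,g + (s_i\cdot f)\,\divdiff_i(g)$, which follows by writing $fg - s_i(fg) = (f - s_i f)g + (s_i f)(g - s_i g)$ and dividing by $x_i - x_{i+1}$. Applying this with $f = 1 - x_{i+1}$ and $g = \mathfrak{G}_w$, and using $\divdiff_i(1-x_{i+1}) = -\divdiff_i(x_{i+1}) = 1$ together with $s_i\cdot(1-x_{i+1}) = 1 - x_i$, yields $\kdivdiff_i(\mathfrak{G}_w) = \divdiff_i((1-x_{i+1})\mathfrak{G}_w) = \mathfrak{G}_w + (1-x_i)\,\divdiff_i(\mathfrak{G}_w)$.

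Comparing the two expressions for $\kdivdiff_i(\mathfrak{G}_w)$ gives $(1-x_i)\,\divdiff_i(\mathfrak{G}_w) = 0$. Since $1 - x_i$ is a nonzero element of the integral domain $\mathbb{Z}[x_1,\ldots,x_n]$, we conclude $\divdiff_i(\mathfrak{G}_w) = 0$. There is no serious obstacle here; the only points requiring care are the sign bookkeeping in $\divdiff_i(1-x_{i+1}) = 1$ and the observation that the hypothesis $w < ws_i$ forces $w\square\tau_i = w$ rather than $ws_i$. One could instead expand $\divdiff_i((1-x_{i+1})\mathfrak{G}_w)$ directly from the quotient definition of $\divdiff_i$, but invoking the Leibniz rule is cleaner.
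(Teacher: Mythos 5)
Your proof is correct, but it takes a different route from the paper's. The paper argues at $ws_i$ rather than at $w$: since $ws_i > w$, the descent case of \cref{eq:grothdivdiffbeta} gives $\mathfrak{G}_w = \kdivdiff_i(\mathfrak{G}_{ws_i}) = \divdiff_i\bigl((1-x_{i+1})\mathfrak{G}_{ws_i}\bigr)$, so $\mathfrak{G}_w$ lies in the image of $\divdiff_i$ and the nilpotence $\divdiff_i^2 = 0$ finishes the proof in one line. You instead stay at $w$, use the ascent case $\kdivdiff_i(\mathfrak{G}_w) = \mathfrak{G}_{w\square\tau_i} = \mathfrak{G}_w$ from \cref{eq:grothdivdiff}, expand $\divdiff_i\bigl((1-x_{i+1})\mathfrak{G}_w\bigr)$ by the twisted Leibniz rule $\divdiff_i(fg) = \divdiff_i(f)\,g + (s_i\cdot f)\,\divdiff_i(g)$ (which you verify correctly, including $\divdiff_i(1-x_{i+1})=1$ and $s_i\cdot(1-x_{i+1})=1-x_i$), and cancel the nonzero factor $1-x_i$ in the integral domain $\mathbb{Z}[x_1,\ldots,x_n]$; this last step is legitimate because $\divdiff_i(\mathfrak{G}_w)$ is again a polynomial. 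The paper's argument is shorter and needs only $\divdiff_i^2=0$, while yours is self-contained at $w$ (no appeal to $\mathfrak{G}_{ws_i}$) at the cost of importing the Leibniz identity and the cancellation step; there is no circularity in either, since both rely only on facts recorded before the lemma.
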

		\begin{proof}
			We have $\mathfrak{G}_{w} = \kdivdiff_i(\mathfrak{G}_{ws_i}) = \divdiff_i\left((1 - x_{i + 1})\mathfrak{G}_{ws_i}\right)$, and because $\divdiff_i^2 = 0$, the result follows.
		\end{proof}
		
		We now prove the pipe dream formula for expanding Schubert polynomials as sums of Grothendieck polynomials.
		\thmpipeB*
		\begin{proof}
			We proceed by reverse induction on Bruhat order. 
			In the base case, let $w = w_0$.
			Then $\pipes(w_0)=\{\mathbb{T}_n\}$. Because $\check{\mathbb{T}}_n = \emptyset$, we have $\demprod(\check{\mathbb{T}_n})=w_0$. Because $\mathfrak{S}_{w_0}=\mathfrak{G}_{w_0}$, the formula holds.
			
			Now suppose $w \neq w_0$, and assume that the statement holds for all $u \in S_n$ such that $u > w$ in Bruhat order.

			Let $j = \addable(w)$ and $i = w^{-1}(j)$. 
			By \cref{lemma:kcotransitiondescents}, we have $ws_i > w$. 
			By \cref{lemma:whathappenswithwsianddescents}, $ws_i\in S_n$. Thus, $ws_i$ falls within the inductive hypothesis.

			Because $\divdiff_i=\divdiff_i x_{i+1}+\kdivdiff_i$, we obtain
			\begin{equation}
				\label{eqn:writingoutdivideddiffapplication}
				\mathfrak{S}_w=\divdiff_i(\mathfrak{S}_{ws_i}) = \divdiff_i(x_{i + 1}\mathfrak{S}_{ws_i})+\kdivdiff_i(\mathfrak{S}_{ws_i}).
			\end{equation}
		
			By the inductive hypothesis,
			\begin{equation}
				\label{eq:schubertexpansioninduction}
				\mathfrak{S}_{ws_i} = \sum_{\mathcal{P} \in \pipes(ws_i)} \mathfrak{G}_{\demprod(\check{\mathcal{P}})}.
			\end{equation}
			
			We now apply $\kdivdiff_i$ to both sides of \cref{eq:schubertexpansioninduction}, which, by \cref{eq:grothdivdiff}, yields
			\begin{equation}
				\label{eq:KNi}
				\kdivdiff_i(\mathfrak{S}_{ws_i}) = \sum_{\mathcal{P} \in \pipes(ws_i)} \mathfrak{G}_{\demprod(\check{\mathcal{P}})\square \tau_i}.
			\end{equation}

			By \cref{prop:kcotransition}, we have
			\[x_{i + 1}\mathfrak{S}_{ws_i} = \sum_{u \in\cotransitionperms_{i + 1}(ws_i)} \mathfrak{S}_{u}.\]
			Applying Part (4) of \cref{lemma:whathappenswithwsianddescents}, together with \cref{eq:shubdivdiff}, we obtain 
			\[\divdiff_i(x_{i + 1}\mathfrak{S}_{ws_i}) = \sum_{\substack{v \in \cotransitionperms_i(w)\\v \neq ws_i}}\mathfrak{S}_v.\]
			
			By \cref{lemma:whathappenswithwsianddescents}, since $j=\addable(w)$, each $u \in \cotransitionperms_i(w)$ is in $S_n$. 
			Furthermore, each such $u > w$, and thus falls within the inductive hypothesis. 
			Therefore, we can write
			\begin{align*}
				\divdiff_i(x_{i + 1}\mathfrak{S}_{ws_i})
				& = \sum_{\substack{u \in \cotransitionperms_i(w)\\u \neq ws_i}}\mathfrak{S}_{u}\\
				& = \sum_{\substack{u \in \cotransitionperms_i(w)\\u \neq ws_i}}\sum_{\mathcal{Q} \in \pipes(u)}\mathfrak{G}_{\demprod(\check{\mathcal{Q}})}.
			\end{align*}
			By Part (4) of \cref{lemma:whathappenswithwsianddescents}, each $u$ in the sum has an ascent at $i$. 
			By \cref{lemma:copdascent}, because $u$ has an ascent at $i$, it follows that $\demprod(\check{\mathcal{Q}})$ also has an ascent at $i$ for all $\mathcal{Q} \in \pipes(u)$. 
			Thus, $\demprod(\check{\mathcal{Q}}) = \demprod(\check{\mathcal{Q}})\square \tau_i$, and so
			\begin{equation}
				\label{eq:xtimesSwsi}
			\divdiff_i(x_{i + 1}\mathfrak{S}_{ws_i}) = \sum_{\substack{u \in \cotransitionperms_i(w)\\u \neq ws_i}}\sum_{\mathcal{Q} \in \pipes(u)}\mathfrak{G}_{\demprod(\check{\mathcal{Q}})\square \tau_i}.
			\end{equation}
			Hence, by substituting \cref{eq:xtimesSwsi} and \cref{eq:KNi} into \cref{eqn:writingoutdivideddiffapplication}, we obtain
			\begin{align*}
				\mathfrak{S}_{w}
				& = \sum_{\mathcal{Q} \in \pipes(ws_i)} \mathfrak{G}_{\demprod(\check{\mathcal{Q}})\square \tau_i} + \sum_{\substack{u \in \cotransitionperms_i(w)\\u \neq ws_i}}\sum_{\mathcal{Q} \in \pipes(u)}\mathfrak{G}_{\demprod(\check{\mathcal{Q}})\square \tau_i}\\
				& = \sum_{u \in \cotransitionperms_i(w)}\sum_{\mathcal{Q} \in \pipes(u)}\mathfrak{G}_{\demprod(\check{\mathcal{Q}})\square \tau_i}\\
				& = \sum_{\mathcal{P} \in \pipes(w)}\mathfrak{G}_{\demprod(\check{\mathcal{P}})} & (\text{by \cref{prop:kcotransitionpipefacts} and \cref{lemma:cotransitionflippermutation}}),
			\end{align*}
			as desired.
		\end{proof}
		
		For the next part of this section, it is more convenient to work with $\beta = 1$. Recall that $\kdivdiff_i^{(1)} = \divdiff_i + \divdiff_ix_{i + 1}$ and $(\kdivdiff_i^{(1)})^2 = -\kdivdiff_i^{(1)}$.
		We will prove the following:
		\begin{theorem}
			\label{thm:pipe_groth_to_schub_no_signs}
			Let $w \in S_n$. Then
			\[\mathfrak{G}_w^{(1)} = \sum_{\substack{\mathcal{P} \in \kpipes(w)\\ \check{\mathcal{P}} \text{ is reduced}}}\mathfrak{S}_{\demprod(\check{\mathcal{P}})}. \]
		\end{theorem}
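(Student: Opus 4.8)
The plan is to prove \cref{thm:pipe_groth_to_schub_no_signs} by reverse induction on Bruhat order, mirroring the proof of \cref{thm:pipe_schub_to_groth} with the two bases interchanged. In the base case $w = w_0$, we have $\kpipes(w_0) = \{\mathbb{T}_n\}$, the associated co-pipe dream $\check{\mathbb{T}}_n$ is empty (hence reduced) with $\demprod(\check{\mathbb{T}}_n) = w_0$ by \cref{lemma:copipewordandpermutation}, and $\mathfrak{G}^{(1)}_{w_0} = \mathfrak{S}_{w_0}$, so the formula holds. For the inductive step, fix $w \neq w_0$, let $j = \addable(w)$ and $i = w^{-1}(j)$, so that $(i,j)$ is an addable cell of $\dom(w)$ with $j = \addable(w)$ and $w < ws_i$ by \cref{lemma:kcotransitiondescents}. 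Since $ws_i$ has a descent at $i$, \cref{eq:grothdivdiffbeta} gives $\mathfrak{G}^{(1)}_w = \kdivdiff^{(1)}_i(\mathfrak{G}^{(1)}_{ws_i})$, and I would expand $\kdivdiff^{(1)}_i = \divdiff_i + \divdiff_i x_{i+1}$ and handle the two resulting summands in turn.

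For the first summand, $ws_i \in S_n$ by \cref{lemma:whathappenswithwsianddescents} and $ws_i > w$, so the inductive hypothesis applies; by \cref{eq:shubdivdiff},
\[\divdiff_i(\mathfrak{G}^{(1)}_{ws_i}) = \sum_{\substack{\mathcal{P} \in \kpipes(ws_i),\ \check{\mathcal{P}} \text{ reduced}\\ \demprod(\check{\mathcal{P}}) \text{ has a descent at } i}} \mathfrak{S}_{\demprod(\check{\mathcal{P}})s_i}.\]
For the second summand, $(i+1,j)$ is an addable cell of $\dom(ws_i)$ by \cref{lemma:whathappenswithwsianddescents}; combining \cref{prop:kcotransitionpipefacts} with \cref{theorem:pipesschubertandgrothendieck} at $\beta = 1$ — where adding a crossing in row $i+1$ multiplies the pipe-dream weight by $x_{i+1}$ and the $\beta$-exponents become trivial — gives the sign-free K-theoretic co-transition identity $x_{i+1}\mathfrak{G}^{(1)}_{ws_i} = \sum_{u \in \kcotransitionperms_{i+1}(ws_i)} \mathfrak{G}^{(1)}_u$. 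Applying $\divdiff_i$ termwise: for $u$ with an ascent at $i$, $\divdiff_i(\mathfrak{G}^{(1)}_u) = 0$ by the argument of \cref{eq:grothorddivdiff}, which holds verbatim for $\beta = 1$ since $\divdiff_i^2 = 0$; for $u$ with a descent at $i$, part (3) of \cref{lemma:whathappenswithwsianddescents} identifies the set of such $u$ with $\{u' \in \kcotransitionperms_i(w) : u' > ws_i\}$, all of which lie in $S_n$ (\cref{lemma:whathappenswithwsianddescents}) and exceed $w$, so the inductive hypothesis and \cref{eq:shubdivdiff} give $\divdiff_i(\mathfrak{G}^{(1)}_u) = \sum \mathfrak{S}_{\demprod(\check{\mathcal{P}})s_i}$, the sum over $\mathcal{P} \in \kpipes(u)$ with $\check{\mathcal{P}}$ reduced and $\demprod(\check{\mathcal{P}})$ having a descent at $i$.

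Adding the two contributions, $\mathfrak{G}^{(1)}_w = \sum_u \sum_{\mathcal{P}} \mathfrak{S}_{\demprod(\check{\mathcal{P}})s_i}$, where $u$ ranges over $\{ws_i\} \cup \{u' \in \kcotransitionperms_i(w) : u' > ws_i\}$ — equivalently, over $\{u' \in \kcotransitionperms_i(w) : u' \geq ws_i\}$ by \cref{lemma:kcotransitiondescents} — and $\mathcal{P}$ over pipe dreams in $\kpipes(u)$ with $\check{\mathcal{P}}$ reduced and $\demprod(\check{\mathcal{P}})$ descending at $i$. By \cref{lemma:copdascent} together with \cref{lemma:kcotransitiondescents}, the remaining $u' \in \kcotransitionperms_i(w)$ (those with $u' \not\geq ws_i$) admit no such $\mathcal{P}$, so the index of summation is all of $\kcotransitionperms_i(w)$, and \cref{lemma:kcotransitionrestrictedbijection} exhibits $\mathcal{Q} \mapsto \mathcal{Q} \cup \{(i,j)\}$ as a bijection from $\{\mathcal{Q} \in \kpipes(w) : \check{\mathcal{Q}} \text{ reduced}\}$ onto this index set. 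Since $\demprod(\check{\mathcal{P}})$ has a descent at $i$, \cref{lemma:cotransitionflippermutation} yields $\demprod(\check{\mathcal{Q}}) = \demprod(\check{\mathcal{P}}) \square \tau_i = \demprod(\check{\mathcal{P}})s_i$, and the right-hand side of the theorem emerges, completing the induction. I expect the main obstacle to be the bookkeeping in this last stage: correctly tracking which co-transition permutations have a descent at $i$, confirming they remain inside $S_n$ so that the inductive hypothesis is available, and verifying that the surviving terms match the bijection of \cref{lemma:kcotransitionrestrictedbijection} exactly. The divided-difference computations themselves are routine once \cref{eq:grothorddivdiff} is seen to extend to $\beta = 1$.
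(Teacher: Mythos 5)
Your proposal is correct and follows essentially the same route as the paper's proof: the same reverse induction on Bruhat order, the same splitting $\kdivdiff^{(1)}_i = \divdiff_i + \divdiff_i x_{i+1}$, and the same use of \cref{lemma:whathappenswithwsianddescents}, \cref{lemma:kcotransitionrestrictedbijection}, and \cref{lemma:cotransitionflippermutation} to reassemble the two contributions. The only (harmless) differences are cosmetic: you discard the ascent terms via \cref{eq:shubdivdiff} earlier, and you justify the sign-free $\beta=1$ co-transition identity directly from \cref{prop:kcotransitionpipefacts} and \cref{theorem:pipesschubertandgrothendieck}, which is if anything slightly more careful than the paper's citation of \cref{prop:kcotransition}.
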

		
		\begin{proof}
			We proceed by reverse induction on Bruhat order. In the base case $w = w_0$, there is a single pipe dream of $w_0$, and it is immediate to check that the statement holds. 
			Now suppose $w \neq w_0$, and assume that the statement holds for all $u \in S_n$ such that $u > w$ in Bruhat order. 
			
			Let $j = \addable(w)$ and $i = w^{-1}(j)$. 
			By \cref{lemma:kcotransitiondescents}, we have $ws_i > w$. By \cref{lemma:whathappenswithwsianddescents}, $ws_i\in S_n$. Thus, $ws_i$ falls within the inductive hypothesis.
			As such,
			\begin{equation}
				\label{eq:grothendieckexpansioninductive}
				\mathfrak{G}_{ws_i}^{(1)} = \sum_{\substack{\mathcal{Q} \in \kpipes(ws_i)\\ \check{\mathcal{Q}} \text{ is reduced}}}\mathfrak{S}_{\demprod(\check{\mathcal{Q}})}.
			\end{equation}
			Because $\kdivdiff_i^{(1)} = \divdiff_i + \divdiff_i x_{i + 1}$, we have
			\begin{align*}
				\mathfrak{G}_w^{(1)}
				&= \kdivdiff_i^{(1)}(\mathfrak{G}_{ws_i}^{(1)}) & \text{(by \cref{eq:grothdivdiffbeta})}\\
				& = \divdiff_i(\mathfrak{G}_{ws_i}^{(1)}) + \divdiff_i (x_{i + 1}\mathfrak{G}_{ws_i}^{(1)})\\
				& = \left(\sum_{\substack{\mathcal{Q} \in \kpipes(ws_i)\\ \check{\mathcal{Q}} \text{ is reduced}}}\divdiff_i(\mathfrak{S}_{\demprod(\check{\mathcal{Q}})})\right) + \divdiff_i( x_{i + 1}\mathfrak{G}_{ws_i}^{(1)}) &\text{(by \cref{eq:grothendieckexpansioninductive})}.
			\end{align*}
			Also,
			\begin{align*}
				\divdiff_i( x_{i + 1}\mathfrak{G}_{ws_i}^{(1)})
				& = \sum_{u \in\kcotransitionperms_{i + 1}(ws_i)}\divdiff_i (\mathfrak{G}_u^{(1)} )&\text{(by \cref{prop:kcotransition} )}\\
				& = \sum_{\substack{u \in\kcotransitionperms_{i + 1}(ws_i)\\ u>us_i}}\divdiff_i (\mathfrak{G}_u^{(1)}) &\text{(by \cref{eq:grothorddivdiff})}\\
				& = \sum_{\substack{u \in\kcotransitionperms_{i}(w)\\u > ws_i}} N_i(\mathfrak{G}_u^{(1)}) & \text{(by Part (3) of \cref{lemma:whathappenswithwsianddescents})}.
			\end{align*}
			By \cref{lemma:whathappenswithwsianddescents}, since $j=\addable(w)$, each $u \in \kcotransitionperms_{i}(w)$ is in $S_n$. 
			In particular, each such $u$ satisfies $u > w$, so we may apply the inductive hypothesis to obtain 
			\begin{align*}
				\divdiff_i( x_{i + 1}\mathfrak{G}_{ws_i}^{(1)})
				&= \sum_{\substack{u \in\kcotransitionperms_{i}(w)\\u > ws_i}} \divdiff_i\left(\sum_{\substack{\mathcal{Q} \in \kpipes(u)\\ 
						\check{\mathcal{Q}} \text{ is reduced}}}\mathfrak{S}_{\demprod(\check{\mathcal{Q}})}\right) \\
				& = \sum_{\substack{u \in\kcotransitionperms_{i}(w)\\u > ws_i}} \sum_{\substack{\mathcal{Q} \in \kpipes(u)\\ 
				\check{\mathcal{Q}} \text{ is reduced}}}\divdiff_i(\mathfrak{S}_{\demprod(\check{\mathcal{Q}})}).
			\end{align*}
			Thus,
			\begin{align*}
				\mathfrak{G}_w^{(1)}& = \sum_{\substack{\mathcal{Q} \in \kpipes(ws_i)\\ \check{\mathcal{Q}} \text{ is reduced}}}\divdiff_i(\mathfrak{S}_{\demprod(\check{\mathcal{Q}})}) + \sum_{\substack{u \in\kcotransitionperms_{i}(w)\\u > ws_i}} \sum_{\substack{\mathcal{Q} \in \kpipes(u)\\ \check{\mathcal{Q}} \text{ is reduced}}}\divdiff_i(\mathfrak{S}_{\demprod(\check{\mathcal{Q}})})\\
				& = \sum_{\substack{u \in\kcotransitionperms_{i}(w)\\u\geq ws_i}} \sum_{\substack{\mathcal{Q} \in \kpipes(u)\\ \check{\mathcal{Q}} \text{ is reduced}}}\divdiff_i(\mathfrak{S}_{\demprod(\check{\mathcal{Q}})}).
			\end{align*}
			Indeed, the only $\mathcal{Q}$'s which contribute nonzero terms to the sum above have the property that $\demprod(\check{\mathcal{Q}})$ has a descent at $i$. 
			Therefore, by \cref{lemma:cotransitionflippermutation} and \cref{lemma:kcotransitionrestrictedbijection},
			\[\mathfrak{G}_w^{(1)} = \sum_{\substack{\mathcal{P} \in \kpipes(w)\\ \check{\mathcal{P}} \text{ is reduced}}}\mathfrak{S}_{\demprod(\check{\mathcal{P}})}. \qedhere\]
		\end{proof}

		We will now prove \cref{thm:pipe_groth_to_schub}.
		\thmpipeA*
		\begin{proof}
			By \cref{thm:pipe_groth_to_schub_no_signs}, we have \[\mathfrak{G}_w^{(1)} = \sum_{\substack{\mathcal{P} \in \kpipes(w)\\ \check{\mathcal{P}} \text{ is reduced}}}\mathfrak{S}_{\demprod(\check{\mathcal{P}})}.\]
			Make the substitution $x_i \mapsto -x_i$ for all $i$ into both sides of the above equation. 
			This substitution maps $\mathfrak{G}_w^{(1)} \mapsto (-1)^{\ell(w)}\mathfrak{G}_w$. 
			Furthermore, making the same substitution into a Schubert polynomial $\mathfrak S_v$ yields $(-1)^{\ell(v)}\mathfrak S_v$. 
			Thus, 
			\begin{align*}
			(-1)^{\ell(w)}\mathfrak{G}_w&=\mathfrak{G}_w^{(1)}(-x_1,-x_2,\ldots,-x_n)\\
			&=\sum_{\substack{\mathcal{P} \in \kpipes(w)\\ \check{\mathcal{P}} \text{ is reduced}}}\mathfrak{S}_{\demprod(\check{\mathcal{P}})}(-x_1,-x_2,\ldots,-x_n)\\
			&=\sum_{\substack{\mathcal{P} \in \kpipes(w)\\ \check{\mathcal{P}} \text{ is reduced}}}(-1)^{\ell(\demprod(\check{\mathcal{P}}))}\mathfrak{S}_{\demprod(\check{\mathcal{P}})},
			\end{align*}
			and the theorem follows after multiplying both sides by $(-1)^{-\ell(w)}$.
		\end{proof}

		\subsection{Changing bases with chains}
		\label{section:chains}
		
		The goal of this section is to interpret the Grothendieck to Schubert expansion as a sum over certain chains in Bruhat order. 
		We also give formulas for the monomial expansions of Schubert and Grothendieck polynomials that are closely related to the climbing chain models of \cite{Bergeron.Sottile} and \cite{Lenart.Robinson.Sottile}. 
		To state our theorems, we need the following definitions.
		
		We define three types of directed, labeled graphs: $\mathcal{G}_n$, $\overline{\mathcal{G}}_n$, and $\mathcal{R}_n$. 
		All three of these graphs have the same vertex set, $S_n$. 
		Their edge sets are defined as follows.
		Given $w\in S_n$, let $j=\addable(w)$ and $i=w^{-1}(j)$. 
		In $\mathcal{G}_n$, there is a directed edge $w\xrightarrow{(i,j)} u$ for all $u\in \cotransitionperms_i(w)$.
		In $\overline{\mathcal{G}}_n$, there is a directed edge $w\xrightarrow{(i,j)} u$ for all $u\in \kcotransitionperms_i(w)$.
		Finally, in $\mathcal{R}_n$, there is a directed edge $w\xrightarrow{(i,j)}u$ for all $u\in \kcotransitionperms_i(w)$ such that $u\geq ws_i$.		
		
		See \cref{figure:G3graphs} for an illustration of the graphs $\mathcal{G}_3$ and $\overline{\mathcal{G}}_3$. 
		See \cref{figure:R3graph} for an illustration of the graph $\mathcal{R}_3$. 
		Note that edge sets of $\mathcal{G}_n$ and $\mathcal{R}_n$ are each a subset of the edges of $\overline{\mathcal{G}}_n$.
		
		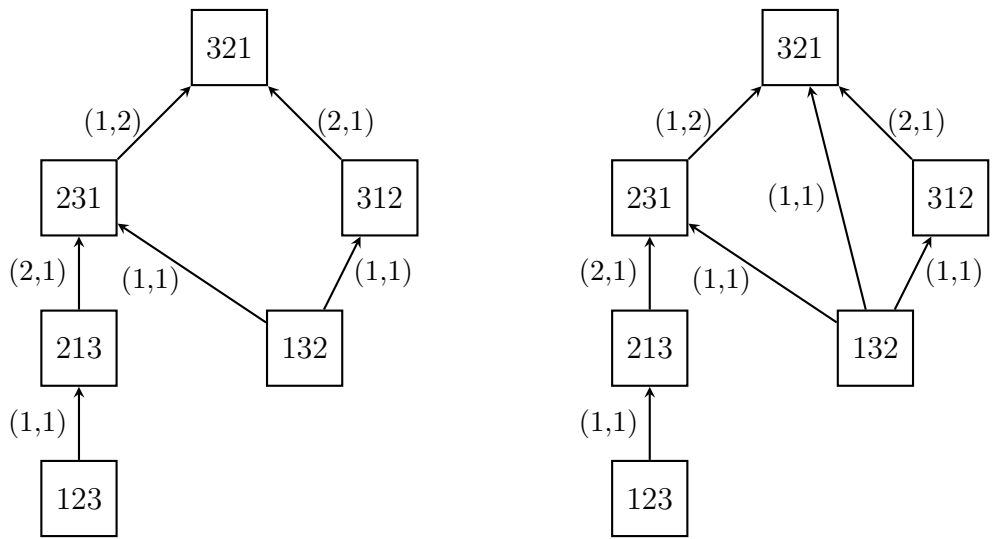
\begin{figure}
			\begin{tikzpicture}[
				- > , % arrow style
				 > = stealth, % arrowhead style
				node distance = 2cm, % spacing between nodes
				thick, % line thickness
				vertex/.style = { draw, minimum size = 1cm, inner sep = 0pt}, % style for vertices
				edge label/.style = {font = \small} % style for edge labels
				]
				\node[vertex] (123) at (0,0) {123};
				\node[vertex] (132) at (3,2) {132};
				\node[vertex] (213) at (0,2) {213};
				\node[vertex] (231) at (0,4) {231};
				\node[vertex] (312) at (4,4) {312};
				\node[vertex] (321) at (2,6) {321};
				\draw (123) to node[edge label, left] {(1,1)} (213);
				\draw (213) to node[edge label, left] {(2,1)} (231);
				\draw (231) to node[edge label, left] {(1,2)} (321);
				\draw (312) to node[edge label, right] {(2,1)} (321);
				\draw (132) to node[edge label, right] {(1,1)} (312);
				\draw (132) to node[edge label,yshift=-.25em, left] {(1,1)} (231);
			\end{tikzpicture}
			\hspace{4em}
			\begin{tikzpicture}[
				- > , % arrow style
				> = stealth, % arrowhead style
				node distance = 2cm, % spacing between nodes
				thick, % line thickness
				vertex/.style = { draw, minimum size = 1cm, inner sep = 0pt}, % style for vertices
				edge label/.style = {font = \small} % style for edge labels
				]
				\node[vertex] (123) at (0,0) {123};
				\node[vertex] (132) at (3,2) {132};
				\node[vertex] (213) at (0,2) {213};
				\node[vertex] (231) at (0,4) {231};
				\node[vertex] (312) at (4,4) {312};
				\node[vertex] (321) at (2,6) {321};
				\draw (123) to node[edge label, left] {(1,1)} (213);
				\draw (213) to node[edge label, left] {(2,1)} (231);
				\draw (231) to node[edge label, left] {(1,2)} (321);
				\draw (312) to node[edge label, right] {(2,1)} (321);
				\draw (132) to node[edge label, right] {(1,1)} (312);
				\draw (132) to node[edge label, left] {(1,1)} (321);
				\draw (132) to node[edge label,yshift=-.25em, left] {(1,1)} (231);
			\end{tikzpicture}
			\caption{The graph $\mathcal{G}_3$ is pictured on the left. $\overline{\mathcal{G}}_3$ is on the right.}
			\label{figure:G3graphs}
			\end{figure}
			
			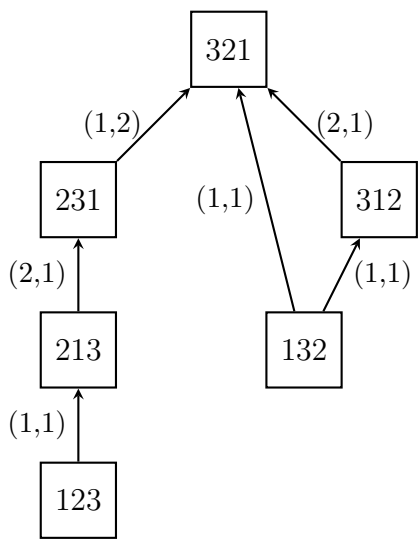
\begin{figure}
				\begin{tikzpicture}[
				- > , % arrow style
				> = stealth, % arrowhead style
				node distance = 2cm, % spacing between nodes
				thick, % line thickness
				vertex/.style = { draw, minimum size = 1cm, inner sep = 0pt}, % style for vertices
				edge label/.style = {font = \small} % style for edge labels
				]
				\node[vertex] (123) at (0,0) {123};
				\node[vertex] (132) at (3,2) {132};
				\node[vertex] (213) at (0,2) {213};
				\node[vertex] (231) at (0,4) {231};
				\node[vertex] (312) at (4,4) {312};
				\node[vertex] (321) at (2,6) {321};
				\draw (123) to node[edge label, left] {(1,1)} (213);
				\draw (213) to node[edge label, left] {(2,1)} (231);
				\draw (231) to node[edge label, left] {(1,2)} (321);
				\draw (312) to node[edge label, right] {(2,1)} (321);
				\draw (132) to node[edge label, right] {(1,1)} (312);
				\draw (132) to node[edge label, left] {(1,1)} (321);
			\end{tikzpicture}
			\caption{The graph $\mathcal{R}_3$.}
			\label{figure:R3graph}
		\end{figure}
		
		Given a directed path $p$ of the form 
		\[u_1\xrightarrow{(i_1,j_1)}u_2 \xrightarrow{(i_2,j_2)} u_3\xrightarrow{(i_3,j_3)} \cdots \xrightarrow{(i_{k-1},j_{k-1})} u_k\] 
		in $\overline{\mathcal{G}}_n$, we define the \newword{word} of $p$ to be $\word(p) = (i_{k-1},i_{k-2},\ldots, i_2, i_{1})$. Also, the \newword{weight} of $p$ is $\wt(p)=\prod_{h=1}^{k-1} x_h$.
		We say that $p$ is \newword{reduced} if $\word(p)$ is a reduced word. 
		
		Given $\heartsuit\in\{\mathcal{G}_n,\overline{\mathcal{G}}_n,\mathcal{R}_n\}$ and $u,v \in S_n$, let $\paths_\heartsuit(u,v)$ denote the set of paths in $\heartsuit$ that start at $u$ and end at $v$. 
		
		\begin{theorem}
			\label{thm:pathformulas}
			Let $w\in S_n$. The following hold.
			\begin{enumerate}
				\item $\displaystyle \mathfrak G_w^{(1)}=x_1^{n-1}x_2^{n-2}\cdots x_{n-1} \sum_{p\in \paths_{\overline{\mathcal{G}}_n}(w,w_0)}\frac{1}{\wt(p)}$.
				\item $\displaystyle \mathfrak S_w=x_1^{n-1}x_2^{n-2}\cdots x_{n-1} \sum_{p\in \paths_{\mathcal{G}_n}(w,w_0)}\frac{1}{\wt(p)}$.
				\item $\displaystyle\mathfrak{G}_w^{(1)} = \sum_{\substack{p \in \paths_{\mathcal{R}_n}(w,w_0)\\p \text{ is reduced}}}\mathfrak{S}_{w_0 \cdot \demprod(\word(p))}$.
				\item $\displaystyle\mathfrak{S}_w = \sum_{p \in \paths_{\mathcal{G}_n}(w,w_0)}\mathfrak{G}_{w_0 \cdot \demprod(\word(p))}$.
			\end{enumerate}
		\end{theorem}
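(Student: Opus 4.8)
The statement splits into two algebraic identities, (1) and (2), and two combinatorial ones, (3) and (4). For (1) and (2) the plan is to unwind the algebraic co-transition recurrences of \cref{prop:kcotransition} along all directed paths terminating at $w_0$. For (1), set $F_w = x_1^{n-1}x_2^{n-2}\cdots x_{n-1}\sum_{p\in\paths_{\overline{\mathcal{G}}_n}(w,w_0)}\tfrac{1}{\wt(p)}$, regarded a priori as an element of the fraction field of $\mathbb{Z}[x_1,\dots,x_n]$, and prove $F_w=\mathfrak{G}_w^{(1)}$ by reverse induction on Bruhat order. The base case $w=w_0$ is immediate. For $w\neq w_0$ let $j=\addable(w)$ and $i=w^{-1}(j)$; by \cref{lemma:dominantpartofpipedreams} the edges of $\overline{\mathcal{G}}_n$ out of $w$ are exactly $w\xrightarrow{(i,j)}u$ with $u\in\kcotransitionperms_i(w)$, so splitting a path at its first edge and using $\wt(p)=x_i\wt(p')$ (from the definition of $\wt$) gives $F_w=\tfrac{1}{x_i}\sum_{u\in\kcotransitionperms_i(w)}F_u$. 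By \cref{lemma:whathappenswithwsianddescents} each such $u$ lies in $S_n$ with $u>w$, so the inductive hypothesis applies; combining it with the co-transition recurrence $x_i\mathfrak{G}_w^{(1)}=\sum_{u\in\kcotransitionperms_i(w)}\mathfrak{G}_u^{(1)}$ (which is \cref{prop:kcotransition}(1) after the substitution $x_\bullet\mapsto-x_\bullet$) yields $F_w=\mathfrak{G}_w^{(1)}$; in particular $F_w$ is a polynomial. Part (2) is the same argument, now with $\mathcal{G}_n$, $\cotransitionperms_i$, \cref{prop:kcotransition}(2), and $\mathfrak{S}_{w_0}=x_1^{n-1}\cdots x_{n-1}$.

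For (3) and (4) I would first record the one permutation identity on which both rest. Iterating the co-transition bijection \cref{prop:kcotransitionpipefacts} (for (4)), or its restricted version \cref{lemma:kcotransitionrestrictedbijection} (for (3)), matches a pipe dream $\mathcal{P}$ for $w$ with a path $p:\;w=u_1\xrightarrow{(i_1,j_1)}u_2\to\cdots\xrightarrow{(i_{k-1},j_{k-1})}u_k=w_0$ in which $\mathcal{P}=\mathcal{P}_1$, $\mathcal{P}_h\in\kpipes(u_h)$, $\mathcal{P}_{h+1}=\mathcal{P}_h\cup\{(i_h,j_h)\}$, and $\mathcal{P}_k=\mathbb{T}_n$. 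Iterating \cref{lemma:cotransitionflippermutation} gives $\demprod(\check{\mathcal{P}})=\demprod(\check{\mathbb{T}}_n)\,\square\,\tau_{i_{k-1}}\square\cdots\square\,\tau_{i_1}$; since $\check{\mathbb{T}}_n$ has empty word, $\demprod(\check{\mathbb{T}}_n)=w_0$ by \cref{lemma:copipewordandpermutation}, and repeated use of \cref{lemma:upanddownoperatorswithflips}(3) converts the chain of $\square$-operators into $\demprod(\check{\mathcal{P}})=w_0\cdot\demprod(\word(p))$. Granting this, part (4) follows by reindexing the sum in \cref{thm:pipe_schub_to_groth} along the bijection $\pipes(w)\cong\paths_{\mathcal{G}_n}(w,w_0)$ obtained by iterating \cref{prop:kcotransitionpipefacts}(2) (which terminates since each $\mathcal{G}_n$-edge raises $\ell$ by one and $\pipes(w_0)=\{\mathbb{T}_n\}$). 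Part (3) follows the same way from \cref{thm:pipe_groth_to_schub_no_signs}, now reindexing along the bijection between $\{\mathcal{P}\in\kpipes(w):\check{\mathcal{P}}\text{ is reduced}\}$ and the reduced paths of $\mathcal{R}_n$ from $w$ to $w_0$ obtained by iterating \cref{lemma:kcotransitionrestrictedbijection}; here one uses $\mathbf{a}_{\check{\mathcal{P}}}=(n-i_{k-1},\dots,n-i_1)$ together with \cref{lemma:conjugationandwords} to see that $\check{\mathcal{P}}$ is reduced if and only if $\word(p)$ is a reduced word.

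The main obstacle is the bookkeeping in (3). \cref{lemma:kcotransitionrestrictedbijection} produces, at each co-transition step, pipe dreams whose co-pipe dream is reduced \emph{and} whose co-permutation has a prescribed descent, and one must check that the descent constraint is automatically consistent as the construction iterates: appending the letter $n-i_h$ to the reduced word $\mathbf{a}_{\check{\mathcal{P}}_{h+1}}$ keeps it reduced precisely when $\demprod(\check{\mathcal{P}}_{h+1})$ has a descent at $i_h$, so along a reduced path these conditions hold for free and the iterated map is a genuine bijection onto the reduced paths of $\mathcal{R}_n$ (and never strays into $\overline{\mathcal{G}}_n\setminus\mathcal{R}_n$). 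The other delicate point is the translation between the $\square$-operator description of co-permutations in \cref{lemma:cotransitionflippermutation} and the Demazure product in the statement, which is exactly what \cref{lemma:upanddownoperatorswithflips} supplies; by contrast parts (1), (2), and (4) are routine iterations of a single recurrence or bijection.
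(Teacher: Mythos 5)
Your proposal is correct, and for parts (3) and (4) it is essentially the paper's argument: iterate the co-transition bijections (\cref{prop:kcotransitionpipefacts}, \cref{lemma:kcotransitionrestrictedbijection}) to identify paths with pipe dreams, prove $\demprod(\check{\mathcal{P}})=w_0\cdot\demprod(\word(p))$ and the matching of reducedness, and then reindex \cref{thm:pipe_schub_to_groth} and \cref{thm:pipe_groth_to_schub_no_signs}; the paper packages these steps as \cref{lemma:graphbijections}, \cref{lemma:pathwords}, and \cref{thm:pathreducedcopdbijection}, and the "descent consistency" issue you flag is exactly what the proof of \cref{thm:pathreducedcopdbijection} checks. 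Where you genuinely diverge is in (1) and (2): the paper gets these for free from the same path-to-pipe-dream bijection via the weight identity $x_1^{n-1}\cdots x_{n-1}/\wt(p)=\wt(\varphi(p))$ together with the monomial expansions of \cref{theorem:pipesschubertandgrothendieck}, whereas you run a fresh reverse induction on Bruhat order using the algebraic co-transition recurrences of \cref{prop:kcotransition} (with $x_\bullet\mapsto -x_\bullet$ for the $\beta=1$ case) and split each path at its first edge. Your route avoids invoking the pipe-dream monomial formulas but in effect re-proves the bijection at the level of generating functions, so it is slightly longer; the paper's route is a one-line consequence once $\varphi$ is in hand. Your derivation of the co-permutation identity via iterating \cref{lemma:cotransitionflippermutation} and unwinding with \cref{lemma:upanddownoperatorswithflips} (you want part (4), $w_0(w\square\tau_i)=(w_0w)*\tau_i$, rather than part (3)) is also a mild variant of the paper's, which goes directly through $\mathbf{a}_{\check{\mathcal{P}}}$ and \cref{lemma:conjugationandwords}; both are fine, and you use the word description anyway for the reducedness claim. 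The only other nit is that the description of the out-edges of $w$ in $\overline{\mathcal{G}}_n$ is just the definition of the graph (no need to cite \cref{lemma:dominantpartofpipedreams}); neither slip affects correctness.
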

	
	Before proceeding, we give an example of \cref{thm:pathformulas}.
	\begin{example}
		Let $w=132$. From \cref{figure:G3graphs}, we see that there are two paths in $\mathcal G_3$ from $132$ to $321$. In particular, Part (1) of \cref{thm:pathformulas} says that
		\[\mathfrak S_w=x_1^2x_2(\frac{1}{x_1^2}+\frac{1}{x_1x_2})=x_2+x_1.\]
		Part (4) says that
		\[\mathfrak S_w=\mathfrak G_{w_0\cdot \demprod((1,1))}+\mathfrak G_{w_0\cdot \demprod((2,1))}=\mathfrak G_{231}+\mathfrak G_{132}.\]
		There are three paths from $132$ to $321$ in $\overline{\mathcal G}_3$. 
		Part (2) of \cref{thm:pathformulas} tells us that
		\[\mathfrak G_{w}^{(1)}=x_1^2x_2(\frac{1}{x_1^2}+\frac{1}{x_1x_2}+\frac{1}{x_1})=x_2+x_1+x_1x_2.\]
		Finally, there are two paths from $132$ to $321$ in $\mathcal R_3$. Part (3) of \cref{thm:pathformulas} says that
		\[\mathfrak G_{w}^{(1)}=\mathfrak S_{w_0\cdot(s_1)}+\mathfrak S_{w_0\cdot (s_2s_1)}=\mathfrak S_{231}+\mathfrak S_{132}. \qedhere\]
	\end{example}
		
		\begin{remark}
			The first part of \cref{thm:pathformulas} is closely related to the marked climbing chain formula for Grothendieck polynomials of \cite{Lenart.Robinson.Sottile}, but is presented in a different form. Namely, we allow chains of different lengths, as our edges arise from K-theoretic co-transition.
			The second part of \cref{thm:pathformulas} is directly equivalent to Bergeron and Sottile's climbing chains formula for Schubert polynomials \cite{Bergeron.Sottile}. 
			See also \cite{Lenart.Sottile} which gives a direct bijection from $\pipes(w)$ to $\paths_{\mathcal{G}_n}(w,w_0)$. 
			We recall this bijection in \cref{lemma:graphbijections}. 
			The third part, the Grothendieck to Schubert expansion, expressed as a sum over chains, is new. 
		\end{remark}
		
		We will prove \cref{thm:pathformulas} after establishing bijections from sets of chains in Bruhat order to sets of pipe dreams; our three bijections are all defined by the same mapping. Namely, if we have a path $p$ of the form
		\[w=u_1\xrightarrow{(i_1,j_1)}u_2 \xrightarrow{(i_2,j_2)} u_3\xrightarrow{(i_3,j_3)} \cdots \xrightarrow{(i_{k-1},j_{k-1})} u_k = w_0\] in $\mathcal{G}_n$, we associate to $p$ the pipe dream $\mathbb{T}_n-\{(i_h,j_h):h \in [k-1]\}$. 
		Write $p \mapsto \varphi(p)$ for this map. We sometimes restrict $\varphi$ to subsets of $\displaystyle \bigsqcup_{u\in S_n} \paths_{\overline{\mathcal{G}}_n}(u,w_0)$, and abuse notation by also calling the restriction $\varphi$.

	We start by stating bijections between certain sets of paths and pipe dreams. These bijections follow quickly from the co-transition recurrences.  
		
		\begin{lemma}
			\label{lemma:graphbijections}
		\begin{enumerate}
		\item The map 
		$\varphi:\paths_{\overline{\mathcal{G}}_n}(w,w_0)\rightarrow \kpipes(w)$ is a bijection.
		\item The map 
		$\varphi:\paths_{\mathcal{G}_n}(w,w_0)\rightarrow \pipes(w)$ is a bijection.
		\end{enumerate}
		\end{lemma}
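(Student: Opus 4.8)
The plan is to prove both parts at once by reverse induction on Bruhat order, since they share the same skeleton: part~(2) is obtained from part~(1) by systematically replacing $\overline{\mathcal{G}}_n$ with $\mathcal{G}_n$, $\kcotransitionperms$ with $\cotransitionperms$, and $\kpipes$ with $\pipes$, and by invoking \cref{prop:kcotransitionpipefacts}(2) in place of \cref{prop:kcotransitionpipefacts}(1). For the base case $w=w_0$, the only path in $\overline{\mathcal{G}}_n$ (resp.\ $\mathcal{G}_n$) from $w_0$ to $w_0$ is the empty one, $\varphi$ sends it to $\mathbb{T}_n$, and $\kpipes(w_0)=\pipes(w_0)=\{\mathbb{T}_n\}$ because $\ell(w_0)=\#\mathbb{T}_n$; so $\varphi$ is trivially bijective there.

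For the inductive step I would fix $w\neq w_0$, put $j=\addable(w)$ and $i=w^{-1}(j)$, so that $(i,j)$ is an addable cell of $\dom(w)$ with $j=\addable(w)$ by \cref{lemma:dominantpartofpipedreams}(1). Since every out-edge of $w$ in $\overline{\mathcal{G}}_n$ carries the label $(i,j)$, stripping the first edge off a path is a bijection from $\paths_{\overline{\mathcal{G}}_n}(w,w_0)$ to $\bigsqcup_{u\in\kcotransitionperms_i(w)}\paths_{\overline{\mathcal{G}}_n}(u,w_0)$, sending $p=\bigl(w\xrightarrow{(i,j)}u\bigr)\cdot p'$ to $(u,p')$, and similarly for $\mathcal{G}_n$ with $\cotransitionperms_i(w)$. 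By \cref{lemma:whathappenswithwsianddescents}(1) each $u\in\kcotransitionperms_i(w)$ lies in $S_n$, and by \cref{lemma:booleaninterval}(1) each satisfies $u>w$ (as $\ell(w_{U,i})=\ell(w)+\#U$ and $w=w_{\emptyset,i}\le w_{U,i}$), so the inductive hypothesis applies to every $u$ occurring in the union.

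The crux is to match this decomposition with the ``adjoin a cell'' bijection $\mathcal{P}\mapsto\mathcal{P}\cup\{(i,j)\}$ from $\kpipes(w)$ to $\bigsqcup_{u\in\kcotransitionperms_i(w)}\kpipes(u)$ of \cref{prop:kcotransitionpipefacts}(1). Writing $p=(u_1\to\cdots\to u_k)$ with $(i_1,j_1)=(i,j)$ and $p'$ the path with the first edge deleted, the definition of $\varphi$ gives $\varphi(p)=\varphi(p')\setminus\{(i,j)\}$ provided the label $(i,j)$ does not recur among $(i_2,j_2),\dots,(i_{k-1},j_{k-1})$; I would extract this non-recurrence from the inductive hypothesis together with \cref{prop:kcotransitionpipefacts}(1) itself: $\varphi(p')\in\kpipes(u_2)$, and every pipe dream $\mathcal{Q}$ of $u_2$ contains $(i,j)$ because $(i,j)$ lies in $\dom(\mathcal{Q})$ — indeed $\mathcal{Q}=\mathcal{P}\cup\{(i,j)\}$ for some $\mathcal{P}\in\kpipes(w)$ with $(i,j)$ addable for $\dom(\mathcal{P})=\dom(w)$, whence $\dom(\mathcal{Q})\supseteq\dom(\mathcal{P})\cup\{(i,j)\}$ by the structure of $\dom$ used in \cref{lemma:dominantpartofpipedreams}. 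So $(i,j)\in\varphi(p')$ while $(i,j)\notin\varphi(p)$, the identity $\varphi(p)\cup\{(i,j)\}=\varphi(p')$ holds, and this simultaneously shows $\varphi(p)\in\kpipes(w)$ (it is the image of $\varphi(p')$ under the inverse of the bijection of \cref{prop:kcotransitionpipefacts}(1), which is ``delete $(i,j)$'') and that the square with top arrow the path-stripping bijection, bottom arrow the cell-adjoining bijection of \cref{prop:kcotransitionpipefacts}(1), left arrow $\varphi$, and right arrow the disjoint union of the maps $\varphi$ for the $u$'s (bijective by induction) commutes. Hence $\varphi$ is a bijection, and the identical argument with $\pipes$, $\cotransitionperms$, and \cref{prop:kcotransitionpipefacts}(2) gives part~(2), noting $\cotransitionperms_i(w)\subseteq\kcotransitionperms_i(w)$ so the membership-in-$S_n$ and Bruhat facts transfer verbatim. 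The hard part will be precisely this bookkeeping — that the cell deleted at the first step is never re-deleted and that $\varphi$ genuinely lands in $\kpipes(w)$ (resp.\ $\pipes(w)$) — but once it is phrased through \cref{prop:kcotransitionpipefacts} and \cref{lemma:dominantpartofpipedreams}, everything after setting up the commutative square is purely formal.
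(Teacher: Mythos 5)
Your proposal is correct and follows essentially the same route as the paper: reverse induction on Bruhat order, the first-edge-stripping bijection onto $\bigsqcup_{u\in\kcotransitionperms_i(w)}\paths_{\overline{\mathcal{G}}_n}(u,w_0)$, the inductive hypothesis on each $u>w$, and composition with the bijection of \cref{prop:kcotransitionpipefacts}, with the cohomological substitutions giving part (2). The extra bookkeeping you do about the label $(i,j)$ not recurring is harmless but not needed, since $\varphi(p)=\varphi(p')-\{(i,j)\}$ holds as a set identity in any case and $(i,j)\in\varphi(p')$ is already guaranteed by the bijectivity in \cref{prop:kcotransitionpipefacts} (or, as you note, by \cref{lemma:dominantpartofpipedreams}).
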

		
		\begin{proof}
			
		\noindent (1) We proceed by reverse induction on Bruhat order. In the base case $w=w_0$, there is a single path $p$ with no edges in $\paths_{\overline{\mathcal{G}}_n}(w_0,w_0)$, and $\varphi(p)=\mathbb T_n$. Because $\kpipes(w_0)=\{\mathbb T_n\}$, the base case holds.
		
		Now fix $w\neq w_0$ and assume the statement holds for all $u>w$. Write $j=\addable(w)$ and $i=w^{-1}(j)$. 
		Each $p\in \paths_{\overline{\mathcal{G}}_n}(w,w_0)$ is formed by taking $p'\in \paths_{\overline{\mathcal{G}}_n}(u,w_0)$, for some $u\in \kcotransitionperms_{i}(w)$. 
		Furthermore, $\varphi(p)=\varphi(p')-\{(i,j)\}$. 
		The map 
		\[\paths_{\overline{\mathcal{G}}_n}(w,w_0)\rightarrow \bigsqcup_{u \in \kcotransitionperms_i(w)} \paths_{\overline{\mathcal{G}}_n}(u,w_0) \]
		defined by deleting the first edge in the path is bijective.

		By the inductive hypothesis, the map
		\[\bigsqcup_{u \in \kcotransitionperms_i(w)} \paths_{\overline{\mathcal{G}}_n}(u,w_0)\rightarrow \bigsqcup_{u \in \kcotransitionperms_i(w)} \kpipes(u)\]
		defined by $p\mapsto \varphi(p)$
		is a bijection.
		By Part (1) of \cref{prop:kcotransitionpipefacts}, the map $\mathcal Q\mapsto \mathcal Q- \{(i,j)\}$ defines a bijection 
		\[\bigsqcup_{u \in \kcotransitionperms_i(w)} \kpipes(u)\rightarrow \kpipes(w).\]
		Thus the result follows by noting that $\varphi:\paths_{\overline{\mathcal{G}}_n}(w,w_0)\rightarrow \kpipes(w)$ is the composition of these bijections.

		\noindent (2) The proof of Part (2) follows by modifying the proof given in Part (1), replacing all K-theoretic objects with their cohomological counterparts.  
		\end{proof}
	
	\begin{remark}
		Part (2) of \cref{lemma:graphbijections} was originally proved in  \cite[Lemma~10]{Lenart.Sottile}.
	\end{remark}

	Next, we examine how the map $\varphi$ impacts permutations associated to paths and their corresponding pipe dreams.
	
		\begin{lemma}
			\label{lemma:pathwords}
			Fix a path $p$ from $w$ to $w_0$ in $\overline{\mathcal{G}}_n$. Write $\mathcal{P} = \varphi(p)$. The following hold.
		\begin{enumerate}
		\item $w_0\cdot \demprod(\word(p)) = \demprod(\check{\mathcal{P}})$.
		\item $p$ is reduced if and only if $\check{\mathcal{P}}$ is reduced.
		\end{enumerate}
		\end{lemma}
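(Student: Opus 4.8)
The plan is to prove both parts at once by reverse induction on the Bruhat order, paralleling the proof of \cref{lemma:graphbijections}. For the base case $w = w_0$, the only element of $\paths_{\overline{\mathcal{G}}_n}(w_0,w_0)$ is the edgeless path $p$, so $\word(p)$ is the empty word, $\demprod(\word(p)) = \id$, and $\varphi(p) = \mathbb{T}_n$; then $\check{\mathcal{P}} = \check{\mathbb{T}}_n = \emptyset$, which has $\demprod(\check{\mathbb{T}}_n) = w_0$ by \cref{lemma:copipewordandpermutation}, and the edgeless path and the empty co-pipe dream are both reduced. So both claims hold in this case.

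For the inductive step I would fix $w \neq w_0$, set $j = \addable(w)$ and $i = w^{-1}(j)$, and recall that every edge of $\overline{\mathcal{G}}_n$ leaving $w$ is labeled $(i,j)$, with $(i,j)$ an addable cell of $\dom(w)$. Given $p \in \paths_{\overline{\mathcal{G}}_n}(w,w_0)$, let $p'$ be the path from $u_2$ to $w_0$ obtained by deleting the first edge of $p$, where $u_2 \in \kcotransitionperms_i(w)$; by Part (1) of \cref{lemma:whathappenswithwsianddescents} we have $u_2 \in S_n$, and $u_2 > w$, so the inductive hypothesis applies to $p'$. Writing $\mathcal{P} = \varphi(p)$ and $\mathcal{P}' = \varphi(p')$, one has $\mathcal{P} \in \kpipes(w)$ by \cref{lemma:graphbijections}, $\mathcal{P}' = \mathcal{P} \cup \{(i,j)\}$, and $\word(p) = (\word(p'),i)$, hence $\demprod(\word(p)) = \demprod(\word(p')) * \tau_i$.

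Part (1) then follows by combining the inductive hypothesis $w_0 \cdot \demprod(\word(p')) = \demprod(\check{\mathcal{P}'})$, \cref{lemma:cotransitionflippermutation} applied with $\mathcal{Q} = \mathcal{P}'$ (which gives $\demprod(\check{\mathcal{P}'}) \square \tau_i = \demprod(\check{\mathcal{P}})$), and Part (3) of \cref{lemma:upanddownoperatorswithflips}:
\[ w_0 \cdot \demprod(\word(p)) = w_0 \cdot \bigl(\demprod(\word(p')) * \tau_i\bigr) = \bigl(w_0 \cdot \demprod(\word(p'))\bigr) \square \tau_i = \demprod(\check{\mathcal{P}'}) \square \tau_i = \demprod(\check{\mathcal{P}}). \]
For part (2) I would use the fact, established inside the proof of \cref{lemma:cotransitionflippermutation}, that $\mathbf{a}_{\check{\mathcal{P}}}$ is obtained from $\mathbf{a}_{\check{\mathcal{P}'}}$ by appending the letter $n-i$; hence $\check{\mathcal{P}}$ is reduced if and only if $\check{\mathcal{P}'}$ is reduced and $\demprod(\mathbf{a}_{\check{\mathcal{P}'}})$ has an ascent at $n-i$. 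By \cref{lemma:copipewordandpermutation} together with the reversal property of right multiplication by $w_0$, the latter is equivalent to $\demprod(\check{\mathcal{P}'})$ having a descent at $i$, and by part (1) of the inductive hypothesis (writing $\demprod(\check{\mathcal{P}'}) = w_0\demprod(\word(p'))$) together with the fact that left multiplication by $w_0$ swaps ascents and descents, this is equivalent to $\demprod(\word(p'))$ having an ascent at $i$. Since $\word(p) = (\word(p'),i)$ is a reduced word exactly when $\word(p')$ is reduced and $\demprod(\word(p'))$ has an ascent at $i$, and since $\check{\mathcal{P}'}$ is reduced iff $\word(p')$ is reduced by the inductive hypothesis, the two conditions coincide, giving that $p$ is reduced iff $\check{\mathcal{P}}$ is reduced.

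The main obstacle is purely organizational: keeping the two ``append a letter'' operations — the one taking $\word(p')$ to $\word(p)$ and the one taking $\mathbf{a}_{\check{\mathcal{P}'}}$ to $\mathbf{a}_{\check{\mathcal{P}}}$, which differ by the reflection $i \mapsto n-i$ — in sync, and tracking carefully how ascents and descents transform under the operators $*$ and $\square$ and under left and right multiplication by $w_0$. All the facts needed for this are already recorded in \cref{lemma:upanddownoperatorswithflips}, \cref{lemma:copipewordandpermutation}, and the proof of \cref{lemma:cotransitionflippermutation}, so no genuinely new argument should be needed.
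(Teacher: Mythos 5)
Your argument is correct, but it is organized quite differently from the paper's. The paper proves \cref{lemma:pathwords} directly, with no induction: since the edges of $p$ record exactly the cells of $\mathbb{T}_n$ missing from $\mathcal{P}$, the word $\word(p)$ is obtained from $\mathbf{a}_{\check{\mathcal{P}}}$ by the letter substitution $i\mapsto n-i$, after which part (1) is immediate from \cref{lemma:conjugationandwords} and \cref{lemma:copipewordandpermutation}, and part (2) is immediate because that substitution preserves reducedness of words. You instead run a reverse induction on Bruhat order parallel to \cref{lemma:graphbijections}, peeling off the first edge of $p$ and the corresponding crossing $(i,j)$, and assembling part (1) from \cref{lemma:cotransitionflippermutation} and Part (3) of \cref{lemma:upanddownoperatorswithflips}, and part (2) from the ascent/descent bookkeeping via \cref{lemma:copipewordandpermutation}; the needed facts ($u\in S_n$, $\mathcal{P}\in\kpipes(w)$, the appended letter being $n-i$) are all available from \cref{lemma:whathappenswithwsianddescents}, \cref{lemma:graphbijections}, and the proof of \cref{lemma:cotransitionflippermutation}, and no circularity arises since none of those results depend on this lemma. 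The trade-off: the paper's global observation is shorter and makes the correspondence of words completely transparent in one step, while your recursive version re-derives it edge by edge; on the other hand, your descent/ascent analysis in part (2) mirrors the argument already used in \cref{lemma:kcotransitionrestrictedbijection} and makes explicit the compatibility between the path recursion and the pipe-dream recursion that the paper exploits again in \cref{thm:pathreducedcopdbijection}. One small stylistic caveat: you rely on a fact stated only inside the proof of \cref{lemma:cotransitionflippermutation} (that the removed letter is $n-i$); it is true and easy to recheck, but citing it as if it were a standalone statement is slightly less clean than the paper's self-contained observation.
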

		
		\begin{proof}
		\noindent (1) Let $\mathcal{P} = \varphi(p)$. Observe that $\word(p)$ can be obtained from $\mathbf{a}_{\check{\mathcal{P}}}$ by replacing each letter $i$ with $n - i$ for all $i$ in the word.
			Thus, by \cref{lemma:conjugationandwords}, $w_0\demprod(\word(p))w_0 = \demprod(\mathbf{a}_{\check{\mathcal{P}}})$,
			and hence,
			$w_0\demprod(\word(p)) = \demprod(\mathbf{a}_{\check{\mathcal{P}}})w_0 = \demprod(\check{\mathcal{P}}).$
			
		\noindent (2) Again, this follows from the observation that $\word(p)$ can be obtained from $\mathbf{a}_{\check{\mathcal{P}}}$ by replacing each letter $i$ with $n - i$ for all $i$ in the word.
		\end{proof}
		
		We now show that the reduced paths in $\paths_{\mathcal{R}_n}(w,w_0)$ are in bijection with pipe dreams for $w$ with reduced co-pipe dreams.
		
		\begin{proposition}
			\label{thm:pathreducedcopdbijection}
The map 
\begin{equation}
	\label{eqn:pathreducedcobpdbijection}
\varphi:\{p \in \paths_{\mathcal{R}_n}(w,w_0): p \text{ is reduced}\} \rightarrow \{\mathcal{P} \in \kpipes(w):\check{\mathcal{P}} \text{ is reduced}\}
\end{equation}
 is a bijection.
		\end{proposition}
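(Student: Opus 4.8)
The plan is to prove this by reverse induction on Bruhat order on $w$, in close parallel with the proof of \cref{lemma:graphbijections}, using \cref{lemma:kcotransitionrestrictedbijection} to drive the inductive step. For the base case $w = w_0$, the only element of $\paths_{\mathcal{R}_n}(w_0,w_0)$ is the empty path, which is reduced and satisfies $\varphi(p_\emptyset) = \mathbb{T}_n$; since $\check{\mathbb{T}}_n = \emptyset$ is reduced and $\kpipes(w_0) = \{\mathbb{T}_n\}$, both sides are singletons and the map is a bijection.

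For the inductive step, fix $w \neq w_0$ and assume the statement holds for all $u > w$ in Bruhat order. Set $j = \addable(w)$ and $i = w^{-1}(j)$, so that $(i,j)$ is an addable cell of $\dom(w)$. Every out-edge of $w$ in $\mathcal{R}_n$ carries the label $(i,j)$, and its targets are exactly the permutations $u \in \kcotransitionperms_i(w)$ with $u \geq ws_i$. Thus deleting the first edge gives a bijection from $\paths_{\mathcal{R}_n}(w,w_0)$ onto $\bigsqcup_{u} \paths_{\mathcal{R}_n}(u,w_0)$, the union over such $u$; and if $p \mapsto p'$ under this map, then $\varphi(p) = \varphi(p') \setminus \{(i,j)\}$, where $\varphi(p') \in \kpipes(u)$ by \cref{lemma:graphbijections} and $(i,j) \in \varphi(p')$ by \cref{prop:kcotransitionpipefacts}. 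Each such $u$ satisfies $\ell(u) > \ell(w)$ and $u \geq w$ (\cref{lemma:booleaninterval}), hence $u > w$ and the inductive hypothesis applies.

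The key step is to match up, under this factorization, the reduced paths with the appropriate pipe dreams. Since $(i,j)$ is the leftmost addable cell with $i+j \leq n$, the word $\mathbf{a}_{\check{\varphi(p)}}$ is obtained from $\mathbf{a}_{\check{\varphi(p')}}$ by appending the letter $n-i$ (as in the proof of \cref{lemma:cotransitionflippermutation}), while $\word(p)$ is obtained from $\word(p')$ by appending $i$. Consequently $p$ is reduced if and only if $p'$ is reduced and $\demprod(\word(p'))$ has an ascent at $i$, and by \cref{lemma:pathwords} the latter condition is equivalent to $\demprod(\check{\varphi(p')})$ having a descent at $i$. Hence deleting the first edge restricts to a bijection from $\{p \in \paths_{\mathcal{R}_n}(w,w_0) : p \text{ reduced}\}$ onto
\[\bigsqcup_{\substack{u \in \kcotransitionperms_i(w)\\ u \geq ws_i}} \{p' \in \paths_{\mathcal{R}_n}(u,w_0) : p' \text{ reduced and } \demprod(\check{\varphi(p')}) \text{ has a descent at } i\}.\]
Applying the inductive hypothesis to each $u$ turns this into $\bigsqcup_u \{\mathcal{Q} \in \kpipes(u) : \check{\mathcal{Q}} \text{ reduced and } \demprod(\check{\mathcal{Q}}) \text{ has a descent at } i\}$, and \cref{lemma:kcotransitionrestrictedbijection} identifies this disjoint union bijectively with $\{\mathcal{P} \in \kpipes(w) : \check{\mathcal{P}} \text{ reduced}\}$ via $\mathcal{Q} \mapsto \mathcal{Q} \setminus \{(i,j)\}$. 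Tracing a reduced path $p$ through the three bijections yields $\varphi(p') \setminus \{(i,j)\} = \varphi(p)$, so the composite is exactly the map in \cref{eqn:pathreducedcobpdbijection}, which is therefore a bijection.

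The step I expect to be the main obstacle is the bookkeeping just described: one must check carefully that appending/removing the last letter of $\word(p)$ (equivalently, adjusting the last column of the associated co-pipe dream) converts reducedness of the path into precisely the "descent at $i$" constraint on $\demprod(\check{\mathcal{Q}})$ appearing in the codomain of \cref{lemma:kcotransitionrestrictedbijection}, so that the two restricted disjoint unions coincide on the nose. Everything else is a routine composition of previously established bijections.
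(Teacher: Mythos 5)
Your proof is correct and follows essentially the same route as the paper: reverse induction on Bruhat order, with \cref{lemma:kcotransitionrestrictedbijection} driving the inductive step and \cref{lemma:pathwords} translating reducedness of the word obtained by appending $i$ into the descent-at-$i$ condition on the co-permutation. The only structural difference is that the paper gets well-definedness and injectivity for free from \cref{lemma:graphbijections} and \cref{lemma:pathwords} and uses the induction only for surjectivity, whereas you package the whole bijection as a composition of three bijections inside the inductive step; the content is the same.
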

		\begin{proof}
			
We proceed by reverse induction on Bruhat order. 
The statement is immediate for $w=w_0$. Assume that $w\neq w_0$ and that the statement holds for all $u>w$.
			
Note that $\{p\in\paths_{\mathcal{R}_n}(w,w_0): p \text{ is reduced}\}$ is a subset of $\paths_{\overline{\mathcal{G}}_n}(w,w_0)$ (thinking of the graph $\mathcal{R}_n$ as being embedded in $\overline{\mathcal{G}}_n$). Because the map in \cref{lemma:graphbijections} is bijective, our given map here is injective. Additionally, \cref{lemma:pathwords} Part (2) tells us that the map in \cref{eqn:pathreducedcobpdbijection} is well defined. 

It remains to show the map is surjective. 
Take $\mathcal{P}\in \kpipes(w)$ with $\check{\mathcal{P}}$ reduced. 
Write $j=\addable(w)$ and $i=w^{-1}(j)$.
By \cref{lemma:kcotransitionrestrictedbijection}, we have $\mathcal Q=\mathcal{P}\cup\{(i,j)\}\in \kpipes(u)$ with $u\in \kcotransitionperms_{i}(w)$ and $u\geq ws_i$. Also, $\check{\mathcal Q}$ is reduced and has a descent at $i$.

We know that there exists $p\in \paths_{\overline{\mathcal{G}}_n}(w,w_0)$ such that $\varphi(p)=\mathcal{P}$. Furthermore, if $p'$ is the path we get by deleting the first edge of $p$, then $\varphi(p')=\mathcal{Q}$. Thus, by induction $p'\in \paths_{\mathcal{R}_n}(u,w_0)$ and $p'$ is reduced. Because $u\in \kcotransitionperms_{i}(w)$ and $u\geq ws_i$, we have $w\xrightarrow{(i,j)}u$ is an edge in $\mathcal{R}_n$, thus $p\in \paths_{\mathcal{R}_n}(w,w_0)$.

It remains to show that $p$ is reduced. Because $\varphi(p')=\mathcal{Q}$ and $\demprod(\check{\mathcal{Q}})$ has a descent at $i$, $\demprod(\word(p'))=w_0\demprod(\check{\mathcal{Q}})$ has an ascent at $i$. 
We obtain $\word(p)$ by appending $i$ to $\word(p')$, which is a reduced word.
Thus, $\word(p)$ is a reduced word.
\end{proof}
		
		We now prove the main theorem of this section.
		\begin{proof}[Proof of \cref{thm:pathformulas}]
For parts (1) and (2), first note that given $p\in \paths_{\overline{\mathcal{G}}_n}(w,w_0)$, we have $\frac{x_1^{n-1}x_2^{n-2}\cdots x_{n-1}}{\wt(p)}=\wt(\varphi(p))$. Thus these statements follow from \cref{lemma:graphbijections} and \cref{theorem:pipesschubertandgrothendieck}.

Part (3) follows from \cref{thm:pipe_groth_to_schub_no_signs}, \cref{lemma:pathwords}, and \cref{thm:pathreducedcopdbijection}.

Finally, Part (4) follows from \cref{thm:pipe_schub_to_groth}, \cref{lemma:graphbijections}, and \cref{lemma:pathwords}.
		\end{proof}

	We conclude with an example for which there is a path in $\paths_{\mathcal{R}_n}(w,w_0)$ that is not reduced.
	
	\begin{example}
		Let $w = 1243$. We list the paths in $\mathcal{R}_4$ that go from $w$ to $w_0$ below.
		
		\begin{itemize}
			\item $1243\xrightarrow{(1, 1)}2143\xrightarrow{(2, 1)}2413\xrightarrow{(3, 1)}2431\xrightarrow{(1, 2)}4231\xrightarrow{(2, 2)}4321$
			\item $1243\xrightarrow{(1, 1)}2143\xrightarrow{(2, 1)}2413\xrightarrow{(3, 1)}2431\xrightarrow{(1, 2)}4321$
			\item $1243\xrightarrow{(1, 1)}2143\xrightarrow{(2, 1)}2431\xrightarrow{(1, 2)}4231\xrightarrow{(2, 2)}4321$
			\item $1243\xrightarrow{(1, 1)}2143\xrightarrow{(2, 1)}2431\xrightarrow{(1, 2)}4321$
		\end{itemize}
		The third path has the word $2121$, which is not reduced. 
		As such, we discard it. 
		The remaining paths have reduced words. 
		By \cref{thm:pathformulas}, we have
		\begin{align*}
			\mathfrak G_{1243}^{(1)}&=\mathfrak S_{w_0\demprod((2,1,3,2,1))}+\mathfrak S_{w_0\demprod((1,3,2,1))}+\mathfrak S_{w_0\demprod((1,2,1))}\\
			&=\mathfrak S_{w_0\cdot 4312}+\mathfrak S_{w_0\cdot4213 }+\mathfrak S_{w_0\cdot 3214}\\
			&=\mathfrak S_{ 1243}+\mathfrak S_{1342 }+\mathfrak S_{ 2341}.
		\end{align*}
		We list the pipe dreams associated to the first, second, and fourth paths below.
		\[
		\begin{tikzpicture}[x = 1.25em,y = 1.25em]
			\draw[step = 1,gray, very thin] (0,0) grid (4, -4);
			\draw[color = black, thick] (0,0) rectangle (4, -4);
			\draw[thick,rounded corners,color = Mulberry](1/2, 0)--(1/2, -1/2)--(0, -1/2);
			\draw[thick,rounded corners,color = Mulberry](1, -1/2)--(1/2, -1/2)--(1/2, -1);
			\draw[thick,rounded corners,color = Mulberry](3/2, 0)--(3/2, -1/2)--(1, -1/2);
			\draw[thick,rounded corners,color = Mulberry](2, -1/2)--(3/2, -1/2)--(3/2, -1);
			\draw[thick,rounded corners,color = Mulberry](2, -1/2)--(3, -1/2);
			\draw[thick,rounded corners,color = Mulberry](5/2, 0)--(5/2, -1);
			\draw[thick,rounded corners,color = Mulberry](7/2, 0)--(7/2, -1/2)--(3, -1/2);
			\draw[thick,rounded corners,color = Mulberry](1/2, -1)--(1/2, -3/2)--(0, -3/2);
			\draw[thick,rounded corners,color = Mulberry](1, -3/2)--(1/2, -3/2)--(1/2, -2);
			\draw[thick,rounded corners,color = Mulberry](3/2, -1)--(3/2, -3/2)--(1, -3/2);
			\draw[thick,rounded corners,color = Mulberry](2, -3/2)--(3/2, -3/2)--(3/2, -2);
			\draw[thick,rounded corners,color = Mulberry](5/2, -1)--(5/2, -3/2)--(2, -3/2);
			\draw[thick,rounded corners,color = Mulberry](1/2, -2)--(1/2, -5/2)--(0, -5/2);
			\draw[thick,rounded corners,color = Mulberry](1, -5/2)--(1/2, -5/2)--(1/2, -3);
			\draw[thick,rounded corners,color = Mulberry](3/2, -2)--(3/2, -5/2)--(1, -5/2);
			\draw[thick,rounded corners,color = Mulberry](1/2, -3)--(1/2, -7/2)--(0, -7/2);
			\node at (1/2, 1/2){1};
			\node at (-1/2, -1/2){1};
			\node at (3/2, 1/2){2};
			\node at (-1/2, -3/2){2};
			\node at (5/2, 1/2){3};
			\node at (-1/2, -5/2){4};
			\node at (7/2, 1/2){4};
			\node at (-1/2, -7/2){3};
		\end{tikzpicture}
		\hspace{2em}
		\begin{tikzpicture}[x = 1.25em,y = 1.25em]
			\draw[step = 1,gray, very thin] (0,0) grid (4, -4);
			\draw[color = black, thick] (0,0) rectangle (4, -4);
			\draw[thick,rounded corners,color = Mulberry](1/2, 0)--(1/2, -1/2)--(0, -1/2);
			\draw[thick,rounded corners,color = Mulberry](1, -1/2)--(1/2, -1/2)--(1/2, -1);
			\draw[thick,rounded corners,color = Mulberry](3/2, 0)--(3/2, -1/2)--(1, -1/2);
			\draw[thick,rounded corners,color = Mulberry](2, -1/2)--(3/2, -1/2)--(3/2, -1);
			\draw[thick,rounded corners,color = Mulberry](2, -1/2)--(3, -1/2);
			\draw[thick,rounded corners,color = Mulberry](5/2, 0)--(5/2, -1);
			\draw[thick,rounded corners,color = Mulberry](7/2, 0)--(7/2, -1/2)--(3, -1/2);
			\draw[thick,rounded corners,color = Mulberry](1/2, -1)--(1/2, -3/2)--(0, -3/2);
			\draw[thick,rounded corners,color = Mulberry](1, -3/2)--(1/2, -3/2)--(1/2, -2);
			\draw[thick,rounded corners,color = Mulberry](1, -3/2)--(2, -3/2);
			\draw[thick,rounded corners,color = Mulberry](3/2, -1)--(3/2, -2);
			\draw[thick,rounded corners,color = Mulberry](5/2, -1)--(5/2, -3/2)--(2, -3/2);
			\draw[thick,rounded corners,color = Mulberry](1/2, -2)--(1/2, -5/2)--(0, -5/2);
			\draw[thick,rounded corners,color = Mulberry](1, -5/2)--(1/2, -5/2)--(1/2, -3);
			\draw[thick,rounded corners,color = Mulberry](3/2, -2)--(3/2, -5/2)--(1, -5/2);
			\draw[thick,rounded corners,color = Mulberry](1/2, -3)--(1/2, -7/2)--(0, -7/2);
			\node at (1/2, 1/2){1};
			\node at (-1/2, -1/2){1};
			\node at (3/2, 1/2){2};
			\node at (-1/2, -3/2){2};
			\node at (5/2, 1/2){3};
			\node at (-1/2, -5/2){4};
			\node at (7/2, 1/2){4};
			\node at (-1/2, -7/2){3};
		\end{tikzpicture}
		\hspace{2em}
		\begin{tikzpicture}[x = 1.25em,y = 1.25em]
			\draw[step = 1,gray, very thin] (0,0) grid (4, -4);
			\draw[color = black, thick] (0,0) rectangle (4, -4);
			\draw[thick,rounded corners,color = Mulberry](1/2, 0)--(1/2, -1/2)--(0, -1/2);
			\draw[thick,rounded corners,color = Mulberry](1, -1/2)--(1/2, -1/2)--(1/2, -1);
			\draw[thick,rounded corners,color = Mulberry](3/2, 0)--(3/2, -1/2)--(1, -1/2);
			\draw[thick,rounded corners,color = Mulberry](2, -1/2)--(3/2, -1/2)--(3/2, -1);
			\draw[thick,rounded corners,color = Mulberry](2, -1/2)--(3, -1/2);
			\draw[thick,rounded corners,color = Mulberry](5/2, 0)--(5/2, -1);
			\draw[thick,rounded corners,color = Mulberry](7/2, 0)--(7/2, -1/2)--(3, -1/2);
			\draw[thick,rounded corners,color = Mulberry](1/2, -1)--(1/2, -3/2)--(0, -3/2);
			\draw[thick,rounded corners,color = Mulberry](1, -3/2)--(1/2, -3/2)--(1/2, -2);
			\draw[thick,rounded corners,color = Mulberry](1, -3/2)--(2, -3/2);
			\draw[thick,rounded corners,color = Mulberry](3/2, -1)--(3/2, -2);
			\draw[thick,rounded corners,color = Mulberry](5/2, -1)--(5/2, -3/2)--(2, -3/2);
			\draw[thick,rounded corners,color = Mulberry](0, -5/2)--(1, -5/2);
			\draw[thick,rounded corners,color = Mulberry](1/2, -2)--(1/2, -3);
			\draw[thick,rounded corners,color = Mulberry](3/2, -2)--(3/2, -5/2)--(1, -5/2);
			\draw[thick,rounded corners,color = Mulberry](1/2, -3)--(1/2, -7/2)--(0, -7/2);
			\node at (1/2, 1/2){1};
			\node at (-1/2, -1/2){1};
			\node at (3/2, 1/2){2};
			\node at (-1/2, -3/2){2};
			\node at (5/2, 1/2){3};
			\node at (-1/2, -5/2){4};
			\node at (7/2, 1/2){4};
			\node at (-1/2, -7/2){3};
		\end{tikzpicture}
		\hspace{2em}
		\]
		The reader is invited to write down the corresponding co-pipe dreams and compare their permutations with the terms in the given expansion of $\mathfrak G_{1243}^{(1)}$.
	\end{example}

		\section{Co-permutations and the canonical bijection}
		\label{section:canonicalbijection}
		
		The goal of this section is to prove \cref{thm:main1} and \cref{thm:main2}, the BPD change of basis formulas. To do so, we will prove \cref{thm:coperm}, which says that co-permutations are preserved under the column-weight preserving canonical bijection of \cite{Gao.Huang}. The proof will involve analyzing the BPD insertion algorithm of \cite{Huang.2023}.
				
		\subsection{Insertion on reduced BPDs}
		
		We begin by recalling an insertion algorithm on BPDs, first given by Huang \cite{Huang.2023}. 
		Huang used this algorithm to give a bijective proof of Monk's rule. 
		We will only require the special case that arises from the co-transition recurrence.
		We refer the reader to \cite{Huang.2023} for full details and provide an overview here.
		
		An \newword{almost BPD} is a NE planar history that satisfies nearly the same conditions as those for reduced BPDs, except that the diagram contains exactly one bump tile \bumptile. 
		We now describe the \emph{droop move} of \cite{Lam.Lee.Shimozono}. 
		Suppose we are given a BPD or an almost BPD that satisfies the following conditions within a rectangle $[a,b] \times [c,d]$:
		\begin{itemize}
			\item a pipe enters the rectangle in cell $(b,c)$, travels vertically within column $c$, turns right in cell $(a,c)$, proceeds horizontally along row $a$, and exits the rectangle in cell $(a,d)$,
			\item cell $(b,c)$ is a \btile or a \drtile,
			\item other pipes do not bend within the region $[a,b] \times [c,d]-\{(a,c),(b,d)\}$.
		\end{itemize}
		In this situation, we may perform a \newword{droop move} by bending the pipe that turns right in cell $(a,c)$ so that it instead travels horizontally along row $b$, turns upward at cell $(b, d)$, and then continues vertically within column $d$. See the picture below for an example of a droop move.
		\[\begin{tikzpicture}[x = 1.25em,y = 1.25em]
			\draw[step = 1,gray, very thin] (0,0) grid (4, -5);
			\draw[color = black, thick] (0,0) rectangle (4, -5);
			\draw[color = black,thick](0,-2.5)--(4,-2.5);
			\draw[color = black,thick](0,-3.5)--(4,-3.5);
			\draw[color = black,thick](1.5,0)--(1.5,-5);
			\draw[color = black,thick,rounded corners](3.5,-5)--(3.5,-4.5)--(4,-4.5);
			\draw[very thick,rounded corners,color = blue](0, -4.5)--(1/2,-4.5)--(1/2, -1/2)--(4, -1/2);
		\end{tikzpicture}
		\hspace{1em}
		\raisebox{3em}{$\mapsto$}
		\hspace{1em}
		\begin{tikzpicture}[x = 1.25em,y = 1.25em]
			\draw[step = 1,gray, very thin] (0,0) grid (4, -5);
			\draw[color = black, thick] (0,0) rectangle (4, -5);
			\draw[color = black,thick](0,-2.5)--(4,-2.5);
			\draw[color = black,thick](0,-3.5)--(4,-3.5);
			\draw[color = black,thick](1.5,0)--(1.5,-5);
			\draw[color = black,thick, rounded corners](3.5,-5)--(3.5,-4.5)--(4,-4.5);
			\draw[very thick,rounded corners,color = blue](0, -4.5)--(1/2, -4.5)--(3.5,-4.5)--(3.5,-1/2)--(4, -1/2);
		\end{tikzpicture}\]		
		We refer to the pipe that bends as the \newword{active pipe}, and the rectangle in which the modifications happen as the \newword{active rectangle}. We also call the part of the active pipe that travels through cell $(a,c)$ the \newword{active elbow}.
		If a droop is restricted to two consecutive rows, we say that the droop is a \newword{row droop}.
		
		When doing a droop move, only the active pipe is modified, and the droop move preserves the underlying permutation. 
		Moreover, if we begin with a reduced BPD or almost BPD, the result is again a reduced BPD or almost BPD.

		A droop within an active rectangle $[a,b] \times [c,d]$ is called a \newword{mindroop} if no cell in the region $[a,b] \times [c,d]- \{(b,d)\}$ contains a \btile.
		Note that because a mindroop is a droop, this is equivalent to the conditions that $(a,c)$ is a \drtile or a \bumptile,
		\[b = \min\{i > a : (i,c) \text{ is not a } \ctile\},\] and 
		\[d = \min\{j > c : (a,j) \text{ is not a } \ctile\}.\]
		See the picture below for an example of a mindroop.
		\[\begin{tikzpicture}[x = 1.25em,y = 1.25em]
			\draw[step = 1,gray, very thin] (0,0) grid (4, -5);
			\draw[color = black, thick] (0,0) rectangle (4, -5);
			\draw[color = black,thick](0,-1.5)--(4,-1.5);
			\draw[color = black,thick](0,-2.5)--(4,-2.5);
			\draw[color = black,thick](0,-3.5)--(4,-3.5);
			\draw[color = black,thick](1.5,0)--(1.5,-5);
			\draw[color = black,thick](2.5,0)--(2.5,-5);
			\draw[color = black,thick,rounded corners](0,-.5)--(.5,-.5)--(.5,0);
			\draw[very thick,rounded corners,color = blue](1/2, -5)--(1/2, -1/2)--(4, -1/2);
		\end{tikzpicture}
		\hspace{1em}
		\raisebox{3em}{$\mapsto$}
		\hspace{1em}
		\begin{tikzpicture}[x = 1.25em,y = 1.25em]
			\draw[step = 1,gray, very thin] (0,0) grid (4, -5);
			\draw[color = black, thick] (0,0) rectangle (4, -5);
			\draw[color = black,thick](0,-1.5)--(4,-1.5);
			\draw[color = black,thick](0,-2.5)--(4,-2.5);
			\draw[color = black,thick](0,-3.5)--(4,-3.5);
			\draw[color = black,thick](1.5,0)--(1.5,-5);
			\draw[color = black,thick](2.5,0)--(2.5,-5);
			\draw[color = black,thick,rounded corners](0,-.5)--(.5,-.5)--(.5,0);
			\draw[very thick,rounded corners,color = blue](1/2, -5)--(1/2, -4.5)--(3.5,-4.5)--(3.5,-1/2)--(4, -1/2);
		\end{tikzpicture}\]
		If a mindroop occupies only two consecutive rows, we call it a \newword{row mindroop}. 
		Note that a mindroop is a row mindroop if and only if the tile immediately below the active elbow is not a \ctile.
		
		Given an almost BPD, it is possible that two pipes may cross and also both appear together in a \bumptile. 
		If we exchange the positions of the \bumptile and the \ctile, this operation is called a \newword{cross-bump swap}. 
		We provide an example of such a move below. The blue pipes are involved in the cross-bump swap and the black pipe is irrelevant.
		\[\begin{tikzpicture}[x = 1.25em,y = 1.25em]
			\draw[step = 1,gray, very thin] (0,0) grid (4, -5);
			\draw[color = black, thick] (0,0) rectangle (4, -5);
			\draw[thick,rounded corners](0, -2.5)--(3/2, -2.5)--(3/2, -3/2)--(4,-1.5);
			\draw[thick,rounded corners,color = blue](0, -4.5)--(1/2, -4.5)--(1/2, -1/2)--(4,-1/2);
			\draw[thick,rounded corners,color = blue](1/2,-5)--(1/2,-4.5)--(2.5,-4.5)--(2.5,-2.5)--(3.5,-2.5)--(3.5,0);
		\end{tikzpicture}
		\hspace{1em}
		\raisebox{3em}{$\mapsto$}
		\hspace{1em}
		\begin{tikzpicture}[x = 1.25em,y = 1.25em]
			\draw[step = 1,gray, very thin] (0,0) grid (4, -5);
			\draw[color = black, thick] (0,0) rectangle (4, -5);
			\draw[thick,rounded corners](0, -2.5)--(3/2, -2.5)--(3/2, -3/2)--(4,-1.5);
			\draw[thick,rounded corners,color = blue](1/2, -5)--(1/2, -4.5)--(1/2, -1/2)--(3.5,-.5)--(3.5,0);
			\draw[thick,rounded corners,color = blue](0,-4.5)--(1/2,-4.5)--(2.5,-4.5)--(2.5,-2.5)--(3.5,-2.5)--(3.5,-1/2)--(4,-1/2);
		\end{tikzpicture}
		\]

		Having defined droops and cross-bump swaps, we are now ready to describe column insertion on reduced BPDs. 
		Let $w \in S_n$, and fix $\mathcal{B} \in \bpd(w)$. 
		We will insert a blank tile into column $j$ of $\mathcal{B}$ using the following algorithm:
		\begin{enumerate}
			\item Look for the first \drtile in column $j$ (reading within column $j$ starting from the top). 
			Set this to be the active elbow.
			\item Perform a mindroop with the active elbow.
			\label{step:activemindroop}
			\begin{enumerate}
				\item If the mindroop results in a \ultile in the bottom-right corner of the active rectangle, search upward within the same column for the first \drtile. 
				Set this to be the new active elbow. 
				Return to Step~\ref{step:activemindroop}.
				\item Otherwise, the bottom-right corner is a \bumptile. 
				\begin{enumerate}
					\item If changing the \bumptile into a \ctile results in a reduced BPD, then make this replacement, stop, and return the resulting BPD as output.
					\item Otherwise, this \bumptile is eligible for a cross-bump swap. Do the cross-bump swap. 
					In the new bump, set the bottom elbow to be active elbow. Return to Step~\ref{step:activemindroop}.
				\end{enumerate}
			\end{enumerate}
		\end{enumerate}
		This procedure is called \newword{column insertion} into a reduced BPD. If $\mathcal B\mapsto \mathcal B'$ by insertion into column $j$, then $\mathcal B'$ has one more \btile in column $j$ than $\mathcal B$ does, and the same number in all other columns.

		Note that we use column insertion instead of row insertion. 
		The results of \cite{Huang.2023} extend to column insertion immediately by symmetry. 
		In particular, \cref{theorem:huang_bijection} follows directly from Huang's arguments about \emph{decorated BPDs} if we consider each blank tile to be labeled with $-y$.
				
		\begin{remark}
			If we restrict ourselves to size $n$ BPDs, it may be that column insertion fails to be well-defined. However, there is a natural inclusion of $\bpd_n$ into $\bpd_{n+1}$, and for a given $\mathcal{B}\in \bpd_n$, if we embed it into $\bpd_{n+1}$, column insertion is well-defined. For the insertion we use in our main arguments (inserting into column $j=\addable(w)$), this issue does not arise. 
		\end{remark}		
		
		\subsection{Co-transition insertion}
		For our purposes, it is enough to analyze the result of column insertion arising from the co-transition recurrence. 
		Specifically, given $w \in S_n$ with $w\neq w_0$, we restrict ourselves to insertion into column $j = \addable(w)$. 
		
		\begin{theorem}
			\label{theorem:huang_bijection}
			Let $w \in S_n$ with $w \neq w_0$, and let $j = \addable(w)$ and $i = w^{-1}(j)$. 
			Then insertion on BPDs into column $j$ defines a bijection from $\bpd(w)$ to $\bigsqcup_{v \in \cotransitionperms_i(w)}\bpd(v)$. 
			Furthermore, if $\mathcal{B} \mapsto \mathcal{B}'$ under this bijection, then $\mathcal{B}'$ has one more blank tile in column $j$ than $\mathcal{B}$, and the same number of blank tiles in all other columns.
		\end{theorem}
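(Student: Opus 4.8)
The plan is to deduce this from Huang's bijective proof of Monk's rule by column insertion on reduced BPDs \cite{Huang.2023}, specialized to insertion into the column $j=\addable(w)$. First I would record the properties of column insertion that Huang establishes in general and that carry over verbatim (by the obvious symmetry) to the column-wise version: the algorithm terminates, the output $\mathcal{B}'$ is again a reduced BPD, the assignment $\mathcal{B}\mapsto\mathcal{B}'$ is injective (it admits an explicit inverse ``uninsertion'' procedure), and $\demprod(\mathcal{B}')$ is obtained from $\demprod(\mathcal{B})$ by a single length-increasing transposition. Since $j=\addable(w)$ forces $i+j\le n$, no size issues arise (as noted in the remark preceding the theorem), so column insertion into column $j$ is genuinely well-defined on $\bpd(w)$.

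Next I would pin down the behavior of the first step of the algorithm in our situation. Because $(i,j)$ is an addable cell of $\dom(w)$ with $j=w(i)$ (\cref{lemma:dominantpartofpipedreams}), the cells $(1,j),\dots,(i-1,j)$ all lie in $\dom(w)$ and hence are \btile tiles in every element of $\bpd(w)$, while $(i,j)$ itself is a \drtile in every element of $\bpd(w)$, since no pipe can enter $(i,j)$ from the north or from the west. Thus the first active elbow is always at $(i,j)$, and the first mindroop turns $(i,j)$ into a \btile. Tracking the remaining steps --- each mindroop replaces the active \drtile by a \btile while converting the \btile in the southeast corner of the active rectangle into a \ultile, and each cross-bump swap preserves tile counts --- one sees that the net effect is a single new \btile, located at $(i,j)$, with the number of \btile tiles in every column other than $j$ unchanged; in particular $\wt(\mathcal{B}')=x_i\,\wt(\mathcal{B})$. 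Moreover the active pipe is throughout the pipe passing through $(i,w(i))$, so the unique step that alters the underlying permutation (changing a \bumptile into a \ctile) produces $\demprod(\mathcal{B}')=\demprod(\mathcal{B})\,t_{i,b}$ for some $b$; together with $\ell(\demprod(\mathcal{B}'))=\ell(w)+1$ this yields $\demprod(\mathcal{B}')\in\cotransitionperms_i(w)$. I expect this step --- carefully unpacking Huang's droop / mindroop / cross-bump-swap bookkeeping in the special case $j=\addable(w)$ to confirm that the surviving new blank is at $(i,j)$ and that the resulting permutation lies in $\cotransitionperms_i(w)$ --- to be the main obstacle, as this is where essentially all of the technical verification lives.

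Finally I would close up the bijection by a weight-counting argument. The previous paragraph shows that $\mathcal{B}\mapsto\mathcal{B}'$ is an injection from $\bpd(w)$ into $\bigsqcup_{v\in\cotransitionperms_i(w)}\bpd(v)$. Summing weights and using \cref{bpd:pipesschubertandgrothendieck} together with the cohomological co-transition recurrence \cref{prop:kcotransition}, we get
\[\sum_{\mathcal{B}\in\bpd(w)}\wt(\mathcal{B}')=x_i\,\mathfrak{S}_w=\sum_{v\in\cotransitionperms_i(w)}\mathfrak{S}_v=\sum_{v\in\cotransitionperms_i(w)}\ \sum_{\mathcal{B}'\in\bpd(v)}\wt(\mathcal{B}').\]
Hence the total weight of the image of the injection equals the total weight of the target. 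Since every $\wt(\mathcal{B}')$ is a monomial with coefficient $1$, the sum of $\wt(\mathcal{B}')$ over those BPDs in the target that are missed by the map is a sum of monomials with positive coefficients which vanishes, hence is empty; therefore the injection is onto and we obtain the desired bijection. The ``furthermore'' assertion about \btile counts is precisely the property of column insertion recorded when the algorithm was defined, which was also re-derived in the second paragraph above.
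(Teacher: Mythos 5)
Your overall strategy differs from the paper's: the paper disposes of this statement in one line, as a special case of Huang's bijective Monk's rule \cite[Theorem~5]{Huang.2023} transported to columns by symmetry, whereas you try to rebuild the bijection from the injectivity of insertion plus a weight count. As written, your argument has two concrete errors. First, it is not true that $(i,j)$ is a \drtile in every $\mathcal{B}\in\bpd(w)$, nor that the first active elbow (or the created blank) sits at $(i,j)$. Take $w=1423$, so $(i,j)=(1,1)$ (see \cref{example:bpd1423}): in two of the three BPDs of $w$ the cell $(1,1)$ is \emph{blank}, the unique \drtile of column $1$ lies in row $2$, and the blank created by insertion lies in row $2$. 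The correct structural statement (cf.\ \cref{lemma:setuprowlemma} and the proof of \cref{lemma:rowdroop}) is only that the unique \drtile in column $j$ belongs to pipe $j$; its row depends on $\mathcal{B}$. Second, and consequently, the identity $\wt(\mathcal{B}')=x_i\,\wt(\mathcal{B})$ is false: insertion preserves blank counts \emph{columnwise}, not rowwise. Indeed, by the column-count property you yourself invoke, the image of the Rothe BPD of $1423$ (blanks in columns $2,3$, weight $x_2^2$) must be the unique BPD of the dominant permutation $4123$, whose weight is $x_1^3\neq x_1\cdot x_2^2$.

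This breaks the surjectivity step, since the equality $\sum_{\mathcal{B}\in\bpd(w)}\wt(\mathcal{B}')=x_i\mathfrak{S}_w$ was justified only by the false termwise identity; without it, the comparison with \cref{prop:kcotransition} and \cref{bpd:pipesschubertandgrothendieck} yields nothing (and assuming the total-weight identity outright would be circular, as it is essentially equivalent to the surjectivity being proved). A repair along your lines is conceivable but requires switching to column weights $\prod_{(r,c)\in\rothe(\mathcal{B})}y_c$, using the transpose bijection $\bpd(w)\to\bpd(w^{-1})$ to identify that generating function with $\mathfrak{S}_{w^{-1}}(y)$, and applying co-transition to $w^{-1}$ at the addable cell $(j,i)$ of $\dom(w^{-1})$ --- none of which appears in your write-up. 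Alternatively, surjectivity (and the ``furthermore'' column-count claim) is already part of Huang's theorem, which is exactly how the paper handles it; if you want a self-contained argument, the missing content is precisely a proof of surjectivity (e.g.\ via an explicit reverse insertion), not the weight bookkeeping you propose.
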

		\begin{proof}
			This is a special case of \cite[Theorem~5]{Huang.2023}.  
		\end{proof}
		
		We will show in \cref{lemma:rowdroop} that because $j=\addable(w)$, all mindroops encountered along the insertion path are row mindroops.

		We define 
		\[\mute(w) = \bigcup_{(i,j) \in\rothe(w)}[1,i] \times [1,j]\] 
		and call this the \newword{mutable region} of $w$. 
		A \newword{hook} in a BPD is a pipe that bends exactly once. 
		Note that the bend in a hook must be a $\drtile$. In particular, any pipe in a BPD which does not pass through the mutable region is a hook. Note that the Rothe BPD for $w$ is the only BPD for $w$ for which all of its pipes are hooks.
		
		We now recall a lemma which states that tiles outside the mutable region are the same across all BPDs of $w$.
		\begin{lemma}
			\label{lemma:mutable}
			Let $w \in S_n$. 
			If $\mathcal{B}_w$ is the Rothe BPD for $w$ and $\mathcal{B} \in \kbpd(w)$, then for all $(i,j)\notin \mute(w)$, the tile at position $(i,j)$ is the same in both $\mathcal{B}_w$ and $\mathcal{B}$.
		\end{lemma}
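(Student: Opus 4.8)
The plan is to run an induction over the sequence of droop moves and K-theoretic droop moves that expresses $\mathcal{B}$ starting from the Rothe BPD $\mathcal{B}_w$, using that such a sequence always exists (see \cref{subsection:BDP}). Since $\mathcal{B}_w$ agrees with itself off $\mute(w)$, it suffices to prove the following: if $\mathcal{B}' \in \kbpd(w)$ agrees with $\mathcal{B}_w$ at every cell outside $\mute(w)$, and $\mathcal{B}''$ is obtained from $\mathcal{B}'$ by a single droop or K-theoretic droop move with active rectangle $[a,b] \times [c,d]$, then $\mathcal{B}''$ still agrees with $\mathcal{B}_w$ outside $\mute(w)$. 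Because such a move edits only the tiles inside its active rectangle, this amounts to the purely geometric claim $[a,b] \times [c,d] \subseteq \mute(w)$.

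I would first record two easy facts. First, $\mute(w)$ is a down-set in $[n] \times [n]$ for the product order: $(i,j) \in \mute(w)$ forces $[1,i] \times [1,j] \subseteq \mute(w)$, straight from the definition $\mute(w) = \bigcup_{(p,q) \in \rothe(w)} [1,p] \times [1,q]$. Hence it is enough to place the two corners $(a,c)$ and $(b,d)$ in $\mute(w)$. Second, the blank tiles of $\mathcal{B}_w$ are exactly the cells of $\rothe(w)$, which lie in $\mute(w)$, and $\up(\mathcal{B}_w) = \emptyset$; so the inductive hypothesis on $\mathcal{B}'$ already implies $\up(\mathcal{B}') \subseteq \mute(w)$ (an \ultile of $\mathcal{B}'$ at a cell off $\mute(w)$ would be an \ultile of $\mathcal{B}_w$). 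For the northwest corner, $(a,c)$ is the active elbow, hence a \drtile in $\mathcal{B}'$; if $(a,c) \notin \mute(w)$ then it is a \drtile in $\mathcal{B}_w$ as well, i.e.\ the unique bend of a hook pipe of $\mathcal{B}_w$ whose vertical arm runs straight down column $c$, all of whose cells are also outside $\mute(w)$ (a Rothe cell weakly southeast of such a cell would be weakly southeast of $(a,c)$). In particular the cell $(b,c)$, which lies strictly below $(a,c)$ on that arm since $b > a$, carries a \vtile or a \ctile in $\mathcal{B}_w$ and therefore in $\mathcal{B}'$; but that is incompatible with the local shape a droop requires at the lower-left corner of its active rectangle. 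So $(a,c) \in \mute(w)$.

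The genuinely delicate step, and where I expect the main obstacle to be, is the southeast corner $(b,d)$: a priori a legal droop could reach far to the southeast, and one must rule out its active rectangle leaving $\mute(w)$. After the move the active pipe bends at $(b,d)$, placing an \ultile there, so $(b,d) \in \up(\mathcal{B}'')$; but concluding $\up(\mathcal{B}'') \subseteq \mute(w)$ is essentially the statement we are proving, so the argument cannot be circular. The way through is to use the droop's legality conditions together with the inductive hypothesis on $\mathcal{B}'$ to locate a blank tile of $\mathcal{B}'$ at, or weakly southeast of, $(b,d)$ that persists in $\mathcal{B}_w$: a blank tile of $\mathcal{B}_w$ belongs to $\rothe(w) \subseteq \mute(w)$, and then the down-set property forces $(b,d) \in \mute(w)$ too. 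Pinning down exactly which cell is forced blank, and verifying it escapes $\mute(w)$ only in the contradictory case, is the heart of the argument. Should the droop bookkeeping turn out to be unpleasant, the fallback plan is to bypass generation-by-droops altogether: characterize $\mute(w)$ via ranks — $(i,j) \notin \mute(w)$ exactly when the submatrix of the permutation matrix of $w$ on rows $[i,n]$ and columns $[j,n]$ contains no cell of the Rothe diagram — and then argue, using the flow (six-vertex) description of BPDs, that the tiles of any $\mathcal{B} \in \kbpd(w)$ at cells outside $\mute(w)$ are determined by $w$ alone.
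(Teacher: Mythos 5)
Your plan (induct along a generating sequence of droop and K-theoretic droop moves from the Rothe BPD, and show each active rectangle stays inside $\mute(w)$, using that $\mute(w)$ is a down-set) is a reasonable route; note for comparison that the paper itself gives no internal proof, citing \cite{Weigandt} instead. But as written your argument has a genuine gap precisely where you flag it: you never establish that the southeast corner $(b,d)$ lies in $\mute(w)$, and since $\mute(w)$ is a down-set this is the only corner that matters. For an ordinary droop the step is immediate from facts you already state: the destination cell must be a \btile of $\mathcal{B}'$, so if $(b,d)\notin\mute(w)$ the inductive hypothesis makes it a \btile of $\mathcal{B}_w$, i.e.\ a cell of $\rothe(w)\subseteq\mute(w)$, a contradiction. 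The real difficulty is the K-theoretic droop, whose definition is not even recalled in this paper: there the destination cell is \emph{not} blank but carries a \drtile of a second pipe (the would-be bump is resolved as a crossing), so your announced strategy of ``locating a blank tile of $\mathcal{B}'$ at or weakly southeast of $(b,d)$'' has nothing to grab onto — no blank tile need exist weakly southeast of $(b,d)$. Closing that case requires a different idea, for instance: if $(b,d)\notin\mute(w)$, then by induction all tiles weakly southeast of $(b,d)$ agree with $\mathcal{B}_w$, so the second pipe is a hook through $(b,d)$; since pipes travel only north and east, the drooping pipe, whose elbow sits strictly northwest at $(a,c)$, cannot already cross that hook in $\mathcal{B}'$, so the K-droop would create a first crossing between the two pipes and change the Demazure product, contradicting that the generating sequence stays inside $\kbpd(w)$. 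Nothing of this kind appears in your proposal, and your fallback paragraph (recover the statement from the six-vertex description) is essentially a restatement of the lemma, not an argument.

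Separately, the one subargument you do carry out, at the northwest corner, is both unnecessary and flawed. Unnecessary, because containment of $(b,d)$ alone already gives $[a,b]\times[c,d]\subseteq\mute(w)$. Flawed, because the claimed incompatibility at $(b,c)$ is not there: a droop requires the active pipe to run vertically up column $c$ through $(b,c)$ before bending at $(a,c)$, so finding a \vtile (or a \ctile) at $(b,c)$ is exactly the expected configuration rather than an obstruction. If you pursue this route, drop the $(a,c)$ discussion, close the blank-destination case as above, and supply the missing K-droop argument, which also means importing from \cite{Weigandt} the precise definition of the K-theoretic droop and the fact that the generating sequence can be taken to remain in $\kbpd(w)$ throughout.
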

		\begin{proof}
			This is part of \cite[Lemma 4.6]{Weigandt}.
		\end{proof}
				
		Next we record two additional properties of column insertion into BPDs, in the case that we are inserting into column $j=\addable(w)$.
		\begin{lemma}
			\label{lemma:setuprowlemma}
			Let $w \in S_n$ with $w \neq w_0$, and set $j = \addable(w)$. 
			Let $\mathcal{B} \in \kbpd(w)$. 
			Then the following statements hold:
			\begin{enumerate}
				\item If $i < j$, then the pipe that enters $\mathcal{B}$ in column $i$ is a hook.
				\item Suppose a pipe with label $j$ runs vertically across row $k$ in $\mathcal{B}$, and the pipe still has label $j$ when it enters row $k-1$.
				If another pipe, say with label $i$, crosses $j$ horizontally in row $k$, then $k > n - j + 1$.
				\item The tile at position $(n - j + 1, j)$ is a \vtile in $\mathcal{B}$. 
			\end{enumerate}
		\end{lemma}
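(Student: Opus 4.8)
The three parts are all deduced, via \cref{lemma:mutable}, from a single combinatorial consequence of the hypothesis $j = \addable(w)$, which I will call the \emph{shape claim}:
\[
w(\{n-c+1,\ n-c+2,\ \dots,\ n\}) = \{1,\ 2,\ \dots,\ c\} \qquad \text{for every } c < j,
\]
equivalently $w^{-1}(c) = n-c+1$ for all $c<j$. The plan is to prove the shape claim first, and then compare the given $\mathcal{B}\in\kbpd(w)$ with the Rothe BPD $\mathcal{B}_w$ on cells that, by the shape claim, lie outside the mutable region $\mute(w)$.

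For the shape claim I would induct on $c$. Write $\dom(w)=\partitionsubset_\lambda$ and let $\lambda'_c$ denote the number of boxes in its $c$-th column. When $c=1$, column $1$ of $\partitionsubset_\lambda$ always has an addable cell $(\lambda'_1+1,1)$; its invalidity (since $1<j=\addable(w)$) forces $\lambda'_1+2>n$, hence $\lambda'_1=n-1$, hence $(n-1,1)\in\rothe(w)$ and so $w^{-1}(1)=n$. For the step, fix $c<j$ and assume positions $n-c+2,\dots,n$ carry the values $1,\dots,c-1$; it suffices to show $w(n-c+1)=c$. If instead $w(n-c+1)\neq c$, then (those small values being used up) $w(n-c+1)>c$, so $w^{-1}(c)\le n-c$ and column $c$ of $\rothe(w)$, hence of $\partitionsubset_\lambda$, has at most $n-c-1$ boxes; but then the addable cell of column $c$ of $\partitionsubset_\lambda$, if it exists, satisfies $(\lambda'_c+1)+c\le n$ and is valid, contradicting $c<\addable(w)$, so $\lambda'_{c-1}=\lambda'_c$; running the same contradiction down columns $c-1,c-2,\dots,1$ (each of which also lies left of $\addable(w)$) gives $\lambda'_1=\lambda'_c\le n-c-1<n-1$, contradicting the base case. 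I expect this induction to be the main obstacle: one has to keep track of the two distinct ways a column can fail to be $\addable(w)$ — having no addable cell versus having an addable cell $(i',c)$ with $i'+c>n$ — and of how the outer boundary of $\partitionsubset_\lambda$ is pinned against the antidiagonal to the left of column $j$.

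Granting the shape claim, the rest is routine. For Part (1), take $c<j$; by the shape claim every $(r,s)\in\rothe(w)$ with $r\ge w^{-1}(c)=n-c+1$ has $w(r)\le c$, hence $s<w(r)\le c$, so no cell of $\rothe(w)$ lies weakly southeast of the dot $(w^{-1}(c),c)$. Consequently the hook of the pipe entering column $c$ of $\mathcal{B}_w$ — which rises in column $c$ to row $w^{-1}(c)$ and then exits rightward — is disjoint from $\mute(w)$, so by \cref{lemma:mutable} the same tiles, hence the same hook, occur in $\mathcal{B}$. For Part (3), the addable cell of $\dom(w)$ defining $\addable(w)$ is $(w^{-1}(j),j)$ with $w^{-1}(j)\le n-j$ (by \cref{lemma:dominantpartofpipedreams}(1)); a similar computation with the shape claim shows the antidiagonal cell $(n-j+1,j)$ lies outside $\mute(w)$, and in $\mathcal{B}_w$ the pipe entering column $j$ runs vertically through it while, by Part (1), every pipe entering a column $c<j$ turns in row $w^{-1}(c)=n-c+1\neq n-j+1$, so no pipe crosses it there; thus $(n-j+1,j)$ is a \vtile in $\mathcal{B}_w$ and hence in $\mathcal{B}$. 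Finally, for Part (2): by Part (1) each pipe entering a column $c<j$ is a hook confined to rows $\ge n-c+1\ge n-j+2$, so in $\mathcal{B}$ all of columns $1,\dots,j-1$ are blank in every row $\le n-j+1$; and since an upward-moving pipe retains the larger label at each crossing it meets, a pipe that is vertical across row $k$ carrying label $j$ (still $j$ one row higher) has not been overtaken, so — having entered column $j$ and not left it to the right without acquiring a larger label — it occupies column $j$ at row $k$. If $k\le n-j+1$, the cell to its left, $(k,j-1)$, is blank, so nothing can cross it horizontally in row $k$, contradicting the hypothesis; hence $k>n-j+1$.
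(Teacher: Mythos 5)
Your ``shape claim'' $w^{-1}(c)=n-c+1$ for $c<j$ is precisely the fact the paper uses without proof (its Part (2) asserts ``because $j=\addable(w)$, we have $i+w^{-1}(i)=n+1$''), and your induction on $c$ is correct: column $1$ of $\dom(w)$ always has an addable cell, so its invalidity pins $\lambda_1'=n-1$, and the descending chain of ``no addable cell, hence equal column heights'' argument for the inductive step is sound. Your Parts (1) and (3) then follow the same route as the paper — show the hooks for columns $c<j$ and the cell $(n-j+1,j)$ lie outside $\mute(w)$ and invoke \cref{lemma:mutable} — with the verifications spelled out; these parts are fine (the only implicit point is that pipes entering columns $\geq j$ never occupy columns $<j$, which is immediate from NE-monotonicity).

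The gap is in Part (2), in the step ``so \dots it occupies column $j$ at row $k$.'' This localization of the vertical label-$j$ strand is neither justified nor true. First, in a non-reduced BPD the label $j$ need not ride the physical pipe that entered column $j$ (labels are exchanged at crossings where the incoming vertical label is smaller). Second, and more decisively, even the pipe that did enter column $j$ can leave column $j$ \emph{without} ``acquiring a larger label'': a droop turns it east at a \drtile and back north at a \ultile, and neither turn changes its label, so label $j$ can run vertically, uncrossed and unovertaken, in a column strictly to the right of $j$ — this already happens after a single droop of pipe $j$ in the Rothe BPD. In that situation the cell immediately to the left of the strand sits in a column $\geq j$, your blank region $[1,n-j+1]\times[1,j-1]$ says nothing about it, and no contradiction follows; so the blank-region observation alone cannot close the argument. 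To finish you must use the crossing itself, as the paper does: by \cref{eqn:NEpipecross}, since the vertical strand keeps label $j$, the horizontal strand at the crossing carries some label $i<j$; labels smaller than $j$ remain on their hook pipes (an easy induction using Part (1), the shape claim, and the fact that labels $1,\dots,i-1$ exit in rows $n,\dots,n-i+2$), and such a hook is horizontal only in row $n-i+1$, forcing $k=n-i+1>n-j+1$. Either adopt that route or supply the missing argument that no label $<j$ can appear on a horizontal strand in a row $\leq n-j+1$, which amounts to the same thing.
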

		
		\begin{proof}
			\noindent (1) The statement holds immediately if $\mathcal{B}$ is the Rothe BPD for $w$. 
			More generally, the claim follows from \cref{lemma:mutable} because the hooks labeled $i<j$ in the Rothe BPD lie entirely outside of the mutable region of $w$, and thus are the same in all BPDs of $w$.
			
			\noindent (2) By \cref{eqn:NEpipecross}, because the label of the vertical pipe stays the same, it must be that $i < j$. 
			By the previous part, pipe $i$ is a hook. 
			In particular, because $j = \addable(w)$, we have $i+w^{-1}(i)=n+1$. 
			Thus, pipe $i$ exits in row $w^{-1}(i)=n-i + 1$. 
			Because pipe $i$ is a hook that travels horizontally in row $k$, it must exit in row $k$.
			Therefore, $k = n-i + 1 > n-j + 1$.
			
			\noindent (3) The tile at position $(n-j + 1,j)$ is vertical in the Rothe BPD for $w$, and $(n-j + 1,j)$ is outside of the mutable region of $w$. 
			Thus, the claim follows by \cref{lemma:mutable}.
		\end{proof}
		
		\begin{lemma}
			\label{lemma:rowdroop}
			Let $w \in S_n$ with $w \neq w_0$, and let $j = \addable(w)$. 
			Fix $\mathcal{B} \in \bpd(w)$ and let $\mathcal{B}'$ be the BPD obtained by insertion into column $j$. 
			\begin{enumerate}
				\item In every cross-bump swap encountered during the insertion, if the \bumptile is in row $r$ then the \ctile lies in some row $r' < r$.
				\item Every mindroop move used in the insertion process is a row mindroop.
			\end{enumerate}
		\end{lemma}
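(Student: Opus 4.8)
The plan is to prove both statements simultaneously by induction on the number of steps in the insertion procedure, carrying along a structural invariant on the position of the active elbow and the tiles directly beneath it. The inputs that make this work are \cref{lemma:setuprowlemma} and \cref{lemma:mutable}, together with the elementary fact that every pipe in a BPD is a staircase path that moves only up and to the right. Before starting the induction, I would record: since $j=\addable(w)$ we have $i+j\le n$, so $w^{-1}(j)=i\le n-j$; every pipe entering $\mathcal{B}$ in a column $i'<j$ is a hook whose horizontal part lies in a row $>n-j+1$; the tile at $(n-j+1,j)$ is a \vtile; and, by monotonicity of pipes in column $j$, the first \drtile of column $j$ — where the insertion begins — lies in a row $a_1\le n-j$.

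The invariant I would maintain, at the start of each iteration of Step~\ref{step:activemindroop}, is: the active elbow sits at a cell $(a,c)$ with $a\le n-j$ and $c\ge j$, and the cell $(a+1,c)$ directly below the active elbow is a \vtile, not a \ctile. Granting the invariant, part (2) is immediate, because (as noted right after the definition of mindroop) a mindroop whose active elbow has a non-crossing tile directly beneath it is a row mindroop. For part (1), when a cross-bump swap is forced the \bumptile occupies the bottom-right corner $(a+1,d)$ of the row-mindroop rectangle just produced; in that tile the active pipe uses the top and left edges while the other pipe uses the bottom and right edges. By monotonicity, the existing crossing of these two pipes must lie weakly northwest of $(a+1,d)$; and by the invariant (no crossing in column $c$ in the rows $a+1$ through $n-j+1$) together with \cref{lemma:setuprowlemma}(2) (which forbids the relevant vertical pipe from being crossed in rows exceeding $n-j+1$ while in columns $\ge j$), one rules out a crossing in row $a+1$ or lower-but-still-in-range, leaving $r'\le a<a+1$, as required.

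The inductive step then consists in verifying that a row mindroop, the ensuing "search upward in the same column for the first \drtile", and a cross-bump swap each preserve the invariant: in every case the active elbow's row does not increase and stays $\le n-j$, its column stays $\ge j$, and the tile beneath the new active elbow is again a \vtile. I expect this last clause to be the main obstacle. The difficulty is that the "search upward" step relocates the active elbow to a \drtile that was already present in the new column $d$ above the drooped rows, and one must argue that no pipe crosses the active pipe immediately below that \drtile. This requires a careful local analysis of the tiles in the strip of rows $\le n-j+1$ and columns $\ge j$, tracking pipe labels via the propagation rule \cref{eqn:NEpipecross}, and using \cref{lemma:mutable} to pin down the tiles outside the mutable region of $w$; identifying precisely which pipe plays the role of the "other pipe" in a forced \bumptile (and that it is not one of the hooks $P_{i'}$ with $i'<j$, which live strictly below row $n-j+1$) is the crux.
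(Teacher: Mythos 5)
Your proposal reorganizes the statement as an invariant ("the tile below the active elbow is a \vtile") to be propagated step by step, but it does not actually prove the two points where the lemma has content. First, the propagation of that invariant past the ``search upward'' step and past a cross-bump swap is exactly part (2) of the lemma in disguise, and you explicitly leave it open (``the main obstacle \ldots the crux''). The idea that closes this gap in the paper is not a local tile analysis at all: one observes that \emph{every} active elbow carries the propagated label $j$, and that (using part (1)) the rows of successive active elbows are weakly decreasing, hence all lie weakly above the first one, which sits strictly above row $n-j+1$ by \cref{lemma:setuprowlemma}(2)--(3). A \ctile directly below any such elbow would be a horizontal crossing of the vertically running label-$j$ pipe in a row $\leq n-j+1$, which \cref{lemma:setuprowlemma}(2) forbids. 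So the invariant you want follows from a global label/row bound rather than from tracking tiles in the strip, and without that ingredient your induction does not close.

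Second, your derivation of part (1) from the invariant is incorrect as stated. Since both pipes of the \bumptile travel only north and east, their pre-existing crossing lies either strictly northeast of the bump (row $<r$, column $>d$ --- the good case) or strictly south of it; it is never ``weakly northwest of $(a+1,d)$,'' so your monotonicity claim excludes the case you need to keep and fails to exclude the case you need to kill. Moreover, the southern case cannot be ruled out by the invariant together with \cref{lemma:setuprowlemma}(2): that crossing need not lie in column $c$, it may sit in a row $>n-j+1$ (where the lemma says nothing), and you have not determined which of the two pipes runs vertically there, nor addressed that \cref{lemma:setuprowlemma} is stated for BPDs of $w$ while the intermediate object is an almost BPD. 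The paper's argument is of a different nature: if the crossing were weakly below the bump, the pipe crossing horizontally there would carry a propagated label $i<j$ by \cref{eqn:NEpipecross}, hence (after replacing the \bumptile by a \ctile and applying \cref{lemma:setuprowlemma}(1)) would be a hook, and a hook cannot participate in a cross-bump swap. Some version of this label-plus-hook argument, or a genuine substitute for it, is missing from your proposal, and without it both parts remain unproven.
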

	
	\begin{proof}
		
		First note that at all points during the insertion process, the active elbow has the label $j$. 
		We now proceed to prove each claim.
		
		\noindent (1) Suppose we perform a mindroop, followed by a cross-bump swap. 
		Let the \bumptile occur in row $r$ and the \ctile in row $r'$. Suppose for contradiction that $r'\geq r$. 
		Then the \ctile has pipe $j$ running vertically. 
		If the horizontal pipe has label $i$, then by \cref{eqn:NEpipecross}, $i < j$. 
		By replacing the \bumptile with a \ctile, we may apply \cref{lemma:setuprowlemma}, which says that pipe $i$ is a hook. However, a hook cannot be involved in a cross-bump swap, so this is a contradiction.

		\noindent (2) To verify each mindroop is a row mindroop, we need to ensure that each active elbow we encounter during the insertion process does not have a \ctile immediately below it.
		
		First consider a row mindroop on some BPD or almost BPD for $w$, with active elbow labeled $j$. 
		Suppose the active elbow is in row $a$. 
		If the algorithm terminates, there is nothing to show. 
		If the mindroop is followed by another mindroop, because we have a row mindroop, the new active elbow will be in some row $a'$ with $a'\leq a$. 
		Otherwise, the next move is a cross-bump swap. 
		Again write $a'$ for the row of the next active elbow. 
		We have that $a'$ is the row of the \ctile that we changed to a \bumptile.
		By the previous part, $a'\leq a$.
		
		We now start with the first mindroop. 
		Observe that because $j=\addable(w)$, the only \drtile in column $j$ of $\mathcal{B}$ belongs to pipe $j$. 
		By \cref{lemma:setuprowlemma}, if this tile lies in row $k$, then $k < n - j + 1$. 
		Also by \cref{lemma:setuprowlemma}, the tile in position $(k + 1, j)$ is not a \ctile, as the only horizontal crossings come from hooks, which occur in rows $k'$ with $k'>n-j+1$. 
		Thus, the first mindroop is a row mindroop.
		
		Furthermore, because we have established that the row indices of all of the active elbows encountered during the insertion process are weakly decreasing and have label $j$, none of them will have a \ctile immediately below, so all of these mindroops are also row mindroops.
	\end{proof}
		
		Often, along an insertion path, we encounter multiple row mindroops within the same row in succession. 
		We may compose these moves into a single row droop.
		We call such a move a \newword{maximal row droop} if it is not immediately preceded or followed by a row mindroop contained within the same two rows during the insertion procedure. In the next lemma, we summarize the types of corner configurations that may arise in maximal row droops.
		
		\begin{lemma}
			\label{lemma:maximalrowdroop}
			Within the active rectangle of the starting configuration of a maximal row droop, the possible corner configurations are as follows:
			\begin{enumerate}
				\item The top-left corner is either a \bumptile or a \drtile.
				\item The bottom-left corner must be a \vtile or a \ultile. If it is a \ultile, then the top-left corner must be a \bumptile.
				\item The top-right corner is either a \htile or a \ultile.
				\item The bottom-right corner must be a \drtile or a \btile. If it is a \btile, then the top-right corner must be a \ultile.
			\end{enumerate}
		\end{lemma}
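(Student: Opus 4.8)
The plan is to track the insertion procedure move by move and to read off, from the way each active elbow is selected and each droop is carried out, which local pictures can occur at the four corners of the active rectangle of a maximal row droop. Write that active rectangle as $[a,a+1]\times[c,d]$: by \cref{lemma:rowdroop} every move in the insertion is a row mindroop, so a maximal row droop is exactly a maximal consecutive run of row mindroops all taking place in the two rows $a$ and $a+1$, the active pipe is pipe $j$ throughout (as in the proof of \cref{lemma:rowdroop}), and the starting configuration is the state just before the first mindroop of this run, at which moment $(a,c)$ is the active elbow.

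For the left edge, parts (1) and (2), I would begin by noting that an active elbow is produced in only three ways during the algorithm: as the first \drtile found in column $j$ at the start of the insertion; as the first \drtile found by the upward search following a mindroop whose southeast corner became a \ultile; or as the bottom elbow of the new \bumptile created by a cross-bump swap. In the first two cases $(a,c)$ is a \drtile, and in the last it is a \bumptile, which gives (1). For (2), the active pipe enters $(a,c)$ from below, so pipe $j$ occupies the top edge of $(a+1,c)$; since the first move of the run is a row mindroop, \cref{lemma:rowdroop} forbids a \ctile directly below the active elbow, so $(a+1,c)$ is a \vtile or a \ultile. To see that a \ultile there forces $(a,c)$ to be a \bumptile, I would split on how $(a,c)$ became active: when it comes from a cross-bump swap this is immediate, and otherwise---$(a,c)$ is the first \drtile in column $j$, or is reached by an upward search triggered by a mindroop in rows other than $a,a+1$---one uses \cref{lemma:setuprowlemma}, \cref{lemma:mutable}, and the maximality of the run to show that pipe $j$ actually runs straight up column $c$ just below $(a,c)$, so $(a+1,c)$ is a \vtile and the conditional is vacuous.

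For the right edge, parts (3) and (4), I would use maximality of the run in the opposite direction. Along row $a$ the active pipe reaches $(a,d)$ from the left through a string of \ctile tiles; because the run does not extend past column $d$, the cell $(a,d)$ cannot be a \ctile (otherwise the next row mindroop would again lie in rows $a,a+1$), so $(a,d)$ is a \htile or a \ultile, which is (3). For (4), the cell $(a+1,d)$ is where the composite droop deposits the active elbow, so by the defining conditions of a droop it must be a \btile or a \drtile before the move. Finally, tracing pipe $j$ shows that if $(a,d)$ were a \htile and $(a+1,d)$ a \btile, then after the droop the southeast corner would become a \ultile sitting directly below a \drtile at $(a,d)$, the upward search would select $(a,d)$, and the run would continue within rows $a,a+1$---contradicting maximality. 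Hence $(a+1,d)=$\btile forces $(a,d)=$\ultile, completing (4).

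The main obstacle I anticipate is the exhaustiveness of this analysis rather than any single step: one has to rule out every other tile at each corner---in particular the \ctile cases at $(a+1,c)$ and $(a,d)$, where \cref{lemma:rowdroop} and maximality do the work---and to keep the bookkeeping straight across cross-bump swaps, since a swap turns a \drtile active elbow into the bottom elbow of a \bumptile and relocates it to a strictly higher row by \cref{lemma:rowdroop}(1). Keeping careful track of which pipe is pipe $j$ and of the crossing labels of \cref{eqn:NEpipecross} through these transitions is the delicate part.
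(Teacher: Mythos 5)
You follow essentially the paper's route --- a corner-by-corner elimination using the requirements of a (min)droop, \cref{lemma:rowdroop}, and maximality of the run --- and your treatments of (1), of the conditional in (2), and of (4) match the paper's in substance. However, there is a genuine gap in the eliminations for the first half of (2) and for (3): you never exclude a \bumptile at the bottom-left or at the top-right corner. A \bumptile has pipe segments on all four of its edges, so knowing only that the bottom-left cell carries a pipe on its top edge (because the active pipe enters $(a,c)$ from below) and is not a \ctile leaves the candidates \vtile, \ultile, and \bumptile; likewise, knowing that $(a,d)$ carries the active pipe on its left edge and is not a \ctile (which, incidentally, is forced by the choice of $d$ in the definition of the mindroop rather than by maximality) leaves \htile, \ultile, and \bumptile. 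Since the lemma is precisely the assertion that only the listed tiles can occur, dropping the \bumptile case means the stated conclusions do not follow from what you wrote.

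The gap is easy to close, and the paper does it locally: a \bumptile at the bottom-left corner contains a second pipe that bends there and already occupies the right edge of $(a+1,c)$, the very edge through which the re-routed active pipe must leave that cell, so the row droop could not be carried out; a \bumptile at $(a,d)$ already has a pipe on its bottom edge, so the drooped active pipe could not rise into $(a,d)$ from $(a+1,d)$. Alternatively one can argue globally: at the start of any mindroop the diagram is either a reduced BPD or an almost BPD whose unique \bumptile is the cell containing the active elbow (a mindroop ending in a \ultile leaves no bump behind, and a cross-bump swap relocates the unique bump to the new active elbow), so no other cell of the active rectangle can be a \bumptile. Adding either observation completes your case analysis; the rest of your plan, including the vacuity argument for the \ultile subcase of (2) and the maximality argument for (4), is sound and parallels the paper's proof.
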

		
		\begin{proof}
			\noindent (1) The condition on the top-left corner is necessary for there to be an active elbow to droop.
			
			\noindent (2) There must be a pipe exiting upward from the bottom-left corner so that it can connect to the active elbow in the top-left corner. 
			If this tile were a \ctile or a \bumptile, we would not be able to complete a row droop with the active elbow in the top-left corner.
			
			For the second statement: if the bottom-left corner is a \ultile, then the active elbow in the top-left corner must have arisen from a cross-bump swap; otherwise, we could have extended a row droop. 
			Thus, in this case, the top-left corner must be a \bumptile.
			
			\noindent (3) The top-right corner must have an incoming pipe from the tile to its left to be part of a droop. 
			However, the top-right corner must not be a \ctile or a \bumptile, as either would prevent the active pipe from drooping into the cell below.
			
			\noindent (4) The only two allowable tiles for drooping the active pipe into the bottom-right corner are a \btile or a \drtile.
			If the bottom-right corner is a \btile and the top-right corner is a \htile, then after drooping, the active elbow would remain in the same row, allowing the droop to be extended. 
			This would contradict maximality. Therefore, by the previous part, the top-right corner must be a \ultile.
		\end{proof}
				
		It is sometimes convenient to associate a co-BPD to an almost BPD. 
		Suppose $\mathcal{B}$ is an almost BPD, and $\mathcal{B}'$ is the BPD obtained by replacing the \bumptile with a \ctile. 
		Then define $\check{\mathcal{B}} = \check{\mathcal{B}'}$. 
		We are now ready to prove the key lemma.
				
		\begin{lemma}
			\label{lemma:columninsertionbpdcopermutations}
			Let $w \in S_n$ with $w \neq w_0$, and let $j = \addable(w)$ and $i = w^{-1}(j)$.
			Suppose that $\mathcal{B} \mapsto \mathcal{B}'$ via insertion into column $j$. 
			Then $\demprod(\check{\mathcal{B}}) = \demprod(\check{\mathcal{B}'}) \square \tau_i$.
		\end{lemma}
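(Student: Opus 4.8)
The plan is to follow the co-object $\check{\cdot}$ along the \emph{entire} insertion path, mirroring the pipe dream argument of \cref{lemma:cotransitionflippermutation}. By \cref{lemma:rowdroop}, inserting a blank tile into column $j = \addable(w)$ produces a chain
\[
\mathcal{B} = \mathcal{B}_0 \longrightarrow \mathcal{B}_1 \longrightarrow \cdots \longrightarrow \mathcal{B}_{m} = \mathcal{B}',
\]
where each $\mathcal{B}_t$ is a BPD or an almost BPD, each intermediate step is a maximal row droop or a cross-bump swap, and the last step replaces the unique \bumptile by a \ctile; for every $t$ the co-BPD $\check{\mathcal{B}_t}$ is defined (resolving a \bumptile to a \ctile when necessary). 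Since resolving a \bumptile to a \ctile does not alter $\check{\cdot}$ by definition, the terminal step preserves $\demprod(\check{\cdot})$. I would then prove the lemma by establishing: \textbf{(a)} each maximal row droop and each cross-bump swap among $\mathcal{B}_1 \to \cdots \to \mathcal{B}_{m}$ preserves $\demprod(\check{\cdot})$; and \textbf{(b)} the first maximal row droop $\mathcal{B}_0 \to \mathcal{B}_1$ satisfies $\demprod(\check{\mathcal{B}_0}) = \demprod(\check{\mathcal{B}_1}) \square \tau_i$. Together with $\mathcal{B}_m = \mathcal{B}'$, these give $\demprod(\check{\mathcal{B}}) = \demprod(\check{\mathcal{B}'}) \square \tau_i$.

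For \textbf{(b)}, I would first locate the initial active elbow precisely. The cells $(i,j),(i,j+1),\ldots,(i,n)$ all lie outside the mutable region $\mute(w)$ — indeed, $(i,j)$ carries the dot $(i,w(i))$, so no cell weakly southeast of any of them lies in $\rothe(w)$ — hence by \cref{lemma:mutable} these cells agree in $\mathcal{B}$ with the Rothe BPD of $w$, in which pipe $j$ is a hook with elbow at $(i,j)$ exiting in row $i = w^{-1}(j)$. Thus in $\mathcal{B}$ the first \drtile of column $j$ sits at $(i,j)$, the first mindroop is a row mindroop confined to rows $i$ and $i+1$ (using also \cref{lemma:setuprowlemma}), and after translating cell-by-cell through the dictionary of \cref{lemma:copipedreambijection}, $\check{\mathcal{B}_0}$ and $\check{\mathcal{B}_1}$ agree outside rows $i$ and $i+1$. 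I would then check that this reconfiguration changes the crossing record of no pair of pipes of $\check{\cdot}$ except the pair exiting in rows $i$ and $i+1$, and in the direction that \emph{adds} an inversion there, so that by \cref{lemma:planarhistoryinversions} either $\demprod(\check{\mathcal{B}_1})$ has an ascent at $i$ and $\demprod(\check{\mathcal{B}_0}) = \demprod(\check{\mathcal{B}_1})s_i$, or it has a descent at $i$ and $\demprod(\check{\mathcal{B}_0}) = \demprod(\check{\mathcal{B}_1})$; in either case this is exactly the effect of $\square \tau_i$.

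For \textbf{(a)}, I would push each move through the same tile dictionary. A maximal row droop reroutes one active pipe inside a two-row strip, whose corner configurations are enumerated in \cref{lemma:maximalrowdroop}; translating each allowed configuration, the induced change in $\check{\mathcal{B}_t}$ is a local rerouting of one pipe of the SE planar history $\check{\mathcal{B}_t}$ that is a composition of the permutation-preserving moves of \cref{lemma:planarmoves} and \cref{lemma:planarhistoryverticalisok}, hence fixes $\demprod(\check{\mathcal{B}_t})$. A cross-bump swap is handled likewise: under the dictionary it trades a \copipebump for a \copipecross configuration (or the reverse) between two pipes of $\check{\mathcal{B}_t}$ that, once bumps are resolved, have already crossed, which is again a permutation-preserving local move. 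What distinguishes these from the first droop is that for $t \ge 1$ the active elbow has arisen from a prior cross-bump swap or an earlier droop of pipe $j$, so the pipes being reconfigured have already met in $\check{\cdot}$, whereas for $t = 0$ the elbow is pipe $j$'s own elbow at $(i,j)$, which has not.

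The main obstacle is the case analysis underlying both \textbf{(a)} and \textbf{(b)}: one must verify uniformly — over every corner configuration of \cref{lemma:maximalrowdroop} and every cross-bump swap — that the translated move on $\check{\mathcal{B}_t}$ really is built from the permutation-preserving local moves of \cref{lemma:planarmoves} and \cref{lemma:planarhistoryverticalisok}, and then, for the first droop, confirm that the single crossing-record change is pinned to rows $i$ and $i+1$ and adds rather than removes an inversion there. This is precisely where the hypothesis $j = \addable(w)$ is essential, since it forces pipe $j$ to be a hook below its elbow and controls the hooks labeled below $j$ via \cref{lemma:setuprowlemma}.
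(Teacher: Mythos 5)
Your overall machinery for the intermediate steps (pushing each row droop and cross-bump swap through the tile dictionary of \cref{lemma:copipedreambijection} and invoking \cref{lemma:planarmoves}, \cref{lemma:planarhistoryverticalisok}, and the corner analysis of \cref{lemma:maximalrowdroop}) matches what is needed, but the structural claim on which your proof hinges is wrong in two ways. First, the localization in (b) fails: it is not true that $(i,j),(i,j+1),\ldots,(i,n)$ lie outside $\mute(w)$. A dot at $(i,w(i))$ only strikes out the cells weakly to its right in row $i$ and weakly below in column $j$; cells strictly southeast of it can perfectly well lie in $\rothe(w)$, so $(i,j)$ is frequently inside the mutable region and \cref{lemma:mutable} gives you nothing there. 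Concretely, for $w=1423$ one has $j=1$, $i=1$, but in the second BPD of \cref{example:bpd1423} pipe $1$ is not a hook and its only \drtile in column $1$ sits at $(2,1)$, so the first active elbow, and hence the first maximal row droop, is not confined to rows $i$ and $i+1$.

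Second, and consequently, your split (a)/(b) attributes the $\square\,\tau_i$ to the wrong end of the insertion path. By \cref{lemma:rowdroop} the row indices of the active elbows are weakly decreasing, so the insertion \emph{terminates} in rows $i$ and $i+1$ (the active pipe carries label $j$ and, since $w(i)=j$, eventually exits in row $i$); it need not start there. The correct statement is the reverse of yours: every two-row modification except the terminal one preserves the co-permutation (including the first droop, whenever it is not terminal), and it is the terminal modification — where, looking rightward, neither pipe bends again and the crossing between the pipes exiting in rows $i$ and $i+1$ of the co-BPD is removed — that produces $\demprod(\check{\mathcal{B}}_{h-1}) = \demprod(\check{\mathcal{B}}_{h})\square\tau_i$, via \cref{lemma:planarhistoryinversions}. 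As written, your claim (b) is false whenever the insertion spans more than one pair of rows, and your claim (a) is false for the last step, so the proof does not go through without reorganizing the case analysis around the terminal configuration rather than the initial one.
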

		
		\begin{proof}
			By \cref{lemma:rowdroop}, each mindroop is a row droop. 
			Moreover, by \cref{lemma:rowdroop}, in every cross-bump swap, if the initial bump occurs in row $r$, then the corresponding cross is in row $r'$ with $r' < r$. 
			Thus, the insertion algorithm begins by modifying a pair of consecutive rows, and works its way upward. 
			This gives rise to a sequence of (almost) BPDs:
			\[\mathcal{B} = \mathcal{B}_0 \mapsto \mathcal{B}_1 \mapsto \cdots \mapsto \mathcal{B}_h = \mathcal{B}'\]
			where $\mathcal{B}_1$ is the resulting (almost) BPD after modifying the first pair of rows, $\mathcal{B}_2$ is the (almost) BPD after modifying the second pair of rows, and so on.
			
			We will show that for all $1 \leq k < h$, we have $\demprod(\check{\mathcal{B}}_{k-1}) = \demprod(\check{\mathcal{B}}_{k})$. 
			Additionally, we will show that $\demprod(\check{\mathcal{B}}_{h-1}) = \demprod(\check{\mathcal{B}}_{h}) \square \tau_i$. 
			In particular, it follows that $\demprod(\check{\mathcal{B}}) = \demprod(\check{\mathcal{B}'}) \square \tau_i$.
			
			Fix some $k \in[h]$. 
			Suppose the move $\mathcal{B}_{k-1} \mapsto \mathcal{B}_k$ modifies tiles in rows $r$ and $r + 1$, and that the first tiles modified in these rows are in column $c$ and the last are in column $d$. 
			If the cell $(r,c)$ contains a \ctile, then the pipes must be involved in a cross-bump swap, and so the first move is to replace the \ctile with a \bumptile. 
			If not, then the tile at $(r, c)$ must be a \drtile. In either case, we carry out a maximal row droop, with active elbow in tile $(r,c)$.
			At this point, there could be a cross-bump swap in rows $r$ and $r + 1$, followed by another maximal row droop. 
			This alternating pattern of cross-bump swaps and maximal row droops continues until the final droop in rows $r$ and $r + 1$ occurs.
			If this process results in a \bumptile in cell $(r + 1,d)$, then it is replaced with a \ctile.
			Otherwise, we end with a \ultile in cell $(r+1,d)$. 
			At this point, we have obtained $\mathcal{B}_k$.
			
			We will proceed with a case analysis based on the tiles in columns $c$ and $d$ of rows $r$ and $r+1$ in $\mathcal{B}_{k-1}$. Note that the cases we encounter are exactly those from \cref{lemma:maximalrowdroop}, after replacing each \bumptile with a \ctile.

			Before analyzing these boundary tiles, we will show that the modification of cells in the interior region $[r, r+1] \times [c+1, d-1]$ of $\mathcal{B}_{k-1}$ preserves the co-permutation.

			First note that we may encounter the following replacements as we perform row droops.
			\begin{equation}
				\label{eqn:irreleventcolumns}
			% [inline block 2: 20 envs, 28038 chars -> data_tex | \begin{array}{c} 				\begin{tikzpicture}[x = 1.25em,y = 1.25em]...]

		\]

Case 2 and Case 4 preserve co-permutations. Case 3 is not a valid configuration as next active elbow would still be in the first row of the active rectangle and we could have extended the row droop.

Case 1 requires additional analysis. There are three subcases to consider.
			
			\noindent {\bf Subcase 1:} Suppose when looking to the right within rows $r$ and $r + 1$ of $\mathcal{B}_{k-1}$, neither pipe bends. 
			In this case, there are no further cross-bump swaps and the algorithm terminates, so we must have $k = h$. 
			In particular, because $w(i)=j$ and the active pipe has label $j$, it must exit in row $i$, so $r=i$ and the active rectangle must sit in rows $i$ and $i + 1$. 
			Thus, $\mathcal{B}_h = \mathcal{B}' \in \bpd(ws_i)$.

			Because the pipes that exit in rows $i$ and $i + 1$ of $\check{\mathcal{B}}_{h-1}$ cross, from \cref{lemma:planarhistoryinversions}, we have $\demprod(\check{\mathcal{B}}_{h-1})$ has an ascent at $i$. 
			Removing this crossing could result in two cases for $\demprod(\check{\mathcal{B}}_{h})$: either it is $\demprod(\check{\mathcal{B}}_{h-1})$ or $\demprod(\check{\mathcal{B}}_{h-1})s_i$. In either case, we conclude that $\demprod(\check{\mathcal{B}'})\square \tau_i =\demprod(\check{\mathcal{B}_h})\square \tau_i= \demprod(\check{\mathcal{B}}_{h-1})$.
			
			\noindent {\bf Subcase 2:} If the bottom pipe bends upward first, we would have to do another cross-bump swap in the same two active rows, so this configuration does not occur.
			
			\noindent {\bf Subcase 3:} It could be that the top pipe bends upward first. In that case, we are in the situation below.
			\[
			\begin{tikzpicture}[x = 1.25em,y = 1.25em]
				\draw[step = 1,gray, very thin] (0,0) grid (2, -2);
				\draw[color = black, thick] (0,0) rectangle (2, -2);
				\draw[thick,rounded corners,color = blue](0, -1/2)--(1, -1/2);
				\draw[thick,rounded corners,color = blue](3/2, 0)--(3/2, -1/2)--(1, -1/2);
				\draw[thick,rounded corners,color = blue](1/2, -2)--(1/2, -3/2)--(1, -3/2);
				\draw[thick,rounded corners,color = blue](1, -3/2)--(2, -3/2);
			\end{tikzpicture}
			\quad
			\raisebox{1em}{$\mapsto$}
			\quad
			\begin{tikzpicture}[x = 1.25em,y = 1.25em]
				\draw[step = 1,gray, very thin] (0,0) grid (2, -2);
				\draw[color = black, thick] (0,0) rectangle (2, -2);
				\draw[thick,rounded corners,color = blue](1/2, -1)--(1/2, -1/2)--(1, -1/2);
				\draw[thick,rounded corners,color = blue](3/2, 0)--(3/2, -1/2)--(1, -1/2);
				\draw[thick,rounded corners,color = blue](0, -3/2)--(1, -3/2);
				\draw[thick,rounded corners,color = blue](1/2, -1)--(1/2, -2);
				\draw[thick,rounded corners,color = blue](1, -3/2)--(2, -3/2);
			\end{tikzpicture}
			\]
			\[
			\begin{tikzpicture}[x = 1.25em,y = 1.25em]
				\draw[step = 1,gray, very thin] (0,0) grid (2, -2);
				\draw[color = black, thick] (0,0) rectangle (2, -2);
				\draw[thick,rounded corners,color = ForestGreen](0, -1/2)--(1, -1/2);
				\draw[thick,rounded corners,color = ForestGreen](1/2, 0)--(1/2, -1);
				\draw[thick,rounded corners,color = ForestGreen](3/2, -1)--(3/2, -1/2)--(1, -1/2);
				\draw[thick,rounded corners,color = ForestGreen](1/2, -1)--(1/2, -3/2)--(1, -3/2);
				\draw[thick,rounded corners,color = ForestGreen](1, -3/2)--(2, -3/2);
				\draw[thick,rounded corners,color = ForestGreen](3/2, -1)--(3/2, -2);
			\end{tikzpicture}
			\quad
			\raisebox{1em}{$\mapsto$}
			\quad
			\begin{tikzpicture}[x = 1.25em,y = 1.25em]
				\draw[step = 1,gray, very thin] (0,0) grid (2, -2);
				\draw[color = black, thick] (0,0) rectangle (2, -2);
				\draw[thick,rounded corners,color = ForestGreen](1/2, 0)--(1/2, -1/2)--(1, -1/2);
				\draw[thick,rounded corners,color = ForestGreen](3/2, -1)--(3/2, -1/2)--(1, -1/2);
				\draw[thick,rounded corners,color = ForestGreen](0, -3/2)--(1, -3/2);
				\draw[thick,rounded corners,color = ForestGreen](1, -3/2)--(2, -3/2);
				\draw[thick,rounded corners,color = ForestGreen](3/2, -1)--(3/2, -2);
			\end{tikzpicture}
			\]
			This preserves co-permutations.
			Note that there may be more, unpictured vertical pipes in the co-BPD, as in \cref{eqn:irreleventcolumns}, but the move is still co-permutation preserving by \cref{lemma:planarmoves} and \cref{lemma:planarhistoryverticalisok}.
			
			Now we have seen that only Subcase 1 has the potential to modify co-permutations. 
			Furthermore, if this subcase occurs, we must have $k=h$. 
			Thus we conclude that \[\demprod(\check{\mathcal{B}'})\square \tau_i = \demprod(\check{\mathcal{B}}_{h-1}) = \cdots = \demprod(\check{\mathcal{B}_0})=\demprod(\check{\mathcal{B}}). \qedhere\]
		\end{proof}

		\subsection{The Canonical bijection}

		Gao and Huang \cite{Gao.Huang} introduced a bijection between reduced pipe dreams and reduced bumpless pipe dreams. 
		Their bijection is canonical in the sense that it intertwines with certain natural bijective maps on pipe dreams and bumpless pipe dreams that provide combinatorial explanations for Monk's rule.  Furthermore, Gao and Huang showed that any bijection with this property is the canonical bijection.
		We will use the column-weight preserving version of their map, which differs from the original only by transposition of diagrams. 
		Let $\kappa:\pipes_n\rightarrow \bpd_n$ denote the column-weight preserving canonical bijection of Gao--Huang. Column-weight preserving means that
		\[\#\{i:(i,j) \in \mathcal{P}\} = \#\{i:(i,j) \in \rothe(\kappa(\mathcal{P}))\}\] for all $j \in [n]$.
		The map $\kappa$ is also permutation preserving, that is, $\delta(\mathcal{P}) = \delta(\kappa(\mathcal{P}))$.
		For a detailed definition of this map, we refer the reader to \cite{Gao.Huang}.

		For our purposes, the following lemma is enough.
		
		\begin{lemma}
			\label{lemma:monkcompatible}
			Let $w \in S_n$ with $w \neq w_0$. Set $j = \addable(w)$ and $i=w^{-1}(j)$. 
			Fix $\mathcal{B} \in \bpd(w)$. 
			Suppose $\mathcal{B} \mapsto \mathcal{B}'$ by insertion into column $j$ of $\mathcal{B}$. 
			Suppose $\mathcal{P}' \in \pipes_n$ satisfies $\kappa(\mathcal{P}') = \mathcal{B}'$. Then $\kappa({\mathcal{P}'}-\{(i,j)\}) = \mathcal{B}$.
		\end{lemma}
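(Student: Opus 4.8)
The plan is to identify the map $\kappa$ with the bijection that is built recursively out of the two combinatorial co-transition recurrences, and then to deduce the asserted compatibility from the Gao--Huang uniqueness characterization. Concretely, define a bijection $\lambda\colon\pipes_n\to\bpd_n$ by reverse induction on Bruhat order: put $\lambda(\mathbb T_n)$ equal to the unique BPD of $w_0$, and for $w\neq w_0$ with $j=\addable(w)$, $i=w^{-1}(j)$, and $\mathcal P\in\pipes(w)$, use that $(i,j)\notin\mathcal P$ (by \cref{lemma:dominantpartofpipedreams}) so that $\mathcal P\cup\{(i,j)\}\in\pipes(v)$ for some $v\in\cotransitionperms_i(w)$ with $v>w$ (by \cref{prop:kcotransitionpipefacts}); then $\lambda(\mathcal P\cup\{(i,j)\})$ is already defined and, by \cref{theorem:huang_bijection}, equals $\mathcal B'$ for a unique $\mathcal B\in\bpd(w)$ with $\mathcal B\mapsto\mathcal B'$ by insertion into column $j$, and we set $\lambda(\mathcal P)=\mathcal B$. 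By construction $\lambda$ is a bijection (a composite of the bijections of \cref{prop:kcotransitionpipefacts} and \cref{theorem:huang_bijection}), is permutation preserving, is column-weight preserving (insertion into column $j$ adds exactly one blank tile to column $j$ and changes no other column weight, matching the crossing added to column $j$ of the pipe dream), and satisfies precisely the statement of the lemma with $\kappa$ replaced by $\lambda$. So it suffices to prove $\lambda=\kappa$.

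For that I would appeal to the characterization of Gao and Huang: $\kappa$ is the unique permutation- and column-weight-preserving bijection $\pipes_n\to\bpd_n$ that intertwines the pipe dream and BPD realizations of Monk's rule \cite{Gao.Huang}. The remaining task is to check that $\lambda$ is compatible with those realizations. On the BPD side this is essentially by design: Huang's column insertion is his bijective proof of Monk's rule \cite{Huang.2023}, and the insertion into column $j=\addable(w)$ used in defining $\lambda$ is the restriction of that Monk bijection to the case of multiplication by $x_i=\mathfrak S_{s_i}-\mathfrak S_{s_{i-1}}$ in the dominant-corner regime, via \cref{theorem:huang_bijection}. On the pipe dream side one checks that $\mathcal P\mapsto\mathcal P\cup\{(i,j)\}$ is likewise the restriction of the corresponding pipe dream Monk bijection: since $(i,j)$ is the northwest-most addable cell of $\dom(w)$ with $i+j\le n$, the only transpositions that occur in $x_i\mathfrak S_w=\mathfrak S_{s_i}\mathfrak S_w-\mathfrak S_{s_{i-1}}\mathfrak S_w$ are the $t_{i,k}$ recorded in $\cotransitionperms_i(w)$, and the chute/ladder description of the Monk move degenerates to simply appending a crossing at $(i,j)$.

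The step I expect to be the main obstacle is exactly this last matching: reconciling the two single-cell moves coming from the co-transition recurrences with the Gao--Huang Monk-rule bijections, which are stated for multiplication by a general $\mathfrak S_{s_k}$, whereas co-transition corresponds to the difference $\mathfrak S_{s_i}-\mathfrak S_{s_{i-1}}$. One must verify that in the dominant-corner regime this difference is realized on both sides by the single-cell operations above, and that the cancellation implicit in the difference is compatible with (i.e. does not disturb) the bijections; granting this, the intertwining $\kappa\circ(\mathcal P\mapsto\mathcal P\cup\{(i,j)\})=(\text{insert column }j)\circ\kappa$ follows, and reading it at $\mathcal P=\mathcal P'-\{(i,j)\}$ gives the lemma. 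Everything else---well-definedness of $\lambda$, bijectivity, and weight- and permutation-preservation---is routine from \cref{prop:kcotransitionpipefacts} and \cref{theorem:huang_bijection}.
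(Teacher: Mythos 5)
There is a genuine gap, and it sits exactly where you flag it. Your construction of $\lambda$ is fine: by \cref{prop:kcotransitionpipefacts} and \cref{theorem:huang_bijection} the recursion is well defined, $\lambda$ is a permutation- and column-weight-preserving bijection, and by construction $\lambda$ satisfies the statement of \cref{lemma:monkcompatible} verbatim. But the lemma is precisely the assertion that $\kappa$ satisfies the defining recursion of $\lambda$, i.e.\ that $\kappa=\lambda$. Your route to $\kappa=\lambda$ is the Gao--Huang uniqueness characterization, which requires showing that $\lambda$ intertwines the Monk's-rule insertion maps used in \cite{Gao.Huang} on \emph{both} sides. Nothing in the recursive construction of $\lambda$ gives this: $\lambda$ is pinned down only by the single co-transition insertion at the leftmost addable dominant corner of each $w$, and to upgrade that to compatibility with the general Monk insertions you would either have to carry out the ``matching'' you defer (reconciling the single-cell moves with the Monk bijections, including the cancellation coming from $x_i=\mathfrak S_{s_i}-\mathfrak S_{s_{i-1}}$), or already know that $\kappa$ respects the co-transition insertions --- which is the lemma itself. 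So the step you label ``granting this'' is not a routine verification to be filled in later; it carries the entire content of the statement, and as written the argument is circular.

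For comparison, the paper does not re-derive this compatibility at all: \cref{lemma:monkcompatible} is obtained directly as a special case of the intertwining theorem of Gao and Huang (\cite[Theorem~4.5]{Gao.Huang}), which already states that $\kappa$ commutes with the relevant insertion operations on pipe dreams and on BPDs. The intended proof is therefore to recognize that adding the crossing at $(i,j)$ (with $j=\addable(w)$, $i=w^{-1}(j)$) on the pipe dream side and Huang's column insertion into column $j$ on the BPD side are instances of the operations covered by that theorem. If you want a self-contained argument along your lines, the work that must be done is exactly the deferred matching step; citing the uniqueness characterization does not let you avoid it.
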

		\begin{proof}
			This follows from a special case of \cite[Theorem 4.5]{Gao.Huang}.
		\end{proof}
				
		We now restate and prove \cref{thm:coperm}.
		\coperm*
		\begin{proof}
			We will proceed by reverse induction on Bruhat order. Suppose $w=w_0$. In this case, there is a unique pipe dream and BPD for $w_0$ and the co-permutation of each of these is $w_0$.
			
			Now fix $w \in S_n$ with $w \neq w_0$. 
			Assume the statement holds for all permutations $v \in S_n$ so that $v > w$ in Bruhat order. 
			Fix $\mathcal{P} \in \pipes(w)$, and let $\mathcal{B} = \kappa(\mathcal{P})$. Let $j = \addable(w)$ and $i = w^{-1}(j)$. 
			
			Let $\mathcal{P}' = \mathcal{P} \cup \{(i, j)\}$, and let $\mathcal{B}' = \kappa(\mathcal{P}')$. Then, by \cref{lemma:monkcompatible}, $\mathcal{B}'$ is obtained from $\mathcal{B}$ by column insertion.
			We have $\demprod(\mathcal{P}') = \demprod(\mathcal{B}') > w$ so by the inductive hypothesis, $\demprod(\check{\mathcal{P}'}) = \demprod(\check{\mathcal{B}'})$.
			Thus,
			\begin{align*}
				\demprod(\check{\mathcal{P}})
				& = \demprod(\check{\mathcal{P'}}) \square \tau_i &\text{(by \cref{lemma:cotransitionflippermutation})}\\
				& = \demprod(\check{\mathcal{B'}}) \square \tau_i &\text{(by the inductive hypothesis)}\\
				& = \demprod(\check{\mathcal{B}}) & \text{(by \cref{lemma:columninsertionbpdcopermutations})}
			\end{align*}
			as desired.
		\end{proof}
		
		\begin{example}
			Let $w = 21543$. There are two reduced pipe dreams for $w$ that have the column-weight $(2,1,1,0,0)$. We list them, and their associated co-pipe dreams below.
			\[\raisebox{3.75em}{$\mathcal{P}=$}\,
			\begin{tikzpicture}[x = 1.25em,y = 1.25em]
			\draw[step = 1,gray, very thin] (0,0) grid (5, -5);
			\draw[color = black, thick] (0,0) rectangle (5, -5);
			\draw[thick,rounded corners,color = Mulberry](0, -1/2)--(1, -1/2);
			\draw[thick,rounded corners,color = Mulberry](1/2, 0)--(1/2, -1);
			\draw[thick,rounded corners,color = Mulberry](3/2, 0)--(3/2, -1/2)--(1, -1/2);
			\draw[thick,rounded corners,color = Mulberry](2, -1/2)--(3/2, -1/2)--(3/2, -1);
			\draw[thick,rounded corners,color = Mulberry](2, -1/2)--(3, -1/2);
			\draw[thick,rounded corners,color = Mulberry](5/2, 0)--(5/2, -1);
			\draw[thick,rounded corners,color = Mulberry](7/2, 0)--(7/2, -1/2)--(3, -1/2);
			\draw[thick,rounded corners,color = Mulberry](4, -1/2)--(7/2, -1/2)--(7/2, -1);
			\draw[thick,rounded corners,color = Mulberry](9/2, 0)--(9/2, -1/2)--(4, -1/2);
			\draw[thick,rounded corners,color = Mulberry](1/2, -1)--(1/2, -3/2)--(0, -3/2);
			\draw[thick,rounded corners,color = Mulberry](1, -3/2)--(1/2, -3/2)--(1/2, -2);
			\draw[thick,rounded corners,color = Mulberry](3/2, -1)--(3/2, -3/2)--(1, -3/2);
			\draw[thick,rounded corners,color = Mulberry](2, -3/2)--(3/2, -3/2)--(3/2, -2);
			\draw[thick,rounded corners,color = Mulberry](5/2, -1)--(5/2, -3/2)--(2, -3/2);
			\draw[thick,rounded corners,color = Mulberry](3, -3/2)--(5/2, -3/2)--(5/2, -2);
			\draw[thick,rounded corners,color = Mulberry](7/2, -1)--(7/2, -3/2)--(3, -3/2);
			\draw[thick,rounded corners,color = Mulberry](0, -5/2)--(1, -5/2);
			\draw[thick,rounded corners,color = Mulberry](1/2, -2)--(1/2, -3);
			\draw[thick,rounded corners,color = Mulberry](1, -5/2)--(2, -5/2);
			\draw[thick,rounded corners,color = Mulberry](3/2, -2)--(3/2, -3);
			\draw[thick,rounded corners,color = Mulberry](5/2, -2)--(5/2, -5/2)--(2, -5/2);
			\draw[thick,rounded corners,color = Mulberry](1/2, -3)--(1/2, -7/2)--(0, -7/2);
			\draw[thick,rounded corners,color = Mulberry](1, -7/2)--(1/2, -7/2)--(1/2, -4);
			\draw[thick,rounded corners,color = Mulberry](3/2, -3)--(3/2, -7/2)--(1, -7/2);
			\draw[thick,rounded corners,color = Mulberry](1/2, -4)--(1/2, -9/2)--(0, -9/2);
			\node at (1/2, 1/2){1};
			\node at (-1/2, -1/2){2};
			\node at (3/2, 1/2){2};
			\node at (-1/2, -3/2){1};
			\node at (5/2, 1/2){3};
			\node at (-1/2, -5/2){5};
			\node at (7/2, 1/2){4};
			\node at (-1/2, -7/2){4};
			\node at (9/2, 1/2){5};
			\node at (-1/2, -9/2){3};
			\node at (1/2, -11/2){\null};
			\end{tikzpicture}
			\hspace{4em}
			\raisebox{3.75em}{$\mathcal{P}'=$}\,
			\begin{tikzpicture}[x = 1.25em,y = 1.25em]
			\draw[step = 1,gray, very thin] (0,0) grid (5, -5);
			\draw[color = black, thick] (0,0) rectangle (5, -5);
			\draw[thick,rounded corners,color = Mulberry](0, -1/2)--(1, -1/2);
			\draw[thick,rounded corners,color = Mulberry](1/2, 0)--(1/2, -1);
			\draw[thick,rounded corners,color = Mulberry](3/2, 0)--(3/2, -1/2)--(1, -1/2);
			\draw[thick,rounded corners,color = Mulberry](2, -1/2)--(3/2, -1/2)--(3/2, -1);
			\draw[thick,rounded corners,color = Mulberry](5/2, 0)--(5/2, -1/2)--(2, -1/2);
			\draw[thick,rounded corners,color = Mulberry](3, -1/2)--(5/2, -1/2)--(5/2, -1);
			\draw[thick,rounded corners,color = Mulberry](7/2, 0)--(7/2, -1/2)--(3, -1/2);
			\draw[thick,rounded corners,color = Mulberry](4, -1/2)--(7/2, -1/2)--(7/2, -1);
			\draw[thick,rounded corners,color = Mulberry](9/2, 0)--(9/2, -1/2)--(4, -1/2);
			\draw[thick,rounded corners,color = Mulberry](1/2, -1)--(1/2, -3/2)--(0, -3/2);
			\draw[thick,rounded corners,color = Mulberry](1, -3/2)--(1/2, -3/2)--(1/2, -2);
			\draw[thick,rounded corners,color = Mulberry](1, -3/2)--(2, -3/2);
			\draw[thick,rounded corners,color = Mulberry](3/2, -1)--(3/2, -2);
			\draw[thick,rounded corners,color = Mulberry](2, -3/2)--(3, -3/2);
			\draw[thick,rounded corners,color = Mulberry](5/2, -1)--(5/2, -2);
			\draw[thick,rounded corners,color = Mulberry](7/2, -1)--(7/2, -3/2)--(3, -3/2);
			\draw[thick,rounded corners,color = Mulberry](1/2, -2)--(1/2, -5/2)--(0, -5/2);
			\draw[thick,rounded corners,color = Mulberry](1, -5/2)--(1/2, -5/2)--(1/2, -3);
			\draw[thick,rounded corners,color = Mulberry](3/2, -2)--(3/2, -5/2)--(1, -5/2);
			\draw[thick,rounded corners,color = Mulberry](2, -5/2)--(3/2, -5/2)--(3/2, -3);
			\draw[thick,rounded corners,color = Mulberry](5/2, -2)--(5/2, -5/2)--(2, -5/2);
			\draw[thick,rounded corners,color = Mulberry](0, -7/2)--(1, -7/2);
			\draw[thick,rounded corners,color = Mulberry](1/2, -3)--(1/2, -4);
			\draw[thick,rounded corners,color = Mulberry](3/2, -3)--(3/2, -7/2)--(1, -7/2);
			\draw[thick,rounded corners,color = Mulberry](1/2, -4)--(1/2, -9/2)--(0, -9/2);
			\node at (1/2, -11/2){\null};
			\node at (1/2, 1/2){1};
			\node at (-1/2, -1/2){2};
			\node at (3/2, 1/2){2};
			\node at (-1/2, -3/2){1};
			\node at (5/2, 1/2){3};
			\node at (-1/2, -5/2){5};
			\node at (7/2, 1/2){4};
			\node at (-1/2, -7/2){4};
			\node at (9/2, 1/2){5};
			\node at (-1/2, -9/2){3};
			\end{tikzpicture}
			\]
			\[		
			\raisebox{4.25em}{$\check{\mathcal{P}}=$}\,		
			\begin{tikzpicture}[x = 1.25em,y = 1.25em]
			\draw[step = 1,gray, very thin] (0,0) grid (5, -5);
			\draw[color = black, thick] (0,0) rectangle (5, -5);
			\draw[thick,rounded corners,color = Rhodamine](1/2, -1)--(1/2, -1/2)--(0, -1/2);
			\draw[thick,rounded corners,color = Rhodamine](1/2, -1)--(1/2, -3/2)--(1, -3/2);
			\draw[thick,rounded corners,color = Rhodamine](0, -3/2)--(1/2, -3/2)--(1/2, -2);
			\draw[thick,rounded corners,color = Rhodamine](3/2, -2)--(3/2, -3/2)--(1, -3/2);
			\draw[thick,rounded corners,color = Rhodamine](0, -5/2)--(1, -5/2);
			\draw[thick,rounded corners,color = Rhodamine](1/2, -2)--(1/2, -3);
			\draw[thick,rounded corners,color = Rhodamine](1, -5/2)--(2, -5/2);
			\draw[thick,rounded corners,color = Rhodamine](3/2, -2)--(3/2, -3);
			\draw[thick,rounded corners,color = Rhodamine](5/2, -3)--(5/2, -5/2)--(2, -5/2);
			\draw[thick,rounded corners,color = Rhodamine](1/2, -3)--(1/2, -7/2)--(1, -7/2);
			\draw[thick,rounded corners,color = Rhodamine](0, -7/2)--(1/2, -7/2)--(1/2, -4);
			\draw[thick,rounded corners,color = Rhodamine](1, -7/2)--(2, -7/2);
			\draw[thick,rounded corners,color = Rhodamine](3/2, -3)--(3/2, -4);
			\draw[thick,rounded corners,color = Rhodamine](5/2, -3)--(5/2, -7/2)--(3, -7/2);
			\draw[thick,rounded corners,color = Rhodamine](2, -7/2)--(5/2, -7/2)--(5/2, -4);
			\draw[thick,rounded corners,color = Rhodamine](7/2, -4)--(7/2, -7/2)--(3, -7/2);
			\draw[thick,rounded corners,color = Rhodamine](0, -9/2)--(1, -9/2);
			\draw[thick,rounded corners,color = Rhodamine](1/2, -4)--(1/2, -5);
			\draw[thick,rounded corners,color = Rhodamine](3/2, -4)--(3/2, -9/2)--(2, -9/2);
			\draw[thick,rounded corners,color = Rhodamine](1, -9/2)--(3/2, -9/2)--(3/2, -5);
			\draw[thick,rounded corners,color = Rhodamine](2, -9/2)--(3, -9/2);
			\draw[thick,rounded corners,color = Rhodamine](5/2, -4)--(5/2, -5);
			\draw[thick,rounded corners,color = Rhodamine](3, -9/2)--(4, -9/2);
			\draw[thick,rounded corners,color = Rhodamine](7/2, -4)--(7/2, -5);
			\draw[thick,rounded corners,color = Rhodamine](9/2, -5)--(9/2, -9/2)--(4, -9/2);
			\node at (1/2, 1/2){\null};
			\node at (1/2, -11/2){1};
			\node at (-1/2, -1/2){3};
			\node at (3/2, -11/2){2};
			\node at (-1/2, -3/2){4};
			\node at (5/2, -11/2){3};
			\node at (-1/2, -5/2){5};
			\node at (7/2, -11/2){4};
			\node at (-1/2, -7/2){1};
			\node at (9/2, -11/2){5};
			\node at (-1/2, -9/2){2};
			\end{tikzpicture}
			\hspace{4em}
			\raisebox{4.25em}{$\check{\mathcal{P}'}=$}\,	
			\begin{tikzpicture}[x = 1.25em,y = 1.25em]
			\draw[step = 1,gray, very thin] (0,0) grid (5, -5);
			\draw[color = black, thick] (0,0) rectangle (5, -5);
			\draw[thick,rounded corners,color = Rhodamine](1/2, -1)--(1/2, -1/2)--(0, -1/2);
			\draw[thick,rounded corners,color = Rhodamine](1/2, -1)--(1/2, -3/2)--(1, -3/2);
			\draw[thick,rounded corners,color = Rhodamine](0, -3/2)--(1/2, -3/2)--(1/2, -2);
			\draw[thick,rounded corners,color = Rhodamine](3/2, -2)--(3/2, -3/2)--(1, -3/2);
			\draw[thick,rounded corners,color = Rhodamine](0, -5/2)--(1, -5/2);
			\draw[thick,rounded corners,color = Rhodamine](1/2, -2)--(1/2, -3);
			\draw[thick,rounded corners,color = Rhodamine](1, -5/2)--(2, -5/2);
			\draw[thick,rounded corners,color = Rhodamine](3/2, -2)--(3/2, -3);
			\draw[thick,rounded corners,color = Rhodamine](5/2, -3)--(5/2, -5/2)--(2, -5/2);
			\draw[thick,rounded corners,color = Rhodamine](0, -7/2)--(1, -7/2);
			\draw[thick,rounded corners,color = Rhodamine](1/2, -3)--(1/2, -4);
			\draw[thick,rounded corners,color = Rhodamine](3/2, -3)--(3/2, -7/2)--(2, -7/2);
			\draw[thick,rounded corners,color = Rhodamine](1, -7/2)--(3/2, -7/2)--(3/2, -4);
			\draw[thick,rounded corners,color = Rhodamine](2, -7/2)--(3, -7/2);
			\draw[thick,rounded corners,color = Rhodamine](5/2, -3)--(5/2, -4);
			\draw[thick,rounded corners,color = Rhodamine](7/2, -4)--(7/2, -7/2)--(3, -7/2);
			\draw[thick,rounded corners,color = Rhodamine](1/2, -4)--(1/2, -9/2)--(1, -9/2);
			\draw[thick,rounded corners,color = Rhodamine](0, -9/2)--(1/2, -9/2)--(1/2, -5);
			\draw[thick,rounded corners,color = Rhodamine](1, -9/2)--(2, -9/2);
			\draw[thick,rounded corners,color = Rhodamine](3/2, -4)--(3/2, -5);
			\draw[thick,rounded corners,color = Rhodamine](5/2, -4)--(5/2, -9/2)--(3, -9/2);
			\draw[thick,rounded corners,color = Rhodamine](2, -9/2)--(5/2, -9/2)--(5/2, -5);
			\draw[thick,rounded corners,color = Rhodamine](3, -9/2)--(4, -9/2);
			\draw[thick,rounded corners,color = Rhodamine](7/2, -4)--(7/2, -5);
			\draw[thick,rounded corners,color = Rhodamine](9/2, -5)--(9/2, -9/2)--(4, -9/2);
			\node at (1/2, 1/2){\null};
			\node at (1/2, -11/2){1};
			\node at (-1/2, -1/2){4};
			\node at (3/2, -11/2){2};
			\node at (-1/2, -3/2){2};
			\node at (5/2, -11/2){3};
			\node at (-1/2, -5/2){5};
			\node at (7/2, -11/2){4};
			\node at (-1/2, -7/2){3};
			\node at (9/2, -11/2){5};
			\node at (-1/2, -9/2){1};
			\end{tikzpicture}
			\]
			We now list the reduced BPDs of $w$ with the same column weight, and their associated co-BPDs.
			\[
			\raisebox{4.25em}{$\mathcal{B}=$}\,
			\begin{tikzpicture}[x = 1.25em,y = 1.25em]
			\draw[step = 1,gray, very thin] (0,0) grid (5, -5);
			\draw[color = black, thick] (0,0) rectangle (5, -5);
			\draw[thick,rounded corners,color = blue](5/2, -1)--(5/2, -1/2)--(3, -1/2);
			\draw[thick,rounded corners,color = blue](3, -1/2)--(4, -1/2);
			\draw[thick,rounded corners,color = blue](4, -1/2)--(5, -1/2);
			\draw[thick,rounded corners,color = blue](3/2, -2)--(3/2, -3/2)--(2, -3/2);
			\draw[thick,rounded corners,color = blue](5/2, -1)--(5/2, -3/2)--(2, -3/2);
			\draw[thick,rounded corners,color = blue](7/2, -2)--(7/2, -3/2)--(4, -3/2);
			\draw[thick,rounded corners,color = blue](4, -3/2)--(5, -3/2);
			\draw[thick,rounded corners,color = blue](1/2, -3)--(1/2, -5/2)--(1, -5/2);
			\draw[thick,rounded corners,color = blue](1, -5/2)--(2, -5/2);
			\draw[thick,rounded corners,color = blue](3/2, -2)--(3/2, -3);
			\draw[thick,rounded corners,color = blue](2, -5/2)--(3, -5/2);
			\draw[thick,rounded corners,color = blue](7/2, -2)--(7/2, -5/2)--(3, -5/2);
			\draw[thick,rounded corners,color = blue](9/2, -3)--(9/2, -5/2)--(5, -5/2);
			\draw[thick,rounded corners,color = blue](1/2, -3)--(1/2, -4);
			\draw[thick,rounded corners,color = blue](3/2, -3)--(3/2, -4);
			\draw[thick,rounded corners,color = blue](7/2, -4)--(7/2, -7/2)--(4, -7/2);
			\draw[thick,rounded corners,color = blue](4, -7/2)--(5, -7/2);
			\draw[thick,rounded corners,color = blue](9/2, -3)--(9/2, -4);
			\draw[thick,rounded corners,color = blue](1/2, -4)--(1/2, -5);
			\draw[thick,rounded corners,color = blue](3/2, -4)--(3/2, -5);
			\draw[thick,rounded corners,color = blue](5/2, -5)--(5/2, -9/2)--(3, -9/2);
			\draw[thick,rounded corners,color = blue](3, -9/2)--(4, -9/2);
			\draw[thick,rounded corners,color = blue](7/2, -4)--(7/2, -5);
			\draw[thick,rounded corners,color = blue](4, -9/2)--(5, -9/2);
			\draw[thick,rounded corners,color = blue](9/2, -4)--(9/2, -5);
			\node at (1/2, 1/2){\null};
			\node at (1/2, -11/2){1};
			\node at (11/2, -1/2){2};
			\node at (3/2, -11/2){2};
			\node at (11/2, -3/2){1};
			\node at (5/2, -11/2){3};
			\node at (11/2, -5/2){5};
			\node at (7/2, -11/2){4};
			\node at (11/2, -7/2){4};
			\node at (9/2, -11/2){5};
			\node at (11/2, -9/2){3};
			\end{tikzpicture}
			\hspace{4em}
			\raisebox{4.25em}{$\mathcal{B'}=$}\,
			\begin{tikzpicture}[x = 1.25em,y = 1.25em]
			\draw[step = 1,gray, very thin] (0,0) grid (5, -5);
			\draw[color = black, thick] (0,0) rectangle (5, -5);
			\draw[thick,rounded corners,color = blue](7/2, -1)--(7/2, -1/2)--(4, -1/2);
			\draw[thick,rounded corners,color = blue](4, -1/2)--(5, -1/2);
			\draw[thick,rounded corners,color = blue](3/2, -2)--(3/2, -3/2)--(2, -3/2);
			\draw[thick,rounded corners,color = blue](2, -3/2)--(3, -3/2);
			\draw[thick,rounded corners,color = blue](3, -3/2)--(4, -3/2);
			\draw[thick,rounded corners,color = blue](7/2, -1)--(7/2, -2);
			\draw[thick,rounded corners,color = blue](4, -3/2)--(5, -3/2);
			\draw[thick,rounded corners,color = blue](1/2, -3)--(1/2, -5/2)--(1, -5/2);
			\draw[thick,rounded corners,color = blue](3/2, -2)--(3/2, -5/2)--(1, -5/2);
			\draw[thick,rounded corners,color = blue](5/2, -3)--(5/2, -5/2)--(3, -5/2);
			\draw[thick,rounded corners,color = blue](7/2, -2)--(7/2, -5/2)--(3, -5/2);
			\draw[thick,rounded corners,color = blue](9/2, -3)--(9/2, -5/2)--(5, -5/2);
			\draw[thick,rounded corners,color = blue](1/2, -3)--(1/2, -4);
			\draw[thick,rounded corners,color = blue](3/2, -4)--(3/2, -7/2)--(2, -7/2);
			\draw[thick,rounded corners,color = blue](5/2, -3)--(5/2, -7/2)--(2, -7/2);
			\draw[thick,rounded corners,color = blue](7/2, -4)--(7/2, -7/2)--(4, -7/2);
			\draw[thick,rounded corners,color = blue](4, -7/2)--(5, -7/2);
			\draw[thick,rounded corners,color = blue](9/2, -3)--(9/2, -4);
			\draw[thick,rounded corners,color = blue](1/2, -4)--(1/2, -5);
			\draw[thick,rounded corners,color = blue](3/2, -4)--(3/2, -5);
			\draw[thick,rounded corners,color = blue](5/2, -5)--(5/2, -9/2)--(3, -9/2);
			\draw[thick,rounded corners,color = blue](3, -9/2)--(4, -9/2);
			\draw[thick,rounded corners,color = blue](7/2, -4)--(7/2, -5);
			\draw[thick,rounded corners,color = blue](4, -9/2)--(5, -9/2);
			\draw[thick,rounded corners,color = blue](9/2, -4)--(9/2, -5);
			\node at (1/2, 1/2){\null};
			\node at (1/2, -11/2){1};
			\node at (11/2, -1/2){2};
			\node at (3/2, -11/2){2};
			\node at (11/2, -3/2){1};
			\node at (5/2, -11/2){3};
			\node at (11/2, -5/2){5};
			\node at (7/2, -11/2){4};
			\node at (11/2, -7/2){4};
			\node at (9/2, -11/2){5};
			\node at (11/2, -9/2){3};
			\end{tikzpicture}
			\]
			\[
			\raisebox{3.75em}{$\check{\mathcal{B}}=$}\,	
			\begin{tikzpicture}[x = 1.25em,y = 1.25em]
			\draw[step = 1,gray, very thin] (0,0) grid (5, -5);
			\draw[color = black, thick] (0,0) rectangle (5, -5);
			\draw[thick,rounded corners,color = ForestGreen](1/2, 0)--(1/2, -1);
			\draw[thick,rounded corners,color = ForestGreen](3/2, 0)--(3/2, -1);
			\draw[thick,rounded corners,color = ForestGreen](5/2, 0)--(5/2, -1/2)--(3, -1/2);
			\draw[thick,rounded corners,color = ForestGreen](3, -1/2)--(4, -1/2);
			\draw[thick,rounded corners,color = ForestGreen](7/2, 0)--(7/2, -1);
			\draw[thick,rounded corners,color = ForestGreen](4, -1/2)--(5, -1/2);
			\draw[thick,rounded corners,color = ForestGreen](9/2, 0)--(9/2, -1);
			\draw[thick,rounded corners,color = ForestGreen](1/2, -1)--(1/2, -2);
			\draw[thick,rounded corners,color = ForestGreen](3/2, -1)--(3/2, -3/2)--(2, -3/2);
			\draw[thick,rounded corners,color = ForestGreen](5/2, -2)--(5/2, -3/2)--(2, -3/2);
			\draw[thick,rounded corners,color = ForestGreen](7/2, -1)--(7/2, -3/2)--(4, -3/2);
			\draw[thick,rounded corners,color = ForestGreen](4, -3/2)--(5, -3/2);
			\draw[thick,rounded corners,color = ForestGreen](9/2, -1)--(9/2, -2);
			\draw[thick,rounded corners,color = ForestGreen](1/2, -2)--(1/2, -5/2)--(1, -5/2);
			\draw[thick,rounded corners,color = ForestGreen](1, -5/2)--(2, -5/2);
			\draw[thick,rounded corners,color = ForestGreen](2, -5/2)--(3, -5/2);
			\draw[thick,rounded corners,color = ForestGreen](5/2, -2)--(5/2, -3);
			\draw[thick,rounded corners,color = ForestGreen](7/2, -3)--(7/2, -5/2)--(3, -5/2);
			\draw[thick,rounded corners,color = ForestGreen](9/2, -2)--(9/2, -5/2)--(5, -5/2);
			\draw[thick,rounded corners,color = ForestGreen](5/2, -3)--(5/2, -4);
			\draw[thick,rounded corners,color = ForestGreen](7/2, -3)--(7/2, -7/2)--(4, -7/2);
			\draw[thick,rounded corners,color = ForestGreen](4, -7/2)--(5, -7/2);
			\draw[thick,rounded corners,color = ForestGreen](5/2, -4)--(5/2, -9/2)--(3, -9/2);
			\draw[thick,rounded corners,color = ForestGreen](3, -9/2)--(4, -9/2);
			\draw[thick,rounded corners,color = ForestGreen](4, -9/2)--(5, -9/2);
			\node at (1/2, -11/2){\null};
			\node at (1/2, 1/2){1};
			\node at (11/2, -1/2){3};
			\node at (3/2, 1/2){2};
			\node at (11/2, -3/2){4};
			\node at (5/2, 1/2){3};
			\node at (11/2, -5/2){5};
			\node at (7/2, 1/2){4};
			\node at (11/2, -7/2){1};
			\node at (9/2, 1/2){5};
			\node at (11/2, -9/2){2};
			\end{tikzpicture}
			\hspace{4em}
			\raisebox{3.75em}{$\check{\mathcal{B}'}=$}\,	
			\begin{tikzpicture}[x = 1.25em,y = 1.25em]
			\draw[step = 1,gray, very thin] (0,0) grid (5, -5);
			\draw[color = black, thick] (0,0) rectangle (5, -5);
			\draw[thick,rounded corners,color = ForestGreen](1/2, 0)--(1/2, -1);
			\draw[thick,rounded corners,color = ForestGreen](3/2, 0)--(3/2, -1);
			\draw[thick,rounded corners,color = ForestGreen](5/2, 0)--(5/2, -1);
			\draw[thick,rounded corners,color = ForestGreen](7/2, 0)--(7/2, -1/2)--(4, -1/2);
			\draw[thick,rounded corners,color = ForestGreen](4, -1/2)--(5, -1/2);
			\draw[thick,rounded corners,color = ForestGreen](9/2, 0)--(9/2, -1);
			\draw[thick,rounded corners,color = ForestGreen](1/2, -1)--(1/2, -2);
			\draw[thick,rounded corners,color = ForestGreen](3/2, -1)--(3/2, -3/2)--(2, -3/2);
			\draw[thick,rounded corners,color = ForestGreen](2, -3/2)--(3, -3/2);
			\draw[thick,rounded corners,color = ForestGreen](5/2, -1)--(5/2, -2);
			\draw[thick,rounded corners,color = ForestGreen](3, -3/2)--(4, -3/2);
			\draw[thick,rounded corners,color = ForestGreen](4, -3/2)--(5, -3/2);
			\draw[thick,rounded corners,color = ForestGreen](9/2, -1)--(9/2, -2);
			\draw[thick,rounded corners,color = ForestGreen](1/2, -2)--(1/2, -5/2)--(1, -5/2);
			\draw[thick,rounded corners,color = ForestGreen](3/2, -3)--(3/2, -5/2)--(1, -5/2);
			\draw[thick,rounded corners,color = ForestGreen](5/2, -2)--(5/2, -5/2)--(3, -5/2);
			\draw[thick,rounded corners,color = ForestGreen](7/2, -3)--(7/2, -5/2)--(3, -5/2);
			\draw[thick,rounded corners,color = ForestGreen](9/2, -2)--(9/2, -5/2)--(5, -5/2);
			\draw[thick,rounded corners,color = ForestGreen](3/2, -3)--(3/2, -7/2)--(2, -7/2);
			\draw[thick,rounded corners,color = ForestGreen](5/2, -4)--(5/2, -7/2)--(2, -7/2);
			\draw[thick,rounded corners,color = ForestGreen](7/2, -3)--(7/2, -7/2)--(4, -7/2);
			\draw[thick,rounded corners,color = ForestGreen](4, -7/2)--(5, -7/2);
			\draw[thick,rounded corners,color = ForestGreen](5/2, -4)--(5/2, -9/2)--(3, -9/2);
			\draw[thick,rounded corners,color = ForestGreen](3, -9/2)--(4, -9/2);
			\draw[thick,rounded corners,color = ForestGreen](4, -9/2)--(5, -9/2);
			\node at (1/2, -11/2){\null};
			\node at (1/2, 1/2){1};
			\node at (11/2, -1/2){4};
			\node at (3/2, 1/2){2};
			\node at (11/2, -3/2){2};
			\node at (5/2, 1/2){3};
			\node at (11/2, -5/2){5};
			\node at (7/2, 1/2){4};
			\node at (11/2, -7/2){3};
			\node at (9/2, 1/2){5};
			\node at (11/2, -9/2){1};
			\end{tikzpicture}\]
			By looking at co-permutations and applying \cref{thm:coperm}, we conclude that $\kappa(\mathcal{P})=\mathcal{B}$ and $\kappa(\mathcal{P}')=\mathcal{B}'$.
		\end{example}
		
		\begin{remark}
		Huang, Shimozono, and Yu \cite{Huang.Shimozono.Yu} generalized \cite{Gao.Huang} to give a weight preserving bijection between marked bumpless pipe dreams and pipe dreams.
		It is likely that considering co-objects would give further insight into this K-theoretic bijection, but we do not pursue this direction here.
		\end{remark}
				
		\subsection{Proofs of the bumpless pipe dream change of basis formulas}
		
		We are now ready to prove our main theorems. 
		We begin with the BPD formula for expanding Grothendieck polynomials in the Schubert basis.
		
		\thmmainA*
		
		\begin{proof}
			Let $w \in S_n$. Define $a_{w,v}$ to be the coefficients in the expansion
			\[
			\mathfrak{G}_w = \sum_{v \in S_n} (-1)^{\ell(v) - \ell(w)} a_{w,v} \mathfrak{S}_v.
			\]
			We have 
			\begin{align*}
				a_{w,v} 
				&= \#\{ \mathcal{P} \in \kpipes(w) : \check{\mathcal{P}} \text{ is reduced and } \demprod(\check{\mathcal{P}}) = v \} & \text{(by \cref{thm:pipe_groth_to_schub})} \\
				&= \#\{ \mathcal{P} \in \kpipes(w) : \check{\mathcal{P}} \in \copipes(v) \} &\text{(by definition)} \\
				&= \#\{ \mathcal{B} \in \kbpd(w) : \check{\mathcal{B}} \in \cobpd(v) \} & \text{(by \cref{thm:coperm} and symmetry)} \\
				&=\#\{\mathcal{B} \in \kbpd(w) : \check{\mathcal{B}} \text{ is reduced and } \demprod(\check{\mathcal{B}}) = v\} &\text{(by definition).}
			\end{align*}
		Thus, the result follows.
		\end{proof}

		We now prove the BPD formula for Schubert to Grothendieck expansion.
		\thmmainB*
		\begin{proof}
			Let $w \in S_n$. Define $b_{w,v}$ to be the coefficients in the expansion
			\[
			\mathfrak{S}_w = \sum_{v \in S_n} b_{w,v} \mathfrak{G}_v.
			\]
			We have
			\begin{align*}
				b_{w,v}
				&= \#\{ \mathcal{P} \in \pipes(w) : \demprod(\check{\mathcal{P}}) = v \} &\text{(by \cref{thm:pipe_schub_to_groth})} \\
				&=\#\{ \mathcal{B} \in \bpd(w) : \demprod(\check{\mathcal{B}}) = v \} & \text{(by \cref{thm:coperm}).} 
			\end{align*}
		Thus, the result follows.
		\end{proof}
		
		We conclude this section with examples of the BPD change of basis formulas.

		\begin{example}\label{example:bpd21534}
			
			Let $w = 21534$. The BPDs of $w$ are pictured below.
			\[
			% [inline block 3: 16 envs, 37517 chars -> data_tex | \begin{tikzpicture}[x = 1.25em,y = 1.25em] 				\draw[step = 1,gray, very thin] (0,0) grid (5, -5);...]

			\]
			
			Of the co-BPDs, the first, second, third, and fourth are reduced. 
			Thus, \cref{thm:main1} says that 
			\[\mathfrak{G}_{21534} = \mathfrak{S}_{21534}-\mathfrak{S}_{31524}-\mathfrak{S}_{23514} + \mathfrak{S}_{32514}.\]
			Of the BPDs, the fourth and eighth are not reduced. 
			Therefore, by \cref{thm:main2},
			\[\mathfrak{S}_{21534} = \mathfrak{G}_{21534} + \mathfrak{G}_{31524} + \mathfrak{G}_{23514} + \mathfrak{G}_{41523} + \mathfrak{G}_{24513} + \mathfrak{G}_{34512}. \qedhere\]
		\end{example}	
	
	We give one additional example, to show that the expansions need not be multiplicity free.
	
	\begin{example}
		\label{ex:13452}
		Let $w=13452$. The BPDs of $w$ are pictured below.
		\[\begin{tikzpicture}[x=1.25em,y=1.25em]
			\draw[step=1,gray, thin] (0,0) grid (5, -5);
			\draw[color=black, thick] (0,0) rectangle (5, -5);
			\draw[thick,rounded corners,color=blue](1/2, -1)--(1/2, -1/2)--(1, -1/2);
			\draw[thick,rounded corners,color=blue](1, -1/2)--(2, -1/2);
			\draw[thick,rounded corners,color=blue](2, -1/2)--(3, -1/2);
			\draw[thick,rounded corners,color=blue](3, -1/2)--(4, -1/2);
			\draw[thick,rounded corners,color=blue](4, -1/2)--(5, -1/2);
			\draw[thick,rounded corners,color=blue](1/2, -1)--(1/2, -2);
			\draw[thick,rounded corners,color=blue](5/2, -2)--(5/2, -3/2)--(3, -3/2);
			\draw[thick,rounded corners,color=blue](3, -3/2)--(4, -3/2);
			\draw[thick,rounded corners,color=blue](4, -3/2)--(5, -3/2);
			\draw[thick,rounded corners,color=blue](1/2, -2)--(1/2, -3);
			\draw[thick,rounded corners,color=blue](5/2, -2)--(5/2, -3);
			\draw[thick,rounded corners,color=blue](7/2, -3)--(7/2, -5/2)--(4, -5/2);
			\draw[thick,rounded corners,color=blue](4, -5/2)--(5, -5/2);
			\draw[thick,rounded corners,color=blue](1/2, -3)--(1/2, -4);
			\draw[thick,rounded corners,color=blue](5/2, -3)--(5/2, -4);
			\draw[thick,rounded corners,color=blue](7/2, -3)--(7/2, -4);
			\draw[thick,rounded corners,color=blue](9/2, -4)--(9/2, -7/2)--(5, -7/2);
			\draw[thick,rounded corners,color=blue](1/2, -4)--(1/2, -5);
			\draw[thick,rounded corners,color=blue](3/2, -5)--(3/2, -9/2)--(2, -9/2);
			\draw[thick,rounded corners,color=blue](2, -9/2)--(3, -9/2);
			\draw[thick,rounded corners,color=blue](5/2, -4)--(5/2, -5);
			\draw[thick,rounded corners,color=blue](3, -9/2)--(4, -9/2);
			\draw[thick,rounded corners,color=blue](7/2, -4)--(7/2, -5);
			\draw[thick,rounded corners,color=blue](4, -9/2)--(5, -9/2);
			\draw[thick,rounded corners,color=blue](9/2, -4)--(9/2, -5);
			\node at (1/2, -11/2){1};
			\node at (11/2, -1/2){1};
			\node at (3/2, -11/2){2};
			\node at (11/2, -3/2){3};
			\node at (5/2, -11/2){3};
			\node at (11/2, -5/2){4};
			\node at (7/2, -11/2){4};
			\node at (11/2, -7/2){5};
			\node at (9/2, -11/2){5};
			\node at (11/2, -9/2){2};
		\end{tikzpicture}
		\hspace{2em}
		\begin{tikzpicture}[x=1.25em,y=1.25em]
			\draw[step=1,gray, thin] (0,0) grid (5, -5);
			\draw[color=black, thick] (0,0) rectangle (5, -5);
			\draw[thick,rounded corners,color=blue](3/2, -1)--(3/2, -1/2)--(2, -1/2);
			\draw[thick,rounded corners,color=blue](2, -1/2)--(3, -1/2);
			\draw[thick,rounded corners,color=blue](3, -1/2)--(4, -1/2);
			\draw[thick,rounded corners,color=blue](4, -1/2)--(5, -1/2);
			\draw[thick,rounded corners,color=blue](1/2, -2)--(1/2, -3/2)--(1, -3/2);
			\draw[thick,rounded corners,color=blue](3/2, -1)--(3/2, -3/2)--(1, -3/2);
			\draw[thick,rounded corners,color=blue](5/2, -2)--(5/2, -3/2)--(3, -3/2);
			\draw[thick,rounded corners,color=blue](3, -3/2)--(4, -3/2);
			\draw[thick,rounded corners,color=blue](4, -3/2)--(5, -3/2);
			\draw[thick,rounded corners,color=blue](1/2, -2)--(1/2, -3);
			\draw[thick,rounded corners,color=blue](5/2, -2)--(5/2, -3);
			\draw[thick,rounded corners,color=blue](7/2, -3)--(7/2, -5/2)--(4, -5/2);
			\draw[thick,rounded corners,color=blue](4, -5/2)--(5, -5/2);
			\draw[thick,rounded corners,color=blue](1/2, -3)--(1/2, -4);
			\draw[thick,rounded corners,color=blue](5/2, -3)--(5/2, -4);
			\draw[thick,rounded corners,color=blue](7/2, -3)--(7/2, -4);
			\draw[thick,rounded corners,color=blue](9/2, -4)--(9/2, -7/2)--(5, -7/2);
			\draw[thick,rounded corners,color=blue](1/2, -4)--(1/2, -5);
			\draw[thick,rounded corners,color=blue](3/2, -5)--(3/2, -9/2)--(2, -9/2);
			\draw[thick,rounded corners,color=blue](2, -9/2)--(3, -9/2);
			\draw[thick,rounded corners,color=blue](5/2, -4)--(5/2, -5);
			\draw[thick,rounded corners,color=blue](3, -9/2)--(4, -9/2);
			\draw[thick,rounded corners,color=blue](7/2, -4)--(7/2, -5);
			\draw[thick,rounded corners,color=blue](4, -9/2)--(5, -9/2);
			\draw[thick,rounded corners,color=blue](9/2, -4)--(9/2, -5);
			\node at (1/2, -11/2){1};
			\node at (11/2, -1/2){1};
			\node at (3/2, -11/2){2};
			\node at (11/2, -3/2){3};
			\node at (5/2, -11/2){3};
			\node at (11/2, -5/2){4};
			\node at (7/2, -11/2){4};
			\node at (11/2, -7/2){5};
			\node at (9/2, -11/2){5};
			\node at (11/2, -9/2){2};
		\end{tikzpicture}
		\hspace{2em}
		\begin{tikzpicture}[x=1.25em,y=1.25em]
			\draw[step=1,gray, thin] (0,0) grid (5, -5);
			\draw[color=black, thick] (0,0) rectangle (5, -5);
			\draw[thick,rounded corners,color=blue](3/2, -1)--(3/2, -1/2)--(2, -1/2);
			\draw[thick,rounded corners,color=blue](2, -1/2)--(3, -1/2);
			\draw[thick,rounded corners,color=blue](3, -1/2)--(4, -1/2);
			\draw[thick,rounded corners,color=blue](4, -1/2)--(5, -1/2);
			\draw[thick,rounded corners,color=blue](3/2, -1)--(3/2, -2);
			\draw[thick,rounded corners,color=blue](5/2, -2)--(5/2, -3/2)--(3, -3/2);
			\draw[thick,rounded corners,color=blue](3, -3/2)--(4, -3/2);
			\draw[thick,rounded corners,color=blue](4, -3/2)--(5, -3/2);
			\draw[thick,rounded corners,color=blue](1/2, -3)--(1/2, -5/2)--(1, -5/2);
			\draw[thick,rounded corners,color=blue](3/2, -2)--(3/2, -5/2)--(1, -5/2);
			\draw[thick,rounded corners,color=blue](5/2, -2)--(5/2, -3);
			\draw[thick,rounded corners,color=blue](7/2, -3)--(7/2, -5/2)--(4, -5/2);
			\draw[thick,rounded corners,color=blue](4, -5/2)--(5, -5/2);
			\draw[thick,rounded corners,color=blue](1/2, -3)--(1/2, -4);
			\draw[thick,rounded corners,color=blue](5/2, -3)--(5/2, -4);
			\draw[thick,rounded corners,color=blue](7/2, -3)--(7/2, -4);
			\draw[thick,rounded corners,color=blue](9/2, -4)--(9/2, -7/2)--(5, -7/2);
			\draw[thick,rounded corners,color=blue](1/2, -4)--(1/2, -5);
			\draw[thick,rounded corners,color=blue](3/2, -5)--(3/2, -9/2)--(2, -9/2);
			\draw[thick,rounded corners,color=blue](2, -9/2)--(3, -9/2);
			\draw[thick,rounded corners,color=blue](5/2, -4)--(5/2, -5);
			\draw[thick,rounded corners,color=blue](3, -9/2)--(4, -9/2);
			\draw[thick,rounded corners,color=blue](7/2, -4)--(7/2, -5);
			\draw[thick,rounded corners,color=blue](4, -9/2)--(5, -9/2);
			\draw[thick,rounded corners,color=blue](9/2, -4)--(9/2, -5);
			\node at (1/2, -11/2){1};
			\node at (11/2, -1/2){1};
			\node at (3/2, -11/2){2};
			\node at (11/2, -3/2){3};
			\node at (5/2, -11/2){3};
			\node at (11/2, -5/2){4};
			\node at (7/2, -11/2){4};
			\node at (11/2, -7/2){5};
			\node at (9/2, -11/2){5};
			\node at (11/2, -9/2){2};
		\end{tikzpicture}
		\hspace{2em}
		\begin{tikzpicture}[x=1.25em,y=1.25em]
			\draw[step=1,gray, thin] (0,0) grid (5, -5);
			\draw[color=black, thick] (0,0) rectangle (5, -5);
			\draw[thick,rounded corners,color=blue](3/2, -1)--(3/2, -1/2)--(2, -1/2);
			\draw[thick,rounded corners,color=blue](2, -1/2)--(3, -1/2);
			\draw[thick,rounded corners,color=blue](3, -1/2)--(4, -1/2);
			\draw[thick,rounded corners,color=blue](4, -1/2)--(5, -1/2);
			\draw[thick,rounded corners,color=blue](3/2, -1)--(3/2, -2);
			\draw[thick,rounded corners,color=blue](5/2, -2)--(5/2, -3/2)--(3, -3/2);
			\draw[thick,rounded corners,color=blue](3, -3/2)--(4, -3/2);
			\draw[thick,rounded corners,color=blue](4, -3/2)--(5, -3/2);
			\draw[thick,rounded corners,color=blue](3/2, -2)--(3/2, -3);
			\draw[thick,rounded corners,color=blue](5/2, -2)--(5/2, -3);
			\draw[thick,rounded corners,color=blue](7/2, -3)--(7/2, -5/2)--(4, -5/2);
			\draw[thick,rounded corners,color=blue](4, -5/2)--(5, -5/2);
			\draw[thick,rounded corners,color=blue](1/2, -4)--(1/2, -7/2)--(1, -7/2);
			\draw[thick,rounded corners,color=blue](3/2, -3)--(3/2, -7/2)--(1, -7/2);
			\draw[thick,rounded corners,color=blue](5/2, -3)--(5/2, -4);
			\draw[thick,rounded corners,color=blue](7/2, -3)--(7/2, -4);
			\draw[thick,rounded corners,color=blue](9/2, -4)--(9/2, -7/2)--(5, -7/2);
			\draw[thick,rounded corners,color=blue](1/2, -4)--(1/2, -5);
			\draw[thick,rounded corners,color=blue](3/2, -5)--(3/2, -9/2)--(2, -9/2);
			\draw[thick,rounded corners,color=blue](2, -9/2)--(3, -9/2);
			\draw[thick,rounded corners,color=blue](5/2, -4)--(5/2, -5);
			\draw[thick,rounded corners,color=blue](3, -9/2)--(4, -9/2);
			\draw[thick,rounded corners,color=blue](7/2, -4)--(7/2, -5);
			\draw[thick,rounded corners,color=blue](4, -9/2)--(5, -9/2);
			\draw[thick,rounded corners,color=blue](9/2, -4)--(9/2, -5);
			\node at (1/2, -11/2){1};
			\node at (11/2, -1/2){1};
			\node at (3/2, -11/2){2};
			\node at (11/2, -3/2){3};
			\node at (5/2, -11/2){3};
			\node at (11/2, -5/2){4};
			\node at (7/2, -11/2){4};
			\node at (11/2, -7/2){5};
			\node at (9/2, -11/2){5};
			\node at (11/2, -9/2){2};
		\end{tikzpicture}
		\]
		The corresponding co-BPDs, listed in the same relative order as their BPDs, are below.
		\[
		\begin{tikzpicture}[x=1.25em,y=1.25em]
			\draw[step=1,gray, thin] (0,0) grid (5, -5);
			\draw[color=black, thick] (0,0) rectangle (5, -5);
			\draw[thick,rounded corners,color=ForestGreen](1/2, 0)--(1/2, -1/2)--(1, -1/2);
			\draw[thick,rounded corners,color=ForestGreen](1, -1/2)--(2, -1/2);
			\draw[thick,rounded corners,color=ForestGreen](3/2, 0)--(3/2, -1);
			\draw[thick,rounded corners,color=ForestGreen](2, -1/2)--(3, -1/2);
			\draw[thick,rounded corners,color=ForestGreen](5/2, 0)--(5/2, -1);
			\draw[thick,rounded corners,color=ForestGreen](3, -1/2)--(4, -1/2);
			\draw[thick,rounded corners,color=ForestGreen](7/2, 0)--(7/2, -1);
			\draw[thick,rounded corners,color=ForestGreen](4, -1/2)--(5, -1/2);
			\draw[thick,rounded corners,color=ForestGreen](9/2, 0)--(9/2, -1);
			\draw[thick,rounded corners,color=ForestGreen](3/2, -1)--(3/2, -2);
			\draw[thick,rounded corners,color=ForestGreen](5/2, -1)--(5/2, -3/2)--(3, -3/2);
			\draw[thick,rounded corners,color=ForestGreen](3, -3/2)--(4, -3/2);
			\draw[thick,rounded corners,color=ForestGreen](7/2, -1)--(7/2, -2);
			\draw[thick,rounded corners,color=ForestGreen](4, -3/2)--(5, -3/2);
			\draw[thick,rounded corners,color=ForestGreen](9/2, -1)--(9/2, -2);
			\draw[thick,rounded corners,color=ForestGreen](3/2, -2)--(3/2, -3);
			\draw[thick,rounded corners,color=ForestGreen](7/2, -2)--(7/2, -5/2)--(4, -5/2);
			\draw[thick,rounded corners,color=ForestGreen](4, -5/2)--(5, -5/2);
			\draw[thick,rounded corners,color=ForestGreen](9/2, -2)--(9/2, -3);
			\draw[thick,rounded corners,color=ForestGreen](3/2, -3)--(3/2, -4);
			\draw[thick,rounded corners,color=ForestGreen](9/2, -3)--(9/2, -7/2)--(5, -7/2);
			\draw[thick,rounded corners,color=ForestGreen](3/2, -4)--(3/2, -9/2)--(2, -9/2);
			\draw[thick,rounded corners,color=ForestGreen](2, -9/2)--(3, -9/2);
			\draw[thick,rounded corners,color=ForestGreen](3, -9/2)--(4, -9/2);
			\draw[thick,rounded corners,color=ForestGreen](4, -9/2)--(5, -9/2);
			\node at (1/2, 1/2){1};
			\node at (11/2, -1/2){1};
			\node at (3/2, 1/2){2};
			\node at (11/2, -3/2){3};
			\node at (5/2, 1/2){3};
			\node at (11/2, -5/2){4};
			\node at (7/2, 1/2){4};
			\node at (11/2, -7/2){5};
			\node at (9/2, 1/2){5};
			\node at (11/2, -9/2){2};
		\end{tikzpicture}
		\hspace{2em}
		\begin{tikzpicture}[x=1.25em,y=1.25em]
			\draw[step=1,gray, thin] (0,0) grid (5, -5);
			\draw[color=black, thick] (0,0) rectangle (5, -5);
			\draw[thick,rounded corners,color=ForestGreen](1/2, 0)--(1/2, -1);
			\draw[thick,rounded corners,color=ForestGreen](3/2, 0)--(3/2, -1/2)--(2, -1/2);
			\draw[thick,rounded corners,color=ForestGreen](2, -1/2)--(3, -1/2);
			\draw[thick,rounded corners,color=ForestGreen](5/2, 0)--(5/2, -1);
			\draw[thick,rounded corners,color=ForestGreen](3, -1/2)--(4, -1/2);
			\draw[thick,rounded corners,color=ForestGreen](7/2, 0)--(7/2, -1);
			\draw[thick,rounded corners,color=ForestGreen](4, -1/2)--(5, -1/2);
			\draw[thick,rounded corners,color=ForestGreen](9/2, 0)--(9/2, -1);
			\draw[thick,rounded corners,color=ForestGreen](1/2, -1)--(1/2, -3/2)--(1, -3/2);
			\draw[thick,rounded corners,color=ForestGreen](3/2, -2)--(3/2, -3/2)--(1, -3/2);
			\draw[thick,rounded corners,color=ForestGreen](5/2, -1)--(5/2, -3/2)--(3, -3/2);
			\draw[thick,rounded corners,color=ForestGreen](3, -3/2)--(4, -3/2);
			\draw[thick,rounded corners,color=ForestGreen](7/2, -1)--(7/2, -2);
			\draw[thick,rounded corners,color=ForestGreen](4, -3/2)--(5, -3/2);
			\draw[thick,rounded corners,color=ForestGreen](9/2, -1)--(9/2, -2);
			\draw[thick,rounded corners,color=ForestGreen](3/2, -2)--(3/2, -3);
			\draw[thick,rounded corners,color=ForestGreen](7/2, -2)--(7/2, -5/2)--(4, -5/2);
			\draw[thick,rounded corners,color=ForestGreen](4, -5/2)--(5, -5/2);
			\draw[thick,rounded corners,color=ForestGreen](9/2, -2)--(9/2, -3);
			\draw[thick,rounded corners,color=ForestGreen](3/2, -3)--(3/2, -4);
			\draw[thick,rounded corners,color=ForestGreen](9/2, -3)--(9/2, -7/2)--(5, -7/2);
			\draw[thick,rounded corners,color=ForestGreen](3/2, -4)--(3/2, -9/2)--(2, -9/2);
			\draw[thick,rounded corners,color=ForestGreen](2, -9/2)--(3, -9/2);
			\draw[thick,rounded corners,color=ForestGreen](3, -9/2)--(4, -9/2);
			\draw[thick,rounded corners,color=ForestGreen](4, -9/2)--(5, -9/2);
			\node at (1/2, 1/2){1};
			\node at (11/2, -1/2){2};
			\node at (3/2, 1/2){2};
			\node at (11/2, -3/2){3};
			\node at (5/2, 1/2){3};
			\node at (11/2, -5/2){4};
			\node at (7/2, 1/2){4};
			\node at (11/2, -7/2){5};
			\node at (9/2, 1/2){5};
			\node at (11/2, -9/2){1};
		\end{tikzpicture}
		\hspace{2em}
		\begin{tikzpicture}[x=1.25em,y=1.25em]
			\draw[step=1,gray, thin] (0,0) grid (5, -5);
			\draw[color=black, thick] (0,0) rectangle (5, -5);
			\draw[thick,rounded corners,color=ForestGreen](1/2, 0)--(1/2, -1);
			\draw[thick,rounded corners,color=ForestGreen](3/2, 0)--(3/2, -1/2)--(2, -1/2);
			\draw[thick,rounded corners,color=ForestGreen](2, -1/2)--(3, -1/2);
			\draw[thick,rounded corners,color=ForestGreen](5/2, 0)--(5/2, -1);
			\draw[thick,rounded corners,color=ForestGreen](3, -1/2)--(4, -1/2);
			\draw[thick,rounded corners,color=ForestGreen](7/2, 0)--(7/2, -1);
			\draw[thick,rounded corners,color=ForestGreen](4, -1/2)--(5, -1/2);
			\draw[thick,rounded corners,color=ForestGreen](9/2, 0)--(9/2, -1);
			\draw[thick,rounded corners,color=ForestGreen](1/2, -1)--(1/2, -2);
			\draw[thick,rounded corners,color=ForestGreen](5/2, -1)--(5/2, -3/2)--(3, -3/2);
			\draw[thick,rounded corners,color=ForestGreen](3, -3/2)--(4, -3/2);
			\draw[thick,rounded corners,color=ForestGreen](7/2, -1)--(7/2, -2);
			\draw[thick,rounded corners,color=ForestGreen](4, -3/2)--(5, -3/2);
			\draw[thick,rounded corners,color=ForestGreen](9/2, -1)--(9/2, -2);
			\draw[thick,rounded corners,color=ForestGreen](1/2, -2)--(1/2, -5/2)--(1, -5/2);
			\draw[thick,rounded corners,color=ForestGreen](3/2, -3)--(3/2, -5/2)--(1, -5/2);
			\draw[thick,rounded corners,color=ForestGreen](7/2, -2)--(7/2, -5/2)--(4, -5/2);
			\draw[thick,rounded corners,color=ForestGreen](4, -5/2)--(5, -5/2);
			\draw[thick,rounded corners,color=ForestGreen](9/2, -2)--(9/2, -3);
			\draw[thick,rounded corners,color=ForestGreen](3/2, -3)--(3/2, -4);
			\draw[thick,rounded corners,color=ForestGreen](9/2, -3)--(9/2, -7/2)--(5, -7/2);
			\draw[thick,rounded corners,color=ForestGreen](3/2, -4)--(3/2, -9/2)--(2, -9/2);
			\draw[thick,rounded corners,color=ForestGreen](2, -9/2)--(3, -9/2);
			\draw[thick,rounded corners,color=ForestGreen](3, -9/2)--(4, -9/2);
			\draw[thick,rounded corners,color=ForestGreen](4, -9/2)--(5, -9/2);
			\node at (1/2, 1/2){1};
			\node at (11/2, -1/2){2};
			\node at (3/2, 1/2){2};
			\node at (11/2, -3/2){3};
			\node at (5/2, 1/2){3};
			\node at (11/2, -5/2){4};
			\node at (7/2, 1/2){4};
			\node at (11/2, -7/2){5};
			\node at (9/2, 1/2){5};
			\node at (11/2, -9/2){1};
		\end{tikzpicture}
		\hspace{2em}
		\begin{tikzpicture}[x=1.25em,y=1.25em]
			\draw[step=1,gray, thin] (0,0) grid (5, -5);
			\draw[color=black, thick] (0,0) rectangle (5, -5);
			\draw[thick,rounded corners,color=ForestGreen](1/2, 0)--(1/2, -1);
			\draw[thick,rounded corners,color=ForestGreen](3/2, 0)--(3/2, -1/2)--(2, -1/2);
			\draw[thick,rounded corners,color=ForestGreen](2, -1/2)--(3, -1/2);
			\draw[thick,rounded corners,color=ForestGreen](5/2, 0)--(5/2, -1);
			\draw[thick,rounded corners,color=ForestGreen](3, -1/2)--(4, -1/2);
			\draw[thick,rounded corners,color=ForestGreen](7/2, 0)--(7/2, -1);
			\draw[thick,rounded corners,color=ForestGreen](4, -1/2)--(5, -1/2);
			\draw[thick,rounded corners,color=ForestGreen](9/2, 0)--(9/2, -1);
			\draw[thick,rounded corners,color=ForestGreen](1/2, -1)--(1/2, -2);
			\draw[thick,rounded corners,color=ForestGreen](5/2, -1)--(5/2, -3/2)--(3, -3/2);
			\draw[thick,rounded corners,color=ForestGreen](3, -3/2)--(4, -3/2);
			\draw[thick,rounded corners,color=ForestGreen](7/2, -1)--(7/2, -2);
			\draw[thick,rounded corners,color=ForestGreen](4, -3/2)--(5, -3/2);
			\draw[thick,rounded corners,color=ForestGreen](9/2, -1)--(9/2, -2);
			\draw[thick,rounded corners,color=ForestGreen](1/2, -2)--(1/2, -3);
			\draw[thick,rounded corners,color=ForestGreen](7/2, -2)--(7/2, -5/2)--(4, -5/2);
			\draw[thick,rounded corners,color=ForestGreen](4, -5/2)--(5, -5/2);
			\draw[thick,rounded corners,color=ForestGreen](9/2, -2)--(9/2, -3);
			\draw[thick,rounded corners,color=ForestGreen](1/2, -3)--(1/2, -7/2)--(1, -7/2);
			\draw[thick,rounded corners,color=ForestGreen](3/2, -4)--(3/2, -7/2)--(1, -7/2);
			\draw[thick,rounded corners,color=ForestGreen](9/2, -3)--(9/2, -7/2)--(5, -7/2);
			\draw[thick,rounded corners,color=ForestGreen](3/2, -4)--(3/2, -9/2)--(2, -9/2);
			\draw[thick,rounded corners,color=ForestGreen](2, -9/2)--(3, -9/2);
			\draw[thick,rounded corners,color=ForestGreen](3, -9/2)--(4, -9/2);
			\draw[thick,rounded corners,color=ForestGreen](4, -9/2)--(5, -9/2);
			\node at (1/2, 1/2){1};
			\node at (11/2, -1/2){2};
			\node at (3/2, 1/2){2};
			\node at (11/2, -3/2){3};
			\node at (5/2, 1/2){3};
			\node at (11/2, -5/2){4};
			\node at (7/2, 1/2){4};
			\node at (11/2, -7/2){5};
			\node at (9/2, 1/2){5};
			\node at (11/2, -9/2){1};
		\end{tikzpicture}
		\]
		By \cref{thm:main1},
		\[\mathfrak G_{13452}=\mathfrak S_{13452}-3\mathfrak S_{23451}.\]
		By \cref{thm:main2},
		\[\mathfrak S_{13452}=\mathfrak G_{13452}+3\mathfrak G_{23451}.\]
		Notice that these expressions are not multiplicity free. 
	\end{example}
		
		\section{Expansion of back stable Grothendieck polynomials into back stable Schubert polynomials}
		\label{section:backstable}
		
		\cref{thm:main1} extends naturally to the setting of back stable Schubert calculus.
		We refer the reader to \cite{Lam.Lee.Shimozono,Lam.Lee.Shimozono-K-theory} for background.
		
		\subsection{The ring of back symmetric formal power series}
		
		Write $S_{\mathbb{Z}}$ for the set of permutations of $\mathbb{Z}$ that fix all but finitely many elements. 
		The set $S_{\mathbb{Z}}$ is a group under composition of functions and is generated by the set of simple reflections $\{s_i= (i \, i+1) : i \in \mathbb{Z}\}$.
		
		Given $p, q \in \mathbb{Z}$ with $p \leq q$, we say that $w\in S_{\mathbb{Z}}$ is \newword{supported on $[p,q]$} if $w(i) = i$ for all $i\notin [p,q]$. 
		Whenever $w \in S_{\mathbb{Z}}$ is supported on $[p,q]$, we may regard $w$ as an element of $S_{[p,q]}$, the group of permutations of the set $[p,q] = \{p,p + 1,\ldots,q\}$. 
		In particular, if $w$ is supported on $[1,n]$, we may identify it with its restriction to the symmetric group $S_n$.
		
		Our setting involves certain formal power series in the variables $\{x_i : i \in \mathbb{Z}\}$. 
		We say
		such a power series $f$ has \newword{bounded degree} if there exists $M \in \mathbb{Z}_{\geq 0}$ such that the total degree of each monomial in $f$ is at most $M$. 
		Also, $f$ has \newword{bounded support} if there exists $N \in \mathbb{Z}$ such that $x_i$ does not appear in $f$ for all $i > N$.
		Let $R$ denote the set of formal power series that have bounded degree and bounded support.

		An element $f \in R$ is \newword{back symmetric} if there exists some integer $c$ so that $s_i \cdot f = f$ for all $i < c$. 
		We write $\overleftarrow{R} \subsetneq R$ for the subset of \newword{back symmetric formal power series}.
		Lam, Lee, and Shimozono showed that 
		\[\overleftarrow{R} = \Lambda \otimes \mathbb Q[\ldots, x_{-1}, x_0, x_1,\ldots],\] where $\Lambda$ denotes the ring of symmetric functions in the variables $\{\ldots, x_{-2}, x_{-1}, x_0\}$ \cite[Proposition 3.1]{Lam.Lee.Shimozono}. 
				
		\subsection{Back stable Schubert and Grothendieck polynomials}
		
		Let $w \in S_{\mathbb{Z}}$. 
		Write $\redword(w)$ for the set of reduced words for $w$, and let 
		\[\heckeword(w) = \{\mathbf{a} = (a_1,a_2,\ldots,a_L):\demprod(\mathbf{a}) = w\}\] 
		denote the set of \newword{Hecke words} for $w$. 
		By \cite[Theorem 3.2]{Lam.Lee.Shimozono}, the \newword{back stable Schubert polynomial} has the monomial expansion
		\begin{equation}
			\label{eqn:backschubertcompat}
			\overleftarrow{\mathfrak{S}}_w = \sum_{(a_1,a_2,\ldots,a_{\ell(w)}) \in\redword(w)}
			\sum_{\substack{b_1\leq b_2\leq \cdots \leq b_{\ell(w)}\\
					a_i\leq a_{i + 1} \Rightarrow b_i < b_{i + 1}\\
					b_i\leq a_i}
			} 
		x_{b_1} x_{b_2} \cdots x_{b_{\ell(w)}}.
		\end{equation}
		We take this expansion as a definition here, although $\overleftarrow{\mathfrak{S}}_w$ is equivalently the limit of certain shifts of Schubert polynomials. 
		Lam, Lee, and Shimozono showed that back stable Schubert polynomials form a $\mathbb{Q}$-linear basis for $\overleftarrow{R}$ \cite[Theorem~3.5]{Lam.Lee.Shimozono}.
		
		Similarly, by \cite[Proposition~4.4]{Lam.Lee.Shimozono-K-theory}, the \newword{back stable Grothendieck polynomial} has the monomial expansion
		\begin{equation}
			\label{eqn:backgrothcompat}
			\overleftarrow{\mathfrak{G}}_w = \sum_{(a_1,a_2,\ldots,a_L) \in\heckeword(w)}(-1)^{L-\ell(w)}
			\sum_{\substack{
					b_1\leq b_2\leq \cdots \leq b_L \\
					a_i\leq a_{i + 1}\Rightarrow b_i < b_{i + 1}\\
					b_i\leq a_i}
				} 
			x_{b_1} x_{b_2} \cdots x_{b_L}.
		\end{equation}
		Again, $\overleftarrow{\mathfrak{G}}_w$ is the limit of certain shifts of Grothendieck polynomials.
		Note that for $w \neq \id$, $\overleftarrow{\mathfrak{G}}_w$ is not an element of $R$, since it is not of bounded degree. 
		However, the degree $d$ component of $\overleftarrow{\mathfrak{G}}_w$ is a back symmetric formal power series, and it expands as a finite sum of back stable Schubert polynomials. 
		See \cite[Section~4.1]{Lam.Lee.Shimozono-K-theory} for a description of the back stable ring in which $\overleftarrow{\mathfrak{G}}_w$ resides.
		
		Given integers $p \leq q$, we define
		\[\mathfrak{S}^{[p,q]}_w = \sum_{(a_1,a_2,\ldots,a_{\ell(w)}) \in\redword(w)}
		\sum_{\substack{
				p\leq b_1\leq b_2\leq \cdots \leq b_{\ell(w)}\leq q\\
				a_i\leq a_{i + 1}\Rightarrow b_i < b_{i + 1}\\
				b_i\leq a_i}
		} 
		x_{b_1}x_{b_2}\cdots x_{b_{\ell(w)}}\]
		and
		\[\mathfrak{G}^{[p,q]}_w = \sum_{(a_1,a_2,\ldots,a_L) \in\heckeword(w)}(-1)^{L-\ell(w)}
		\sum_{\substack{p\leq b_1\leq b_2\leq \cdots \leq b_L \leq q\\
				a_i\leq a_{i + 1}\Rightarrow b_i < b_{i + 1}\\
				b_i\leq a_i}
			} 
		x_{b_1}x_{b_2}\cdots x_{b_L}.\] 
				
		Given $w\in \mathbb S_{\mathbb Z}$, fix $q$ so that $w(i) = i$ for all $i > q$. 
		Then $\mathfrak{S}^{[p,q]}_w$ and $\mathfrak{G}^{[p,q]}_w$ are nonzero if and only if $w$ is supported on $[p,q]$.
		Furthermore, in this case, these polynomials are simply evaluations of ordinary Schubert and Grothendieck polynomials on the alphabet $x_p, x_{p + 1},\ldots, x_{q}$, for an appropriate shift of $w$. 
		Namely, if $w'\in S_{[p,q]}$ is defined by $w'(i)=w(i+p-1)-p+1$, then
		\[\mathfrak S_w^{[p,q]}=\mathfrak S_{w'}(x_p,x_{p+1},\ldots,x_q)\]
		and
		\[\mathfrak G_w^{[p,q]}=\mathfrak G_{w'}(x_p,x_{p+1},\ldots,x_q).\]
		In particular, when $p = 1$ and $q=n$, we recover the usual compatible sequence definitions of the ordinary Schubert and Grothendieck polynomials, as given in \cite{BJS93} and \cite{Fomin.Kirillov}, respectively.
				
		\subsection{Back stable BPDs}
		
		We now recall the back stable version of BPDs from \cite{Lam.Lee.Shimozono,Lam.Lee.Shimozono-K-theory}. 
		An \newword{$S_{\mathbb{Z}}$ BPD} is a tiling of $\mathbb{Z} \times \mathbb{Z}$ with the six tiles from \cref{eqn:BPDtiles}, forming a network of pipes labeled by $\mathbb{Z}$ such that:
		\begin{enumerate}
			\item pipes do not start or end within the grid,
			\item for each column $c$, there exists a row $r_c$ such that the tile at position $(r,c)$ is a \vtile for all $r > r_c$,
			\item for each row $r$, there exists a column $c_r$ such that the tile at position $(r,c)$ is a \htile for all $c > c_r$, and
			\item there exist integers $p \leq q$ such that for all $i \notin [p,q]$, the tile at position $(i,i)$ is a \drtile, and the pipe that passes through this tile is a hook.
		\end{enumerate}
		If the fourth condition holds for a particular choice of $p \leq q$, we say that the $S_{\mathbb{Z}}$ BPD is \newword{supported} on $[p,q]$.
		An $S_{\mathbb{Z}}$ BPD is \newword{reduced} if each pair of pipes crosses at most one time.
		
		We may assign a permutation to an $S_{\mathbb{Z}}$ BPD $\mathcal{B}$ by labeling each pipe with the index of the column in which it is eventually vertical, and propagating labels across crossings using the local rules for NE planar histories, as in \cref{eqn:NEpipecross}. 
		We obtain an associated permutation $w \in S_{\mathbb{Z}}$ by setting $w(i) = j$ if the pipe labeled $j$ is eventually horizontal in row $i$. 
		We write $\demprod(\mathcal{B}) = w$. 
		
		Given $w \in S_{\mathbb{Z}}$, write $\overleftarrow{\kbpd}(w)$ for the set of $S_{\mathbb{Z}}$ BPDs of $w$, and $\overleftarrow{\bpd}(w)$ for the set of reduced $S_{\mathbb{Z}}$ BPDs of $w$. 
		Note that if $\mathcal{B} \in \overleftarrow{\kbpd}(w)$ is supported on $[p, q]$, then $w$ is also supported on $[p,q]$. 
		We write $\kbpd^{[p,q]}(w) \subseteq \overleftarrow{\kbpd}(w)$ and $\bpd^{[p,q]}(w) \subseteq \overleftarrow{\bpd}(w)$ for the subsets of $S_{\mathbb{Z}}$ BPDs supported on $[p,q]$. 
		
		It is immediate from the definitions that restricting an element of $\kbpd^{[p,q]}(w)$ to the rectangle $[p,q]\times[p,q]$ defines a bijection from $\kbpd^{[p,q]}(w)$ to $\kbpd(w')$ where $w'\in S_{q-p+1}$ and $w'(i)=w(i+p-1)-p+1$. 
		
		\subsection{Back stable co-BPDs}
		
		In this section, we introduce a back stable version of co-BPDs. 
		An \newword{$S_{\mathbb{Z}}$ co-BPD} is a tiling of the $\mathbb{Z} \times \mathbb{Z}$ grid with the six tiles from \cref{eqn:coBPDtiles}, forming a network of pipes labeled by $\mathbb{Z}$ so that:
		\begin{enumerate}
			\item pipes do not start or end within the grid,
			\item for each column $c$, there exists a row $r_c$ such that the tile at position $(r,c)$ is a \covtile for all $r < r_c$,
			\item for each row $r$, there exists a column $c_r$ such that the tile at position $(r,c)$ is a \cohtile for all $c > c_r$, and
			\item there exist integers $p \leq q$ such that for all $i \notin [p,q]$, the tile at position $(i,i)$ is a \courtile, and the pipe that passes through this tile is a hook.
		\end{enumerate}
		If the fourth condition holds for a particular choice of $p \leq q$, we say that the $S_{\mathbb{Z}}$ co-BPD is \newword{supported} on $[p,q]$.
		An $S_{\mathbb{Z}}$ co-BPD is \newword{reduced} if each pair of pipes crosses at most one time.
		
		Note that in contrast to $S_{\mathbb{Z}}$ BPDs, each $S_{\mathbb{Z}}$ co-BPD contains infinitely many crossings. 
		Nevertheless, we can still read off a permutation $w\in S_{\mathbb{Z}}$ from an $S_{\mathbb{Z}}$ co-BPD $\mathcal B$ as follows. 
		First, choose integers $p \leq q$ such that $\mathcal B$ is supported on $[p,q]$. 
		If $i\notin [p,q]$, then the pipe passing through $(i,i)$ is a hook, and we set $w(i)=i$. 
		Otherwise, within the rectangle $[p,q]\times [p,q]$, we label each pipe by the column in which it enters the rectangle, and propagate labels across crossings using the local rules for SE planar histories, as in \cref{eqn:SEpipecross}. 
		If a pipe labeled $j$ exits the rectangle in row $i$, then we set $w(i)=j$. 
		We define $\demprod(\mathcal{B}) = w$. 
		Note that $\demprod(\mathcal{B})$ is independent of the choice of $p$ and $q$.

		Making the same tile-by-tile replacements as in \cref{lemma:copipedreambijection} induces a bijection from $S_{\mathbb{Z}}$ BPDs to $S_{\mathbb{Z}}$ co-BPDs. 
		Given $\mathcal{B}\in\overleftarrow{\kbpd}(w)$, we write $\check{\mathcal{B}}$ for the image of $\mathcal{B}$ under this map. 
		Note that $\mathcal{B}$ is supported on $[p,q]$ if and only if $\check{\mathcal{B}}$ is supported on $[p,q]$.
		
		In contrast to the situation with finite BPDs, we no longer have a well-defined map from $S_{\mathbb{Z}}$ BPDs to $S_{\mathbb{Z}}$ co-BPDs by reflection.

		\subsection{The back stable Grothendieck to back stable Schubert expansion}
		
		We now state and prove the main theorem of this section.
		
		\begin{theorem}
			\label{thm:backstable}
			Given $w \in S_{\mathbb{Z}}$, we have 
			\[\overleftarrow{\mathfrak{G}}_w = \sum_{\substack{\mathcal{B} \in \overleftarrow{\kbpd}(w)\\\check{\mathcal{B}} \text{ is reduced}}} (-1)^{\ell(\demprod(\check{\mathcal{B}}))-\ell(w)} \overleftarrow{\mathfrak{S}}_{\demprod(\check{\mathcal{B}})}.\]
		\end{theorem}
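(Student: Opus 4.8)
The plan is to reduce \cref{thm:backstable} to its finite counterpart \cref{thm:main1} by truncating to a window and taking a limit. Fix $w \in S_{\mathbb{Z}}$ and choose $q$ so that $w(i) = i$ for all $i > q$; for each $p \leq q$ for which $w$ is supported on $[p,q]$, let $w' \in S_{q-p+1}$ be the shifted permutation $w'(i) = w(i+p-1)-p+1$. As recorded after the definition of back stable BPDs, restriction to the rectangle $[p,q]\times[p,q]$ is a bijection $\kbpd^{[p,q]}(w)\to \kbpd(w')$, and since the tile-by-tile replacements of \cref{lemma:copipedreambijection} are purely local, this restriction intertwines the co-BPD construction: the window restriction of $\check{\mathcal{B}}$ is the finite co-BPD associated to the window restriction of $\mathcal{B}$. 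The first real step is to promote this to a dictionary at the level of reducedness and co-permutations: I will show that once $p$ is taken sufficiently small (with $q$ fixed as above), every crossing of $\check{\mathcal{B}}$ lying outside $[p,q]\times[p,q]$ is a rigid, forced single crossing --- either between two of the hook pipes indexed outside $[p,q]$, or between such a hook and an interior pipe along its eventual horizontal row --- so that $\check{\mathcal{B}}$ is reduced if and only if its window restriction is, and in that case $\demprod(\check{\mathcal{B}})$ is the element of $S_{\mathbb{Z}}$ supported on $[p,q]$ obtained from $\demprod$ of the window restriction by the inverse index shift. One also has $\ell(w')=\ell(w)$, and $\ell$ of the shift of $\demprod(\check{\mathcal{B}})$ equals $\ell(\demprod(\check{\mathcal{B}}))$.

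Granting this dictionary, apply \cref{thm:main1} to $w'$ and evaluate both sides on the alphabet $x_p, x_{p+1}, \dots, x_q$ with the matching index shift. Using the identities $\mathfrak{G}_{w'}(x_p,\dots,x_q) = \mathfrak{G}^{[p,q]}_w$ and $\mathfrak{S}_{v'}(x_p,\dots,x_q) = \mathfrak{S}^{[p,q]}_v$ from the definitions of the window polynomials, together with the dictionary above, one obtains the finite identity
\begin{equation*}
\mathfrak{G}^{[p,q]}_w = \sum_{\substack{\mathcal{B} \in \kbpd^{[p,q]}(w)\\ \check{\mathcal{B}} \text{ is reduced}}} (-1)^{\ell(\demprod(\check{\mathcal{B}}))-\ell(w)}\, \mathfrak{S}^{[p,q]}_{\demprod(\check{\mathcal{B}})}.
\end{equation*}
Now let $p \to -\infty$, working degree by degree. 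Each $\overleftarrow{\mathfrak{S}}_v$ is homogeneous of degree $\ell(v)$, and the degree-$d$ component of $\overleftarrow{\mathfrak{G}}_w$ is a finite sum of back stable Schubert polynomials, so it is enough to compare degree-$d$ parts. From \eqref{eqn:backschubertcompat}, \eqref{eqn:backgrothcompat} and the window formulas, $\mathfrak{S}^{[p,q]}_v \to \overleftarrow{\mathfrak{S}}_v$ and $\mathfrak{G}^{[p,q]}_w \to \overleftarrow{\mathfrak{G}}_w$ coefficientwise as $p \to -\infty$. On the right-hand side all degree-$d$ terms carry the common sign $(-1)^{d-\ell(w)}$, so there is no cancellation and the set of $v$ with $\ell(v)=d$ actually occurring is finite (it is the $v$ appearing in the known finite expansion of the degree-$d$ part of $\overleftarrow{\mathfrak{G}}_w$); moreover by (the $S_{\mathbb{Z}}$ version of) \cref{lemma:cobpdascent} such $v$ satisfy $\des(v)\subseteq\des(w)$, and each has only finitely many associated reduced $S_{\mathbb{Z}}$ co-BPDs. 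Since the windows $[p,q]$ increase as $p$ decreases and $\overleftarrow{\kbpd}(w) = \bigcup_p \kbpd^{[p,q]}(w)$, the relevant index set stabilizes: for $p$ sufficiently small, $\{\mathcal{B} \in \kbpd^{[p,q]}(w) : \check{\mathcal{B}} \text{ reduced},\ \demprod(\check{\mathcal{B}}) = v\}$ equals $\{\mathcal{B} \in \overleftarrow{\kbpd}(w) : \check{\mathcal{B}} \text{ reduced},\ \demprod(\check{\mathcal{B}}) = v\}$ and is independent of $p$. Passing to the limit in the finite identity, term by term within this fixed finite collection, and summing over $d$ gives \cref{thm:backstable}.

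The main obstacle is the dictionary in the first paragraph: an $S_{\mathbb{Z}}$ co-BPD has infinitely many crossing tiles, so I must verify that no crossing outside a sufficiently large window can conceal a second crossing of a pair of pipes, and that $\demprod(\check{\mathcal{B}})$ is faithfully recorded inside the window. The combinatorial content is that the pipes indexed outside the support window are hooks whose interactions --- with one another and with the interior pipes along their eventual horizontal rows --- are completely determined and occur exactly once; this is analogous to, but somewhat more delicate than, the corresponding stabilization statement for $S_{\mathbb{Z}}$ BPDs, because of the extra crossing tiles introduced by the co-BPD replacement of \cref{lemma:copipedreambijection}. A secondary point requiring care is the finiteness, in each degree, needed to make the limit rigorous, which is handled via the descent bound from \cref{lemma:cobpdascent} together with finiteness of the set of reduced $S_{\mathbb{Z}}$ co-BPDs of a fixed permutation (inherited, through the bijection of \cref{lemma:copipedreambijection}, from finiteness of reduced $S_{\mathbb{Z}}$ BPDs).
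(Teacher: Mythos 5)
Your overall strategy (restrict to a window $[p,q]$, invoke \cref{thm:main1} there, and recover the back stable identity by letting $p \to -\infty$ degree by degree) is the same as the paper's, and your ``dictionary'' step is fine --- in fact the paper treats it as essentially definitional, since $\demprod$ of an $S_{\mathbb{Z}}$ co-BPD is defined through a window and the hook pipes outside the support window cross each other pipe exactly once, so reducedness and the co-permutation are determined inside the window. The genuine gap is in your limiting step. You justify finiteness and stabilization of the index set $\{\mathcal{B} \in \overleftarrow{\kbpd}(w) : \check{\mathcal{B}} \text{ reduced},\ \demprod(\check{\mathcal{B}}) = v\}$ by appealing to ``finiteness of reduced $S_{\mathbb{Z}}$ co-BPDs of a fixed permutation, inherited from finiteness of reduced $S_{\mathbb{Z}}$ BPDs.'' That premise is false: for a fixed permutation, the set of reduced $S_{\mathbb{Z}}$ BPDs is infinite. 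For example, $\overleftarrow{\mathfrak{S}}_{s_0} = \cdots + x_{-1} + x_0$ has infinitely many monomials, each coming from a distinct element of $\overleftarrow{\bpd}(s_0)$ (the blank tile can sit in any row $i \leq 0$). Moreover, the bijection of \cref{lemma:copipedreambijection} does not preserve the permutation ($\demprod(\mathcal{B}) \neq \demprod(\check{\mathcal{B}})$ in general), so even a correct finiteness statement for BPDs would not transfer to co-BPDs with a fixed \emph{co}-permutation in the way you assert. Without an honest finiteness/stabilization argument, the term-by-term passage to the limit in your finite identity is not justified as written; a priori the union over all windows could be infinite.

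The fix is available and is exactly what the paper does: first write $\overleftarrow{\mathfrak{G}}_w = \sum_v (-1)^{\ell(v)-\ell(w)} d_{w,v}\,\overleftarrow{\mathfrak{S}}_v$ with unknown coefficients, finite in each degree (this uses \cite[Theorem 3.5]{Lam.Lee.Shimozono}, not the combinatorics); then specialize to $[p,q]$, compare with the window form of \cref{thm:main1}, and use \emph{linear independence} of the nonzero $\mathfrak{S}^{[p,q]}_v$ to identify $d_{w,v}$ with the window BPD count. Since $\mathfrak{S}^{[p',q]}_v \neq 0$ for every $p' < p$, the same identification holds in every larger window, so the counts are independent of the window and, the windows exhausting $\overleftarrow{\kbpd}(w)$, equal the count over all back stable BPDs --- which in particular proves the finiteness you were trying to assume. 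Alternatively, your ``no cancellation'' remark can be upgraded to a correct argument by bounding the window counts uniformly in $p$ via a fixed monomial coefficient of $\overleftarrow{\mathfrak{G}}_w$; but some such step (linear independence or a positivity bound) is needed, and the justification you currently give does not supply it. Your use of an $S_{\mathbb{Z}}$ analogue of \cref{lemma:cobpdascent} is also unproved in the paper, though it is inessential once the coefficient identification is done correctly.
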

		
		\begin{proof}
			Because each homogeneous component of $\overleftarrow{\mathfrak{G}}_w$ is back symmetric, by \cite[Theorem~3.5]{Lam.Lee.Shimozono}, we may write 
			\begin{equation}
				\label{eqn:backstablelinearcomb}
				\overleftarrow{\mathfrak{G}}_w = \sum_{v \in S_{\mathbb{Z}}} (-1)^{\ell(v)-\ell(w)} d_{w,v}\overleftarrow{\mathfrak{S}}_v,
			\end{equation} 
			so that in each degree, we have a finite linear combination of back stable Schubert polynomials. 
			In other words, for each $k$, the set 
			\[\{v\in S_{\mathbb Z}:\ell(v) = k \text{ and } d_{w,v} \neq 0\}\]
			is finite. 
			
			Given $k\in \mathbb Z_{\geq 0}$, choose integers $p \leq q$ so that $w$ is supported on $[p,q]$, and also $v$ is supported on $[p,q]$ for all $v\in S_{\mathbb Z}$ such that $\ell(v)=k$ and $d_{w,v}\neq 0$. We can do this because the set of such $v$ is finite.
			Now specialize both sides of \cref{eqn:backstablelinearcomb}, setting $x_i = 0$ for all $i\notin [p,q]$, which yields
			\[\mathfrak{G}_w^{[p,q]} = \sum_{v \in S_{\mathbb{Z}}} (-1)^{\ell(v)-\ell(w)}d_{w,v} \mathfrak{S}^{[p,q]}_v.\]
			On the other hand, as an immediate consequence of \cref{thm:main1}, we also have
			\[\mathfrak{G}_w^{[p,q]} = \sum_{\substack{\mathcal{B} \in \kbpd^{[p,q]}(w)\\ \check{\mathcal{B}} \text{ is reduced}}} (-1)^{\ell(\demprod(\check{\mathcal{B}}))-\ell(w)} \mathfrak{S}^{[p,q]}_{\demprod(\check{\mathcal{B}})}.\]
			Since the set $\{\mathfrak{S}_v^{[p,q]}: v\in S_{\mathbb Z} \text{ is supported on } [p,q]\}$ is linearly independent, it follows that if $\ell(v)=k$, then 
			\[d_{w,v} = \#\{\mathcal{B} \in \kbpd^{[p,q]}(w) : \check{\mathcal{B}} \text{ is reduced and } \demprod(\check{\mathcal{B}}) = v\}.\]
			In particular, if $\mathfrak{S}^{[p,q]}_v \neq 0$, then $\mathfrak{S}^{[p',q]}_v \neq 0$ for all $p' < p$. Thus,
			\[d_{w,v} = \#\{\mathcal{B} \in \kbpd^{[p',q]}(w) : \check{\mathcal{B}} \text{ is reduced and } \demprod(\check{\mathcal{B}}) = v\}\]
			for all $p'<p$ and so
			\[d_{w,v} = \#\{\mathcal{B} \in \overleftarrow{\kbpd}(w) : \check{\mathcal{B}} \text{ is reduced and } \demprod(\check{\mathcal{B}}) = v\}.\]
			Since the choice of $k$ was arbitrary, the theorem follows.
		\end{proof}
		
		\begin{example}
			Let $w = s_2s_1 \in S_{\mathbb{Z}}$. 
			The Rothe $S_{\mathbb Z}$ BPD of $w$ and its associated $S_{\mathbb Z}$ co-BPD, both restricted to $[-1,3] \times [-1,3]$, are shown below.
			\[\begin{tikzpicture}[x = 1.25em,y = 1.25em]
				\draw[step = 1,gray, thin] (0,0) grid (5, -5);
				\draw[color = black, thick,dotted] (0,0) rectangle (5, -5);
				\draw[thick,rounded corners,color = blue](1/2, -1)--(1/2, -1/2)--(1, -1/2);
				\draw[thick,rounded corners,color = blue](1, -1/2)--(2, -1/2);
				\draw[thick,rounded corners,color = blue](2, -1/2)--(3, -1/2);
				\draw[thick,rounded corners,color = blue](3, -1/2)--(4, -1/2);
				\draw[thick,rounded corners,color = blue](4, -1/2)--(5, -1/2);
				\draw[thick,rounded corners,color = blue](1/2, -1)--(1/2, -2);
				\draw[thick,rounded corners,color = blue](3/2, -2)--(3/2, -3/2)--(2, -3/2);
				\draw[thick,rounded corners,color = blue](2, -3/2)--(3, -3/2);
				\draw[thick,rounded corners,color = blue](3, -3/2)--(4, -3/2);
				\draw[thick,rounded corners,color = blue](4, -3/2)--(5, -3/2);
				\draw[thick,rounded corners,color = blue](1/2, -2)--(1/2, -3);
				\draw[thick,rounded corners,color = blue](3/2, -2)--(3/2, -3);
				\draw[thick,rounded corners,color = blue](9/2, -3)--(9/2, -5/2)--(5, -5/2);
				\draw[thick,rounded corners,color = blue](1/2, -3)--(1/2, -4);
				\draw[thick,rounded corners,color = blue](3/2, -3)--(3/2, -4);
				\draw[thick,rounded corners,color = blue](5/2, -4)--(5/2, -7/2)--(3, -7/2);
				\draw[thick,rounded corners,color = blue](3, -7/2)--(4, -7/2);
				\draw[thick,rounded corners,color = blue](4, -7/2)--(5, -7/2);
				\draw[thick,rounded corners,color = blue](9/2, -3)--(9/2, -4);
				\draw[thick,rounded corners,color = blue](1/2, -4)--(1/2, -5);
				\draw[thick,rounded corners,color = blue](3/2, -4)--(3/2, -5);
				\draw[thick,rounded corners,color = blue](5/2, -4)--(5/2, -5);
				\draw[thick,rounded corners,color = blue](7/2, -5)--(7/2, -9/2)--(4, -9/2);
				\draw[thick,rounded corners,color = blue](4, -9/2)--(5, -9/2);
				\draw[thick,rounded corners,color = blue](9/2, -4)--(9/2, -5);
				\node at (1/2, 1/2){\null};
				\node at (1/2, -11/2){-1};
				\node at (11/2, -1/2){-1};
				\node at (3/2, -11/2){0};
				\node at (11/2, -3/2){0};
				\node at (5/2, -11/2){1};
				\node at (11/2, -5/2){3};
				\node at (7/2, -11/2){2};
				\node at (11/2, -7/2){1};
				\node at (9/2, -11/2){3};
				\node at (11/2, -9/2){2};
			\end{tikzpicture}
			\hspace{2em}
			\begin{tikzpicture}[x = 1.25em,y = 1.25em]
				\draw[step = 1,gray, thin] (0,0) grid (5, -5);
				\draw[color = black, thick, dotted] (0,0) rectangle (5, -5);
				\draw[thick,rounded corners,color = ForestGreen](1/2, 0)--(1/2, -1/2)--(1, -1/2);
				\draw[thick,rounded corners,color = ForestGreen](1, -1/2)--(2, -1/2);
				\draw[thick,rounded corners,color = ForestGreen](3/2, 0)--(3/2, -1);
				\draw[thick,rounded corners,color = ForestGreen](2, -1/2)--(3, -1/2);
				\draw[thick,rounded corners,color = ForestGreen](5/2, 0)--(5/2, -1);
				\draw[thick,rounded corners,color = ForestGreen](3, -1/2)--(4, -1/2);
				\draw[thick,rounded corners,color = ForestGreen](7/2, 0)--(7/2, -1);
				\draw[thick,rounded corners,color = ForestGreen](4, -1/2)--(5, -1/2);
				\draw[thick,rounded corners,color = ForestGreen](9/2, 0)--(9/2, -1);
				\draw[thick,rounded corners,color = ForestGreen](3/2, -1)--(3/2, -3/2)--(2, -3/2);
				\draw[thick,rounded corners,color = ForestGreen](2, -3/2)--(3, -3/2);
				\draw[thick,rounded corners,color = ForestGreen](5/2, -1)--(5/2, -2);
				\draw[thick,rounded corners,color = ForestGreen](3, -3/2)--(4, -3/2);
				\draw[thick,rounded corners,color = ForestGreen](7/2, -1)--(7/2, -2);
				\draw[thick,rounded corners,color = ForestGreen](4, -3/2)--(5, -3/2);
				\draw[thick,rounded corners,color = ForestGreen](9/2, -1)--(9/2, -2);
				\draw[thick,rounded corners,color = ForestGreen](5/2, -2)--(5/2, -3);
				\draw[thick,rounded corners,color = ForestGreen](7/2, -2)--(7/2, -3);
				\draw[thick,rounded corners,color = ForestGreen](9/2, -2)--(9/2, -5/2)--(5, -5/2);
				\draw[thick,rounded corners,color = ForestGreen](5/2, -3)--(5/2, -7/2)--(3, -7/2);
				\draw[thick,rounded corners,color = ForestGreen](3, -7/2)--(4, -7/2);
				\draw[thick,rounded corners,color = ForestGreen](7/2, -3)--(7/2, -4);
				\draw[thick,rounded corners,color = ForestGreen](4, -7/2)--(5, -7/2);
				\draw[thick,rounded corners,color = ForestGreen](7/2, -4)--(7/2, -9/2)--(4, -9/2);
				\draw[thick,rounded corners,color = ForestGreen](4, -9/2)--(5, -9/2);
				\node at (1/2, -11/2){\null};
				\node at (1/2, 1/2){-1};
				\node at (11/2, -1/2){-1};
				\node at (3/2, 1/2){0};
				\node at (11/2, -3/2){0};
				\node at (5/2, 1/2){1};
				\node at (11/2, -5/2){3};
				\node at (7/2, 1/2){2};
				\node at (11/2, -7/2){1};
				\node at (9/2, 1/2){3};
				\node at (11/2, -9/2){2};
			\end{tikzpicture}\]
			Notice that the following $S_{\mathbb Z}$ BPD for $w$ (pictured on the left) has a nonreduced $S_{\mathbb Z}$ co-BPD (pictured on the right).
			\[
			\begin{tikzpicture}[x = 1.25em,y = 1.25em]
				\draw[step = 1,gray, thin] (0,0) grid (5, -5);
				\draw[color = black, thick,dotted] (0,0) rectangle (5, -5);
				\draw[thick,rounded corners,color = blue](1/2, -1)--(1/2, -1/2)--(1, -1/2);
				\draw[thick,rounded corners,color = blue](1, -1/2)--(2, -1/2);
				\draw[thick,rounded corners,color = blue](2, -1/2)--(3, -1/2);
				\draw[thick,rounded corners,color = blue](3, -1/2)--(4, -1/2);
				\draw[thick,rounded corners,color = blue](4, -1/2)--(5, -1/2);
				\draw[thick,rounded corners,color = blue](1/2, -1)--(1/2, -2);
				\draw[thick,rounded corners,color = blue](7/2, -2)--(7/2, -3/2)--(4, -3/2);
				\draw[thick,rounded corners,color = blue](4, -3/2)--(5, -3/2);
				\draw[thick,rounded corners,color = blue](1/2, -2)--(1/2, -3);
				\draw[thick,rounded corners,color = blue](3/2, -3)--(3/2, -5/2)--(2, -5/2);
				\draw[thick,rounded corners,color = blue](2, -5/2)--(3, -5/2);
				\draw[thick,rounded corners,color = blue](7/2, -2)--(7/2, -5/2)--(3, -5/2);
				\draw[thick,rounded corners,color = blue](9/2, -3)--(9/2, -5/2)--(5, -5/2);
				\draw[thick,rounded corners,color = blue](1/2, -3)--(1/2, -4);
				\draw[thick,rounded corners,color = blue](3/2, -3)--(3/2, -4);
				\draw[thick,rounded corners,color = blue](5/2, -4)--(5/2, -7/2)--(3, -7/2);
				\draw[thick,rounded corners,color = blue](3, -7/2)--(4, -7/2);
				\draw[thick,rounded corners,color = blue](4, -7/2)--(5, -7/2);
				\draw[thick,rounded corners,color = blue](9/2, -3)--(9/2, -4);
				\draw[thick,rounded corners,color = blue](1/2, -4)--(1/2, -5);
				\draw[thick,rounded corners,color = blue](3/2, -4)--(3/2, -5);
				\draw[thick,rounded corners,color = blue](5/2, -4)--(5/2, -5);
				\draw[thick,rounded corners,color = blue](7/2, -5)--(7/2, -9/2)--(4, -9/2);
				\draw[thick,rounded corners,color = blue](4, -9/2)--(5, -9/2);
				\draw[thick,rounded corners,color = blue](9/2, -4)--(9/2, -5);
				\node at (1/2, 1/2){\null};
				\node at (1/2, -11/2){-1};
				\node at (11/2, -1/2){-1};
				\node at (3/2, -11/2){0};
				\node at (11/2, -3/2){0};
				\node at (5/2, -11/2){1};
				\node at (11/2, -5/2){3};
				\node at (7/2, -11/2){2};
				\node at (11/2, -7/2){1};
				\node at (9/2, -11/2){3};
				\node at (11/2, -9/2){2};
			\end{tikzpicture}
			\hspace{2em}
			\begin{tikzpicture}[x = 1.25em,y = 1.25em]
				\draw[step = 1,gray, thin] (0,0) grid (5, -5);
				\draw[color = black, thick,dotted] (0,0) rectangle (5, -5);
				\draw[thick,rounded corners,color = ForestGreen](1/2, 0)--(1/2, -1/2)--(1, -1/2);
				\draw[thick,rounded corners,color = ForestGreen](1, -1/2)--(2, -1/2);
				\draw[thick,rounded corners,color = ForestGreen](3/2, 0)--(3/2, -1);
				\draw[thick,rounded corners,color = ForestGreen](2, -1/2)--(3, -1/2);
				\draw[thick,rounded corners,color = ForestGreen](5/2, 0)--(5/2, -1);
				\draw[thick,rounded corners,color = ForestGreen](3, -1/2)--(4, -1/2);
				\draw[thick,rounded corners,color = ForestGreen](7/2, 0)--(7/2, -1);
				\draw[thick,rounded corners,color = ForestGreen](4, -1/2)--(5, -1/2);
				\draw[thick,rounded corners,color = ForestGreen](9/2, 0)--(9/2, -1);
				\draw[thick,rounded corners,color = ForestGreen](3/2, -1)--(3/2, -2);
				\draw[thick,rounded corners,color = ForestGreen](5/2, -1)--(5/2, -2);
				\draw[thick,rounded corners,color = ForestGreen](7/2, -1)--(7/2, -3/2)--(4, -3/2);
				\draw[thick,rounded corners,color = ForestGreen](4, -3/2)--(5, -3/2);
				\draw[thick,rounded corners,color = ForestGreen](9/2, -1)--(9/2, -2);
				\draw[thick,rounded corners,color = ForestGreen](3/2, -2)--(3/2, -5/2)--(2, -5/2);
				\draw[thick,rounded corners,color = ForestGreen](2, -5/2)--(3, -5/2);
				\draw[thick,rounded corners,color = ForestGreen](5/2, -2)--(5/2, -3);
				\draw[thick,rounded corners,color = ForestGreen](7/2, -3)--(7/2, -5/2)--(3, -5/2);
				\draw[thick,rounded corners,color = ForestGreen](9/2, -2)--(9/2, -5/2)--(5, -5/2);
				\draw[thick,rounded corners,color = ForestGreen](5/2, -3)--(5/2, -7/2)--(3, -7/2);
				\draw[thick,rounded corners,color = ForestGreen](3, -7/2)--(4, -7/2);
				\draw[thick,rounded corners,color = ForestGreen](7/2, -3)--(7/2, -4);
				\draw[thick,rounded corners,color = ForestGreen](4, -7/2)--(5, -7/2);
				\draw[thick,rounded corners,color = ForestGreen](7/2, -4)--(7/2, -9/2)--(4, -9/2);
				\draw[thick,rounded corners,color = ForestGreen](4, -9/2)--(5, -9/2);
				\node at (1/2, -11/2){\null};
				\node at (1/2, 1/2){-1};
				\node at (11/2, -1/2){-1};
				\node at (3/2, 1/2){0};
				\node at (11/2, -3/2){2};
				\node at (5/2, 1/2){1};
				\node at (11/2, -5/2){3};
				\node at (7/2, 1/2){2};
				\node at (11/2, -7/2){0};
				\node at (9/2, 1/2){3};
				\node at (11/2, -9/2){1};
			\end{tikzpicture}\]
			Furthermore, any $S_{\mathbb Z}$ BPD obtained from it by a sequence of droop moves will still have a nonreduced $S_{\mathbb Z}$ co-BPD.
			
			One can check that the $S_{\mathbb Z}$ BPDs contributing to the expansion of $\overleftarrow{\mathfrak{G}}_w$ are the Rothe $S_{\mathbb Z}$ BPD for $w$, plus $S_{\mathbb Z}$ BPDs obtained from a sequence of droop moves applied to the $S_{\mathbb Z}$ BPD below, each of which leaves the pipe with label $0$ fixed.
			\[\begin{tikzpicture}[x = 1.25em,y = 1.25em]
				\draw[step = 1,gray, thin] (0,0) grid (5, -5);
				\draw[color = black, thick,dotted] (0,0) rectangle (5, -5);
				\draw[thick,rounded corners,color = blue](1/2, -1)--(1/2, -1/2)--(1, -1/2);
				\draw[thick,rounded corners,color = blue](1, -1/2)--(2, -1/2);
				\draw[thick,rounded corners,color = blue](2, -1/2)--(3, -1/2);
				\draw[thick,rounded corners,color = blue](3, -1/2)--(4, -1/2);
				\draw[thick,rounded corners,color = blue](4, -1/2)--(5, -1/2);
				\draw[thick,rounded corners,color = blue](1/2, -1)--(1/2, -2);
				\draw[thick,rounded corners,color = blue](5/2, -2)--(5/2, -3/2)--(3, -3/2);
				\draw[thick,rounded corners,color = blue](3, -3/2)--(4, -3/2);
				\draw[thick,rounded corners,color = blue](4, -3/2)--(5, -3/2);
				\draw[thick,rounded corners,color = blue](1/2, -2)--(1/2, -3);
				\draw[thick,rounded corners,color = blue](3/2, -3)--(3/2, -5/2)--(2, -5/2);
				\draw[thick,rounded corners,color = blue](5/2, -2)--(5/2, -5/2)--(2, -5/2);
				\draw[thick,rounded corners,color = blue](9/2, -3)--(9/2, -5/2)--(5, -5/2);
				\draw[thick,rounded corners,color = blue](1/2, -3)--(1/2, -4);
				\draw[thick,rounded corners,color = blue](3/2, -3)--(3/2, -4);
				\draw[thick,rounded corners,color = blue](5/2, -4)--(5/2, -7/2)--(3, -7/2);
				\draw[thick,rounded corners,color = blue](3, -7/2)--(4, -7/2);
				\draw[thick,rounded corners,color = blue](4, -7/2)--(5, -7/2);
				\draw[thick,rounded corners,color = blue](9/2, -3)--(9/2, -4);
				\draw[thick,rounded corners,color = blue](1/2, -4)--(1/2, -5);
				\draw[thick,rounded corners,color = blue](3/2, -4)--(3/2, -5);
				\draw[thick,rounded corners,color = blue](5/2, -4)--(5/2, -5);
				\draw[thick,rounded corners,color = blue](7/2, -5)--(7/2, -9/2)--(4, -9/2);
				\draw[thick,rounded corners,color = blue](4, -9/2)--(5, -9/2);
				\draw[thick,rounded corners,color = blue](9/2, -4)--(9/2, -5);
				\node at (1/2, 1/2){\null};
				\node at (1/2, -11/2){-1};
				\node at (11/2, -1/2){-1};
				\node at (3/2, -11/2){0};
				\node at (11/2, -3/2){0};
				\node at (5/2, -11/2){1};
				\node at (11/2, -5/2){3};
				\node at (7/2, -11/2){2};
				\node at (11/2, -7/2){1};
				\node at (9/2, -11/2){3};
				\node at (11/2, -9/2){2};
			\end{tikzpicture}
			\hspace{2em}
			\begin{tikzpicture}[x = 1.25em,y = 1.25em]
				\draw[step = 1,gray, thin] (0,0) grid (5, -5);
				\draw[color = black, thick,dotted] (0,0) rectangle (5, -5);
				\draw[thick,rounded corners,color = ForestGreen](1/2, 0)--(1/2, -1/2)--(1, -1/2);
				\draw[thick,rounded corners,color = ForestGreen](1, -1/2)--(2, -1/2);
				\draw[thick,rounded corners,color = ForestGreen](3/2, 0)--(3/2, -1);
				\draw[thick,rounded corners,color = ForestGreen](2, -1/2)--(3, -1/2);
				\draw[thick,rounded corners,color = ForestGreen](5/2, 0)--(5/2, -1);
				\draw[thick,rounded corners,color = ForestGreen](3, -1/2)--(4, -1/2);
				\draw[thick,rounded corners,color = ForestGreen](7/2, 0)--(7/2, -1);
				\draw[thick,rounded corners,color = ForestGreen](4, -1/2)--(5, -1/2);
				\draw[thick,rounded corners,color = ForestGreen](9/2, 0)--(9/2, -1);
				\draw[thick,rounded corners,color = ForestGreen](3/2, -1)--(3/2, -2);
				\draw[thick,rounded corners,color = ForestGreen](5/2, -1)--(5/2, -3/2)--(3, -3/2);
				\draw[thick,rounded corners,color = ForestGreen](3, -3/2)--(4, -3/2);
				\draw[thick,rounded corners,color = ForestGreen](7/2, -1)--(7/2, -2);
				\draw[thick,rounded corners,color = ForestGreen](4, -3/2)--(5, -3/2);
				\draw[thick,rounded corners,color = ForestGreen](9/2, -1)--(9/2, -2);
				\draw[thick,rounded corners,color = ForestGreen](3/2, -2)--(3/2, -5/2)--(2, -5/2);
				\draw[thick,rounded corners,color = ForestGreen](5/2, -3)--(5/2, -5/2)--(2, -5/2);
				\draw[thick,rounded corners,color = ForestGreen](7/2, -2)--(7/2, -3);
				\draw[thick,rounded corners,color = ForestGreen](9/2, -2)--(9/2, -5/2)--(5, -5/2);
				\draw[thick,rounded corners,color = ForestGreen](5/2, -3)--(5/2, -7/2)--(3, -7/2);
				\draw[thick,rounded corners,color = ForestGreen](3, -7/2)--(4, -7/2);
				\draw[thick,rounded corners,color = ForestGreen](7/2, -3)--(7/2, -4);
				\draw[thick,rounded corners,color = ForestGreen](4, -7/2)--(5, -7/2);
				\draw[thick,rounded corners,color = ForestGreen](7/2, -4)--(7/2, -9/2)--(4, -9/2);
				\draw[thick,rounded corners,color = ForestGreen](4, -9/2)--(5, -9/2);
				\node at (1/2, -11/2){\null};
				\node at (1/2, 1/2){-1};
				\node at (11/2, -1/2){-1};
				\node at (3/2, 1/2){0};
				\node at (11/2, -3/2){1};
				\node at (5/2, 1/2){1};
				\node at (11/2, -5/2){3};
				\node at (7/2, 1/2){2};
				\node at (11/2, -7/2){0};
				\node at (9/2, 1/2){3};
				\node at (11/2, -9/2){2};
			\end{tikzpicture}\]
			For instance, we first droop the pipe with label $-1$ and obtain the diagrams below.
			\[\begin{tikzpicture}[x = 1.25em,y = 1.25em]
				\draw[step = 1,gray, thin] (0,0) grid (5, -5);
				\draw[color = black, thick,dotted] (0,0) rectangle (5, -5);
				\draw[thick,rounded corners,color = blue](3/2, -1)--(3/2, -1/2)--(2, -1/2);
				\draw[thick,rounded corners,color = blue](2, -1/2)--(3, -1/2);
				\draw[thick,rounded corners,color = blue](3, -1/2)--(4, -1/2);
				\draw[thick,rounded corners,color = blue](4, -1/2)--(5, -1/2);
				\draw[thick,rounded corners,color = blue](1/2, -2)--(1/2, -3/2)--(1, -3/2);
				\draw[thick,rounded corners,color = blue](3/2, -1)--(3/2, -3/2)--(1, -3/2);
				\draw[thick,rounded corners,color = blue](5/2, -2)--(5/2, -3/2)--(3, -3/2);
				\draw[thick,rounded corners,color = blue](3, -3/2)--(4, -3/2);
				\draw[thick,rounded corners,color = blue](4, -3/2)--(5, -3/2);
				\draw[thick,rounded corners,color = blue](1/2, -2)--(1/2, -3);
				\draw[thick,rounded corners,color = blue](3/2, -3)--(3/2, -5/2)--(2, -5/2);
				\draw[thick,rounded corners,color = blue](5/2, -2)--(5/2, -5/2)--(2, -5/2);
				\draw[thick,rounded corners,color = blue](9/2, -3)--(9/2, -5/2)--(5, -5/2);
				\draw[thick,rounded corners,color = blue](1/2, -3)--(1/2, -4);
				\draw[thick,rounded corners,color = blue](3/2, -3)--(3/2, -4);
				\draw[thick,rounded corners,color = blue](5/2, -4)--(5/2, -7/2)--(3, -7/2);
				\draw[thick,rounded corners,color = blue](3, -7/2)--(4, -7/2);
				\draw[thick,rounded corners,color = blue](4, -7/2)--(5, -7/2);
				\draw[thick,rounded corners,color = blue](9/2, -3)--(9/2, -4);
				\draw[thick,rounded corners,color = blue](1/2, -4)--(1/2, -5);
				\draw[thick,rounded corners,color = blue](3/2, -4)--(3/2, -5);
				\draw[thick,rounded corners,color = blue](5/2, -4)--(5/2, -5);
				\draw[thick,rounded corners,color = blue](7/2, -5)--(7/2, -9/2)--(4, -9/2);
				\draw[thick,rounded corners,color = blue](4, -9/2)--(5, -9/2);
				\draw[thick,rounded corners,color = blue](9/2, -4)--(9/2, -5);
				\node at (1/2, 1/2){\null};
				\node at (1/2, -11/2){-1};
				\node at (11/2, -1/2){-1};
				\node at (3/2, -11/2){0};
				\node at (11/2, -3/2){0};
				\node at (5/2, -11/2){1};
				\node at (11/2, -5/2){3};
				\node at (7/2, -11/2){2};
				\node at (11/2, -7/2){1};
				\node at (9/2, -11/2){3};
				\node at (11/2, -9/2){2};
			\end{tikzpicture}
			\hspace{2em}
			\begin{tikzpicture}[x = 1.25em,y = 1.25em]
				\draw[step = 1,gray, thin] (0,0) grid (5, -5);
				\draw[color = black, thick,dotted] (0,0) rectangle (5, -5);
				\draw[thick,rounded corners,color = ForestGreen](1/2, 0)--(1/2, -1);
				\draw[thick,rounded corners,color = ForestGreen](3/2, 0)--(3/2, -1/2)--(2, -1/2);
				\draw[thick,rounded corners,color = ForestGreen](2, -1/2)--(3, -1/2);
				\draw[thick,rounded corners,color = ForestGreen](5/2, 0)--(5/2, -1);
				\draw[thick,rounded corners,color = ForestGreen](3, -1/2)--(4, -1/2);
				\draw[thick,rounded corners,color = ForestGreen](7/2, 0)--(7/2, -1);
				\draw[thick,rounded corners,color = ForestGreen](4, -1/2)--(5, -1/2);
				\draw[thick,rounded corners,color = ForestGreen](9/2, 0)--(9/2, -1);
				\draw[thick,rounded corners,color = ForestGreen](1/2, -1)--(1/2, -3/2)--(1, -3/2);
				\draw[thick,rounded corners,color = ForestGreen](3/2, -2)--(3/2, -3/2)--(1, -3/2);
				\draw[thick,rounded corners,color = ForestGreen](5/2, -1)--(5/2, -3/2)--(3, -3/2);
				\draw[thick,rounded corners,color = ForestGreen](3, -3/2)--(4, -3/2);
				\draw[thick,rounded corners,color = ForestGreen](7/2, -1)--(7/2, -2);
				\draw[thick,rounded corners,color = ForestGreen](4, -3/2)--(5, -3/2);
				\draw[thick,rounded corners,color = ForestGreen](9/2, -1)--(9/2, -2);
				\draw[thick,rounded corners,color = ForestGreen](3/2, -2)--(3/2, -5/2)--(2, -5/2);
				\draw[thick,rounded corners,color = ForestGreen](5/2, -3)--(5/2, -5/2)--(2, -5/2);
				\draw[thick,rounded corners,color = ForestGreen](7/2, -2)--(7/2, -3);
				\draw[thick,rounded corners,color = ForestGreen](9/2, -2)--(9/2, -5/2)--(5, -5/2);
				\draw[thick,rounded corners,color = ForestGreen](5/2, -3)--(5/2, -7/2)--(3, -7/2);
				\draw[thick,rounded corners,color = ForestGreen](3, -7/2)--(4, -7/2);
				\draw[thick,rounded corners,color = ForestGreen](7/2, -3)--(7/2, -4);
				\draw[thick,rounded corners,color = ForestGreen](4, -7/2)--(5, -7/2);
				\draw[thick,rounded corners,color = ForestGreen](7/2, -4)--(7/2, -9/2)--(4, -9/2);
				\draw[thick,rounded corners,color = ForestGreen](4, -9/2)--(5, -9/2);
				\node at (1/2, -11/2){\null};
				\node at (1/2, 1/2){-1};
				\node at (11/2, -1/2){0};
				\node at (3/2, 1/2){0};
				\node at (11/2, -3/2){1};
				\node at (5/2, 1/2){1};
				\node at (11/2, -5/2){3};
				\node at (7/2, 1/2){2};
				\node at (11/2, -7/2){-1};
				\node at (9/2, 1/2){3};
				\node at (11/2, -9/2){2};
			\end{tikzpicture}\]
			Then we droop the pipe with label $-2$, then $-3$, and so on. From this and \cref{thm:backstable}, we deduce that 
			\[
			\overleftarrow{\mathfrak{G}}_{s_2s_1} = \overleftarrow{\mathfrak{S}}_{s_2s_1}-\overleftarrow{\mathfrak{S}}_{s_{0}s_2s_1} + \overleftarrow{\mathfrak{S}}_{s_{-1}s_{0}s_2s_1}-\overleftarrow{\mathfrak{S}}_{s_{-2}s_{-1}s_{0}s_2s_1} + \cdots. \]
			The reader is invited to compare the expansion of $\overleftarrow{\mathfrak{G}}_{s_2s_1}$ into back stable Schubert polynomials directly with the monomial expansions in \cref{eqn:backschubertcompat} and \cref{eqn:backgrothcompat}.
		\end{example}

		\section*{Acknowledgments}
		
		The author thanks Daoji Huang for helpful conversations and Benjamin Young for providing code that aided this work. 
		This research was partially supported by NSF DMS \#2344764.  
				
		\bibliographystyle{amsalphavar} 
		\bibliography{WLR.bib}
	\end{document}